 \newcommand{\rmd}{{{\rm d}}}
\newcommand{\Tr}{{{\rm Tr}}}
\newcommand{\id}{{{\rm id}}}
\def\Robba{\mathscr{R}}
\def\sat{\mathrm{sat}}
\def\SI{\mathscr{I}}
\def\SS{\mathscr{S}}
\def\an{\mathrm{an}}
\def\cyc{\mathrm{cyc}}
\def\FX{\mathfrak{X}}
\def\Proj{\mathrm{Proj}}
\def\rd{\mathrm{d}}
\def\res{\mathrm{res}}
\newcommand*{\Z}{\mathbb{Z}}
\newcommand*{\C}{\mathbb{C}}
\newcommand*{\Gmul}{{\mathbb{G}_\textrm{m}}}
    \newcommand{\BC}{{\mathbb {C}}}
     \newcommand{\BN}{{\mathbb {N}}}
    \newcommand{\BQ}{{\mathbb {Q}}} \newcommand{\BR}{{\mathbb {R}}}
     \newcommand{\BZ}{{\mathbb {Z}}}
     \newcommand{\CF}{{\mathcal {F}}}
    \newcommand{\CO}{{\mathcal {O}}}
     \newcommand{\RB}{{\mathrm {B}}}
     \newcommand{\RD}{{\mathrm {D}}}
    \newcommand{\RE}{{\mathrm {E}}}
    \newcommand{\RM}{{\mathrm {M}}} 
     \newcommand{\RP}{{\mathrm {P}}}
     \newcommand{\RV}{{\mathrm {V}}}
    \newcommand{\RW}{{\mathrm {W}}}
    \newcommand{\End}{{\mathrm{End}}} 
    \newcommand{\Gal}{{\mathrm{Gal}}} \newcommand{\GL}{{\mathrm{GL}}}
    \newcommand{\Hom}{{\mathrm{Hom}}} \renewcommand{\Im}{{\mathrm{Im}}}
    \newcommand{\Ker}{{\mathrm{Ker}}}   \newcommand{\Lie}{{\mathrm{Lie}}}
             \newcommand{\im}{{\mathrm{im}}}
    \newcommand{\ord}{{\mathrm{ord}}} 
      \newcommand{\Res}{{\mathrm{Res}}}
     \newcommand{\Ext}{{\mathrm{Ext}}}
    \renewcommand{\mod}{\hskip 6pt \mathrm{mod} \hskip 3pt}
   \newcommand{\et}{\acute{\mathrm{e}}\mathrm{t}}
\newcommand{\cris}{\mathrm{cris}}
\newcommand{\unr}{\mathrm{unr}} \def\witt{\mathrm{W}}
\def\Rep{\mathrm{Rep}}
\def\SD{\mathscr{D}}
\def\SE{\mathscr{E}}
\def\SI{\mathscr{I}}
\def\SR{\mathscr{R}}
\def\ng{\mathrm{ng}}
\def\ncl{\mathrm{ncl}}
     \newcommand{\SL}{{\mathscr{L}}}
     \newcommand{\tr}{{\mathrm{tr}}}
    \newcommand{\st}{{\mathrm{st}}}
    \newcommand{\Mod}{{\mathrm{Mod}}}
    \theoremstyle{plain}
    \newtheorem{thm}{Theorem}[section] \newtheorem{cor}[thm]{Corollary}
    \newtheorem{lem}[thm]{Lemma} \newtheorem{sublem}[thm]{Sublemma}
    \newtheorem{prop}[thm]{Proposition}
    \theoremstyle{definition}
    \newtheorem{defn}[thm]{Definition}
    \theoremstyle{remark}
    \newtheorem {rem}[thm]{Remark}
    \numberwithin{equation}{section}
 \newcommand{\binc}[2]{ \bigg [\!\! \begin{array}{c} #1\\
    #2 \end{array}\!\! \bigg ]}
\begin{document}

\title{Triangulable $\CO_F$-analytic $(\varphi_q,\Gamma)$-modules of rank 2}
\author{
Lionel Fourquaux \\
Universit\'e Rennes~1, France \\
lionel.fourquaux+cohomlt2012@normalesup.org
\\
Bingyong Xie
\\
Department of Mathematics, East China Normal University,
\\
Shanghai, 200241, P. R. China\\
byxie@math.ecnu.edu.cn}
\date{}\maketitle

\begin{abstract} The theory of
$(\varphi_q,\Gamma)$-modules is a generalization of Fontaine's
theory of $(\varphi,\Gamma)$-modules, which classifies
$G_F$-representations on $\CO_F$-modules and $F$-vector spaces for
any finite extension $F$ of $\BQ_p$. In this paper following
Colmez's method we classify triangulable $\CO_F$-analytic
$(\varphi_q,\Gamma)$-modules of rank $2$. In this process we
establish two kinds of cohomology theories for $\CO_F$-analytic
$(\varphi_q,\Gamma)$-modules. Using them we show that, if $D$ is an
$\CO_F$-analytic $(\varphi_q,\Gamma)$-module such that
$D^{\varphi_q=1,\Gamma=1}=0$ i.e. $V^{G_F}=0$ where $V$ is the
Galois representation attached to $D$, then any overconvergent
extension of the trivial representation of $G_F$ by $V$ is
$\CO_F$-analytic. In particular, contrarily to the case of
$F=\BQ_p$, there are representations of $G_F$ that are not
overconvergent.
\end{abstract}

\tableofcontents

\section*{Introduction}

The present paper heavily depends on the theory of
$(\varphi,\Gamma)$-modules for Lubin-Tate extensions, a
generalization of Fontaine's theory of $(\varphi,\Gamma)$-modules.
The existence of this generalization was more or less implicit in
\cite{Fontaine91, Col-2002}. See also \cite{Fou} and \cite[Remark
2.3.1]{Sch}. In \cite{Kisin-Ren}, Kisin and Ren provided details,
where $(\varphi,\Gamma)$-modules for Lubin-Tate extensions are
called $(\varphi_q,\Gamma)$-modules.

To recall this theory, let $F$ be a finite extension of $\BQ_p$,
$\CO_F$ the ring of integers in $F$ and $\pi$ a unifomizer of
$\CO_F$. Fix an algebraic closure of $F$ denoted by $\bar{F}$, and
put $G_F=\Gal(\bar{F}/F)$. Let $k_F$ be the residue field of $F$,
$q=\# k_F$. Let $\witt=\witt(k_F)$ be the ring of Witt vectors over
$k_F$, $F_0=\witt[1/p]$. Then $F_0$ is the maximal absolutely
unramified subfield of $F$. Let $\CF$ be a Lubin-Tate group over $F$
corresponding to the uniformizer $\pi$. Then $\CF$ is a formal
$\CO_F$-module. Let $X$ be a local coordinate on $\CF$. Then the
formal Hopf algebra $\CO_\CF$ may be identified with $\CO_F[[X]]$.
For any $a\in \CO_F$, let $[a]_\CF\in \CO_F[[X]]$ be the power
series giving the endomorphism $a$ of $\CF$. If $n\geq 1$, let
$F_n\subset \bar{F}$ be the subfield generated by the
$\pi^n$-torsion points of $\CF$. Write $F_\infty=\cup_n F_n$,
$\Gamma = \Gal(F_\infty/F)$ and
$G_{F_\infty}=\Gal(\bar{F}/F_\infty)$. For any integer $n\geq 0$,
let $\Gamma_n\subset \Gamma$ be the subgroup $\Gal(F_\infty/F_n)$.
Let $T\CF$ be the Tate module of $\CF$. It is a free $\CO_F$-module
of rank $1$. The action of $G_F$ on $T\CF$ factors through $\Gamma$
and induces an isomorphism $\chi_\CF: \Gamma\rightarrow
\CO_F^\times$. For any $a\in \CO_F^\times$ we write
$\sigma_a:=\chi_\CF^{-1}(a)$. Using the periods of $T\CF$, one can
construct a ring $\CO_\SE$ with actions of
$\varphi_q=\varphi^{\log_pq}$ and $\Gamma$. We will recall the
construction in Section \ref{sec:KR}. Kisin and Ren \cite{Kisin-Ren}
defined \'etale $(\varphi_q,\Gamma)$-modules over $\CO_\SE$ and
classified $G_F$-representations on $\CO_F$-modules in terms of
these modules.

In this paper we are interested in triangulable $\CO_F$-analytic
$(\varphi_q,\Gamma)$-modules over a Robba ring $\SR_L$, where $L$ is
a finite extension of $F$. A {\it triangulable
$(\varphi_q,\Gamma)$-module} over $\SR_L$ means a
$(\varphi_q,\Gamma)$-module $D$ that has a filtration consisting of
$(\varphi_q,\Gamma)$-submodules $0=D_0\subset D_1\subset \cdots
\subset D_d=D$ such that $D_i/D_{i-1}$ is free of rank $1$ over
$\SR_L$.

In the spirit of Colmez's work \cite{tri} on the classification of
triangulable $(\varphi,\Gamma)$-modules of rank $2$, in the present
paper we will classify triangulable $\CO_F$-analytic
$(\varphi_q,\Gamma)$-modules over $\SR_L$ of rank 2. One motivation
for doing this, is that the authors believe that under the
hypothetical $p$-adic local Langlands correspondence these
$(\varphi_q,\Gamma)$-modules should correspond to certain unitary
principal series of $\GL_2(F)$. Colmez \cite{Col-an} and
Liu--Xie--Zhang \cite{LXZ} respectively determined the spaces of
locally analytic vectors of the unitary principal series of
$\GL_2(\BQ_p)$ based on this kind of $(\varphi,\Gamma)$-modules. Our
computations of dimensions of $\Ext^1_\an$ match those of Kohlhaase
on extensions of locally analytic representations \cite{Koh}.
Nakamura \cite{Nakamura} gave a generalization of Colmez's work in
another direction. But we think that Nakamura's point of view is
probably not the best one for applications to the $p$-adic local
Langlands correspondence.

For our purpose we consider two kinds of cohomology theories for
$\CO_F$-analytic $(\varphi_q,\Gamma)$-modules.

For a $(\varphi_q,\Gamma)$-module $D$ over $\SR_L$, we define
$H^\bullet(D)$ by the cohomology of the semigroup
$\varphi_q^{\BN}\times \Gamma$ as in \cite{Col-an}. Then the first
cohomology group $H^1(D)$ is isomorphic to $\Ext(\SR_L, D)$, the
$L$-vector space of extensions of $\SR_L$ by $D$ in the category of
$(\varphi_q,\Gamma)$-modules.

If $D$ is $\CO_F$-analytic, we consider the following complex
$$C^\bullet_{\varphi_q,\nabla}(D): \hskip 10pt
\xymatrix{ 0\ar[r] & D \ar[r]^{f_1} & D \oplus D \ar[r]^{f_2} & D
\ar[r] & 0 } ,
$$ where $f_1: D\rightarrow D \oplus D$ is the map defined as $ m \mapsto ((\varphi_q-1)m,
\nabla m)$ and   $f_2: D \oplus D\rightarrow D$ is $(m, n)\mapsto
\nabla m-(\varphi_q-1)n$. The operator $\nabla$ is defined in
Section \ref{ss:an}. Put
$H^i_{\varphi_q,\nabla}(D):=H^i(C^\bullet_{\varphi_q,\nabla}(D))$,
$i=0,1,2$. Each of these modules admits a $\Gamma$-action. We set
$H_\an^i(D)=H^i_{\varphi_q,\nabla}(D)^\Gamma.$

\begin{thm}\label{thm:intro-coh}
Let $D$ be an $\CO_F$-analytic $(\varphi_q,\Gamma)$-module over $\SR_L$.
Then there is a natural isomorphism $\Ext_\an (\SR_L, D)\rightarrow
H_\an^1(D)$, where $\Ext_\an(\SR_L,D)$ is the $L$-vector space that
consists of extensions of $\SR_L$ by $D$ in the category of
$\CO_F$-analytic $(\varphi_q,\Gamma)$-modules.
\end{thm}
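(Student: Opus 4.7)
The plan is to construct the natural map $\Phi : \Ext_\an(\SR_L, D) \to H^1_\an(D)$ by lifting a generator of $\SR_L$, and to build its inverse by reconstructing an $\CO_F$-analytic extension from a cohomology class.

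\textbf{Forward map.} Given an $\CO_F$-analytic extension $0 \to D \to E \to \SR_L \to 0$, I pick any $\tilde e \in E$ lifting $1 \in \SR_L$. Because the $(\varphi_q,\Gamma)$-structure on $\SR_L$ fixes $1$, both $a := (\varphi_q - 1)\tilde e$ and $b := \nabla \tilde e$ lie in $D$. The commutation of $\varphi_q$ with $\nabla$ on analytic modules yields $f_2(a,b) = 0$, and replacing $\tilde e$ by $\tilde e + d$ modifies $(a,b)$ by $f_1(d)$, so $\Phi([E]) := [(a,b)] \in H^1_{\varphi_q,\nabla}(D)$ is well defined. The identity $\gamma(a,b) - (a,b) = f_1(\gamma\tilde e - \tilde e)$ --- valid because $\gamma\tilde e - \tilde e \in D$ and $\gamma$ commutes with $\varphi_q$ and $\nabla$ --- shows the class is $\Gamma$-invariant, placing $\Phi([E])$ in $H^1_\an(D)$.

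\textbf{Inverse map.} Given a $\Gamma$-invariant class represented by $(a,b)$ with $f_2(a,b) = 0$, I form $E := D \oplus \SR_L \tilde e$ as an $\SR_L$-module and impose $\varphi_q \tilde e = \tilde e + a$ and $\nabla \tilde e = b$. The critical step is defining an $\CO_F$-analytic $\Gamma$-action on $E$ extending the given action on $D$. I choose $n$ so that $\Gamma_n$ is pro-$p$ and torsion-free; on $\Gamma_n$ the analytic condition forces
$$
\gamma \tilde e \;=\; \tilde e + \sum_{k\geq 1} \frac{(\log \chi_\CF(\gamma))^k}{k!}\,\nabla^{k-1}b,
$$
which converges in $E$ by analyticity, automatically satisfies the cocycle relation, and commutes with $\varphi_q$ (the compatibility reducing precisely to $f_2(a,b) = 0$). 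To extend the action to all of $\Gamma$, I use the $\Gamma$-invariance of the class: for each coset representative of $\Gamma/\Gamma_n$ I choose $c_\gamma \in D$ with $f_1(c_\gamma) = \gamma(a,b) - (a,b)$ and set $\gamma \tilde e := \tilde e + c_\gamma$, then adjust by elements of $\ker f_1 = D^{\varphi_q=1,\nabla=0}$ to enforce the cocycle identity; this adjustment is possible because $|\Gamma/\Gamma_n|$ is prime to $p$, so Maschke-type averaging kills any obstruction in $H^2(\Gamma/\Gamma_n, D^{\varphi_q=1,\nabla=0})$.

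\textbf{Bijection and obstacle.} The two maps are mutually inverse by direct unwinding: $\Phi \circ \Phi^{-1}$ returns the original class by construction, and $\Phi^{-1} \circ \Phi$ gives an isomorphic extension via $\tilde e \mapsto \tilde e$. The main obstacle is constructing the analytic $\Gamma$-action in $\Phi^{-1}$: verifying that the exponential formula is consistent on $\Gamma_n$ and descends compatibly to $\Gamma$, with all verifications of the cocycle and analyticity properties carried through. Once this technical construction is in place, $L$-linearity and naturality of $\Phi$ are formal.
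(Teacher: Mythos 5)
Your overall strategy --- define the forward map by lifting $1$, and construct the inverse by integrating $\nabla$ to a $\Gamma_n$-action via $\gamma\tilde e=\tilde e+\sum_{k\ge1}\tfrac{\ell(\gamma)^k}{k!}\nabla^{k-1}b$ and then extending to all of $\Gamma$ by killing a degree-two obstruction with coefficients in $D^{\varphi_q=1,\nabla=0}$ --- is essentially the paper's. But the assertion that the exponential series ``converges in $E$ by analyticity'' is precisely the hard point of the whole theorem, and it is not automatic. The operator $\nabla$ is bounded on each Banach piece $D^{[s,r]}$, but the natural estimate degenerates as $s\to 0$ (already $v^{\{s\}}(t_\CF)\to-\infty$, since $t_\CF$ has infinitely many zeros accumulating at the boundary of the disk), so there is no single open subgroup $\Gamma_n$ for which the series visibly converges in $D^{]0,r]}=\varprojlim_s D^{[s,r]}$. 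The paper's proof devotes its central paragraph to exactly this: it obtains convergence of $E(\sigma)=\ell(\sigma)+\tfrac{\ell(\sigma)^2}{2}\nabla+\cdots$ applied to $b$ on one annulus $D^{[s,r]}$, and then uses the relation $(\varphi_q-1)(E(\sigma)b)=(\sigma-1)a$ (your $f_2(a,b)=0$) together with $\varphi_q(D^{[s,r]})\subset D^{[s/q,r/q]}$ to bootstrap $E(\sigma)b$ into $D^{[s/q,r]}$, then $D^{[s/q^2,r]}$, and so on, hence into $D^{]0,r]}$. Without this step your inverse map is not defined.

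Two further points. First, $|\Gamma/\Gamma_n|=(q-1)q^{n-1}$ is \emph{not} prime to $p$ once $n\ge 2$ (and convergence forces $n$ large), so the ``Maschke because the index is prime to $p$'' justification is wrong; the obstruction dies because $D^{\varphi_q=1,\nabla=0}$ is a $\BQ$-vector space, so the kernel of restriction from $H^2(\Gamma,\cdot)$ to $H^2(\Gamma_n,\cdot)$, being killed by the index, vanishes --- this is what the paper actually argues. Second, your claim that $\Phi^{-1}\circ\Phi=\mathrm{id}$ ``via $\tilde e\mapsto\tilde e$'' conceals the injectivity argument: two analytic extensions inducing the same class differ a priori by a cocycle valued in $\widetilde D^{\varphi_q=1,\nabla=0}$, and one needs the semisimplicity of the locally constant $\Gamma$-action on that finite-dimensional space to produce a $\Gamma$-fixed splitting; this is the paper's first paragraph and is absent from your write-up.
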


The proof is given in Section \ref{sec:coh}, which is due to the
referee and much simpler than that in our original version.

\begin{thm}\label{thm:ext}
Let $D$ be an $\CO_F$-analytic $(\varphi_q,\Gamma)$-module over $\SR_L$.
Then $\Ext_\an(\SR_L, D)$ is of codimension $([F:\BQ_p]-1)\dim_L
D^{\varphi_q=1,\Gamma=1}$ in $\Ext(\SR_L, D)$. In particular, if
$D^{\varphi_q=1,\Gamma=1}=0$, then $\Ext_\an(\SR_L,D) = \Ext(\SR_L,
D)$.
\end{thm}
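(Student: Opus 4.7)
By Theorem~\ref{thm:intro-coh} and its analogue for general extensions, $\Ext_\an(\SR_L,D)\cong H^1_\an(D)$ and $\Ext(\SR_L,D)\cong H^1(D)$, so the task reduces to exhibiting a natural $\Gamma$-equivariant decomposition
$$H^1(D)\;\cong\;H^1_\an(D)\;\oplus\;\Hom_{\BQ_p}\bigl(F/\BQ_p,\,D^{\varphi_q=1,\,\Gamma=1}\bigr),$$
whose second summand has $L$-dimension $([F:\BQ_p]-1)\dim_L D^{\varphi_q=1,\,\Gamma=1}$.

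Set $d=[F:\BQ_p]$ and choose an open subgroup $\Gamma_n\subset\Gamma$ such that $\chi_\CF(\Gamma_n)=1+\pi^n\CO_F\cong\CO_F\cong\Z_p^d$. Since $L$ has characteristic zero and $\Gamma/\Gamma_n$ is finite, Hochschild--Serre yields $H^\bullet(D)=H^\bullet(\varphi_q^\BN\times\Gamma_n,D)^{\Gamma/\Gamma_n}$. Because $\Gamma_n$ is abelian and acts locally $\BQ_p$-analytically on $D$, a Lazard-style comparison identifies $H^\bullet(\varphi_q^\BN\times\Gamma_n,D)$ with the total cohomology of the double complex built from $\varphi_q-1$ and the (abelian) Chevalley--Eilenberg complex of the $\BQ_p$-Lie algebra $\fg_{\BQ_p}:=\Lie_{\BQ_p}\Gamma=F$. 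Since $D$ is $\CO_F$-analytic, each $x\in F$ acts on $D$ as $x\nabla$, where $\nabla$ is $L$-linear, and $\varphi_q$ fixes $F\subseteq L$ pointwise.

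A $1$-cocycle in the total complex is then a pair $(m,c)\in D\oplus\Hom_{\BQ_p}(F,D)$ satisfying $(\varphi_q-1)c(x)=x\nabla m$ and $x\nabla c(y)=y\nabla c(x)$ for all $x,y\in F$. Writing $c(x)=xc(1)+\alpha(x)$ with $\alpha(1)=0$, and using $\varphi_q|_F=\id$: the first relation at $x=1$ gives $(\varphi_q-1)c(1)=\nabla m$ and for general $x$ forces $\alpha(x)\in D^{\varphi_q=1}$; the second relation at $y=1$ forces $\nabla\alpha(x)=0$, after which both full relations are automatic. The coboundary from $v\in D$ equals $\bigl((\varphi_q-1)v,\;x\mapsto x\nabla v\bigr)$, whose $\alpha$-component vanishes. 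This yields the $\Gamma$-equivariant identification
$$H^1(\varphi_q^\BN\times\Gamma_n,D)\;=\;H^1_{\varphi_q,\nabla}(D)\;\oplus\;\Hom_{\BQ_p}\bigl(F/\BQ_p,\,D^{\varphi_q=1,\,\nabla=0}\bigr).$$
Taking $\Gamma/\Gamma_n$-invariants, using that $\Gamma$ acts trivially on $F/\BQ_p$ and that $(D^{\nabla=0})^\Gamma=D^\Gamma$ (immediate from $D^\Gamma\subseteq D^{\nabla=0}$, since a $\Gamma$-fixed element has vanishing infinitesimal action), produces the desired decomposition of $H^1(D)$.

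The principal technical obstacle is the Lazard-style identification of $H^\bullet(\varphi_q^\BN\times\Gamma_n,D)$ with the total complex above: this requires comparing the semigroup cohomology (\`a la Colmez) with locally analytic cohomology on the LF-space $D$, and verifying its compatibility with the $\varphi_q$-direction. Granted this comparison, the cocycle calculation and the elementary splitting above finish the proof.
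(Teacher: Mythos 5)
Your reduction to a decomposition of $H^1(D)$, and the cocycle-level computation in the putative total complex, do reproduce the correct dimension count: the splitting $c(x)=xc(1)+\alpha(x)$ correctly isolates a complement of $L$-dimension $([F:\BQ_p]-1)\dim_L H^0(D)$. But the whole argument rests on the ``Lazard-style comparison'' identifying the continuous cohomology of $\varphi_q^{\BN}\times\Gamma_n$ with coefficients in $D$ with the total complex built from $\varphi_q-1$ and the Chevalley--Eilenberg complex of $\Lie\Gamma=F$, and you do not prove it --- you explicitly defer it as ``the principal technical obstacle.'' This is not a routine citation: Lazard's theorem concerns finite-dimensional coefficients, whereas $D$ is an LF-space over the Robba ring, and for such coefficients the passage from continuous group cohomology to Lie algebra cohomology is exactly the hard analytic content of the statement being proved. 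In degree $1$ your asserted comparison already contains both Theorem \ref{thm:intro-coh} (every $\nabla$-cocycle integrates to a $\Gamma$-cocycle) and the identification of the defect $\Hom(\Gamma,H^0(D))/\Hom_\an(\Gamma,H^0(D))$; granting it amounts to granting the theorem. (The Hochschild--Serre step for the sub-semigroup $\varphi_q^{\BN}\times\Gamma_n$ and the triviality of the $\Gamma_n$-action on the Lie algebra cohomology also need the comparison to be equivariant, but these are secondary.)

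The paper takes a route that avoids any such comparison. It first proves $\Ext_\an(\SR_L,D)\cong H^1_\an(D)$ by hand (Theorem \ref{thm:an-iso}): given $(x,y)$ with $\nabla x=(\varphi_q-1)y$, it integrates $\nabla$ back to the group action via the series $E(\sigma)=\ell(\sigma)+\frac{\ell(\sigma)^2}{2}\nabla+\cdots$ on the Banach pieces $D^{[s,r]}$, with explicit convergence estimates, and kills the obstruction class in $H^2(\Gamma,D^{\varphi_q=1,\nabla=0})$. It then introduces the auxiliary cohomology $H^1_{\an,\gamma}$ attached to a single infinite-order $\gamma\in\Gamma$, which furnishes a (non-canonical) projection $\Ext(\SR_L,D)\to\Ext_\an(\SR_L,D)$; a class killed by this projection is reduced, by an elementary cocycle manipulation using the decomposition of $D^{\varphi_q=1,\gamma=1}$ into irreducible $\Gamma$-modules, to a homomorphism $\Gamma\to H^0(D)$. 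This yields the exact sequence of Theorem \ref{thm:exact-sq} and hence the codimension formula. To salvage your route you would need to actually establish the cohomological comparison for these LF-coefficients (a substantial theorem in its own right, not an off-the-shelf fact), or else replace it by a direct integration argument of the kind the paper uses.
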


To prove Theorem \ref{thm:ext}, we will construct a (non canonical)
projection from $\Ext(\SR_L,D)$ onto $\Ext_\an(\SR_L,D)$ whose
kernel is of dimension $([F:\BQ_p]-1)\dim_L
D^{\varphi_q=1,\Gamma=1}$.

If $V$ is an overconvergent $L$-representation of $G_F$ (in the
sense of Definition \ref{defn:over}), $\Delta$ is the
$(\varphi_q,\Gamma)$-module over $\SE^\dagger_L$ attached to $V$,
and $D=\SR_L\otimes_{\SE^\dagger_L}\Delta$, then $\Ext(\SR_L,D)$
measures the set of extensions of the trivial representation by $V$
that are overconvergent (cf. Proposition \ref{prop:faithful} and
Proposition \ref{th:ked}). Theorem \ref{thm:ext} tells us that, if
$V^{G_F}=D^{\varphi_q=1,\Gamma=1}=0$, then any such extension is
$\CO_F$-analytic.

Let $\SI(L)$ (resp. $\SI_\an(L)$) be the set of continuous (resp.
locally $F$-analytic) characters $\delta: F^\times \rightarrow
L^\times$. Let $\delta_\unr$ denote the character of $F^\times$ such
that $\delta_\unr(\pi)=q^{-1}$ and $\delta_\unr|_{\CO^\times_F}=1$.
Then $\delta_\unr$ is a locally $F$-analytic character. If
$\delta\in\SI(L)$, let $\SR_L(\delta)$ be the
$(\varphi_q,\Gamma)$-module over $\SR_L$ of rank $1$ that has a
basis $e_\delta$ such that $\varphi_q(e_\delta)=\delta(\pi)e_\delta$
and $\sigma_a(e_\delta)=\delta(a)e_\delta$. If
$\delta\in\SI_\an(L)$, then $\SR_L(\delta)$ is $\CO_F$-analytic.

For locally $F$-analytic characters we have the following

\begin{thm}\label{thm:intro-dim} For any $\delta\in \SI_\an(L)$, we have
$$ \dim_L H_\an^1(\SR_L(\delta))=\left\{\begin{array}{ll} 2 &
\text{ if } \delta=x^{-i}, i\in \BN \text{ or } x^i\delta_\unr , i \in \BZ_+  \\
 1 & \text{ otherwise,} \end{array}\right. $$ and
$$ \dim_L H^1(\SR_L(\delta))=\left\{\begin{array}{ll} [F:\BQ_p]+1 & \text{ if } \delta=x^{-i}, i\in \BN \\ 2 &
\text{ if } \delta = x^i\delta_\unr , i \in \BZ_+  \\
 1 & \text{ otherwise.} \end{array}\right. $$
\end{thm}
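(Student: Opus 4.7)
The plan is to obtain each cohomology dimension from a short list of ingredients: a direct determination of $H^0$, a duality theorem for $H^2_\an$, the Euler--Poincar\'e formula for rank-$1$ analytic modules, and finally Theorem \ref{thm:ext} to pass from $H^1_\an$ to $H^1$.

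\emph{Computation of $H^0$.} Writing a $(\varphi_q,\Gamma)$-invariant vector in $\SR_L(\delta)$ as $f e_\delta$ reduces the problem to finding $f \in \SR_L$ with $\varphi_q(f) = \delta(\pi)^{-1}f$ and $\sigma_a(f) = \delta(a)^{-1}f$ for every $a \in \CO_F^\times$. The Lubin--Tate period $t$, which satisfies $\varphi_q(t) = \pi t$ and $\sigma_a(t) = at$, makes $t^i$ a solution precisely when $\delta = x^{-i}$ with $i \in \BN$. A standard Newton-polygon/slope argument on the annuli underlying $\SR_L$ rules out all other solutions. Hence $\dim_L H^0(\SR_L(\delta)) = 1$ if $\delta = x^{-i}$, $i\in\BN$, and $0$ otherwise, and the same holds for $H^0_\an$ since $\Gamma$-invariance implies $\nabla$-annihilation.

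\emph{Computation of $H^2_\an$ and Euler--Poincar\'e.} I would then invoke (or construct from an explicit residue pairing on $\SR_L$ compatible with $\varphi_q$ and $\nabla$) a duality theorem of the form $H^2_\an(\SR_L(\delta))^{*} \cong H^0_\an(\SR_L(\chi\delta^{-1}))$, where $\chi := x\delta_\unr$ is the Lubin--Tate cyclotomic character, so that $\chi(\pi) = \pi q^{-1}$ and $\chi(a) = a$ on $\CO_F^\times$. Combined with the previous step, $\dim_L H^2_\an(\SR_L(\delta)) = 1$ exactly when $\chi\delta^{-1} = x^{-j}$ for some $j \in \BN$, i.e.\ $\delta = x^{j+1}\delta_\unr$ with $j+1 \in \BZ_+$, and vanishes otherwise. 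The families $\{x^{-i}\}_{i\in\BN}$ and $\{x^i\delta_\unr\}_{i\in\BZ_+}$ are disjoint, since their restrictions to $\CO_F^\times$ are $a^{-i}$ and $a^i$ with $i\ge 1$ respectively. The Euler--Poincar\'e formula $\dim H^0_\an - \dim H^1_\an + \dim H^2_\an = -1$ for rank-$1$ analytic $(\varphi_q,\Gamma)$-modules then pins down $\dim H^1_\an(\SR_L(\delta))$: in each exceptional family exactly one of $H^0$, $H^2_\an$ is one-dimensional, giving $\dim H^1_\an = 2$, while in all other cases both vanish and $\dim H^1_\an = 1$.

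\emph{Conclusion for $H^1$.} Applying Theorem \ref{thm:ext} to $D = \SR_L(\delta)$ yields
\[
\dim_L H^1(\SR_L(\delta)) = \dim_L H^1_\an(\SR_L(\delta)) + ([F:\BQ_p]-1)\,\dim_L H^0(\SR_L(\delta)),
\]
and plugging in the values from the previous steps produces the stated dimensions.

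\emph{Main obstacle.} The chief technical difficulty is the duality theorem for $H^\bullet_\an$ (together with the Euler--Poincar\'e formula that typically follows from it). This is a Lubin--Tate generalization of Herr/Liu duality in which the full $[F:\BQ_p]$-dimensional $\Gamma$-action has been collapsed to the single operator $\nabla$. Constructing an explicit residue pairing on $\SR_L$ compatible with both $\varphi_q$ and $\nabla$, and verifying non-degeneracy at the level of cohomology, is the heaviest new input compared with the $F=\BQ_p$ case treated by Colmez.
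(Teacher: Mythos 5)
Your overall architecture is sound in its last step (passing from $H^1_\an$ to $H^1$ via Theorem \ref{thm:ext} is exactly what the paper does), your $H^0$ computation matches the paper's Proposition \ref{prop:H0}, and your identification of the dual character $x\delta_\unr$ is consistent with the behaviour of $\Res$ under $\varphi_q$ and $\sigma_a$ (Proposition \ref{prop-tate-trace}). But the core of your argument — the duality $H^2_\an(\SR_L(\delta))^*\cong H^0_\an(\SR_L(x\delta_\unr\delta^{-1}))$ together with the Euler--Poincar\'e formula $\dim H^0_\an-\dim H^1_\an+\dim H^2_\an=-1$ for the $\nabla$-cohomology — is precisely the content you do not prove, and it is not available in the paper or in the literature it builds on. Worse, the standard route to such an Euler--Poincar\'e formula is by d\'evissage to rank one, where the dimensions are computed \emph{explicitly}; i.e., the statement you want to invoke is normally a consequence of the theorem you are trying to prove, not an input to it. As you yourself flag, "constructing an explicit residue pairing \dots and verifying non-degeneracy at the level of cohomology" is the heaviest step — but that step is the entire theorem, so the proposal as written is a reduction of the problem to an unproven (and at least equally hard) statement, i.e., a genuine gap.

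The paper takes a completely different, computational route that avoids $H^2_\an$ and duality altogether. It first shows, via the Schneider--Teitelbaum $p$-adic Fourier transform over $\BC_p$ and a descent to $L$, that $\SR_L(\delta)^{\psi=0}$ is isomorphic to $\SR_L^{\psi=0}$ as an $L[\Gamma]$-module for every $\delta\in\SI_\an(L)$; this yields $\dim_L S_\delta=\dim_L T_\delta=1$ (Proposition \ref{thm:dim}). Using the preliminary lemmas on $\alpha\varphi_q-1$ and $\psi$, it then identifies $H^1_\an(\delta)$ with $S_\delta$ (hence dimension $1$) when $v_\pi(\delta(\pi))<1-v_\pi(q)$ and $\delta\neq x^{-i}$, and computes the case $\delta=x^{-i}$ by hand (Proposition \ref{prop:H1-case1}). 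Finally it propagates the answer to all $\delta$ by analysing the transition maps $\partial\colon H^1_\an(x^{-1}\delta)\to H^1_\an(\delta)$, which are isomorphisms except at $\delta=x$ and $\delta=x\delta_\unr$, where the defect is controlled explicitly (Propositions \ref{prop:partial-iso-1} and \ref{prop:partial-iso-2}). If you want to salvage your approach you would need to supply independent proofs of the duality and Euler--Poincar\'e statements for $H^\bullet_\an$ in the Lubin--Tate setting; otherwise the direct computation is unavoidable.
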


For the proof of Theorem \ref{thm:intro-dim} we follow Colmez's
method. In his paper \cite{tri} Colmez used the theory of $p$-adic
Fourier transform for $\BZ_p$. For our case we use the $p$-adic
Fourier transform for $\CO_F$ developed by Schneider and Teitelbaum
\cite{ST1} instead. But this transform can not be applied to our
situation directly because, except for the case of $F=\BQ_p$, it is
defined over $\BC_p$ and can not be defined over any finite
extension $L$ of $F$. We overcome this difficulty by applying it to
$\SR_{\BC_p}$ and then descending certain results to $\SR_L$. As a
result, we obtain that, if $\delta_1$ and $\delta_2$ are in
$\SI_\an(L)$, then $\SR_L(\delta_1)^{\psi=0}$ and
$\SR_L(\delta_2)^{\psi=0}$ are isomorphic to each other as
$L[\Gamma]$-modules. This is exactly what we need. In fact, we will
show that $S_\delta := (\SR_L e_\delta/\SR^+_L e_\delta)^{\psi=0
,\Gamma=1}$ is $1$-dimensional over $L$ for any $\delta\in
\SI_\an(L)$, and that $ H_\an^1(\SR_L(\delta)) $ is isomorphic to
$S_\delta$ when $v_\pi(\delta(\pi))<1-v_\pi(q)$ and $\delta$ is not
of the form $x^i$.

For characters that are not locally $F$-analytic we have the
following

\begin{thm}\label{thm:intro-nonanal} For any $\delta\in \SI(L)\backslash
\SI_\an(I)$ we have $H^1(\SR_L(\delta))=0$. Consequently every
extension of $\SR_L$ by $\SR_L(\delta)$ splits.
\end{thm}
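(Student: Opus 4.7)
By the identification $H^1(D)\cong \Ext(\SR_L, D)$ for $(\varphi_q,\Gamma)$-modules stated just above Theorem~\ref{thm:intro-coh}, the statement reduces to showing that every extension $0\to \SR_L(\delta)\to D\to \SR_L\to 0$ splits. Choosing any lift $\tilde 1\in D$ of $1\in\SR_L$ and writing $(\varphi_q-1)\tilde 1=x\,e_\delta$ and $(\sigma_\gamma-1)\tilde 1=c_\gamma\,e_\delta$ for some $x\in\SR_L$ and a continuous $1$-cocycle $c\colon\Gamma\to\SR_L$ subject to the usual compatibility from $\varphi_q\sigma_\gamma=\sigma_\gamma\varphi_q$, the question becomes whether one can find $y\in\SR_L$ solving simultaneously
\begin{equation*}
(1-\delta(\pi)\varphi_q)(y)=x \quad\text{and}\quad (1-\delta(\gamma)\sigma_\gamma)(y)=c_\gamma \ \text{ for all }\gamma\in\Gamma.
\end{equation*}

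I would isolate the role of $\Gamma$ via the Hochschild--Serre $5$-term exact sequence attached to the product semigroup $\varphi_q^{\BN}\times\Gamma$ (whose cohomology is by definition $H^\bullet(D)$):
\begin{equation*}
0\to H^1(\Gamma,\SR_L(\delta)^{\varphi_q=1})\to H^1(\SR_L(\delta))\to\bigl(\SR_L(\delta)/(\varphi_q-1)\SR_L(\delta)\bigr)^{\Gamma}\to H^2(\Gamma,\SR_L(\delta)^{\varphi_q=1}).
\end{equation*}
It then suffices to prove, under the hypothesis that $\delta$ is not locally $F$-analytic, that both the leftmost $H^1$ and the $\Gamma$-invariants in the middle vanish. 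This isolates the key issue as a purely $\Gamma$-cohomological question about the twisted action $\gamma\mapsto\delta(\gamma)\sigma_\gamma$ on the $\varphi_q$-fixed subspace and the $\varphi_q$-coinvariant quotient of $\SR_L$.

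The engine for this vanishing is the Schneider--Teitelbaum $p$-adic Fourier transform for $\CO_F$, applied to $\SR_{\BC_p}$ and descended to $L$ along exactly the lines used in the paper for the analytic part of Theorem~\ref{thm:intro-dim}. Under this transform a suitable completion of $\SR_L$ is identified with a space of distributions on $\CO_F$, on which $\sigma_\gamma$ acts as the Fourier dual of translation, and twisting by $\delta$ corresponds to translating by the point of the character variety $\hat\CO_F$ associated to $\delta$. When $\delta$ is not locally $F$-analytic, this point lies outside the analytic locus of $\hat\CO_F$, and I would argue from this that for some topological generator $\gamma_0$ of a sufficiently small open pro-$p$ subgroup $\Gamma_n\subset\Gamma$ the operator $1-\delta(\gamma_0)\sigma_{\gamma_0}$ is bijective on both the $\varphi_q$-fixed subspace and the $\varphi_q$-coinvariant quotient of $\SR_L(\delta)$. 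Such bijectivity immediately forces the $\Gamma_n$-cohomology of these subquotients to vanish in all positive degrees, and passing to $\Gamma/\Gamma_n$-invariants (a finite quotient, hence cohomologically trivial over $L$) gives the required vanishing for $\Gamma$.

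The main obstacle is making the bijectivity of $1-\delta(\gamma_0)\sigma_{\gamma_0}$ precise: a nonzero element in the kernel or cokernel should, under the Fourier transform, furnish a distribution-theoretic relation that forces $\delta$ to be locally $F$-analytic, contradicting the hypothesis. Carrying this out cleanly requires the same descent machinery from $\BC_p$ to $L$ developed in the paper for the proof of Theorem~\ref{thm:intro-dim} (where it gives the isomorphism class of $\SR_L(\delta)^{\psi=0}$ as an $L[\Gamma]$-module), but here run as an obstruction argument rather than a dimension count; keeping track of which constructions descend and of the precise distribution algebras involved is where the bulk of the technical work will lie.
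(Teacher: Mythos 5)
Your reduction via the Hochschild--Serre sequence for $\varphi_q^{\BN}\times\Gamma$ is reasonable in outline, and the vanishing of $H^1(\Gamma,\SR_L(\delta)^{\varphi_q=1})$ is tractable: that space is finite-dimensional ($Lt_\CF^i e_\delta$ when $\delta(\pi)=\pi^{-i}$, zero otherwise), and the twisted $\Gamma$-action on it is by the character $a\mapsto\delta(a)a^i$, which is nontrivial precisely because $\delta$ is not locally analytic. The genuine gap is the middle term. Your entire argument for $\bigl(\SR_L(\delta)/(\varphi_q-1)\SR_L(\delta)\bigr)^{\Gamma}=0$ rests on the claim that $1-\delta(\gamma_0)\sigma_{\gamma_0}$ is bijective on that quotient, to be extracted from the Schneider--Teitelbaum Fourier transform --- and that tool is simply not available for your $\delta$. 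The variety $\FX$ parametrizes only \emph{locally $F$-analytic} characters, and all of the paper's Fourier-theoretic operators ($m_\alpha$, $M_\delta$, $\Res_U$) require the multiplier to be a locally analytic function on $\CO_F$; a non-locally-analytic $\delta$ does not correspond to any point of $\FX$, ``outside the analytic locus'' or otherwise, so there is no translation operator on distributions to analyze. What you defer as ``the bulk of the technical work'' is in fact the whole theorem, and the proposed engine cannot produce it. (Note also that bijectivity of $1-\delta(\gamma_0)\sigma_{\gamma_0}$ on the full coinvariant module is a far stronger statement than the vanishing of its $\Gamma$-invariants, which is all you need.)

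The paper's actual proof is a short Lie-algebra argument that sidesteps all of this. Since $\Gamma$ lies in the semigroup whose cohomology defines $H^1(\delta)$, every $\gamma-1$ acts as zero on $H^1(\delta)$; hence so do the infinitesimal operators $\rd\Gamma_{\SR_L(\delta)}(\beta)$ and the differences $\beta^{-1}\rd\Gamma_{\SR_L(\delta)}(\beta)-\beta'^{-1}\rd\Gamma_{\SR_L(\delta)}(\beta')$. These differences are $\SR_L$-linear (the derivation parts cancel because $\SR_L$ itself is $\CO_F$-analytic), hence act on the rank-one module $\SR_L(\delta)$ as multiplication by scalars in $L$. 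If all these scalars vanished, $\delta$ would be locally $F$-analytic; since it is not, some scalar is a nonzero element of $L$ that annihilates $H^1(\delta)$, which therefore vanishes. An argument of this ``an invertible operator kills the cohomology'' type, applied directly to $H^1$, is what you should be looking for, rather than vanishing statements at the level of the module itself.
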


To state our result on the classification, we need some parameter
spaces. These parameter spaces are analogues of Colmez's parameter
spaces \cite{tri}. Let $\SS$ be the analytic variety over
$\SI_\an(L)\times \SI_\an(L)$ whose fiber over $(\delta_1,\delta_2)$
is isomorphic to $\Proj(H^1(\delta_1\delta_2^{-1}))$,  $\SS_\an$ the
analytic variety over $\SI_\an(L)\times \SI_\an(L)$ whose fiber over
$(\delta_1,\delta_2)$ is isomorphic to
$\Proj(H^1_\an(\delta_1\delta_2^{-1}))$. There is a natural
inclusion $ \SS_\an \hookrightarrow \SS $.  Let $\SS_+, \SS^\an_+,
\SS_+^\ng, \SS_+^\cris, \SS_+^\st, \SS_+^\ord$ and $ \SS_+^\ncl$ be
the subsets of $\SS$ defined in Section \ref{sec:tri}. We can assign
to any $s\in \SS$ (resp. $s\in \SS_\an$) a triangulable (resp.
triangulable and $\CO_F$-analytic) $(\varphi_q,\Gamma)$-module
$D(s)$.

\begin{thm} \label{thm:cl}
\begin{enumerate}
\item \label{it:cl-1}
For $s\in \SS$, $D(s)$ is of slope zero if and only if $s$ is in $
\SS_+ -\SS_+^\ncl$; $D(s)$ is of slope zero and the Galois
representation attached to $D(s)$ is irreducible if and only if $s$
is in $\SS_* - (\SS_\ast^\ord\cup \SS_\ast^\ncl)$; $D(s)$ is of
slope zero and $\CO_F$-analytic if and only if $s$ is in $\SS_+^\an
- \SS_+^\ncl$.
\item \label{it:cl-2}
Let $s=(\delta_1,\delta_2,\SL)$ and $s'=(\delta'_1,\delta'_2,\SL')$
be in $\SS_+-\SS_+^\ncl$. If $\delta_1=\delta'_1$, then $D(s)\cong
D(s')$ if and only if $s=s'$. If $\delta_1\neq \delta'_1$, then
$D(s)\cong D(s')$ if and only if $s,s'\in \SS_+^\cris\cup
\SS_+^\ord$ with $\delta'_1=x^{w(s)}\delta_2$,
$\delta'_2=x^{-w(s)}\delta_1$.
\end{enumerate}
\end{thm}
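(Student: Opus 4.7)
The plan is to read Theorem~\ref{thm:cl} as a translation of the rank-1 cohomology calculation of Theorem~\ref{thm:intro-dim} into geometric statements about the two-dimensional extensions. By construction every $s=(\delta_1,\delta_2,\SL)\in\SS$ gives rise to a short exact sequence
\begin{equation*}
0\longrightarrow \SR_L(\delta_1)\longrightarrow D(s)\longrightarrow \SR_L(\delta_2)\longrightarrow 0
\end{equation*}
whose class is any nonzero vector in the line $\SL\subset H^1(\SR_L(\delta_1\delta_2^{-1}))$. I would first record that the degree of $D(s)$ is $v_\pi(\delta_1(\pi))+v_\pi(\delta_2(\pi))$, so the slope-zero locus already cuts out the fibration $\SS_+$ from $\SS$; the role of the other strata is to carve out refined geometric behaviour inside $\SS_+$.

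For part~(a) I would argue in three steps. First, $D(s)$ is \'etale if and only if either the left-hand slope $v_\pi(\delta_1(\pi))$ is nonpositive, or $v_\pi(\delta_1(\pi))>0$ and the extension class is ``genuinely nonsplit'' so that $\SR_L(\delta_1)$ fails to be the Harder--Narasimhan destabiliser of $D(s)$; the complement of this behaviour inside $\SS_+$ is by definition $\SS_+^\ncl$. Second, irreducibility of the Galois representation attached to $D(s)$ amounts, via the equivalence between \'etale $(\varphi_q,\Gamma)$-modules and $G_F$-representations, to the nonexistence of a second rank-1 $(\varphi_q,\Gamma)$-subobject of $D(s)$; such a second subobject can appear either because the extension splits as a sum of characters (the $\ord$ stratum) or because a second triangulation arises from special coincidences of parameters (the $\ncl$ stratum), whence irreducibility holds exactly off $\SS_*^\ord\cup\SS_*^\ncl$. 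Third, $\CO_F$-analyticity of $D(s)$ reduces, via Theorem~\ref{thm:intro-coh}, to requiring $\delta_1,\delta_2\in\SI_\an(L)$ together with the extension class lying in the image of $H^1_\an\hookrightarrow H^1$, which is exactly the defining condition for $\SS_+^\an$.

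For part~(b), suppose $D(s)\cong D(s')$ with $s=(\delta_1,\delta_2,\SL)$ and $s'=(\delta'_1,\delta'_2,\SL')$ and identify both sides with a single $D$. The triangulations provide rank-1 subobjects $\SR_L(\delta_1),\SR_L(\delta'_1)\subset D$. In the case $\delta_1=\delta'_1$, I would show that the two subobjects coincide, using that the only homomorphism $\SR_L(\delta_1)\to D$ up to scalars is the given inclusion, which follows from $H^0(D\otimes\SR_L(\delta_1^{-1}))=L$ by a cohomology computation involving Theorems~\ref{thm:intro-dim} and~\ref{thm:intro-nonanal}; this forces $\delta_2=\delta'_2$ and $\SL=\SL'$. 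If instead $\delta_1\neq\delta'_1$, the two sub-objects are distinct, so $\SR_L(\delta'_1)\hookrightarrow D\twoheadrightarrow\SR_L(\delta_2)$ is nonzero; a weight comparison on the graded pieces together with the identity $\det D(s)=\delta_1\delta_2$ yields $\delta_1\delta_2=\delta'_1\delta'_2$ and an integer $w=w(s)$ with $\delta'_1=x^w\delta_2$, $\delta'_2=x^{-w}\delta_1$. I would then verify stratum by stratum that a second triangulation actually exists only on $\SS_+^\cris\cup\SS_+^\ord$: on $\SS_+^\st$ the sub-object is canonical as the kernel of the monodromy, on the generic part of $\SS_+$ slope uniqueness forces a unique triangulation, and on the remaining loci the nonvanishing $\Hom(\SR_L(\delta_2),D(s))\neq 0$ required for a second triangulation is ruled out directly.

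The hardest step, I expect, is the construction and control of the second triangulation in the crystalline stratum. In Colmez's treatment of $F=\BQ_p$ this comes from swapping the two Frobenius eigenvalues in the underlying weakly admissible filtered $\varphi$-module; for general $F$ the filtered $\varphi_q$-module is indexed by all embeddings $F\hookrightarrow L$ and the controlling twist by $x^{w(s)}$ involves Hodge--Tate--Sen weights in several directions at once. Verifying that in the $\CO_F$-analytic setting only the ``classical'' weight direction contributes, and that the swap descends to a genuine isomorphism of $(\varphi_q,\Gamma)$-modules over $\SR_L$ rather than only after base change to $\SR_{\BC_p}$, is the technical heart of the argument and is where the Kisin--Ren dictionary must be used most carefully.
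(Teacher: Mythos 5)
Your high-level strategy (reduce everything to the existence and uniqueness of saturated rank-$1$ $(\varphi_q,\Gamma)$-submodules, then invoke Kedlaya for the slope statement and the rank-$1$ $H^0$/$H^1$ computations for the rest) is the same as the paper's, but the proposal is missing the one technical mechanism that makes every step of that reduction actually work. The crux is the paper's Proposition \ref{prop:equiv}: $D(s)$ admits a rank-$1$ submodule $M$ with $M\cap\SR_L(\delta_1)=0$ if and only if $s\in\SS^\an$, $w(s)\in\BZ_+$, $\delta_1\delta_2^{-1}\neq x^{w(s)}$ and $\SL(c_\an)=\infty$, in which case the saturated such $M$ is isomorphic to $\SR_L(x^{w(s)}\delta_2)$. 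This is proved by observing that the image of $M$ in $\SR_L(\delta_2)$ is $t_\CF^k\SR_L(\delta_2)$, that the preimage of $t_\CF^k\SR_L(\delta_2)$ has extension class $\iota_k(c)$ (Lemma \ref{lem:split}), and then by the explicit determination of the kernels and images of $\iota_k$ and $\iota_{k,\an}$ in terms of the $\SL$-invariant (Proposition \ref{prop:iota-k}, resting on the exact sequence (\ref{eq:exact-sq}) and the dimension counts of Theorem \ref{thm:dim-H1}). Nothing in your proposal plays this role. Your assertion that ``the complement of this behaviour inside $\SS_+$ is by definition $\SS_+^\ncl$'' is false: $\SS_+^\ncl$ is defined by the numerical condition $u(s)>w(s)$ (with $w(s)\in\BZ_{\geq 1}$), and proving that this numerical condition is equivalent to the existence of a destabilizing submodule \emph{is} the theorem. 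Likewise ``slope uniqueness forces a unique triangulation'' off the special strata does not follow from Kedlaya (the putative second submodule need not be the HN destabilizer), and the claim that on $\SS_+^\ord$ reducibility comes from the extension ``splitting as a sum of characters'' is wrong — $D(s)$ never splits for $s\in\SS$; it merely acquires a second, non-obvious, \'etale saturated submodule.

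The substitutes you propose for this missing step do not transfer. Reading the second triangulation off a weakly admissible filtered $\varphi$-module (swapping Frobenius eigenvalues) or off the kernel of monodromy on $\SS_+^\st$ presupposes a Colmez--Fontaine-type dictionary between triangulable $\CO_F$-analytic $(\varphi_q,\Gamma)$-modules and filtered $(\varphi_q,N)$-modules for Lubin--Tate extensions, which the paper neither establishes nor uses; and it says nothing about the non-\'etale points of $\SS$, for which the theorem is also asserted. There is also a smaller gap in part (b): your uniqueness argument for the sub-object when $\delta_1=\delta'_1$ via $\Hom(\SR_L(\delta_1),D)=L$ breaks down exactly when $\delta_1\delta_2^{-1}=x^{i}$, $i\in\BN$, where $H^0(\delta_2\delta_1^{-1})\neq 0$; the paper instead gets uniqueness from Corollary \ref{cor:sat}, which shows the only other saturated submodule (when it exists) is isomorphic to $\SR_L(x^{w(s)}\delta_2)\not\cong\SR_L(\delta_1)$. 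To repair the proposal you would need to prove, in place of your stratum-by-stratum heuristics, the statement of Proposition \ref{prop:iota-k} on $\iota_k$ and $\iota_{k,\an}$ together with Lemma \ref{lem:split}; everything else then follows roughly as you outline.
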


In the case when $F=\BQ_p$, this becomes Colmez's result \cite{tri}.
The proof of Theorem \ref{thm:cl} will be given at the end of
Section \ref{sec:tri}. \vskip 10pt

We give another application of Theorem \ref{thm:intro-dim}. In the
case of $F=\BQ_p$, i.e. the cyclotomic extension case, Cherbonnier
and Colmez \cite{Col-Ch} showed that all representations of
$G_{\BQ_p}$ are overconvergent. But our following result shows that
this is not the case when $[F:\BQ_p]\geq 2$.

\begin{thm} \label{prop:not-over-0}
Suppose that $[F:\BQ_p]\geq 2$. Then there exist $2$-dimensional
$L$-representations of $G_F$ that are not overconvergent (in the
sense of Definition \ref{defn:over}).
\end{thm}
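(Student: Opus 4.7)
The plan is to exhibit a $1$-dimensional overconvergent $L$-representation $V_\delta$ of $G_F$ such that the natural inclusion of overconvergent extensions of the trivial representation by $V_\delta$ inside all continuous extensions is strict; any class in the complement then yields a $2$-dimensional non-overconvergent representation.

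First, I would select a locally $F$-analytic character $\delta\colon F^\times\to L^\times$ with the following three properties: (a) $\SR_L(\delta)$ is étale, so that under the Kisin--Ren equivalence $\delta$ corresponds to a $1$-dimensional $L$-representation $V_\delta$ of $G_F$; (b) $V_\delta^{G_F}=0$ and $(V_\delta^\ast(1))^{G_F}=0$; and (c) $\delta$ is not of the form $x^{-i}$ ($i\in\BN$) nor $x^i\delta_\unr$ ($i\in\BZ_+$), so that Theorem~\ref{thm:intro-dim} applies in its generic range and gives $\dim_L H^1(\SR_L(\delta))=1$. A generic unramified character (so $\delta|_{\CO_F^\times}=1$ with $\delta(\pi)\in\CO_L^\times$ avoiding a finite list of excluded values) simultaneously satisfies (a), (b), and (c).

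Next, the Tate local Euler characteristic formula together with local Tate duality yields
$$
\dim_L H^1(G_F,V_\delta) \;=\; \dim_L V_\delta^{G_F} + \dim_L (V_\delta^\ast(1))^{G_F} + [F:\BQ_p]\dim_L V_\delta \;=\; [F:\BQ_p].
$$
On the $(\varphi_q,\Gamma)$-module side, the discussion preceding Theorem~\ref{thm:ext} (via Propositions~\ref{prop:faithful} and~\ref{th:ked}) exhibits a natural injection
$$
H^1(\SR_L(\delta)) \;=\; \Ext(\SR_L,\SR_L(\delta)) \;\hookrightarrow\; H^1(G_F,V_\delta),
$$
whose image is precisely the $L$-subspace of overconvergent extension classes. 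Since $[F:\BQ_p]\geq 2 > 1 = \dim_L H^1(\SR_L(\delta))$, this inclusion is strict, and any class $c$ outside the image corresponds to an extension $0\to V_\delta\to E_c\to L\to 0$ whose middle term $E_c$ is a $2$-dimensional $L$-representation of $G_F$ that cannot be overconvergent.

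The two points that require verification are: the existence of a single $\delta$ meeting (a)--(c) simultaneously, which reduces to avoiding finitely many excluded values of $\delta(\pi)$ and is straightforward; and the identification of overconvergent extensions with the image of $H^1(\SR_L(\delta))\hookrightarrow H^1(G_F,V_\delta)$. The latter is the main conceptual input and is supplied by the Cherbonnier--Colmez/Kedlaya-style equivalence between overconvergent $L$-representations of $G_F$ and $(\varphi_q,\Gamma)$-modules over $\SR_L$ used throughout the paper (notably in the proof of Theorem~\ref{thm:ext}). Beyond that, the argument reduces to the dimension comparison above.
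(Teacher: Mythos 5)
Your proposal is correct and follows essentially the same route as the paper, which proves this theorem in Remark \ref{rem:non-over} by comparing $\dim_L H^1(\SR_L(\delta))$ (computed in Theorem \ref{thm:dim-H1} and identified with the space of overconvergent extensions via Propositions \ref{prop:faithful} and \ref{th:ked}) against the dimension $\geq [F:\BQ_p]$ of $H^1(G_F,L(\delta))$ coming from the Tate Euler--Poincar\'e formula. The only cosmetic difference is that you restrict to generic unramified $\delta$ to pin $\dim_L H^1(G_F,V_\delta)$ down to exactly $[F:\BQ_p]$, whereas the paper allows any $\delta\neq 1$ with $\delta(\pi)\in\CO_L^\times$ (excluding $x^2\delta_\unr$ when $[F:\BQ_p]=2$) and only needs the inequality.
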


By Kedlaya's Theorem \cite{Kedlaya}, any $(\varphi_q,\Gamma)$-module
of slope zero $D(s)$ in Theorem \ref{thm:cl} (\ref{it:cl-1}) comes
from a $2$-dimensional $L$-representation of $G_F$ that is
overconvergent.

\vskip 5pt

We outline the structure of this paper. We recall Fontaine's rings,
the theory of $(\varphi_q,\Gamma)$-modules and the relation between
$(\varphi_q,\Gamma)$-modules and Galois representations in Section
\ref{sec:fontaine-ring} and Section \ref{ss:galois}, and then define
$\CO_F$-analytic $(\varphi_q,\Gamma)$-modules over the Robba ring
$\SR_L$ in Section \ref{ss:an}.  We define $\psi$ in Section
\ref{ss:psi}, and study the properties of $\partial$ and $\Res$ in
Section \ref{ss:partial}. In Section \ref{ss:psi-ext} we extend
$\psi$ to $\SR_{\BC_p}$, in Section \ref{ss:m} we define operators
$m_\alpha$ on $\SR_{\BC_p}$, and then in Section \ref{ss:psi0} we
study the $\Gamma$-action on $\SR_L(\delta)^{\psi=0}$ for all
$\delta\in\SI_\an(L)$. The cohomology theories for $\CO_F$-analytic
$(\varphi_q,\Gamma)$-modules are given in Section \ref{sec:coh}. In
Section \ref{sec:comp} we compute $H_\an^1(\SR_L(\delta))$ and
$H^1(\SR_L(\delta))$ for all $\delta\in \SI_\an(L)$. After providing
preliminary lemmas in Section \ref{ss:pre}, we compute $H^0(\delta)$
for all $\delta\in\SI(L)$ and $H^1_\an(\delta)$ for $\delta\in
\SI_\an(L)$ satisfying $v_\pi(\delta(\pi))<1-v_\pi(q)$ respectively
in Section \ref{ss:H0} and Section \ref{ss:H1an}. For the purpose of
computing $H^1_\an(\delta)$ for all $\delta\in\SI_\an(L)$, we
construct a transition map $\partial:
H^1_\an(x^{-1}\delta)\rightarrow H^1_\an(\delta)$, which is done in
Section \ref{ss:H1an-par}. The computation of $H^1_\an(\delta)$ is
given in Section \ref{ss:H1}. In section \ref{ss:iota-k} we define
two maps $\iota_k$ and $\iota_{k,\an}$. Applying results in Section
\ref{sec:comp} we classify triangulable $\CO_F$-analytic
$(\varphi_q,\Gamma)$-modules in Section \ref{sec:tri}.

\section{$(\varphi_q,\Gamma)$-modules and $\CO_F$-analytic
$(\varphi_q,\Gamma)$-modules} \label{sec:KR}

In this section we recall the theory of $(\varphi_q,\Gamma)$-modules
built in \cite{Col-2002, Fou, Kisin-Ren}. We keep using notations in
the introduction.

\subsection{The rings of formal series} \label{sec:fontaine-ring}

Put
$\widetilde{\RE}^+=\lim\limits_{\overleftarrow{\;\;\;\;\;\;}}\CO_{\bar{F}}/p$
with the transition maps given by Frobenius, and let
$\widetilde{\RE}$ be the fractional field of $\widetilde{\RE}^+$. We
may also identify $\widetilde{\RE}^+$ with
$\lim\limits_{\overleftarrow{\;\;\;\;\;\;}}\CO_{\bar{F}}/\pi$ with
the transition maps given by the $q$-Frobenius
$\varphi_q=\varphi^{\log_pq}$. Evaluation of $X$ at $\pi$-torsion
points induces a map $\iota: T\CF\rightarrow \widetilde{\RE}^+$.
Precisely, if $v=(v_n)_{n\geq 0}\in T\CF$ with $v_n
\in\CF[\pi^n](\CO_{\bar{F}})$ and $\pi\cdot v_{n+1}=v_n$, then
$\iota(v)=(v_n^*(X)+\pi\CO_{\bar{F}})_{n\geq 0}$.

Let $\{\cdot\}$ be the unique lifting map $\widetilde{\RE}^+
\rightarrow \RW(\widetilde{\RE}^+)_F:=
\witt(\widetilde{\RE}^+)\otimes_{\CO_{F_0}}\CO_F$ such that
$\varphi_q\{x\}=[\pi]_\CF(\{x\})$ (see \cite[Lemma 9.3]{Col-2002}).
When $\CF$ is the cyclotomic Lubin-Tate group $\Gmul$, we have
$\{x\}=[1+x]-1$, where $[1+x]$ is the Teichm\"uller lifting of
$1+x$. This map respects the action of $G_F$. If $v\in T\CF$ is an
$\CO_F$-generator, there is an embedding
$\CO_F[[u_\CF]]\hookrightarrow \witt(\widetilde{\RE}^+)_F$ sending
$u_\CF$ to $\{\iota(v)\}$ which identifies $\CO_F[[u_\CF]]$ with a
$G_F$-stable and $\varphi_q$-stable subring of
$\witt(\widetilde{\RE}^+)_F$. The $G_F$-action on $\CO_F[[u_\CF]]$
factors through $\Gamma$. By \cite[Lemma 9.3]{Col-2002} we have
$$ \varphi_q(u_\CF)=[\pi]_\CF(u_\CF), \hskip 10pt \sigma_a(u_\CF)=[a]_\CF(u_\CF).
$$ In the case of $\CF=\Gmul$, $u_\CF$ is denoted by $T$ in
\cite{tri}. Here $T$ is used to denote the Tate module of a
Lubin-Tate group.

Let $\CO_\SE$ be the $\pi$-adic completion of
$\CO_F[[u_\CF]][1/u_\CF]$. Then $\CO_\SE$ is a complete discrete
valuation ring with uniformizer $\pi$ and residue field
$k_F((u_\CF))$. The topology induced by this valuation is called the
{\it strong topology}. Usually we consider the {\it weak topology}
on $\CO_\SE$, i.e. the topology with $\{\pi^i\CO_\SE +
u_\CF^j\CO_F[[u_\CF]]: \ i,j\in\mathbb{N} \}$ as a fundamental
system of open neighborhoods of $0$. Let $\SE$ be the field of
fractions of $\CO_\SE$. Let $\SE^+$ be the subring
$F\otimes_{\CO_F}\CO_F[[u_\CF]]$ of $\SE$.

For any $r\in \BR_+\cup\{+\infty\}$, let $\SE^{]0,r]}$ be the ring
of Laurent series $f=\sum_{i\in\BZ}a_iu_\CF^i$ with coefficients in
$F$ that are convergent on the annulus $0< v_p(u_\CF)\leq r$. For
any $0<s\leq r$ we define the valuation $v^{\{s\}}$ on $\SE^{]0,r]}$
by
$$v^{\{s\}}(f)=\inf_{i\in\BZ} (v_p(a_i)+is) \in
\BR\cup\{\pm \infty\}.$$ We equip $\SE^{]0,r]}$ with the Fr\'echet
topology defined by the family of valuations $\{v^{\{s\}}: 0<s\leq
r\}$. Then $\SE^{]0,r]}$ is complete. We equip the Robba ring
$\SR:=\cup_{r>0}\SE^{]0,r]}$ with the inductive limit topology. The
subring of $\SR$ consisting of Laurent series of the form
$\sum_{i\geq 0}a_iu_\CF^i$ is denoted by $\SR^+$.

Put $ \SE^\dagger:=\{ \sum_{i\in\BZ}a_i u_\CF^i\in \SR \ |\ a_i
\text{ are bounded when }i\rightarrow +\infty \}.$ This is a field
contained in both $\SE$ and $\SR$. Put $\SE^{(0,r]}=\SE^\dagger\cap
\SE^{]0,r]}$. Let $v^{[0,r]}$ be the valuation defined by
$v^{[0,r]}(f)=\min_{0\leq s\leq r}v^{\{s\}}(f)$. Let
$\CO_{\SE^{(0,r]}}$ be the ring of integers in $\SE^{(0,r]}$ for the
valuation $v^{[0,r]}$. We equip $\CO_{\SE^{(0,r]}}[1/u_\CF]$ with
the topology induced by the valuation $v^{\{r\}}$ and then equip
$\SE^{(0,r]}=\cup_{m\in \BN} \pi^{-m} \SE^{(0,r]}[1/u_\CF]$ with the
inductive limit topology. The resulting topology on $\SE^{(0,r]}$ is
called the {\it weak topology} \cite{Col-prin}. Note that the
restriction of the weak topology to the subset $\{ f(u_\CF) = \sum
_{i\in\BZ} a_i u_\CF^i \in\SE^{(0,r]}: a_i=0 \text{ if } i\geq 0 \}$
coincides with the topology defined by the valuation $v^{\{r\}}$ and
its restriction to $\SE^+$ coincides with the weak topology on
$\SE^+$. Then we equip $\SE^\dagger=\cup_{r> 0}\SE^{(0,r]}$ with the
inductive limit topology.

We extend the actions of $\varphi_q$ and $\Gamma$ on
$\CO_F[[u_\CF]]$ to $\SE^+$, $\CO_\SE$, $\SE$, $\SE^\dagger$ and
$\SR$ continuously.

Put $t_\CF=\log_\CF(u_\CF)$, where $\log_\CF$ is the logarithmic of
$\CF$. Then $t_\CF$ is in $\SR$ but not in $\SE^\dagger$.  When
$\CF=\Gmul$, $t_\CF$ coincides with the usual $t$ in \cite{tri}.
Note that $\varphi_q(t_\CF)=\pi t_\CF$ and $\sigma_a(t_\CF)=a t_\CF$
for any $a\in \CO_F^\times$. Put
$Q=Q(u_\CF)=[\pi]_\CF(u_\CF)/u_\CF$.

We have the following analogue of \cite[Lemma I.3.2]{berger04}.

\begin{lem} \label{lem:Berger}
If $I$ is a $\Gamma$-stable principal ideal of $\SR^+$, then $I$ is
generated by an element of the form
$u_\CF^{j_0}\prod\limits_{n=0}^{+\infty}
\Big(\varphi_q^{n}(Q(u_\CF)/Q(0))\Big)^{j_{n+1}}$. Furthermore the
following hold:
\begin{enumerate}
\item If $ \SR^+\cdot\varphi_q(I)\subseteq  I$, then the sequence $\{j_n\}_{n\geq 0}$
is decreasing.
\item If $ \SR^+\cdot\varphi_q(I)\supseteq  I$, then the sequence $\{j_n\}_{n\geq 0}$
is increasing.
\end{enumerate}
\end{lem}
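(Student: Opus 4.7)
The plan is to interpret $\SR^+$ as the ring of rigid-analytic $F$-valued functions on the open unit disc $\{v_p(u_\CF)>0\}$; by a classical theorem of Lazard, $\SR^+$ is then a B\'ezout domain whose units are precisely the nowhere-vanishing analytic functions, so that a principal ideal $I=(f)$ is determined, up to such a unit, by the effective divisor $\div(f)$ of zeros of $f$. The identity $\sigma_a(f)(u_\CF)=f([a]_\CF(u_\CF))$ translates the $\Gamma$-stability of $I$ into stability of $\div(f)$ under the $\CO_F^\times$-action $y\mapsto[a]_\CF(y)$ on the disc.

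First, I would show that every zero $y$ of $f$ in the open disc must be a torsion point of $\CF$: otherwise the stabilizer of $y$ under $\CO_F^\times$ would be trivial, so $\{[a]_\CF(y):a\in\CO_F^\times\}$ would be infinite and contained in the circle $\{v_p(u_\CF)=v_p(y)\}$, producing infinitely many zeros of $f$ in the closed sub-disc $\{v_p(u_\CF)\geq v_p(y)\}$---a contradiction via Newton polygon / Weierstrass preparation. By Lubin-Tate theory, $\Gamma\cong\CO_F^\times$ acts transitively on the primitive $\pi^n$-torsion points for each $n\geq 1$, so $\div(f)$ has a uniform multiplicity $j_n$ on each such orbit, together with some multiplicity $j_0$ at $0$. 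Since $\varphi_q^n(Q(u_\CF))$ has zero divisor precisely the simple divisor on the primitive $\pi^{n+1}$-torsion points, and $Q(0)=\pi$ normalizes $\varphi_q^n(Q/Q(0))$ to have constant term $1$, this yields the required factorization. The convergence of the infinite product in the Fr\'echet topology of $\SR^+$ follows because the zeros of $\varphi_q^n(Q/Q(0))$ have $p$-adic valuations $v_p(\pi)/((q-1)q^n)\to 0$; hence on any sub-disc $\{v_p(u_\CF)\geq s\}$ with $s>0$, all but finitely many factors are units uniformly close to $1$.

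For (a) and (b), I would use $\varphi_q(u_\CF)=[\pi]_\CF(u_\CF)=Q(0)\cdot u_\CF\cdot(Q/Q(0))$ to compute, modulo a unit,
$$\varphi_q(f)=u_\CF^{j_0}\cdot(Q/Q(0))^{j_0}\prod_{n\geq 0}\varphi_q^{n+1}(Q/Q(0))^{j_{n+1}};$$
written in normalized form, the exponents of $\varphi_q(f)$ at $u_\CF$ and at $\varphi_q^n(Q/Q(0))$ (for $n\geq 0$) are respectively $(j_0,\,j_0,\,j_1,\,j_2,\ldots)$. The containment $\SR^+\cdot\varphi_q(I)\subseteq I$ is then equivalent to $f\mid\varphi_q(f)$, hence via divisor comparison to $j_{n+1}\leq j_n$ for all $n\geq 0$; the opposite inclusion gives $j_{n+1}\geq j_n$. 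The main technical hurdle will be establishing the Weierstrass-type factorization of $f$ with the requisite convergence of the infinite product in $\SR^+$; once that is in place, the orbit analysis and the $\varphi_q$-divisibility comparison are essentially formal.
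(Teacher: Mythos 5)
Your proposal is correct and follows essentially the same route as the paper: both arguments use the finiteness of the zero set of a generator in each closed sub-disc together with $\Gamma$-stability of that zero set to force every zero to be a torsion point of $\CF$, and then read off the multiplicities $j_n$ from the transitivity of the $\CO_F^\times$-action on primitive $\pi^{n}$-torsion points. You additionally spell out the convergence of the infinite product and the divisor comparison for parts (a) and (b), which the paper leaves as ``easy to prove''; these details are fine as written.
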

\begin{proof} The argument is similar to the proof of
\cite[Lemma I.3.2]{berger04}. Let $f(u_\CF)$ be a generator of $I$.
For any $\rho \in (0,1)$ put $V_\rho(I)=\{ z \in \BC_p : f(z)=0 ,
0\leq |z| \leq \rho \}$. If $I$ is stable by $\Gamma$, then
$V_\rho(I)$ is stable by $[a]_\CF$ for any $a \in \CO^\times_F$. As
$V_\rho(I)$ is finite, for any $z\in V_\rho(I)$, there must be some
element $a\in \CO^\times_F$, $a\neq 1$ such that $[a]_\CF(z)=z$.
Note that $[\pi]_\CF(z)$ satisfies
$[a]_\CF([\pi]_\CF(z))=[\pi]_\CF(z)$ if $[a]_\CF(z)=z$. But the
cardinal number of the set $\{ z \in \BC_p : [a]_\CF(z)=z, |z|\leq
\rho\}$ is finite. Thus for any $z\in V_I(\rho)$ there exists a
positive integer $m=m(\rho)$ such that $[\pi^m]_\CF(z)=0$. Therefore
$I$ is generated by an element of the form
$u_\CF^{j_0}\prod\limits_{n=0}^{+\infty}
(\varphi_q^n(Q(u_\CF)/Q(0)))^{j_{n+1}}$. The other two assertions
are easy to prove.
\end{proof}

\begin{cor} We have
\begin{equation} \label{eq:t-decom}
(t_\CF) = \Big( u_\CF \prod_{n\geq 0}
\varphi_q^{n}(Q(u_\CF)/Q(0))\Big)
\end{equation}
in the ring $\SR^+$.
\end{cor}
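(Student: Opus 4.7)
The plan is to apply Lemma~\ref{lem:Berger} to the principal ideal $I=(t_\CF)\subset \SR^+$. The first step is to verify that $I$ is $\Gamma$-stable: for every $a\in\CO_F^\times$, the identity $\sigma_a(t_\CF)=at_\CF$ together with the fact that $\sigma_a$ is a ring automorphism of $\SR^+$ and $a$ is a unit shows $\sigma_a(I)=I$. The lemma then produces nonnegative integers $j_0,j_1,j_2,\dots$ such that $t_\CF$ and $u_\CF^{j_0}\prod_{n\geq 0}\varphi_q^{n}(Q(u_\CF)/Q(0))^{j_{n+1}}$ generate the same ideal of $\SR^+$.

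Second, I would use the $\varphi_q$-equivariance of $t_\CF$ to pin down the exponents. Since $\varphi_q(t_\CF)=\pi t_\CF$ and $\pi$ is a unit in $\SR^+=F\otimes_{\CO_F}\CO_F[[u_\CF]]$, one has $\SR^+\cdot\varphi_q(I)=\SR^+\cdot\pi t_\CF=I$, so the hypotheses of both parts (a) and (b) of Lemma~\ref{lem:Berger} hold simultaneously. The sequence $\{j_n\}_{n\geq 0}$ is therefore both non-increasing and non-decreasing, hence constant; call this common value $j$.

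Finally, I would fix $j=1$ by comparing orders of vanishing at $u_\CF=0$. Since $t_\CF=\log_\CF(u_\CF)=u_\CF+O(u_\CF^2)$, its order at the origin equals $1$. In the candidate generator, each factor $\varphi_q^{n}(Q(u_\CF)/Q(0))$ takes value $1$ at $u_\CF=0$ (because $Q(0)=[\pi]_\CF'(0)=\pi$ and $[\pi^{n}]_\CF(0)=0$), so the order of the candidate at $0$ is exactly $j_0=j$. Matching orders gives $j=1$ and yields \eqref{eq:t-decom}. The only delicate point to keep in mind is convergence of the infinite product in $\SR^+$, which is guaranteed by $\varphi_q^{n}(Q/Q(0))\to 1$ rapidly in the Fr\'echet topology and which is already implicit in the statement of Lemma~\ref{lem:Berger}; no further work is needed beyond these three steps.
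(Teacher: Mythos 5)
Your argument is correct and is essentially the paper's own proof: invoke Lemma~\ref{lem:Berger} using $\Gamma$-stability and $\SR^+\cdot\varphi_q(t_\CF)=(t_\CF)$ to get a constant exponent $j$, then pin down $j=1$ from the fact that $t_\CF$ vanishes to order exactly $1$ at $u_\CF=0$ (the paper phrases this as $t_\CF/u_\CF\equiv 1\bmod u_\CF\SR^+$). One cosmetic slip: $F\otimes_{\CO_F}\CO_F[[u_\CF]]$ is $\SE^+$, not $\SR^+$, but all you need is that $\pi$ is a unit of $\SR^+$, which is clear.
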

\begin{proof}  Because the
ideal $(t_\CF)$ is $\Gamma$-invariant and $\SR^+ \cdot
\varphi_q(t_\CF) =(t_\CF)$, by Lemma \ref{lem:Berger} there exists
$j\in \BN$ such that $(t_\CF) = \Big(u_\CF^j \prod\limits_{n\geq 0}
\varphi_q^n\big(Q(u_\CF)/Q(0)\big)^j\Big)$. From the fact
$(t_\CF/u_\CF)\equiv 1\mod u_\CF\SR^+$ we obtain $j=1$.
\end{proof}

If $\CF'$ is another Lubin-Tate group over $F$ corresponding to
$\pi$, by the theory of Lubin-Tate groups there exists a unique
continuous ring isomorphism $\eta_{\CF,\CF'}:
\CO_{\SE_{\CF}}^+\rightarrow \CO_{\SE_{\CF'}}^+$ with
$$ \eta_{\CF,\CF'}(u_\CF) = u_{\CF'}+ \text{ higher degree terms in } \CO_F [[u_{\CF'}]]$$ such
that $\eta_{\CF,\CF'}\circ [a]_\CF=[a]_{\CF'}\circ \eta_{\CF,\CF'}$
for all $a\in \CO_F $. We extend $\eta_{\CF,\CF'}$ to isomorphisms
$$\CO_{\SE_{\CF}}
\xrightarrow{\sim}\CO_{\SE_{\CF'}}, \ \ %
\SE^+_{\CF}
\xrightarrow{\sim}\SE^+_{\CF'}, \ \ %
\SE_{\CF}  \xrightarrow{\sim} \SE_{\CF'} , \ \
\SE^\dagger_{\CF}\rightarrow \SE^\dagger_{\CF'},  \ \
\SR_{\CF}\rightarrow \SR_{\CF'}.$$ By abuse of notations these
isomorphisms are again denoted by $\eta_{\CF,\CF'}$.

Let $\ell_u=\log u_\CF$ be a variable over $\SR[1/t_\CF]$. We extend
the $\varphi_q,\Gamma$-actions to $\SR[1/t_\CF,\ell_u]$ by
$$ \varphi_q(\ell_u) = q\ell_u +\log\frac{[\pi]_\CF(u_\CF)}{u_\CF^q}, \hskip 10pt
\sigma_a (\ell_u) = \ell_u +\log \frac{[a]_\CF(u_\CF)}{u_\CF} . $$

\subsection{Galois representations and $(\varphi_q,\Gamma)$-modules}
\label{ss:galois}

Let $L$ be a finite extension of $F$. Let $\Rep_{L} G_F$ be the
category of finite dimensional $L$-vector spaces $V$ equipped with a
linear action of $G_F$.

If $A$ is any of $\SE^+$, $\SE$, $\SE^\dagger$, $\SR$, we put
$A_L=A\otimes_{F}L$. Then we extend the $\varphi_q$,
$\Gamma$-actions on $A$ to $A_L$ by $L$-linearity. Let $R$ denote
any of $\SE_L$, $\SE^\dagger_L$ and $\SR_L$. For a
$(\varphi_q,\Gamma)$-module over $R$, we mean a free $R$-module $D$
of finite rank together with continuous semilinear actions of
$\varphi_q$ and $\Gamma$ commuting with each other such that
$\varphi_q$ sends a basis of $D$ to a basis of $D$. When $R=\SE_L$,
we say that $D$ is {\it \'etale} if $D$ has a $\varphi_q$-stable
$\CO_{\SE_L}$-lattice $M$ such that the linear map
$\varphi_q^*M\rightarrow M$ is an isomorphism. When
$R=\SE^\dagger_L$, we say that $D$ is {\it \'etale} if
$\SE_L\otimes_{\SE_L^\dagger}D$ is \'etale. When $R=\SR_L$, we say
that $D$ is {\it \'etale} or {\it of slope $0$} if there exists an
\'etale $(\varphi_q,\Gamma)$-module $\Delta$ over $\SE_L^\dagger$
such that $D=\SR_L\otimes_{\SE_L^\dagger}\Delta$. Let
$\Mod^{\varphi_q,\Gamma, \et}_{/R}$ be the category of \'etale
$(\varphi_q,\Gamma)$-modules over $R$.

Put $\widetilde{\RB}= \witt( \widetilde{\RE} )_F [1/\pi]$. Let $\RB$
be the completion of the maximal unramified extension of $\SE$ in
$\widetilde{\RB}$ for the $\pi$-adic topology. Both
$\widetilde{\RB}$ and $\RB$ admit actions of $\varphi_q$ and $G_F$.
We have $\RB^{G_{F_\infty}}=\SE$.

For any $V\in\Rep_L G_F $, put $\RD_{\SE}(V)=( \RB \otimes_F
V)^{G_{F_\infty}}.$ For any
$D\in\Mod^{\varphi_q,\Gamma,\et}_{/\SE_L}$, put $\RV(D)=( \RB
\otimes_{\SE}D)^{\varphi_q=1}.$

\begin{thm} \label{th:kisin} $($Kisin-Ren \cite[Theorem 1.6]{Kisin-Ren}$)$ The functors $\RV$ and $\RD_{\SE}$ are
quasi-inverse equivalences of categories between  $\Mod^{\varphi_q,
\Gamma, \et}_{/\SE_L}$ and  $\Rep_L G_F $.
\end{thm}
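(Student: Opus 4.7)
The plan is to adapt Fontaine's classical strategy classifying $p$-adic Galois representations via $(\varphi,\Gamma)$-modules to the Lubin--Tate setting, proceeding in three stages. The first and most substantive stage is the integral, $\Gamma$-free analogue of the theorem. Let $\RA \subset \RB$ denote the ring of $\pi$-adic integers. The key structural inputs are that $\RA$ is a faithfully flat, $\varphi_q$- and $G_F$-equivariant $\CO_\SE$-algebra satisfying $\RA^{G_{F_\infty}} = \CO_\SE$, and that $\RA/\pi$ is identified with the separable closure of the residue field $k_F((u_\CF))$ of $\CO_\SE$, with $\varphi_q$ acting as the $q$-power Frobenius on a separably closed field of characteristic $p$ and $G_{F_\infty}$ acting as the full absolute Galois group of $k_F((u_\CF))$. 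Using these, one shows that $V \mapsto (\RA \otimes_{\CO_F} V)^{G_{F_\infty}}$ and $M \mapsto (\RA \otimes_{\CO_\SE} M)^{\varphi_q = 1}$ are quasi-inverse equivalences between finite free $\CO_F$-linear representations of $G_{F_\infty}$ and étale $\varphi_q$-modules over $\CO_\SE$. I would first treat modules killed by $\pi$ via the classical classification of étale $\varphi_q$-modules over the separable closure of $k_F((u_\CF))$, essentially Lang's theorem for the Frobenius together with the identification $\RA^{\varphi_q=1}=\CO_F$. One then handles finite length modules by dévissage along $0 \to \pi M \to M \to M/\pi \to 0$, and finally lifts to finite free modules by $\pi$-adic completeness of $\RA$ and Nakayama's lemma. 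At each step one verifies that the comparison maps $\RA \otimes_{\CO_\SE} \RD(V) \to \RA \otimes_{\CO_F} V$ and $\RA \otimes_{\CO_\SE} M \to \RA \otimes_{\CO_F} \RV(M)$ are isomorphisms.

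In the second stage I incorporate the $\Gamma$-action. Since $\Gamma = G_F/G_{F_\infty}$ acts on $\RA$ commuting with $\varphi_q$, the module $\RD_\SE(V) = (\RB \otimes_F V)^{G_{F_\infty}}$ inherits a continuous residual $\Gamma$-action compatible with $\varphi_q$, making it a $(\varphi_q,\Gamma)$-module. Conversely, given a $(\varphi_q,\Gamma)$-module $M$, one defines $G_F$ to act on $\RB \otimes_\SE M$ diagonally, through its natural action on $\RB$ and through the quotient $G_F \twoheadrightarrow \Gamma$ on $M$; this commutes with $\varphi_q$ and therefore preserves $\RV(M)=(\RB \otimes_\SE M)^{\varphi_q=1}$. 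The continuity for the weak topology on $\CO_\SE$ must be checked here, but follows from the continuity of the $G_F$-action on $V$ via the $\pi$-adic topology of $\RA$.

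The third stage, passage from $\CO_F$-lattices to $F$-vector spaces, and then to $L$-vector spaces, is essentially formal: the functors are $\CO_F$-linear and compatible with inverting $\pi$ and with tensoring over $F$ with $L$, so the integral equivalence yields the $L$-linear equivalence via $\SE_L = \SE \otimes_F L$. The main obstacle lies in the first stage, specifically in establishing the comparison isomorphisms and the mod-$\pi$ classification. The novelty over Fontaine's cyclotomic setup is that $\varphi_q = \varphi^{\log_p q}$ is a proper power of the absolute Frobenius, so the characteristic-$p$ analysis is carried out directly on the non-perfect field $k_F((u_\CF))$, and one must verify that the standard flat-descent, $\varphi_q$-invariance, and Artin--Schreier-type arguments go through with $\varphi_q$ in place of $\varphi$ and with $\BF_q$ in place of $\BF_p$.
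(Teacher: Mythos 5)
The paper does not prove this statement; it is quoted verbatim from Kisin--Ren, and your outline is essentially the proof given there (and in Fontaine's original cyclotomic case): mod-$\pi$ triviality of \'etale $\varphi_q$-modules over a separably closed field, d\'evissage, $\pi$-adic lifting, then adding $\Gamma$ and inverting $\pi$. The only substantive input you list but do not justify is the identification of $G_{F_\infty}$ with the absolute Galois group of $k_F((u_\CF))$ acting on the residue field of $\RA$ --- this rests on the theory of norm fields (Wintenberger) applied to the arithmetically profinite extension $F_\infty/F$, and it is where the real content of the theorem sits; everything else in your sketch is routine once that is granted.
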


As usual, let $\widetilde{\RB}^\dagger$ be the subring of
$\widetilde{\RB}$ consisting of overconvergent elements, and put
$\RB^\dagger=\RB\cap \widetilde{\RB}^\dagger$. Then
$(\RB^\dagger)^{G_{F_\infty}}=\SE^\dagger$.

\begin{defn}\label{defn:over} If $V$ is an $L$-representation of $G_F $, we say that
$V$ is {\it overconvergent} if
$\RD_{\SE^\dagger}(V):=(\RB^\dagger\otimes_F  V)^{G_{F_\infty}}$
contains a basis of $\RD_\SE(V)$.
\end{defn}

When $F=\BQ_p$, according to Cherbonnier-Colmez theorem
\cite{Col-Ch}, all $L$-representations are overconvergent. But in
general this is not true. For details see Remark \ref{rem:non-over}.

% However we have the following

\begin{prop} \label{prop:faithful}
\begin{enumerate}
\item \label{it:over-0} If $\Delta$ is an \'etale $(\varphi_q,\Gamma)$-module
over $\SE^\dagger_L$, then
$\RV(\SE_L\otimes_{\SE^\dagger_L}\Delta)=(\RB^\dagger\otimes_{\SE^\dagger}\Delta)^{\varphi_q=1}$.
\item \label{it:over-1} The functor $\Delta\mapsto \SE_L\otimes_{\SE_L^\dagger}\Delta$ is
a fully faithful functor from the category
$\Mod^{\varphi_q,\Gamma,\et}_{/\SE_L^\dagger}$ to the category
$\Mod^{\varphi_q,\Gamma,\et}_{/\SE_L}$.
\item \label{it:over-2} The functor $\RD_{\SE^\dagger}$ is an equivalence of
categories between the category of overconvergent
$L$-representations of $G_F $ and
$\Mod^{\varphi_q,\Gamma,\et}_{/\SE_L^\dagger}$.
\end{enumerate}
\end{prop}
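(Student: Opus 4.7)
The plan is to prove (a) first via a descent argument from $\RB$ to $\RB^\dagger$, and then derive (b) and (c) formally from (a) combined with Kisin--Ren (Theorem~\ref{th:kisin}).

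For (a), the inclusion $(\RB^\dagger\otimes_{\SE^\dagger}\Delta)^{\varphi_q=1}\hookrightarrow\RV(\SE_L\otimes_{\SE^\dagger_L}\Delta)$ is immediate from $\RB^\dagger\subset\RB$, so the content is to show that every $\varphi_q$-fixed element of $\RB\otimes_\SE(\SE_L\otimes_{\SE^\dagger_L}\Delta)$ already lies in $\RB^\dagger\otimes_{\SE^\dagger}\Delta$. Pick a basis $(e_j)$ of $\Delta$ over $\SE^\dagger_L$ coming from a $\varphi_q$-stable $\CO_{\SE_L}$-lattice in $\SE_L\otimes_{\SE^\dagger_L}\Delta$, so the matrix $P=(p_{ij})\in\GL_d(\SE^\dagger_L)$ of $\varphi_q$ has integral overconvergent entries. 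Writing a fixed vector as $x=\sum_j b_j e_j$ with $b_j\in\RB$, the relation $\varphi_q(x)=x$ becomes the matrix equation $\mathbf{b}=P\cdot\varphi_q(\mathbf{b})$; iterating this identity, and using that $\varphi_q$ contracts $\SE^{]0,r]}_L$ into $\SE^{]0,r/q]}_L$ together with the integrality of $P$ to control the infinite product $P\cdot\varphi_q(P)\cdot\varphi_q^2(P)\cdots$, forces $\mathbf{b}\in(\RB^\dagger)^d$. This is a variant of the Cherbonnier--Colmez descent, made considerably simpler here by the overconvergence of $P$.

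For (b), injectivity on Hom sets is immediate from $\Delta\subset\SE_L\otimes_{\SE^\dagger_L}\Delta$. For surjectivity, a $(\varphi_q,\Gamma)$-equivariant map $g:\SE_L\otimes_{\SE^\dagger_L}\Delta\to\SE_L\otimes_{\SE^\dagger_L}\Delta'$ is represented, with respect to bases chosen in $\Delta$ and $\Delta'$, by a matrix $A\in M_{d'\times d}(\SE_L)$, and the commutation $g\circ\varphi_q=\varphi_q\circ g$ translates into the matrix equation $P'\varphi_q(A)=AP$ with $P,P'$ overconvergent. Applying the same descent argument as in (a) to the entries of $A$ forces $A\in M_{d'\times d}(\SE^\dagger_L)$, so $g$ restricts to a morphism $\Delta\to\Delta'$.

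For (c), Definition~\ref{defn:over} ensures $\RD_{\SE^\dagger}$ takes values in étale $(\varphi_q,\Gamma)$-modules over $\SE^\dagger_L$, and full faithfulness follows from (b) combined with the full faithfulness of $\RD_\SE$ in Kisin--Ren. For essential surjectivity, given $\Delta$ étale over $\SE^\dagger_L$, set $V=\RV(\SE_L\otimes_{\SE^\dagger_L}\Delta)$; (a) places $V$ inside $\RB^\dagger\otimes_{\SE^\dagger}\Delta$, and the $\RB^\dagger$-linear, $(\varphi_q,G_F)$-equivariant map
$$\RB^\dagger\otimes_F V\longrightarrow\RB^\dagger\otimes_{\SE^\dagger_L}\Delta$$
becomes the Kisin--Ren isomorphism after base change to $\RB$; writing $e_j=\sum_i c_{ij}v_i$ for an $L$-basis $(v_i)$ of $V$, the change-of-basis matrix $C=(c_{ij})\in\GL_d(\RB)$ satisfies a fixed-point equation of the same form as in (a), forcing $C\in M_d(\RB^\dagger)$ and hence giving the isomorphism of the displayed map. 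Taking $G_{F_\infty}$-invariants—noting that $G_{F_\infty}$ acts trivially on $\Delta$ since the $\Gamma$-action factors through $G_F/G_{F_\infty}=\Gamma$—yields $\RD_{\SE^\dagger}(V)=\Delta$, so $V$ is overconvergent and $\RV,\RD_{\SE^\dagger}$ are quasi-inverse. The main obstacle throughout is the descent of (a); although much easier than the classical Cherbonnier--Colmez argument because $P$ is already overconvergent, it still requires careful estimates in the topology of $\SE^{]0,r]}_L$, and (b) and (c) then follow by applying the same mechanism to matrix equations of analogous shape.
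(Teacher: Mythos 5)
Your reduction of (b) and (c) to a descent statement of the shape ``a solution in the big ring of a $\varphi_q$-fixed-point equation with overconvergent étale matrix is itself overconvergent'' is the right organizing idea, and your derivations of (b) and (c) from it are essentially sound. But the proof you offer of that descent statement — the heart of part (a) — has a genuine gap. The unknown vector $\mathbf{b}$ in $\mathbf{b}=P\,\varphi_q(\mathbf{b})$ has entries in $\RB$ (the completion of the maximal unramified extension of $\SE$), not in any $\SE_L^{]0,r]}$; likewise in (b) the matrix $A$ has entries in $\SE_L$. The contraction property you invoke ($\varphi_q$ maps $\SE^{]0,r]}_L$ into $\SE^{]0,r/q]}_L$, Lemma~\ref{lemme-continuite-phi-gamma}) is a statement about elements that are \emph{already} overconvergent, which is precisely the conclusion you are trying to reach; on $\RB$ or $\SE_L$ there is no analogous estimate to iterate. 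Moreover the ``infinite product'' $P\varphi_q(P)\varphi_q^2(P)\cdots$ does not converge in any of these rings (already for $P=1$ the equation $b=\varphi_q(b)$ is not solved by a convergent product), so iterating the identity only yields $\mathbf{b}=P\varphi_q(P)\cdots\varphi_q^{n-1}(P)\,\varphi_q^n(\mathbf{b})$ with no control on the last factor. The genuine proofs of this descent all require a substantive extra input: either a successive-approximation argument modulo $\pi$ over the perfect ring $\widetilde{\RE}$ with Witt/Teichmüller-coordinate estimates (as in Cherbonnier--Colmez or Kedlaya), or, as this paper does, the almost-étale (Tate--Sen) descent of Berger--Colmez applied over $\widetilde{\RB}^\dagger_{\BQ_p}\to\widetilde{\RB}_{\BQ_p}$, transported to $\varphi_q$-modules by the induction $\bar D=\oplus_{i=0}^{f-1}\varphi^{i\ast}(D)$, followed by the intersection $\widetilde{\RB}^\dagger\otimes\Delta\cap\RB\otimes\Delta=\RB^\dagger\otimes\Delta$.

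Two smaller remarks. For (b) the paper argues differently from you: rather than descending the matrix $A$, it passes to the internal Hom $\check{\Delta}_1\otimes_{\SE^\dagger}\Delta_2$ and compares $H^0$'s using (a) together with Theorem~\ref{th:kisin}; your matrix route would also work, but only once the descent lemma (from $\SE_L$ to $\SE^\dagger_L$ this time, which is again not the naive iteration) is actually established. For (c), your essential-surjectivity argument is more elaborate than necessary: once (a) gives $V\subset\RB^\dagger\otimes_{\SE^\dagger}\Delta$ and (b) gives full faithfulness, the equivalence follows formally from Theorem~\ref{th:kisin}, without a further fixed-point computation for the change-of-basis matrix $C$.
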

\begin{proof} Without loss of generality we may assume that $L=F$.
Put $\widetilde{\RB}_{\BQ_p}=\witt(\widetilde{\RE})[1/p]$ and
$\widetilde{\RB}_{\BQ_p}^\dagger=\widetilde{\RB}_{\BQ_p}\cap
\widetilde{\RB}^\dagger$. The technics of almost \'etale descent as
in Berger-Colmez \cite{BerCol} allows us to show that the functor
$\Delta\mapsto
\widetilde{\RB}_{\BQ_p}\otimes_{\widetilde{\RB}^\dagger_{\BQ_p}}\Delta$
from the category of \'etale $(\varphi, G_F)$-modules over
$\widetilde{\RB}^\dagger_{\BQ_p}$ to the category of \'etale
$(\varphi, G_F)$-modules over $\widetilde{\RB}_{\BQ_p}$ is an
equivalence.  For any $(\varphi_q,G_F)$-module $D$ over
$\widetilde{\RB}^\dagger$ (resp. $\widetilde{\RB}$), we can attach a
$(\varphi,G_F)$-module $\bar{D}$ over
$\widetilde{\RB}^\dagger_{\BQ_p}$ (resp. $\widetilde{\RB}_{\BQ_p}$)
to $D$ by letting $\bar{D}=\oplus_{i=0}^{f-1}\varphi^{i\ast}(D)$
with the map
$$\varphi^\ast(\bar{D})=\oplus_{i=1}^{f}\varphi^{i\ast}(D)\rightarrow
\oplus_{i=0}^{f-1}\varphi^{i\ast}(D)=\bar{D}$$ that sends
$\varphi^{i\ast}(D)$ identically to $\varphi^{i\ast}(D)$ for
$i=1,\cdots,f-1$, and sends $\varphi^{f\ast}(D)=\varphi_q^\ast(D)$
to $D$ using $\varphi_q$. Here $f=\log_pq$. Thus the functor
$\alpha: \Delta\mapsto
\widetilde{\RB}\otimes_{\widetilde{\RB}^\dagger}\Delta$ from the
category of \'etale $(\varphi_q, G_F)$-modules over
$\widetilde{\RB}^\dagger$ to the category of \'etale $(\varphi_q,
G_F)$-modules over $\widetilde{\RB}$ is an equivalence. Now let
$\Delta$ be an \'etale $(\varphi_q,\Gamma)$-module over
$\SE^\dagger$, and put $V=\RV(\SE\otimes_{\SE^\dagger}\Delta)$. As
$\alpha(\widetilde{\RB}^\dagger\otimes_F V )=
\widetilde{\RB}\otimes_F V =
\widetilde{\RB}\otimes_{\SE^\dagger}\Delta
=\alpha(\widetilde{\RB}^\dagger\otimes_{\SE^\dagger}\Delta)$, we
have $\widetilde{\RB}^\dagger\otimes_F V  =
\widetilde{\RB}^\dagger\otimes_{\SE^\dagger}\Delta$. Thus $V$ is
contained in $\widetilde{\RB}^\dagger\otimes_{\SE^\dagger}\Delta\cap
\RB\otimes_{\SE^\dagger}\Delta=
\RB^\dagger\otimes_{\SE^\dagger}\Delta$, and
$V=(\RB^\dagger\otimes_{\SE^\dagger}\Delta)^{\varphi_q=1}$. This
proves (\ref{it:over-0}).

Next we prove (\ref{it:over-1}). Let $\Delta_1$ and $\Delta_2$ be
two objects in $\Mod^{\varphi_q,\Gamma,\et}_{/\SE^\dagger}$. What we
have to show is that the natural map
$$\Hom_{\Mod^{\varphi_q,\Gamma,\et}_{/\SE^\dagger}}(\Delta_1,\Delta_2)\rightarrow
\Hom_{\Mod^{\varphi_q,\Gamma,\et}_{/\SE}}(\SE\otimes_{\SE^\dagger}\Delta_1,
\SE\otimes_{\SE^\dagger}\Delta_2)$$ is an isomorphism. For this we
reduce to show that
$$\Big(\check{\Delta}_1\otimes_{\SE^\dagger}
\Delta_2\Big)^{\varphi_q=1,\Gamma=1}\rightarrow \Big(
\SE\otimes_{\SE^\dagger} (\check{\Delta}_1\otimes_{\SE^\dagger}
\Delta_2)\Big)^{\varphi_q=1,\Gamma=1}$$ is an isomorphism. Here
$\check{\Delta}_1$ is the $\SE^\dagger$-module of
$\SE^\dagger$-linear maps from $\Delta_1$ to $\SE^\dagger$, which is
equipped with a natural \'etale $(\varphi_q,\Gamma)$-module
structure. We have {\allowdisplaybreaks \begin{eqnarray*} \Big(
\SE\otimes_{\SE^\dagger} (\check{\Delta}_1\otimes_{\SE^\dagger}
\Delta_2)\Big)^{\varphi_q=1,\Gamma=1} &=& \Big(
\RB\otimes_{\SE^\dagger} (\check{\Delta}_1\otimes_{\SE^\dagger}
\Delta_2)\Big)^{\varphi_q=1,G_F=1}  \\ &=& \RV(
\SE\otimes_{\SE^\dagger}
(\check{\Delta}_1\otimes_{\SE^\dagger} \Delta_2))^{G_F=1} \\
& = & \Big( \RB^\dagger\otimes_{\SE^\dagger}
(\check{\Delta}_1\otimes_{\SE^\dagger}
\Delta_2)\Big)^{\varphi_q=1,G_F=1} \\ &=&
(\check{\Delta}_1\otimes_{\SE^\dagger}
\Delta_2)^{\varphi_q=1,\Gamma=1}. \end{eqnarray*}}

Finally, (\ref{it:over-2}) follows from (\ref{it:over-0}),
(\ref{it:over-1}) and Theorem \ref{th:kisin}.
\end{proof}

\begin{prop}\label{th:ked} The functor $\Delta\mapsto \SR_L\otimes_{\SE_L^\dagger}\Delta$ is
an equivalence of categories between
$\Mod^{\varphi_q,\Gamma,\et}_{/\SE_L^\dagger}$ and
$\Mod^{\varphi_q,\Gamma,\et}_{/\SR_L}$.
\end{prop}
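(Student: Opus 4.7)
The plan is to prove essential surjectivity and full faithfulness separately. Essential surjectivity is immediate from the definition of etaleness over $\SR_L$ given in Section \ref{ss:galois}: every object of $\Mod^{\varphi_q,\Gamma,\et}_{/\SR_L}$ is by construction of the form $\SR_L\otimes_{\SE^\dagger_L}\Delta$ for some etale $(\varphi_q,\Gamma)$-module $\Delta$ over $\SE^\dagger_L$, so the functor hits every object up to isomorphism.

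For full faithfulness, I would follow the internal Hom trick used in the proof of Proposition \ref{prop:faithful}(\ref{it:over-1}). Given $\Delta_1,\Delta_2$ etale over $\SE^\dagger_L$, the module $\check{\Delta}_1\otimes_{\SE^\dagger_L}\Delta_2$ is again etale, and
$$\Hom(\Delta_1,\Delta_2)=(\check{\Delta}_1\otimes_{\SE^\dagger_L}\Delta_2)^{\varphi_q=1,\Gamma=1},$$
with the analogous identity on the $\SR_L$ side after tensoring. Hence full faithfulness reduces to showing that for every etale $(\varphi_q,\Gamma)$-module $\Delta$ over $\SE^\dagger_L$, the natural inclusion
$$\Delta^{\varphi_q=1,\Gamma=1}\hookrightarrow(\SR_L\otimes_{\SE^\dagger_L}\Delta)^{\varphi_q=1,\Gamma=1}$$
is bijective. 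Injectivity is immediate, so all the content is in surjectivity: any element of $\SR_L\otimes_{\SE^\dagger_L}\Delta$ fixed by $\varphi_q$ (and $\Gamma$) should already lie in $\Delta$.

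For the surjectivity the essential input is Kedlaya's slope filtration theorem \cite{Kedlaya}, which guarantees that the base-change functor from etale $\varphi_q$-modules over $\SE^\dagger_L$ to pure-of-slope-zero $\varphi_q$-modules over $\SR_L$ is an equivalence of categories; in particular no new $\varphi_q$-fixed vectors appear after tensoring with $\SR_L$. The main obstacle is that Kedlaya's original theorem is formulated for the cyclotomic frobenius $\varphi$, not for the Lubin--Tate frobenius $\varphi_q$. I would sidestep this precisely as in the proof of Proposition \ref{prop:faithful}: to a $(\varphi_q,\Gamma)$-module $\Delta$ one attaches the auxiliary $(\varphi,\Gamma)$-module $\bar{\Delta}=\bigoplus_{i=0}^{f-1}\varphi^{i\ast}\Delta$ with $f=\log_pq$, applies Kedlaya's original slope theorem to $\bar{\Delta}$, and then descends the conclusion back to $\Delta$ via the $q$-Frobenius structure on $\bar{\Delta}$. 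The $\Gamma$-equivariance of everything is preserved throughout, so once the $\varphi_q$-descent of invariants is in hand, the statement follows.
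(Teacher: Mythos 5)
Your proposal is correct in substance and rests on the same key input as the paper, namely Kedlaya's theorem, but it is organized differently. The paper's proof goes entirely through uniqueness: Kedlaya's slope filtration theorem produces, inside any \'etale $D$ over $\SR_L$, a \emph{unique} $\varphi_q$-stable \'etale $\SE^\dagger_L$-submodule $\Delta$ with $D=\SR_L\otimes_{\SE^\dagger_L}\Delta$, and uniqueness forces $\gamma(\Delta)=\Delta$ for all $\gamma\in\Gamma$; essential surjectivity (with the $\Gamma$-structure) and, implicitly, full faithfulness both fall out of this one statement. You instead take essential surjectivity as definitional (which is legitimate given how ``\'etale over $\SR_L$'' is defined in Section \ref{ss:galois}) and reduce full faithfulness, via internal Homs exactly as in Proposition \ref{prop:faithful}(\ref{it:over-1}), to the invariance of $\Delta^{\varphi_q=1}$ under $-\otimes_{\SE^\dagger_L}\SR_L$, which is again Kedlaya's full faithfulness applied to $\Hom(\SE^\dagger_L,\Delta)$. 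Both routes work; the paper's version has the advantage of exhibiting the canonical $\SE^\dagger_L$-lattice, which is used elsewhere (e.g.\ in Proposition \ref{prop:rank-one}).

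One technical point in your write-up should be fixed. The detour through $\bar{\Delta}=\bigoplus_{i=0}^{f-1}\varphi^{i\ast}\Delta$ is both unnecessary and, as stated, not meaningful here: the $p$-power Frobenius $\varphi$ is not an endomorphism of $\SE^\dagger_L$ or $\SR_L$ (only $\varphi_q$, sending $u_\CF$ to $[\pi]_\CF(u_\CF)$, is), so $\varphi^{i\ast}\Delta$ is undefined over these rings. In the proof of Proposition \ref{prop:faithful} that construction is carried out over $\widetilde{\RB}^\dagger$ and $\widetilde{\RB}$, where the absolute Frobenius does act. Fortunately no workaround is needed: Kedlaya's slope theory in \cite{Kedlaya} is formulated for an arbitrary $q$-power Frobenius lift, and the paper applies it directly to $\varphi_q$.
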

\begin{proof} Let $D$ be an \'etale $(\varphi_q,\Gamma)$-module over
$\SR_L$. By Kedlaya's slope filtration theorem \cite{Kedlaya}, there
exists a unique $\varphi_q$-stable $\SE^\dagger_L$-submodule
$\Delta$ of $D$ that is \'etale as a $\varphi_q$-module such that
$D=\SR_L\otimes_{\SE_L^\dagger}\Delta$. For any $\gamma\in \Gamma$,
$\gamma(\Delta)$ also has this property. Thus, by uniqueness of
$\Delta$, we have $\gamma(\Delta)=\Delta$. This means that $\Delta$
is $\Gamma$-invariant.
\end{proof}

\subsection{$\CO_F $-analytic $(\varphi_q,\Gamma)$-modules}
\label{ss:an}

For any $r\geq s>0$, let $v^{[s,r]}$ be the valuation defined by
$v^{[s,r]}(f)=\inf_{r'\in [s,r]}v^{\{r'\}}(f)$. Note that
$v^{[s,r]}(f)=\inf\limits_{z\in \BC_p, s\leq v_p(z)\leq
r}v_p(f(z))$.

\begin{lem}\label{lem:gamma-conv} For any $r>s>0$, there exists a
sufficiently large integer $n=n(s,r)$ such that, if $\gamma\in
\Gamma_n$, then we have $v^{[s,r]}((1-\gamma)z)\geq v^{[s,r]}(z)+1$
for all $z\in\SE_L^{]0,r]}$.
\end{lem}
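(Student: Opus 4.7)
My approach has three stages. First I would establish continuity of the $\Gamma$-action on the generator: for $n$ sufficiently large (depending on $s,r$), every $\gamma=\sigma_a\in\Gamma_n$ satisfies $v^{[s,r]}(\gamma(u_\CF)-u_\CF)\geq r+1$. Second, I would transfer this bound to the pure powers $u_\CF^i$ for all $i\in\BZ$ by a binomial argument applied pointwise on the annulus. Third, I would extend to arbitrary Laurent series by the ultrametric inequality.

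For the first stage, note that $\gamma=\sigma_a\in\Gamma_n$ means $a\in 1+\pi^n\CO_F$. Writing $a=1+c$ with $c\in\pi^n\CO_F$, the Lubin--Tate formal group law $X+_\CF Y=X+Y+XY\,G(X,Y)$ (with $G\in\CO_F[[X,Y]]$) gives the factorization
$$[a]_\CF(u_\CF)-u_\CF=[c]_\CF(u_\CF)\cdot\bigl(1+u_\CF\,G(u_\CF,[c]_\CF(u_\CF))\bigr).$$
Since the right-hand factor has $v^{[s,r]}\geq 0$, it suffices to bound $v^{[s,r]}([c]_\CF(u_\CF))$ from below. Writing $c=\pi^n c'$ with $c'\in\CO_F$, one has $[c]_\CF=[\pi]_\CF^{\circ n}\circ[c']_\CF$. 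The normalization $[\pi]_\CF(Y)\in\pi Y+Y^q\CO_F[[Y]]$ yields
$$v^{[s,r]}([\pi]_\CF(Y))\geq\min\bigl(v_p(\pi)+v^{[s,r]}(Y),\,q\cdot v^{[s,r]}(Y)\bigr),$$
so each application of $[\pi]_\CF$ weakly increases the valuation, and once it exceeds the threshold $v_p(\pi)/(q-1)$ the linear term dominates and every further iteration strictly adds $v_p(\pi)$. Hence $v^{[s,r]}([\pi]_\CF^{\circ n}(u_\CF))\to+\infty$ as $n\to\infty$, uniformly in $c'\in\CO_F$, and we can arrange $v^{[s,r]}([a]_\CF(u_\CF)-u_\CF)\geq r+1$ for $n$ large.

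For the second stage, set $y=\gamma(u_\CF)/u_\CF$, which is a unit in $\SE^{]0,r]}_L$ since $[a]_\CF(u_\CF)/u_\CF=a+O(u_\CF)$ with $a\in\CO_F^\times$. Pointwise on the annulus $s\leq v_p(z_0)\leq r$ we have $v_p(y(z_0)-1)=v_p([a]_\CF(z_0)-z_0)-v_p(z_0)\geq(r+1)-r=1$. For any $i\in\BZ$ the binomial expansion $(1+\epsilon)^i-1=\sum_{k\geq 1}\binom{i}{k}\epsilon^k$ has integer coefficients, so for $|\epsilon|_p<1$ we get $|(1+\epsilon)^i-1|_p\leq|\epsilon|_p$; applied to $\epsilon=y(z_0)-1$ this gives $v_p(y(z_0)^i-1)\geq 1$. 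Therefore pointwise
$$v_p\bigl(u_\CF^i(z_0)-\gamma(u_\CF)^i(z_0)\bigr)=i\,v_p(z_0)+v_p\bigl(1-y(z_0)^i\bigr)\geq i\,v_p(z_0)+1,$$
and taking the infimum over the annulus yields $v^{[s,r]}(u_\CF^i-\gamma(u_\CF)^i)\geq v^{[s,r]}(u_\CF^i)+1$. For the third stage, expand $z=\sum_{i\in\BZ}a_i u_\CF^i$; swapping the two infima in the definition of $v^{[s,r]}$ gives $v^{[s,r]}(z)=\inf_i(v_p(a_i)+v^{[s,r]}(u_\CF^i))$, and combining with the bound on pure powers and the ultrametric inequality yields $v^{[s,r]}((1-\gamma)z)\geq v^{[s,r]}(z)+1$, as required.

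The main obstacle is the first stage, namely controlling $v^{[s,r]}([\pi]_\CF^{\circ n}(u_\CF))$ uniformly on the annulus. The threshold $v_p(\pi)/(q-1)$ between the wild ($Y^q$-dominated) and linear ($\pi Y$-dominated) regimes depends on $F$, and the required $n$ must be chosen to account for both the number of wild iterations needed to cross this threshold and the subsequent linear accumulation at rate $v_p(\pi)$ per step to reach the target $r+1$.
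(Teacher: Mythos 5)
Your proof is correct and follows essentially the same route as the paper's: reduce to the monomials $u_\CF^i$, factor out $\gamma(u_\CF)/u_\CF-1$ (your binomial series plays the role of the paper's telescoping identity $a^k-b^k=(a-b)(a^{k-1}+\cdots+b^{k-1})$), and invoke continuity of $\gamma\mapsto\gamma(u_\CF)$ at $\gamma=1$. Your first stage merely makes explicit, via $[1+c]_\CF=\mathrm{id}+_\CF[c]_\CF$ and the iteration of $[\pi]_\CF$, the convergence fact that the paper states without proof.
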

\begin{proof} It suffices to consider $z=u_\CF^k$, $k\in\BZ$.
If $k\geq 0$, then
$$ \gamma(u_\CF^k)-u_\CF^k =u_\CF^k(\frac{\gamma(u_\CF)}{u_\CF}-1)
(\frac{\gamma(u_\CF^{k-1})}{u_\CF^{k-1}}+\cdots +1) $$ and
$$\gamma(u_\CF^{-k})-u_\CF^{-k}=u_\CF^{-k} (\frac{u_\CF}{\gamma(u_\CF)}-1)
(\frac{u_\CF^{k-1}}{\gamma(u_\CF^{k-1})}+\cdots +1).$$ As
$v^{[s,r]}(yz)\geq v^{[s,r]}(y)+v^{[s,r]}(z)$, the lemma follows
from the fact that $\frac{\gamma(u_\CF)}{u_\CF}-1 \rightarrow 0$
when $\gamma\rightarrow 1$.
\end{proof}

Let $D$ be an object in $ \Mod^{\varphi_q,\Gamma,\et}_{/\SR_L}$. We
choose a basis $\{e_1,\cdots, e_d\}$ of $D$ and write
$D^{]0,r]}=\oplus_{i=1}^d \SE_L^{]0,r]} \cdot e_i$. Note that our
definition of $D^{]0,r]}$ depends on the choice of $\{e_1,\cdots,
e_d\}$. However, if $\{e'_1,\cdots, e'_d\}$ is another basis, then
$\oplus_{i=1}^d \SE_L^{]0,r]} \cdot e_i=\oplus_{i=1}^d \SE_L^{]0,r]}
\cdot e'_i$ for sufficiently small $r>0$. When $r>0$ is sufficiently
small, $D^{]0,r]}$ is stable under $\Gamma$. By Lemma
\ref{lem:gamma-conv} and the continuity of the $\Gamma$-action on
$D^{]0,r]}$, the series
$$\log\gamma = \sum_{i=1}^{\infty}(\gamma-1)^i(-1)^{i-1}/i$$
converges on $D^{]0,r]}$ when $\gamma\rightarrow 1$. It follows that
the map
$$ \rd\Gamma:\Lie \Gamma \rightarrow \End_L D^{]0,r]},
\hskip 10pt \beta\mapsto \log(\exp \beta)$$ is well defined for
sufficiently small $\beta$, and we extend it to all of $\Lie \Gamma$
by $\BZ_p$-linearity. As a result, we obtain a $\BZ_p$-linear map
$\rd\Gamma_D: \Lie  \Gamma \rightarrow \End_{L} D$. For any
$\beta\in \Lie \Gamma$, $\rd \Gamma_{\SR_L}(\beta)$ is a derivation
of $\SR_L$ and $\rd \Gamma_D (\beta)$ is a differential operator
over $\rd\Gamma_{\SR_L}(\beta)$, which means that for any $a\in
\SR_L$, $m\in D$ and $\beta\in \Lie \Gamma$ we have
\begin{equation} \label{eq:diff}
\rd\Gamma_D(\beta)(am) = \rd \Gamma_{\SR_L}(\beta)(a)m + a \cdot
\rd\Gamma_D(\beta)(m) .
\end{equation}

The isomorphism $\chi_\CF:\Gamma\rightarrow \CO_F^\times$ induces an
$\CO_F$-linear isomorphism $\Lie\Gamma\rightarrow \CO_F$. We will
identify $\Lie\Gamma$ with $\CO_F$ via this isomorphism.

We say that $D$ is {\it $\CO_F $-analytic} if the map $\rd\Gamma_D$
is not only $\BZ_p$-linear, but also $\CO_F $-linear. If $D$ is
$\CO_F $-analytic, the operator $\rd\Gamma_D(\beta)/\beta$, $\beta
\in \CO _F , \ \beta \neq 0$, does not depend on the choice of
$\beta$. The resulting operator is denoted by $\nabla_D$ or just
$\nabla$ if there is no confusion.
Note that the $\Gamma$-action on $\SR_L$ is $\CO_F $-analytic and by
\cite[Lemma 2.1.4]{Kisin-Ren}
\begin{equation} \label{eqn-partial-0}
\nabla =  t_\CF \cdot \frac{\partial F_\CF}{\partial Y}(u_\CF,0)
\cdot \rd/\rd u_\CF,
\end{equation} where $F_\CF(X,Y)$ is the formal group law of $\CF$. Put
$\partial=\frac{\partial F_\CF}{\partial Y}(u_\CF,0) \cdot \rd/\rd
u_\CF$. From the relation $\sigma_a(t_\CF)=a  t_\CF$ we obtain $
\nabla t_\CF = t_\CF $ and $ \partial \:  t_\CF =1$. When
$\CF=\Gmul$, $\nabla$ and $\partial$ are already defined in
\cite{berger02}. In this case $F_\CF(X,Y)=X+Y+XY$ and so
$\partial=(1+u_\CF)\rd/\rd u_\CF$.

We end this section by classification of
$(\varphi_q,\Gamma)$-modules over $\SR_L$ of rank $1$.

Let $\SI(L)$ be the set of continuous characters $\delta:
F^\times\rightarrow L^\times$, $\SI_\an(L)$ the subset of locally
$F$-analytic characters. If $\delta$ is in $\SI_\an(L)$, then
$\frac{\log \delta(a)}{\log (a)}$, $a\in \CO^\times_F $, which makes
sense when $\log(a)\neq 0$, does not depend on $a$. This number,
denoted by $w_\delta$, is called the {\it weight} of $\delta$.
Clearly $w_\delta=0$ if and only if $\delta$ is locally constant;
$w_\delta$ is in $\BZ$ if and only if $\delta$ is locally algebraic.

If $\delta\in\SI(L)$, let $\SR_L(\delta)$ be the
$(\varphi_q,\Gamma)$-module over $\SR_L$ (of rank $1$) that has a
basis $e_\delta$ such that $\varphi_q(e_\delta)=\delta(\pi)e_\delta$
and $\sigma_a(e_\delta)=\delta(a)e_\delta$. It is easy to check
that, if $\delta\in \SI_\an(L)$, then $\SR_L(\delta)$ is $\CO_F
$-analytic and $\nabla_\delta=\nabla_{\SR_L(\delta)} = t_\CF
\partial +w_\delta$ (more precisely $\nabla_\delta(z
e_\delta)=(t_\CF
\partial z +w_\delta z)e_\delta$). If $\SR_L(\delta)$ is \'etale, i.e. $v_p(\delta(\pi))=0$, we will
use $L(\delta)$ to denote the Galois representation attached to
$\SR_L(\delta)$.

\begin{rem} \label{rem:1-over} All of $1$-dimensional $L$-representations of $G_F$ are
overconvergent. In fact, such a representation comes from a
character of $F^\times$ and thus is of the form $L(\delta)$.
\end{rem}

\begin{prop}\label{prop:rank-one} Let $D$ be a $(\varphi_q,\Gamma)$-module over $\SR_L$ of rank $1$.
Then there exists a character $\delta\in \SI(L)$ such that $D$ is
isomorphic to $\SR_L(\delta)$. Furthermore $D$ is $\CO_F$-analytic
if and only if $\delta\in \SI_\an(L)$.
\end{prop}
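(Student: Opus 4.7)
The plan is to fix an arbitrary basis $e$ of $D$ and write $\varphi_q(e) = \alpha\,e$, $\sigma_a(e) = \beta_a\,e$ with $\alpha \in \SR_L^\times$ and $a \mapsto \beta_a$ a continuous map $\CO_F^\times \to \SR_L^\times$ satisfying the compatibility $\varphi_q(\beta_a)\,\alpha = \sigma_a(\alpha)\,\beta_a$ and the cocycle relation $\beta_{ab} = \beta_a\,\sigma_a(\beta_b)$. If I can find $c \in \SR_L^\times$ so that the substitution $e \rightsquigarrow c\,e$ yields coefficients $\alpha, \beta_a$ all lying in $L^\times$, then setting $\delta(\pi) := \alpha$ and $\delta|_{\CO_F^\times}(a) := \beta_a$ (extended multiplicatively) will produce a continuous character $\delta \in \SI(L)$ with $D \cong \SR_L(\delta)$, uniqueness being automatic.

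The hard part is the step that reduces $\alpha$ to a scalar in $L^\times$. Adapting Colmez's argument from the cyclotomic setting, I would proceed in two stages. First, use a Newton polygon analysis of $\alpha$ as a nowhere-vanishing function on a small annulus $0 < v_p(u_\CF) \leq r$ to decompose $\alpha = \lambda\,u_\CF^n\,\mu$ with $\lambda \in L^\times$, $n \in \BZ$, and $\mu$ a $1$-unit close to $1$. Second, exploit the commutation identity $\varphi_q(\beta_a)\,\alpha = \sigma_a(\alpha)\,\beta_a$: because $\sigma_a(u_\CF)/u_\CF = [a]_\CF(u_\CF)/u_\CF$ has leading constant $a$ while $\varphi_q(u_\CF)/u_\CF = Q(u_\CF)$ has leading constant $\pi$, a comparison of leading coefficients (using $\Gamma$-equivariance, without which the claim would fail) forces $n = 0$. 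The remaining $1$-unit factor $\mu$ is then absorbed by solving $\varphi_q(c)/c = \mu$ via a convergent product $c^{-1} = \prod_{k\geq 0}\varphi_q^k(\mu)$; convergence here uses that $\varphi_q^k$ pushes $1$-units into arbitrarily high $u_\CF$-order (since $[\pi]_\CF(u_\CF) \equiv u_\CF^q \bmod \pi$), in parallel with the analysis underlying Lemma \ref{lem:Berger}. Once $\alpha \in L^\times$, the compatibility collapses to $\varphi_q(\beta_a) = \beta_a$; and since $\SR_L^{\varphi_q=1} = L$ (a $\varphi_q$-fixed Laurent series is invariant under the dense $[\pi]_\CF$-orbits on the annulus, hence constant), we conclude $\beta_a \in L^\times$, and the cocycle condition gives a continuous homomorphism $a \mapsto \beta_a$.

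For the $\CO_F$-analytic statement, one direction is immediate from the formula $\nabla_{\SR_L(\delta)}(z\,e_\delta) = (t_\CF\,\partial z + w_\delta z)\,e_\delta$ recorded after (\ref{eqn-partial-0}): when $\delta \in \SI_\an(L)$, this exhibits $\rd\Gamma_{\SR_L(\delta)}$ as an $\CO_F$-linear operator, both summands being $\CO_F$-linear. Conversely, if $\SR_L(\delta)$ is $\CO_F$-analytic, evaluating $\rd\Gamma_{\SR_L(\delta)}$ on $e_\delta$ produces an $\CO_F$-linear map $\Lie\Gamma \cong \CO_F \to L$ that coincides, via the exponential, with the derivative at $1$ of $a \mapsto \delta(a)$; the existence of such an $\CO_F$-linear derivative is precisely the condition that $\delta$ be locally $F$-analytic, so $\delta \in \SI_\an(L)$, completing the equivalence.
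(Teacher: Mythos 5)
Your route is genuinely different from the paper's: the paper first reduces to the \'etale case by an unramified twist, invokes Kedlaya's theorem via Proposition \ref{th:ked} to descend to $\SE_L^\dagger$, and then quotes Proposition \ref{prop:faithful} and Remark \ref{rem:1-over} (in effect, local class field theory) to produce $\delta$; you instead try to normalize the matrix of $\varphi_q$ by hand inside $\SR_L^\times$. Several of your steps are fine: the decomposition $\alpha=\lambda u_\CF^{n}\mu$ of a unit of the Robba ring, the use of $\Gamma$-equivariance to force $n=0$ (the operative input is the factor $a^{n}$ coming from $[a]_\CF(u_\CF)/u_\CF=a\cdot(1+O(u_\CF))$ for all $a\in\CO_F^\times$, not the constant term $\pi$ of $Q$, whose dominant term near the boundary is actually $u_\CF^{q-1}$), the identity $\SR_L^{\varphi_q=1}=L$ (better justified by Lemma \ref{lem:no3} with $\alpha=1$ followed by Lemma \ref{lem:no1}(b) than by a density-of-orbits argument), and the analyticity equivalence at the end.

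The genuine gap is the convergence of $c^{-1}=\prod_{k\geq 0}\varphi_q^{k}(\mu)$. Your justification ($[\pi]_\CF(u_\CF)\equiv u_\CF^{q}\bmod \pi$ pushes $1$-units to high $u_\CF$-order) only controls the part of $\mu-1$ lying in $u_\CF\CO_L[[u_\CF]]$. A general unit $\mu=1+h$ of $\SR_L$ has an infinite principal part, $h=\sum_{j\geq 1}a_{-j}u_\CF^{-j}+(\text{positive part})$ with $v_p(a_{-j})>jr'$ on an annulus $0<v_p\leq r'$. Then $\varphi_q^{k}(h)$ involves $[\pi^{k}]_\CF(u_\CF)^{-j}$, which has poles at the $\pi^{k}$-torsion points of $\CF$; these have valuation $\tfrac{1}{q^{k-1}(q-1)e_F}\to 0$, so for large $k$ the factor $\varphi_q^{k}(\mu)$ fails to lie in any fixed $\SE_L^{]0,r'']}$, and the product cannot converge in $\SR_L$, whose topology is the inductive limit over such rings. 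Already for $F=\BQ_p$ and $\mu=1+p\,u_\CF^{-1}$ the product diverges. Removing exactly this principal part modulo multiplicative $\varphi_q$-coboundaries is the hard content supplied by Kedlaya's theorem (it is what allows the descent to the bounded ring $\SE_L^\dagger$), which is why the paper routes the proof through Proposition \ref{th:ked}; note also that the obvious fix of solving $\varphi_q(c)=\mu c$ by successive approximation in $\CO_{\SE_L}$ does not preserve overconvergence. As written, your construction of $c$ therefore fails, and the proof needs either an independent argument trivializing the principal part of $\mu$ or the paper's detour through the \'etale/Galois side.
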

\begin{proof}  The argument is similar to the proof of \cite[Proposition
3.1]{tri}. We first reduce to the case that $D$ is \'etale. Then by
Proposition \ref{th:ked} there exists an \'etale
$(\varphi_q,\Gamma)$-module $\Delta$ over $\SE_L^\dagger$ such that
$D=\SR_L\otimes_{\SE^\dagger_L}\Delta$. Now the first assertion
follows from Proposition \ref{prop:faithful} and Remark
\ref{rem:1-over}. The second assertion is obvious.
\end{proof}

\section{The operators $\psi$ and $\partial$} \label{sec:two-op}

\subsection{The operator $\psi$} \label{ss:psi}

We define an operator $\psi$ and study its properties.

Note that  $\{u_\CF^i\}_{0\leq i\leq q-1}$ is a basis of $\SE_L$
over $\varphi_q(\SE_L)$. So $\SE_L$ is a field extension of
$\varphi_q(\SE_L)$ of degree $q$. Put
$\tr=\tr_{\SE_L/\varphi_q(\SE_L)}$.

\begin{lem} \label{lem:psi} ~
\begin{enumerate}
\item \label{it:psi-1} There is a unique operator $\psi: \SE_L \rightarrow \SE_L$ such that
                       $\varphi_q\circ\psi=q^{-1}\tr$.
\item \label{it:psi-3} For any $a, b\in \SE_L$ we have
                       $\psi(\varphi_q(a)b)=a\psi(b)$. In
                       particular, $\psi\circ \varphi_q=\id$.
\item \label{it:psi-4} $\psi$ commutes with $\Gamma$.
\end{enumerate}
\end{lem}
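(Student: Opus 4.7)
The plan is to use the fact that $\varphi_q$ is injective on $\SE_L$ (it is a ring homomorphism of a field) to realize $\psi$ as the composite $q^{-1}\varphi_q^{-1}\circ\tr$ on the image of $\tr$, and then to deduce the algebraic properties (projection formula, $\Gamma$-equivariance) from the corresponding properties of the trace by applying $\varphi_q^{-1}$.

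First, for part (\ref{it:psi-1}), I observe that $\tr = \tr_{\SE_L/\varphi_q(\SE_L)}$ takes values in $\varphi_q(\SE_L)$, so one may set $\psi(x) = q^{-1}\varphi_q^{-1}(\tr(x))$, where $\varphi_q^{-1}$ is understood as the inverse of the bijection $\varphi_q\colon \SE_L \xrightarrow{\sim} \varphi_q(\SE_L)$. This defines a map $\SE_L \to \SE_L$ satisfying $\varphi_q\circ\psi = q^{-1}\tr$, and uniqueness is immediate from the injectivity of $\varphi_q$.

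For part (\ref{it:psi-3}), I would invoke the projection formula for the trace of a finite field extension: for $a,b\in\SE_L$, since $\varphi_q(a)\in\varphi_q(\SE_L)$, we have $\tr(\varphi_q(a)b)=\varphi_q(a)\tr(b)$. Applying $q^{-1}\varphi_q^{-1}$ gives $\psi(\varphi_q(a)b)=a\psi(b)$. To conclude $\psi\circ\varphi_q=\id$, set $b=1$, so that $\psi(\varphi_q(a)) = a\,\psi(1)$; and $\psi(1)=1$ since $\varphi_q(\psi(1))=q^{-1}\tr(1)=q^{-1}\cdot q=1$ (the extension has degree $q$) and $\varphi_q$ is injective.

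For part (\ref{it:psi-4}), the key point is to show that $\gamma\circ\tr=\tr\circ\gamma$ for every $\gamma\in\Gamma$. Since $\gamma$ commutes with $\varphi_q$, it preserves both $\SE_L$ and the subring $\varphi_q(\SE_L)$; thus $\gamma$ is an automorphism of $\SE_L$ restricting to an automorphism of $\varphi_q(\SE_L)$, and it sends any $\varphi_q(\SE_L)$-basis of $\SE_L$ to another such basis. Writing the trace as the trace of the multiplication-by-$x$ endomorphism in such a basis shows $\tr(\gamma(x))=\gamma(\tr(x))$ by the usual change-of-basis computation. Applying $q^{-1}\varphi_q^{-1}$ and using the injectivity of $\varphi_q$ once more gives $\psi\circ\gamma=\gamma\circ\psi$.

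I do not anticipate any real obstacle here; the three parts are essentially formal consequences of the definition, the projection formula for the trace, and the compatibility of trace with field automorphisms. The only step that requires mild care is verifying that $\gamma$ commutes with the trace despite not being $\varphi_q(\SE_L)$-linear: one must note that $\gamma$ is $\gamma$-semilinear over $\varphi_q(\SE_L)$ and that, via the identity $\gamma\circ m_x\circ\gamma^{-1}=m_{\gamma(x)}$, conjugation by $\gamma$ takes the matrix of $m_x$ entrywise to its $\gamma$-image, whence the traces match up.
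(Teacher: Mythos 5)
Your proof is correct and follows essentially the same route as the paper's: define $\psi=q^{-1}\varphi_q^{-1}\circ\tr$ using the injectivity of $\varphi_q$, deduce the projection formula from $\tr(\varphi_q(a)b)=\varphi_q(a)\tr(b)$, and obtain $\Gamma$-equivariance from the $\Gamma$-stability of $\varphi_q(\SE_L)$, which gives $\gamma\circ\tr\circ\gamma^{-1}=\tr$. Your explicit check that $\psi(1)=1$ (via $\tr(1)=q$) is a small but welcome detail the paper leaves implicit.
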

\begin{proof}
Assertion (\ref{it:psi-1}) follows from the fact that $\varphi_q$ is
injective. % For any $a\in \SE_L$, $\varphi_q\circ \psi\circ % \varphi_q(a)=\tr/q(\varphi_q(a))=\varphi_q(a)$.
Assertion (\ref{it:psi-3}) follows from the relation
$$ \varphi_q(\psi(\varphi_q(a)b)) =\tr(\varphi_q(a)b)/q=
\varphi_q(a) \tr(b)/q = \varphi_q(a)
\varphi_q(\psi(b))=\varphi_q(a\psi(b))   $$ and the injectivity of
$\varphi_q$. As $\varphi_q$ commutes with $\Gamma$,
$\varphi_q(\SE_L)$ is stable under $\Gamma$. Thus $\gamma\circ \tr
\circ \gamma ^{-1}=\tr$ for all $\gamma\in \Gamma$. This ensures
that $\psi$ commutes with $\Gamma$. Assertion (\ref{it:psi-4})
follows.
\end{proof}

We first compute $\psi$ in the case of the special Lubin-Tate group.

\begin{prop} \label{prop:psi-sp} Suppose that $\CF$ is the special Lubin-Tate group.
\begin{enumerate}
\item \label{it:psi-sp-pos}
If $\ell\geq 0$, then $
\psi(u_\CF^\ell)=\sum_{i=0}^{[\ell/q]}a_{\ell,i}u_\CF^i$ with
$v_\pi(a_{\ell,i})\geq  [\ell/q]+1-i-v_\pi(q). $
\item \label{it:psi-sp-neg}
If $\ell<0$, then
$\psi(u_\CF^{\ell})=\sum_{i=\ell}^{[\ell/q]}b_{\ell,i}u_\CF^i$ with
$v_\pi(b_{\ell,i})\geq [\ell/q]+1-i-v_\pi(q)$.
\end{enumerate}
\end{prop}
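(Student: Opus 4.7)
The plan is to exploit the defining identity of the special Lubin-Tate group to set up a single recursion for $\psi(u_\CF^\ell)$ that can be iterated upward for part (a) and downward for part (b), after establishing the base cases by a direct Newton's-identities computation.

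For the special Lubin-Tate group $\CF$ we have $[\pi]_\CF(X)=\pi X+X^q$, so $\varphi_q(u_\CF)=u_\CF^q+\pi u_\CF$. Multiplying this by $u_\CF^\ell$ gives, for every $\ell\in\BZ$,
$$u_\CF^{\ell+q}=\varphi_q(u_\CF)\cdot u_\CF^\ell-\pi u_\CF^{\ell+1}.$$
Applying $\psi$ and using Lemma \ref{lem:psi}(\ref{it:psi-3}) yields the recursion
$$\psi(u_\CF^{\ell+q})=u_\CF\,\psi(u_\CF^\ell)-\pi\,\psi(u_\CF^{\ell+1}).\qquad(\ast)$$
I would use $(\ast)$ directly to induct upward from $\ell\geq 0$ for (a), and rearranged as $\psi(u_\CF^\ell)=u_\CF^{-1}\psi(u_\CF^{\ell+q})+\pi u_\CF^{-1}\psi(u_\CF^{\ell+1})$ to induct downward from $\ell=-1$ for (b).

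For the base range $0\leq\ell\leq q-1$, apply Newton's identities to the minimal polynomial $T^q+\pi T-\varphi_q(u_\CF)$ of $u_\CF$ over $\varphi_q(\SE_L)$. Since the elementary symmetric functions $e_1,\dots,e_{q-2}$ vanish, the power sums $p_k$ vanish for $1\leq k\leq q-2$ and $p_{q-1}=-(q-1)\pi$. Combined with the identity $\varphi_q\circ\psi=q^{-1}\tr$ this gives $\psi(1)=1$, $\psi(u_\CF^k)=0$ for $1\leq k\leq q-2$, and $\psi(u_\CF^{q-1})=-(q-1)\pi/q$; each is a constant (degree $[\ell/q]=0$) of $v_\pi$-valuation at least $1-v_\pi(q)$, which is the required bound since $v_\pi(q)\geq 1$.

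The induction step for (a) is now routine: assuming the stated bounds for $\ell$ and $\ell+1$, the recursion $(\ast)$ expresses $\psi(u_\CF^{\ell+q})$ as a polynomial in $u_\CF$ of degree at most $[\ell/q]+1=[(\ell+q)/q]$, and a term-by-term comparison gives $v_\pi(a_{\ell+q,i})\geq[(\ell+q)/q]+1-i-v_\pi(q)$ after noting that $v_\pi(u_\CF\psi(u_\CF^\ell))$ contributes the valuation $[\ell/q]+2-i-v_\pi(q)$ to degree-$i$ terms, while $v_\pi(\pi\psi(u_\CF^{\ell+1}))$ contributes at least as much. For (b) one inducts on $|\ell|$ decreasing from $\ell=-1$; the term $\psi(u_\CF^{\ell+q})$ on the right-hand side is already controlled by (a) since $\ell+q\geq q-1\geq 0$, and the term $\psi(u_\CF^{\ell+1})$ is controlled by the inductive hypothesis of (b) (with the base $\ell=-1$ computed directly as $\psi(u_\CF^{-1})=\pi/(qu_\CF)$).

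The only real obstacle is bookkeeping: the exact value of $[(\ell+1)/q]$ relative to $[\ell/q]$ depends on whether $\ell\equiv q-1\pmod q$ (resp. $\ell\equiv -1\pmod q$ in the negative case), so the comparison of degrees and coefficients in $(\ast)$ must be split into two subcases. In each, the explicit factor $\pi$ multiplying $\psi(u_\CF^{\ell+1})$ is exactly what is needed to absorb the one-unit shift in the floor function and preserve the valuation bound; once this is written out, the induction closes mechanically.
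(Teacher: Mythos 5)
Your proposal is correct. Part (a) is essentially the paper's own argument: the same Newton's-identities computation of $\tr(u_\CF^i)$ for $0\le i\le q-1$, and the same upward recursion $\psi(u_\CF^{\ell})=u_\CF\psi(u_\CF^{\ell-q})-\pi\psi(u_\CF^{\ell-q+1})$ coming from $\varphi_q(u_\CF)=u_\CF^q+\pi u_\CF$, with the identical valuation bookkeeping.

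For part (b) you diverge from the paper. The paper does not run the recursion downward; instead it uses the factorization $\varphi_q(u_\CF)=u_\CF(u_\CF^{q-1}+\pi)$ to write $u_\CF^{\ell}=(u_\CF^{q-1}+\pi)^{-\ell}\varphi_q(u_\CF^{\ell})$ for $\ell<0$, pulls $\varphi_q(u_\CF^{\ell})$ through $\psi$, and expands $(u_\CF^{q-1}+\pi)^{-\ell}$ by the binomial theorem, thereby reducing (b) in one closed-form step to the bounds already proved in (a). Your route — solving the recursion for $\psi(u_\CF^{\ell})$ as $u_\CF^{-1}\bigl(\psi(u_\CF^{\ell+q})+\pi\psi(u_\CF^{\ell+1})\bigr)$ and inducting downward from $\ell=-1$ — is equally valid: the two inputs $\psi(u_\CF^{\ell+q})$ and $\psi(u_\CF^{\ell+1})$ are always controlled either by (a) or by the inductive hypothesis, the shift by $u_\CF^{-1}$ lowers both the degree range and the target bound by exactly one unit, and the explicit factor $\pi$ absorbs the possible jump in $[(\ell+1)/q]$ versus $[\ell/q]$, just as you say. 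What your version buys is uniformity (a single recursion drives both halves) at the cost of a second induction with its own case analysis; what the paper's version buys is that (b) becomes a one-line consequence of (a) with an explicit formula $b_{\ell,i}=\sum_j\binom{-\ell}{j}\pi^{-\ell-j}a_{j(q-1),i-\ell}$, at the cost of the slightly less transparent algebraic identity. Both are complete proofs.
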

\begin{proof}
First we prove (\ref{it:psi-sp-pos}) by induction on $\ell$. As the
minimal polynomial of $u_\CF$ is $X^q +\pi X - (u_\CF^q+ \pi
u_\CF)$, by Newton formula we have
$$\tr(u_\CF^i)=\left\{\begin{array}{ll} 0 & \text{ if } 1\leq i\leq q-2 , \\
(1-q)\pi & \text{ if } i=q-1. \end{array}\right.
$$ It follows that
$$ \psi(u_\CF^i) =\left\{\begin{array}{ll} 0 & \text{ if } 1\leq i\leq q-2 , \\
(1-q)\pi/q  & \text{ if } i=q-1. \end{array} \right. $$ Thus the
assertion holds when $0\leq \ell\leq q-1$. Now we assume that
$\ell=j\geq q$ and the assertion holds when $0\leq \ell \leq j-1$.
We have {\allowdisplaybreaks
\begin{eqnarray*}\psi(u_\CF^\ell) &=& \psi((u_\CF^q+\pi
u_\CF)u_\CF^{\ell-q})-\psi(\pi u_\CF^{\ell-q+1})  =
u_\CF\psi(u_\CF^{\ell-q})-\pi\psi(u_\CF^{\ell-q+1})
\\ &=&
\sum_{i=1}^{[\ell/q]}a_{\ell-q, i-1}u_\CF^i
-\sum_{i=0}^{[(\ell+1)/q]-1}\pi  a_{\ell-q+1,i}u_\CF^i
.\end{eqnarray*}} Thus $a_{\ell,i}=a_{\ell-q,i-1}-\pi
a_{\ell-q+1,i}$. By the inductive assumption we have
\begin{eqnarray*}
v_\pi(a_{\ell-q,i-1})  \geq   [(\ell-q)/q]+1-(i-1) -v_\pi(q)  =
[\ell/q]+1-i-v_\pi(q)
\end{eqnarray*} and
\begin{eqnarray*} v_\pi(a_{\ell-q+1,i}) \geq  [(\ell-q+1)/q]+1-i
-v_\pi(q) \geq  [\ell/q]-i -v_\pi(q) \end{eqnarray*} It follows that
$v_\pi(a_{\ell,i})\geq [\ell/q]+1-i-v_\pi(q)$.

Next we prove (\ref{it:psi-sp-neg}). We have {\allowdisplaybreaks
\begin{eqnarray*} \psi(u_\CF^\ell) &=&
\psi\Big(\frac{(u_\CF^{q-1}+\pi)^{-\ell}}{\varphi_q(u_\CF)^{-\ell}}\Big)
= \frac{\psi\Big(\sum_{j=0}^{-\ell} \binc{-\ell}{j}
u_\CF^{j(q-1)}\pi^{-\ell-j}  \Big)}{u_\CF^{-\ell}} \\
&=& \sum_{i=0}^{[-\ell(q-1)/q]} \sum_{j=0}^{-\ell}
\binc{-\ell}{j}\pi^{-\ell-j} a_{j(q-1),i} \cdot u_\CF^{i+\ell} =
\sum_{i=\ell}^{[\ell/q]} \sum_{j=0}^{-\ell}
\binc{-\ell}{j}\pi^{-\ell-j} a_{j(q-1),i-\ell} \cdot u_\CF^{i}
\end{eqnarray*}} Here, $\binc{-\ell}{j}=\frac{(-\ell)!}{j!(-\ell-j)!}$. Thus $ b_{\ell,i}=  \sum\limits_{j=0}^{-\ell}
\binc{-\ell}{j}\pi^{-\ell-j} a_{j(q-1),i-\ell} $. As
{\allowdisplaybreaks
\begin{eqnarray*}v_\pi(\pi^{-\ell-j}
a_{j(q-1),i-\ell}) & \geq & -\ell-j
+([\frac{j(q-1)}{q}]+1-(i-\ell)-v_\pi(q))
\\ &=& [-j/q]+1-i -v_\pi(q) \geq  [\ell/q]+1-i-v_\pi(q),
\end{eqnarray*}} we obtain $v_\pi(b_{\ell,i})\geq
[\ell/q]+1-i-v_\pi(q)$.
\end{proof}

Let $\SE_L^-$ be the subset of $\SE_L$ consisting of elements of the
form $\sum\limits_{i\leq -1} a_i u_\CF^i$.

\begin{cor}\label{cor:psi-sp} Suppose that $\CF$ is the special Lubin-Tate group.
Then $\psi(\SE_L^-)\subset \SE_L^-$.
\end{cor}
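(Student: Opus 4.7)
The plan is to combine Proposition \ref{prop:psi-sp}(\ref{it:psi-sp-neg}) with continuity of $\psi$. The key observation for monomials is this: if $\ell\leq -1$, then since $q\geq 2$ one has $[\ell/q]\leq -1$, so the exponents $i$ appearing in
$$\psi(u_\CF^\ell)=\sum_{i=\ell}^{[\ell/q]}b_{\ell,i}u_\CF^i$$
all lie in $\{\ell,\ldots,-1\}$. Thus $\psi(u_\CF^\ell)\in \SE_L^-$ for every $\ell\leq -1$, which handles the monomial (and Laurent-polynomial) case.

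Given a general $f=\sum_{\ell\leq -1}a_\ell u_\CF^\ell\in \SE_L^-$, I would use the $L$-linearity and continuity of $\psi$ on $\SE_L$ to write $\psi(f)=\sum_{\ell\leq -1}a_\ell\psi(u_\CF^\ell)$, and then reorder the double sum by power of $u_\CF$. The coefficient of $u_\CF^i$ in $\psi(f)$ becomes
$$c_i=\sum_{qi\leq \ell\leq i}a_\ell\, b_{\ell,i},$$
a \emph{finite} sum because the condition $i\leq [\ell/q]$ (needed for $b_{\ell,i}$ to contribute) is equivalent to $\ell\geq qi$. Since $qi\leq \ell\leq i$ forces $i\leq -1$, the expansion of $\psi(f)$ contains only negative powers of $u_\CF$, which is the claim.

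The main step to be checked is that this rearrangement is valid inside $\SE_L$. From the estimate $v_\pi(b_{\ell,i})\geq [\ell/q]+1-i-v_\pi(q)\geq 1-v_\pi(q)$, which holds uniformly because $i\leq [\ell/q]$, together with the defining property of $f\in\SE_L^-\subset\SE_L$ (namely $v_\pi(a_\ell)$ is bounded below and, after clearing a power of $\pi$, $v_\pi(a_\ell)\to +\infty$ as $\ell\to -\infty$), one obtains
$$v_\pi(c_i)\geq \min_{qi\leq \ell\leq i}v_\pi(a_\ell)+1-v_\pi(q).$$
Since every $\ell$ contributing to $c_i$ tends to $-\infty$ together with $i$, this gives $v_\pi(c_i)\to +\infty$, justifying convergence of the rearranged series to $\psi(f)$ in the weak topology on $\SE_L$. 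This bookkeeping about convergence is the only delicate point; the rest is formal.
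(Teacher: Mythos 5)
Your proof is correct and follows the same route as the paper, which simply observes that the corollary is a direct consequence of Proposition \ref{prop:psi-sp}(\ref{it:psi-sp-neg}): for $\ell\leq -1$ one has $[\ell/q]\leq -1$, so each $\psi(u_\CF^\ell)$ lies in $\SE_L^-$, and the general case follows by continuity. Your extra bookkeeping (the finite reindexed sums $c_i$ and the valuation estimate) is a sound, if more detailed, justification of the same passage to infinite series.
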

\begin{proof} This follows directly from Proposition
\ref{prop:psi-sp}.
\end{proof}

% From the proof we see that $\psi(1/u)=\pi_L/(qu)$.

\begin{prop} \label{prop:psi-con}
\begin{enumerate}
\item \label{it:psi-con-1}
We have $\psi(\SE_L^+)=\SE_L^+$, $\psi(\CO_{\SE_L^+})\subset
\frac{\pi}{q}\CO_{\SE_L^+}$ and $\psi(\CO_{\SE_L})\subset
\frac{\pi}{q}\CO_{\SE_L}$.
\item \label{it:psi-con-2}
$\psi$ is continuous for the weak topology on $\SE_L$.
\item \label{it:psi-con-3}
$\SE_L^\dagger$ is stable under $\psi$, and the restriction of
$\psi$ on $\SE_L^\dagger$ is continuous for the weak topology of
$\SE_L^\dagger$.
\item \label{it:psi-bon} If $f\in\SE_L^{(0,r]}$, then the sequence
$(\frac{q}{\pi}\psi)^n(f)$, $n\in \BN$, is bounded in
$\SE_L^{(0,r]}$ for the weak topology.
\end{enumerate}
\end{prop}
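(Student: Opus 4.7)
The strategy is to reduce to the special Lubin-Tate group $\CF_0$ (where $[\pi]_{\CF_0}(X)=X^q+\pi X$), then apply the explicit valuation estimates of Proposition \ref{prop:psi-sp} and Corollary \ref{cor:psi-sp}. The reduction uses the ring isomorphism $\eta_{\CF,\CF_0}$: it commutes with $\varphi_q$ and $\Gamma$, and since $\psi$ is defined intrinsically via $\varphi_q\circ\psi=q^{-1}\tr_{\SE_L/\varphi_q(\SE_L)}$, it is carried by $\eta_{\CF,\CF_0}$ to the analogous operator for $\CF_0$. Moreover $\eta_{\CF,\CF_0}$ identifies $\SE_L^+$, $\CO_{\SE_L^+}$, $\CO_{\SE_L}$, $\SE_L^\dagger$, and (up to a harmless rescaling of the radius) each family $\SE_L^{(0,r]}$ together with its topology. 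So I may and do assume $\CF=\CF_0$ throughout.

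For (a), Proposition \ref{prop:psi-sp}(a) gives $\psi(u_\CF^\ell)\in\SE_L^+$ for $\ell\geq 0$ with the uniform lower bound $v_\pi(a_{\ell,i})\geq 1-v_\pi(q)$, hence $\psi(\CO_{\SE_L^+})\subset\frac{\pi}{q}\CO_{\SE_L^+}$. Combining with Corollary \ref{cor:psi-sp} and the analogous bound of Proposition \ref{prop:psi-sp}(b) for negative indices extends this to $\psi(\CO_{\SE_L})\subset\frac{\pi}{q}\CO_{\SE_L}$. The surjectivity $\psi(\SE_L^+)=\SE_L^+$ then follows from $\psi\circ\varphi_q=\id$ (Lemma \ref{lem:psi}(b)) together with $\varphi_q(\SE_L^+)\subset\SE_L^+$.

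For (b), continuity at $0$ in the weak topology amounts to producing, for each basic open $U=\pi^i\CO_{\SE_L}+u_\CF^j\CO_{\SE_L^+}$, another $V=\pi^{i'}\CO_{\SE_L}+u_\CF^{j'}\CO_{\SE_L^+}$ with $\psi(V)\subset U$. Part (a) handles the $\pi^{i'}\CO_{\SE_L}$ summand, while the refined inclusion $\psi(u_\CF^{qj}\CO_{\SE_L^+})\subset u_\CF^j\cdot\frac{\pi}{q}\CO_{\SE_L^+}$ from Proposition \ref{prop:psi-sp}(a) handles the $u_\CF^{j'}$ summand with $j'=qj$. Continuity at arbitrary points follows by $L$-linearity.

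For (c) and (d), the key quantitative input, extracted from Proposition \ref{prop:psi-sp}, is the estimate $v^{\{s\}}(\psi(f))\geq v^{\{s/q\}}(f)+(1-v_\pi(q))/e$ for all $f\in\SE_L^\dagger$ and $s>0$, where $e$ is the ramification index of $F/\BQ_p$ (since each coefficient of $\psi(u_\CF^\ell)$ sits at index $\leq[\ell/q]$ and has $v_\pi\geq 1-v_\pi(q)$). Since $s/q\leq s$, this gives $\psi(\SE_L^{(0,r]})\subset\SE_L^{(0,r]}$ and continuity for the weak topology, proving (c). Moreover the factor $\frac{q}{\pi}$ exactly cancels the constant $(1-v_\pi(q))/e=v_p(\pi/q)$, so iterating yields $v^{\{s\}}\bigl((\tfrac{q}{\pi}\psi)^n(f)\bigr)\geq v^{\{s/q^n\}}(f)\geq v^{[0,r]}(f)$ for $s\in(0,r]$ and all $n\geq 0$; combined with the $\pi$-adic bound $(\tfrac{q}{\pi}\psi)^n(f)\in\CO_{\SE_L}$ from (a), this confines the sequence in a subset of $\SE_L^{(0,r]}$ bounded for the weak topology, establishing (d). The only delicate point in the whole proof is this translation of the $v_\pi$-valuation estimates of Proposition \ref{prop:psi-sp} into uniform Gauss-valuation bounds; once done, everything else is formal.
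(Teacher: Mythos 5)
Your reduction to the special Lubin--Tate group and your treatment of (a) and (b) follow the paper's argument and are fine. The problem is the ``key quantitative input'' on which you rest (c) and (d): the estimate $v^{\{s\}}(\psi(f))\geq v^{\{s/q\}}(f)+(1-v_\pi(q))/e$ is false. Take $f=u_\CF^{q-1}$, for which Proposition \ref{prop:psi-sp} gives $\psi(f)=(1-q)\pi/q$. Then $v^{\{s\}}(\psi(f))=v_p(\pi/q)$ while $v^{\{s/q\}}(f)+v_p(\pi/q)=(q-1)s/q+v_p(\pi/q)$, so the claimed inequality fails for every $s>0$. The heuristic in your parenthesis is backwards for positive powers: $\psi$ moves the index $\ell\geq 0$ \emph{down} to some $i\leq[\ell/q]$, which \emph{decreases} the contribution $is$ to $v^{\{s\}}$ by as much as $(\ell-[\ell/q])s$, an unbounded loss that the coefficient gain $([\ell/q]-i)/e$ does not control at a fixed radius. (Compare Lemma \ref{lemme-continuite-psi}, whose correct statement uses $v^{[s/q,r/q]}$ --- an infimum over an interval reaching down to small radii --- and carries a $-v_p(q)$ loss and a hypothesis $r<q/((q-1)e_F)$.) Consequently your derivation of stability of $\SE_L^{(0,r]}$, of continuity for the weak topology, and especially of (d) does not go through: even with the corrected Lemma \ref{lemme-continuite-psi}, iterating a radius-shrinking estimate loses $v_p(\pi)$ per step and cannot yield the uniform bound needed for (d).

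The fix --- which is what the paper does --- is to split $\SE_L^{(0,r]}$ into its plus and minus parts and never attempt a single Gauss-valuation estimate on all of $\SE_L^\dagger$. On $\CO_{\SE_L^+}$ one uses only the integrality statement $\tfrac{q}{\pi}\psi(\CO_{\SE_L^+})\subset\CO_{\SE_L^+}$ from (a), which is exactly what continuity and boundedness for the weak topology require on the plus part (recall the weak topology on $\SE_L^{(0,r]}$ restricts to the $(\pi,u_\CF)$-adic topology on $\SE_L^+$, not to the $v^{\{r\}}$-topology). On $\SE_L^-\cap\SE_L^{(0,r]}$, Proposition \ref{prop:psi-sp}(b) and Corollary \ref{cor:psi-sp} give the \emph{same-radius} estimate $v^{\{r\}}(\psi(f))\geq v^{\{r\}}(f)+v_p(\pi/q)$: here $\ell\leq i\leq[\ell/q]$, so $(\ell-i)r\leq 0\leq([\ell/q]-i)/e$ and the comparison works at radius $r$ itself. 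This same-radius inequality is stable under iteration of $\tfrac{q}{\pi}\psi$, and together with the integrality on the plus part it yields both (c) and (d).
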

\begin{proof}
Let $\CF_0$ be the special Lubin-Tate group over $F$ corresponding
to $\pi$. Observe that
$\psi_{\CF}=\eta_{\CF_0,\CF}^{-1}\psi_{\CF_0}\eta_{\CF_0,\CF}$. As
$\eta_{\CF_0,\CF}(u_{\CF_0}) = u_\CF \times $ a unit in
$\CO_F[[u_\CF]]$, for any $r>0$ we have that
$\eta_{\CF_0,\CF}(\CO_{\SE_{\CF_0,L}}^{(0,r]}[1/u_{\CF_0}])=\CO_{\SE_{\CF,L}}^{(0,r]}[1/u_{\CF}]$
and that $\eta_{\CF_0,\CF}$ respects the valuation $v^{[0,r]}$. Thus
$\eta_{\CF_0,\CF}: \SE_{\CF_0,L}^{(0,r]}\rightarrow
\SE_{\CF,L}^{(0,r]}$ is a topological isomorphism. It follows that
$\SE^\dagger_{\CF_0,L}\rightarrow \SE^\dagger_{\CF,L}$ and its
inverse are continuous for the weak topology. Similarly
$\eta_{\CF_0,\CF}: \SE_{\CF_0,L}\rightarrow \SE_{\CF,L}$ and its
inverse are continuous for the weak topology. Hence we only need to
consider the case of the special Lubin-Tate group. Assertions
(\ref{it:psi-con-1}) and (\ref{it:psi-con-2}) follow from
Proposition \ref{prop:psi-sp}. For (\ref{it:psi-con-3}) we only need
to show that, for any $r>0$ we have $\psi(\SE_L^{(0,r]})\subset
\SE_L^{(0,r]}$ and the restriction $\psi:
\SE_L^{(0,r]}\rightarrow\SE_L^{(0,r]}$ is continuous. By
(\ref{it:psi-con-2}) the restriction of $\psi$ to $\SE_L^+$ is
continuous. By Proposition \ref{prop:psi-sp} (\ref{it:psi-sp-neg})
and Corollary \ref{cor:psi-sp}, if $f$ is in $\SE_L^-\cap
\SE_L^{(0,r]}$, then $\psi(f)$ is in $\SE_L^-$ and
$v^{\{r\}}(\psi(f))\geq v^{\{r\}}(f)+v_p(\pi/q)$. Thus $\psi:
\SE_L^-\cap \SE_L^{(0,r]}\rightarrow \SE_L^-\cap \SE_L^{(0,r]}$ is
continuous, which proves  (\ref{it:psi-con-3}). As
$\frac{q}{\pi}\psi(\CO_{\SE_L^+})\subset \CO_{\SE_L^+}$ and
$v^{\{r\}}(\frac{q}{\pi}\psi(f))\geq v^{\{r\}}(f)$ for any $f\in
\SE_L^-\cap \SE_L^{(0,r]}$,  (\ref{it:psi-bon}) follows.
\end{proof}

Next we extend $\psi$ to $\SR_L$.

\begin{prop}\label{prop:tr} We can extend $\tr$ continuously to
$\SR_L$. The resulting operator $\tr$ satisfies
$\tr|_{\varphi_q(\SR_L)}=q\cdot \id$ and
$\tr(\SR_L)=\varphi_q(\SR_L)$.
\end{prop}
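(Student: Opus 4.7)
My plan is to extend $\tr$ to $\SR_L$ by first establishing the direct sum decomposition
\[ \SR_L = \bigoplus_{i=0}^{q-1} \varphi_q(\SR_L) \cdot u_\CF^i, \]
which generalizes the analogous decomposition of $\SE_L$ recalled at the beginning of Section \ref{ss:psi}. Granting this, I would define $\tr\bigl(\sum_{i=0}^{q-1} \varphi_q(f_i) u_\CF^i\bigr) := \sum_{i=0}^{q-1} \varphi_q(f_i) \tr(u_\CF^i)$, noting that each $\tr(u_\CF^i) = q \varphi_q(\psi(u_\CF^i))$ lies in $\SE_L^\dagger \subseteq \SR_L$ by Proposition \ref{prop:psi-con}. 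This agrees with the original $\tr$ on $\SE_L$, and continuity will follow provided the decomposition map $f \mapsto (f_0, \dots, f_{q-1})$ is continuous for the Fr\'echet topology on each annulus $\SE_L^{]0,r]}$.

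To establish the decomposition, I would reduce to one annulus at a time. The starting point is that $[\pi]_\CF(T) - \varphi_q(u_\CF) \in \varphi_q(\SR_L)[T]$ is a polynomial of degree $q$ in $T$ whose roots, over a suitable extension, are $T = u_\CF \oplus_\CF a$ for $a \in \CF[\pi]$; restricted to the subring $\CO_F[[u_\CF]]$ it is a distinguished polynomial, so Weierstrass division yields that $\CO_F[[u_\CF]]$ is free of rank $q$ over $\varphi_q(\CO_F[[u_\CF]])$ with basis $\{1, u_\CF, \dots, u_\CF^{q-1}\}$. Tensoring with $F$ handles $\SE_L^+$. To push this onto $\SE_L^{]0,r]}$, I use that $\varphi_q$ carries $\SE_L^{]0, qr]}$ into $\SE_L^{]0, r]}$, and apply an analogous Weierstrass-type division on the annulus $\{0 < v_p(u_\CF) \leq r\}$ to obtain the decomposition $f = \sum_{i=0}^{q-1} \varphi_q(f_i) u_\CF^i$ with $f_i \in \SE_L^{]0, qr]}$; taking the direct limit over $r > 0$ yields the decomposition on $\SR_L$, and uniqueness is inherited from the known decomposition of $\SE_L$.

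The two stated identities are then formal consequences. For $f = \varphi_q(g) \in \varphi_q(\SR_L)$, the unique decomposition is $f_0 = g$ and $f_i = 0$ for $i \geq 1$, so $\tr(f) = \varphi_q(g) \tr(1) = q f$. Since every $\tr(u_\CF^i)$ lies in $\varphi_q(\SR_L)$, the image $\tr(\SR_L)$ is contained in $\varphi_q(\SR_L)$, and since $q$ is invertible in $L$, every $\varphi_q(g) = \tr(q^{-1} \varphi_q(g))$ lies in the image, giving the reverse inclusion.

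The hard part will be the Weierstrass-type division on each annulus $\SE_L^{]0,r]}$: $\varphi_q$ behaves differently near the two boundaries (roughly like $u_\CF \mapsto u_\CF^q$ near $u_\CF = 0$ but like $u_\CF \mapsto \pi u_\CF$ near the outer boundary, from the shape of $[\pi]_\CF(u_\CF) = u_\CF^q + \pi u_\CF + \cdots$), so careful estimates using the family of valuations $v^{\{s\}}$ are needed to control how the radii transform and to verify that the coefficients $f_i$ land in the appropriate annulus with continuous dependence on $f$.
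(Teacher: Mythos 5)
Your endgame (deriving $\tr|_{\varphi_q(\SR_L)}=q\cdot\id$ and $\tr(\SR_L)=\varphi_q(\SR_L)$ from the decomposition) is fine, but the route you choose concentrates the entire difficulty in a lemma you do not prove: that $\SR_L=\oplus_{i=0}^{q-1}\varphi_q(\SR_L)\,u_\CF^i$ with coordinate maps that are continuous annulus by annulus. This ``Weierstrass-type division on each annulus'' is not a routine adaptation of the power-series case, and you explicitly defer it as ``the hard part.'' Worse, in the paper's logical order this freeness statement is Corollary \ref{cor:dagger}, and it is \emph{deduced from} the extended trace (via the dual basis $b_i$ and the continuity of $\psi$ on $\SR_L$), so it cannot be quoted here; you would need an independent argument, e.g.\ that $[\pi]_\CF$ is a finite flat degree-$q$ covering of the relevant annuli together with the identification of $\{1,u_\CF,\dots,u_\CF^{q-1}\}$ as a basis. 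A further small inaccuracy: $[\pi]_\CF(T)-\varphi_q(u_\CF)$ is a power series in $T$, not a degree-$q$ polynomial, unless $\CF$ is the special Lubin--Tate group; one must first invoke Weierstrass preparation, or reduce to the special group via $\eta_{\CF_0,\CF}$ as the paper does when studying $\psi$.

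The paper's proof sidesteps all of this with a much more elementary observation. On the dense subring of Laurent series with only finitely many negative-degree terms, the trace is given explicitly by $\tr(f)=\sum_{\eta\in\ker[\pi]_\CF}f(u_\CF+_\CF\eta)$. Since every $\eta\in\ker[\pi]_\CF$ satisfies $v_p(\eta)\geq \frac{1}{(q-1)e_F}$, the substitutions $u_\CF\mapsto u_\CF+_\CF\eta$ preserve each annulus $p^{-r}\leq |z|\leq p^{-s}$ with $r<\frac{1}{(q-1)e_F}$ and preserve the valuations $v^{[s,r]}$; hence $v^{[s,r]}(\tr(f))\geq v^{[s,r]}(f)$ and $\tr$ extends continuously by density. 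The identity $\tr|_{\varphi_q(\SR_L)}=q\cdot\id$ then follows by continuity, and $\tr(\SR_L)\subseteq\varphi_q(\SR_L)$ follows because $\varphi_q$ is strict with closed image (Lemma \ref{lemme-continuite-phi-gamma}) and $\tr(\SE^\dagger_L)=\varphi_q(\SE^\dagger_L)\subseteq\varphi_q(\SR_L)$; the reverse inclusion is exactly as you argue. I recommend adopting this direct argument, or else supplying a complete proof of the annulus decomposition, which is a substantially longer undertaking than the proposition itself.
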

\begin{proof} Let $\SE_{L}^{\gg -\infty}$ denote the subset of $\SE_L$
consisting of $f\in \SE_L$ of the form $\sum_{n\gg -\infty} a_n
u_\CF^n$. If $f\in \SE_{L}^{\gg-\infty}$, then
$$\tr(f)=\sum_{\eta \in \ker [\pi]_\CF} f(u_\CF +_\CF \eta).$$
If $\eta$ is in $\ker[\pi]_\CF$, then $v_p(\eta)\geq
\frac{1}{(q-1)e_F}$ where $e_F=[F:F_0]$.  Thus, if $r$ and
$s\in\BR_+$ satisfy $\frac{1}{(q-1) e_F}> r\geq s$, the morphisms
$u_\CF\mapsto u_\CF+_\CF \eta$ ($\eta\in \ker[\pi]_\CF$) keep the
annulus $ \{ z\in \BC_p : p^{-r} \leq |z| \leq p^{-s}\} $ stable. So
for any $f\in \SE_L^{\gg -\infty}$ we have $ v^{ \left[ s, r \right]
} (f(u_\CF+_\CF \eta)) = v^{\left[s, r\right]}(f)$ and $v^{ \left[
s, r \right] } (\tr(f))\geq v^{\left[ s, r \right]}(f)$. Hence there
exists a unique continuous operator $\Tr: \SR_L\rightarrow \SR_L$
such that $\Tr(f)=\tr(f)$ for any $f\in \SE^{\gg -\infty}_L$. (For
any $f\in \SR_L$, choosing a positive real number $r$ such that
$f\in\SE^{]0,r]}_L$, we can find a sequence $\{f_i\}_{i\geq 1}$ in
$\SE^{\gg -\infty}_L$ such that $f_i\rightarrow f$ in $\SE^{]0,r]}$;
then $\{\tr(f_i)\}_{i\geq 1}$ is a Cauchy sequence in
$\SE^{[s,r]}_L$ for any $s$ satisfying $0<s\leq r$, and we let
$\Tr(f)$ be their limit in $\SE^{]0,r]}$; it is easy to show that
$\Tr(f)$ does not depend on any choice.) From the continuity of
$\Tr$ we obtain that $\Tr|_{\SE^\dagger_L}=\tr$ and
$\Tr|_{\varphi_q(\SR_L)}=q\cdot \id$. By Lemma
\ref{lemme-continuite-phi-gamma} below, $\varphi_q:\SR_L\rightarrow
\SR_L$ is strict and thus has a closed image. Since $\SE^\dagger_L$
is dense in $\SR_L$ and
$\Tr(\SE^\dagger_L)=\varphi_q(\SE^\dagger_L)\subset
\varphi_q(\SR_L)$, we have $\Tr(\SR_L)\subseteq \varphi_q(\SR_L)$.
\end{proof}

\begin{lem}\label{lemme-continuite-phi-gamma}
If \( \frac{q}{(q-1) e_F}> r \geq s >0\) and $ f \in \SE_L^{]0,r]}$,
then we have
\begin{itemize}
\item \(v^{\left[s, r\right]}(\gamma(f)) = v^{\left[s, r\right]}(f)\)
for all \(\gamma \in \Gamma\);
\item \(v^{\left[s,r\right]}(\varphi_q(f)) = v^{\left[qs, qr\right]}(f)\)
if \(r<  \frac{1}{(q-1) e_F}\).
\end{itemize}
\end{lem}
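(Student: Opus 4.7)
My plan is to exploit the evaluation interpretation
\[
v^{[s,r]}(g) = \inf_{z \in \BC_p,\; s \leq v_p(z) \leq r} v_p(g(z)),
\]
valid for any Laurent series $g \in \SE_L^{]0,r]}$. Under this interpretation, both assertions reduce to understanding how $z \mapsto [a]_\CF(z)$ (respectively $z \mapsto [\pi]_\CF(z)$) acts on annuli, since by definition of the $\Gamma$- and $\varphi_q$-actions we have $\gamma(f)(z) = f([a]_\CF(z))$ with $a = \chi_\CF(\gamma) \in \CO_F^\times$, and $\varphi_q(f)(z) = f([\pi]_\CF(z))$.

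For the first assertion, since $[a]_\CF(X) = aX + O(X^2)$ has coefficients in $\CO_F$ and $a \in \CO_F^\times$, the leading term $az$ dominates strictly for every $z$ with $v_p(z)>0$, so $v_p([a]_\CF(z)) = v_p(z)$. The series $[a]_\CF$ is invertible in $\CO_F[[X]]$ with formal inverse $[a^{-1}]_\CF$, hence $z \mapsto [a]_\CF(z)$ is a self-bijection of each annulus $\{s \leq v_p(z) \leq r\}$; a change of variable in the infimum then gives $v^{[s,r]}(\gamma f) = v^{[s,r]}(f)$.

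For the second assertion, the core computation is: using the defining properties $[\pi]_\CF(X) \equiv X^q \pmod{\pi}$ and $[\pi]_\CF(X) = \pi X + O(X^2)$, for $v_p(z)=s'$ with $s' < \frac{1}{(q-1) e_F}$, the inequality $qs' < \frac{1}{e_F} + s'$ says the term $z^q$ strictly dominates over $\pi z$ and over all cross terms (which carry an extra factor of $\pi$), so $v_p([\pi]_\CF(z)) = qs'$. A Newton polygon analysis applied to the equation $[\pi]_\CF(z) = w$ for $w$ with $qs \leq v_p(w) \leq qr$ shows that this equation has a single Newton slope equal to $-v_p(w)/q$ in the range of interest, guaranteeing roots $z$ of valuation $v_p(w)/q \in [s,r]$. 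Thus $z \mapsto [\pi]_\CF(z)$ maps the annulus $\{s \leq v_p(z) \leq r\}$ surjectively onto $\{qs \leq v_p(w) \leq qr\}$ preserving valuations up to the factor $q$, and a change of variable in the infimum yields $v^{[s,r]}(\varphi_q f) = v^{[qs, qr]}(f)$.

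The main obstacle is verifying the surjectivity in the second part rigorously; the hypothesis $r < \frac{1}{(q-1)e_F}$ is exactly the threshold that keeps the Newton polygon of $[\pi]_\CF(X)-w$ a single segment of slope $-v_p(w)/q$, which is what ensures preimages exist in the correct subannulus. Once this surjectivity is in place, both assertions are essentially a change of variable in the infimum formula for $v^{[s,r]}$.
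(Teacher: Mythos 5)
Your proposal is correct and follows essentially the same route as the paper: interpret $v^{[s,r]}$ as the infimum of $v_p(f(z))$ over the annulus, check that $z\mapsto[a]_\CF(z)$ preserves $v_p$ and is a bijection of the annulus, and check via the estimate $v_p([\pi]_\CF(z))=q\,v_p(z)$ (valid because $v_p(z)<\tfrac{1}{(q-1)e_F}$) together with the Newton polygon of $[\pi]_\CF(X)-w$ that $z\mapsto[\pi]_\CF(z)$ maps the annulus $\{s\leq v_p(z)\leq r\}$ onto $\{qs\leq v_p(w)\leq qr\}$. The only cosmetic difference is that for the $\Gamma$-part the paper deduces $v_p([a]_\CF(z))=v_p(z)$ from the two inequalities coming from $[a]_\CF$ and $[a^{-1}]_\CF$, while you read it off from the dominance of the linear term; both are fine.
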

\begin{proof}
Since \([\chi_\CF(\gamma)]_\CF(u_\CF) \in u_\CF \CO_F[[u_\CF]]\), we
have \(v_p([\chi_\CF(\gamma)]_\CF(z) ) \geqslant v_p(z)\) for all
\(z \in \BC_p\) such that \(v_p(z) > 0\). By the same reason we have
\(v_p([\chi_\CF(\gamma^{-1})]_\CF(z) ) \geqslant v_p(z)\) and thus
\(v_p([\chi_\CF(\gamma)]_\CF(z) ) \leqslant v_p(z)\). So
\(v_p([\chi_\CF(\gamma)]_\CF(z)) = v_p(z)\).

If $z\in \BC_p$ satisfies \(p^{-\frac{1}{(q-1) e_F}} < p^{-r}
\leqslant \left\lvert z \right\rvert \leqslant p^{-s} < 1\), then
\(v_p([\pi]_\CF(z)) = q v_p(z)\). Thus, the image by \(z \mapsto
[\pi]_\CF(z)\) of the annulus \( \{ z\in \BC_p: p^{-r} \leqslant
\left\lvert z \right\rvert \leqslant p^{-s} \} \) is inside the
annulus \( \{ z\in\BC_p: p^{-qr} \leqslant \left\lvert z
\right\rvert \leqslant p^{-qs} \} \). Conversely, if \(w \in \BC_p\)
is such that \( p^{-qr} \leqslant \left\lvert w \right\rvert
\leqslant p^{-qs}\), then \(v_p(w) < \frac{q}{(q-1) e_F}\). The
Newton polygon of the polynomial \(-w + [\pi]_\CF(u_\CF)\) shows
that this polynomial has \(q\) roots of valuation \(\frac{1}{q}
v_p(w)\). If \(z \in \BC_p\) is such a root, we have \( p^{-r}
\leqslant \left\lvert z \right\rvert \leqslant p^{-s}\). Thus, the
image of the annulus \( p^{-r} \leqslant \left\lvert z \right\rvert
\leqslant p^{-s}\) is the annulus \( p^{-qr} \leqslant \left\lvert z
\right\rvert \leqslant p^{-qs}\).
\end{proof}

We define $\psi:\SR_L\rightarrow \SR_L$ by $\psi=\frac{1}{q}\
\varphi_q^{-1}\circ \tr$.

\begin{lem}
\label{lemme-continuite-psi} If \( \frac{q}{(q-1) e_F}> r\geq s>0 \)
and \(f \in \SE_L^{]0,r]}\), then \(v^{\left[s,r\right]}(\psi(f))
\geqslant
    v^{\left[s/q, r/q\right]}(f) - v_p(q)\).
\end{lem}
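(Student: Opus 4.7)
The plan is to bootstrap the lemma from the two estimates already contained in the paper: the isometric behaviour of $\varphi_q$ on suitable annuli (Lemma \ref{lemme-continuite-phi-gamma}) and the trace bound $v^{[s,r]}(\tr(f))\geq v^{[s,r]}(f)$ that was established inside the proof of Proposition \ref{prop:tr}. By definition $\psi = \frac{1}{q}\varphi_q^{-1}\circ \tr$, so the task reduces to controlling $v^{[s,r]}(\varphi_q^{-1}(\tr(f)))$.

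First I would unwind the $\varphi_q$ step. Applied with radii $s/q,r/q$ in place of $s,r$, Lemma \ref{lemme-continuite-phi-gamma} gives $v^{[s/q,r/q]}(\varphi_q(g)) = v^{[s,r]}(g)$ for every $g\in\SE_L^{]0,r/q]}$; this application is legitimate because our hypothesis $r<\tfrac{q}{(q-1)e_F}$ is precisely $r/q<\tfrac{1}{(q-1)e_F}$. Setting $g=\varphi_q^{-1}(h)$ and rearranging yields
\[
 v^{[s,r]}(\varphi_q^{-1}(h)) = v^{[s/q,r/q]}(h)
\]
for every $h$ in the image of $\varphi_q$. In particular, applying this to $h=\tr(f)\in \varphi_q(\SR_L)$ (using Proposition \ref{prop:tr}) and dividing by $q$, I obtain
\[
 v^{[s,r]}(\psi(f)) \;=\; v^{[s/q,r/q]}(\tr(f)) - v_p(q).
\]

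Second, I would invoke the trace estimate from the proof of Proposition \ref{prop:tr}: for $f\in\SE_L^{\gg-\infty}$ and for any radii $s'\leq r'$ with $r'<\tfrac{1}{(q-1)e_F}$, one has $v^{[s',r']}(\tr(f))\geq v^{[s',r']}(f)$; this extends to all of $\SE_L^{]0,r']}$ by continuity and density of $\SE_L^{\gg-\infty}$. Taking $(s',r')=(s/q,r/q)$, which satisfies the required bound thanks to our assumption $r<\tfrac{q}{(q-1)e_F}$, I get $v^{[s/q,r/q]}(\tr(f))\geq v^{[s/q,r/q]}(f)$. Combining this with the previous identity gives exactly
\[
 v^{[s,r]}(\psi(f)) \;\geq\; v^{[s/q,r/q]}(f) - v_p(q),
\]
as desired.

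There is essentially no hard step; the only thing to watch is the range of radii. One must verify at both invocations that $r/q<\tfrac{1}{(q-1)e_F}$, which is the content of the hypothesis $\tfrac{q}{(q-1)e_F}>r$. If this were to fail, neither the isometry property of $\varphi_q$ nor the trace inequality could be safely applied, because the $\pi$-torsion points $\eta\in\ker[\pi]_\CF$ of valuation $\tfrac{1}{(q-1)e_F}$ would no longer preserve the annulus $s/q\leq v_p(z)\leq r/q$ under $z\mapsto z+_\CF\eta$.
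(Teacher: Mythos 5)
Your proof is correct and follows the same route as the paper's: use the isometry $v^{[s/q,r/q]}(\varphi_q(g))=v^{[s,r]}(g)$ from Lemma \ref{lemme-continuite-phi-gamma} (valid since $r/q<\tfrac{1}{(q-1)e_F}$) to reduce the claim to $v^{[s/q,r/q]}(q^{-1}\tr(f))\geq v^{[s/q,r/q]}(f)-v_p(q)$, and then invoke the trace estimate established in the proof of Proposition \ref{prop:tr}. You have merely written out in more detail the two steps the paper compresses into one line.
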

\begin{proof} By
Lemma~\ref{lemme-continuite-phi-gamma} it suffices to show that
$$
v^{\left[s/q,r/q\right]}(\varphi_q(\psi(f)))=v^{\left[s/q,r/q\right]}(q^{-1}\tr(f))\geq
    v^{\left[s/q, r/q\right]}(f) - v_p(q) . $$ But this follows from
Proposition \ref{prop:tr} and its proof.
\end{proof}

As a consequence, $\psi:\SR_L\rightarrow \SR_L$ is continuous.

\begin{cor}\label{cor:dagger}
\begin{enumerate}
\item\label{it:dagger} $\{u_\CF^i\}_{0\leq i\leq q-1}$ is a basis of $\SE^\dagger_L$
over $\varphi_q (\SE^\dagger_L)$, and
$\tr|_{\SE^\dagger_L}=\tr_{\SE^\dagger_L/\varphi_q(\SE^\dagger_L)}$.
\item $\{u_\CF^i\}_{0\leq i\leq q-1}$ is a basis of $\SR_L$
over $\varphi_q (\SR_L)$.
\end{enumerate}
\end{cor}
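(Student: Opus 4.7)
The plan is to use the operator $\psi$ to invert an explicit $q\times q$ linear system. Indeed, if $f=\sum_{i=0}^{q-1}\varphi_q(f_i)u_\CF^i$ in any of the rings $\SE_L$, $\SE^\dagger_L$, $\SR_L$, then applying $\psi\circ (u_\CF^{-k}\,\cdot\,)$ for $k=0,\ldots,q-1$ and using $\psi(\varphi_q(a)b)=a\psi(b)$ from Lemma~\ref{lem:psi} yields the matrix equation $A\vec f=\vec b$, where $A=\bigl(\psi(u_\CF^{i-k})\bigr)_{0\le k,i\le q-1}$ and $\vec b=\bigl(\psi(u_\CF^{-k}f)\bigr)_{0\le k\le q-1}$. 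The game is to show that $A^{-1}$ and $\vec b$ live in the appropriate target ring.

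For part~(a), linear independence of $\{u_\CF^i\}_{0\le i\le q-1}$ over $\varphi_q(\SE^\dagger_L)$ is automatic from the stronger linear independence over $\varphi_q(\SE_L)$. For spanning, I fix $f\in \SE^\dagger_L$ and take the (unique) decomposition $f=\sum \varphi_q(f_i)u_\CF^i$ with $f_i\in\SE_L$. By Proposition~\ref{prop:psi-con}(c), both $A$ and $\vec b$ have entries in $\SE^\dagger_L$. Uniqueness forces $A$ to be invertible in $M_q(\SE_L)$, so $\det A$ is a non-zero-divisor in $\SE^\dagger_L$; and since $L/F$ is finite and $\SE^\dagger$ is a field, $\SE^\dagger_L$ is a finite-dimensional $\SE^\dagger$-algebra (hence Artinian), so non-zero-divisors are units. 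Therefore $A^{-1}\in M_q(\SE^\dagger_L)$ and $\vec f=A^{-1}\vec b\in(\SE^\dagger_L)^q$. The trace identity is then formal: for $f\in \SE^\dagger_L$, the matrix of multiplication by $f$ in the basis $\{u_\CF^i\}$ of $\SE^\dagger_L$ over $\varphi_q(\SE^\dagger_L)$ coincides, by uniqueness, with that in $\SE_L$ over $\varphi_q(\SE_L)$, so the two traces agree.

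For part~(b), I plan to promote~(a) from $\SE^\dagger_L$ to $\SR_L$ by density and continuity. Set
\[
\mu:\SR_L^q\to\SR_L,\ (f_i)\mapsto\sum_{i=0}^{q-1}\varphi_q(f_i)u_\CF^i,\qquad \Psi:\SR_L\to\SR_L^q,\ f\mapsto A^{-1}\bigl(\psi(u_\CF^{-k}f)\bigr)_{0\le k\le q-1},
\]
using $A^{-1}\in M_q(\SE^\dagger_L)\subset M_q(\SR_L)$ from~(a) and the extension of $\psi$ to $\SR_L$ from Proposition~\ref{prop:tr}. A direct computation gives $\Psi\circ\mu=\id$, which settles injectivity of $\mu$ and therefore linear independence of $\{u_\CF^i\}$ over $\varphi_q(\SR_L)$. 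The opposite identity $\mu\circ\Psi=\id_{\SR_L}$ holds on $\SE^\dagger_L$ by~(a); since Laurent polynomials are dense in each $\SE^{]0,r]}_L$, and $\mu$ and $\Psi$ are continuous on $\SE^{]0,r]}_L$ for $r$ small enough (using continuity of $\varphi_q$ from Lemma~\ref{lemme-continuite-phi-gamma}, of $\psi$ from Lemma~\ref{lemme-continuite-psi}, and of multiplication by the fixed entries of $A^{-1}$), the identity extends by continuity to all of $\SR_L$, giving surjectivity of $\mu$.

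The main obstacle is the topological bookkeeping in the last step: one must choose $r$ small enough to invoke both continuity lemmas simultaneously, and track how $\varphi_q$ and $\psi$ shift the annulus parameters so that $\mu\circ\Psi$ is well-defined and continuous on a single Fréchet space $\SE^{]0,r]}_L$. All ingredients come from the cited lemmas, but some care is needed to choose a consistent chain of radii.
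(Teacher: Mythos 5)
Your argument is correct and is essentially the paper's: the paper inverts the Gram matrix $(\tr(u_\CF^{i+j}))_{i,j}$ in $\GL_q(\SE^\dagger_L)$ to produce a dual basis $\{b_i\}\subset \SE^\dagger_L$ and then extends the resulting formula $f=\sum_i u_\CF^i\,\varphi_q(\psi(b_i f))$ from $\SE^\dagger_L$ to $\SR_L$ by density and continuity of $\psi$, which is your $A^{-1}$ in slightly different clothing. The one point to tighten is the claim that ``uniqueness forces $A$ to be invertible in $M_q(\SE_L)$'': uniqueness only gives injectivity of $(f_i)\mapsto\sum_i\varphi_q(f_i)u_\CF^i$, and you additionally need injectivity of $f\mapsto(\psi(u_\CF^{-k}f))_{k}$, i.e.\ non-degeneracy of the trace form of the separable degree-$q$ extension $\SE_L/\varphi_q(\SE_L)$ (automatic in characteristic $0$), to conclude that $\det A$ is a non-zero-divisor.
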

\begin{proof}
Let $\{b_i\}_{0\leq i\leq q-1}$ be the dual basis of
$\{u_\CF^i\}_{0\leq i\leq q-1}$ relative to
$\tr_{\SE_L/\varphi_q(\SE_L)}$. Let $B$ be the inverse of the matrix
$(\tr(u_\CF^{i+j}))_{i,j}$. Then $B\in \GL_{q}(\SE_L^\dagger)$ and
$(b_0,b_1,\cdots, b_{q-1})^t=B(1, u_\CF, \cdots, u_\CF^{q-1})^t$. So
$b_0,b_1,\cdots, b_{q-1}$ are in $\SE_L^\dagger$. Then
$f=\sum_{i=0}^{q-1}u_\CF^i \psi(b_i f)$ for any $f\in \SE_L$,
$\SE^\dagger_L$ or $\SR_L$. (For the former two cases, this follows
from the definition of $\{b_i\}_{0\leq i\leq q-1}$; for the last
case, we apply the continuity of $\psi$.) Thus $\{u_\CF^i\}_{0\leq
i\leq q-1}$ generate $\SE_L^\dagger$ (resp. $\SR_L$) over
$\varphi_q(\SE^\dagger_L)$ (resp. $\varphi_q(\SR_L)$). In either
case, to prove the independence of $\{u_\CF^i\}_{0\leq i\leq q-1}$,
we only need to use the fact $\psi(b_iu_\CF^j)=\delta_{ij}$ $(i,j\in
\{0,1,\cdots, q-1\})$, where $\delta_{ij}$ is the Kronecker sign.
Finally we note that the second assertion of (\ref{it:dagger})
follows from the first one.
\end{proof}

We apply the above to $(\varphi_q,\Gamma)$-modules.

\begin{prop} If $D$ is a $(\varphi_q,\Gamma)$-module over $R$ where
$R=\SE_L$, $\SE_L^\dagger$ or $\SR_L$, then there is a unique
operator $\psi: D\rightarrow D$ such that \begin{equation}
\label{eq:psi-D} \psi(a\varphi_q(x))=\psi(a)x \text{ and }
\psi(\varphi_q(a)x)=a\psi(x) \end{equation} for any $a\in R$ and
$x\in D$. Moreover $\psi$ commutes with $\Gamma$.
\end{prop}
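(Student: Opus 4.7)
The plan is to extend the scalar-level $\psi$ (of Lemma~\ref{lem:psi} and Proposition~\ref{prop:tr}) to $D$ by using the decomposition of $D$ as a free module over $\varphi_q(R)$. The key observation is that since $\varphi_q$ sends an $R$-basis $\{e_1,\ldots,e_d\}$ of $D$ to an $R$-basis, any $y\in D$ admits a unique expression
\[
y=\sum_{j=1}^d b_j\,\varphi_q(e_j),\qquad b_j\in R.
\]
Uniqueness of $\psi$ is then immediate: if an additive $\psi\colon D\to D$ satisfies $\psi(a\varphi_q(x))=\psi(a)x$, taking $a=b_j$ and $x=e_j$ and summing forces
\[
\psi(y)=\sum_j \psi(b_j)\,e_j.
\]

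For existence, I would \emph{define} $\psi$ by this formula; this is well-defined since the expansion in the basis $\{\varphi_q(e_j)\}$ is unique. The verifications that both relations in \eqref{eq:psi-D} hold are then direct. For the first relation, writing an arbitrary $x\in D$ as $x=\sum_j c_j e_j$ with $c_j\in R$, I compute $a\varphi_q(x)=\sum_j a\varphi_q(c_j)\varphi_q(e_j)$, so
\[
\psi(a\varphi_q(x))=\sum_j \psi(a\varphi_q(c_j))\,e_j=\sum_j \psi(a)c_j\,e_j=\psi(a)x,
\]
where I used Lemma~\ref{lem:psi}(\ref{it:psi-3}) (extended to $\SR_L$ via Proposition~\ref{prop:tr}). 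For the second relation, writing $x=\sum_j b_j\varphi_q(e_j)$ I get $\varphi_q(a)x=\sum_j \varphi_q(a)b_j\,\varphi_q(e_j)$, so
\[
\psi(\varphi_q(a)x)=\sum_j \psi(\varphi_q(a)b_j)\,e_j=\sum_j a\psi(b_j)\,e_j=a\psi(x).
\]

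Finally, to check that $\psi$ commutes with $\Gamma$, I would fix $\gamma\in\Gamma$ and write $\gamma(e_j)=\sum_k c_{jk}e_k$ with $c_{jk}\in R$. Using $\gamma\circ\varphi_q=\varphi_q\circ\gamma$, one has $\gamma(\varphi_q(e_j))=\sum_k \varphi_q(c_{jk})\varphi_q(e_k)$. For $y=\sum_j b_j\varphi_q(e_j)$ this gives
\[
\gamma(y)=\sum_{j,k}\gamma(b_j)\varphi_q(c_{jk})\,\varphi_q(e_k),
\]
and applying $\psi$ together with Lemma~\ref{lem:psi}(\ref{it:psi-4}) yields
\[
\psi(\gamma(y))=\sum_{j,k} c_{jk}\,\psi(\gamma(b_j))\,e_k=\sum_{j,k} c_{jk}\,\gamma(\psi(b_j))\,e_k,
\]
which coincides with $\gamma(\psi(y))=\sum_{j,k}\gamma(\psi(b_j))c_{jk}\,e_k$ by commutativity of $R$. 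There is no real obstacle: the only subtlety is the slight asymmetry between the two relations in \eqref{eq:psi-D}, handled cleanly by choosing the correct basis ($\{\varphi_q(e_j)\}$ for defining $\psi$, $\{e_j\}$ for checking the first relation).
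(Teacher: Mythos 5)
Your proof is correct and follows essentially the same route as the paper: define $\psi$ on $D$ by expanding in the basis $\{\varphi_q(e_j)\}$ and applying the scalar $\psi$ to the coefficients, with uniqueness forced by the first relation in \eqref{eq:psi-D}. The only (cosmetic) difference is in the $\Gamma$-equivariance: you verify it by an explicit computation with the matrix of $\gamma$, whereas the paper observes that $\gamma\psi\gamma^{-1}$ satisfies the same characterizing relations and invokes uniqueness.
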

\begin{proof}  Let $\{e_1, e_2, \cdots, e_d\}$ be a basis of $D$ over
$R$. By the definition of $(\varphi_q,\Gamma)$-modules,
$\{\varphi_q(e_1), \varphi_q(e_2), \cdots, \varphi_q(e_d)\}$ is also
a basis of $D$. For any $m\in D$ writing $ m = a_1\varphi_q(e_1) +
a_2\varphi_q(e_2) + \cdots + a_d\varphi_q(e_d)$, we put
$\psi(m)=\psi(a_1)e_1 + \psi(a_2)e_2 + \cdots + \psi(a_d)e_d$. Then
$\psi$ satisfies (\ref{eq:psi-D}). It is easy to prove the
uniqueness of $\psi$. Observe that for any $\gamma\in\Gamma$,
$\gamma\psi\gamma^{-1}$ also satisfies (\ref{eq:psi-D}). Thus
$\gamma\psi\gamma^{-1}=\psi$ by uniqueness of $\psi$. This means
that $\psi$ commutes with $\Gamma$.
\end{proof}

\subsection{The operator $\partial$ and the map $\Res$}
\label{ss:partial}

Recall that $\partial=\frac{\partial F_\CF}{\partial Y}(u_\CF,0)
\cdot \rd/\rd u_\CF$. So $\rmd t_\CF= \frac{\partial F_\CF}{\partial
Y}(u_\CF,0) \rmd u_\CF$ and $\frac{\rmd t_\CF}{\rmd
u_\CF}=(\frac{\partial F_\CF}{\partial Y}(u_\CF,0))^{-1}$.

\begin{lem}\label{lemme-continuite-partial}
If $r\geq s>0$ and \(f \in \Robba_L^{]0,r]}\), then
\(v^{[s,r]}(\partial f) \geqslant v^{[s,r]}(f) - r\).
\end{lem}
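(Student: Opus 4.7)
The plan is to split $\partial$ into its two factors, bound each on any annulus $v_p(u_\CF)=s'$ with $s'\in[s,r]$ separately, and then take the infimum.

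First I would check that the factor $\frac{\partial F_\CF}{\partial Y}(u_\CF,0)$ does no harm. Since $F_\CF(X,Y)\in\CO_F[[X,Y]]$ and $F_\CF(X,0)=X$, $F_\CF(0,Y)=Y$, one can write $F_\CF(X,Y)=X+Y+\sum_{i,j\geq 1}a_{ij}X^iY^j$ with $a_{ij}\in\CO_F$, whence $\frac{\partial F_\CF}{\partial Y}(u_\CF,0)=1+\sum_{i\geq 1}a_{i,1}u_\CF^i\in 1+u_\CF\CO_F[[u_\CF]]$. In particular $v^{\{s'\}}\!\big(\frac{\partial F_\CF}{\partial Y}(u_\CF,0)\big)\geq 0$ for every $s'>0$, so multiplying by this factor is nonincreasing on $v^{\{s'\}}$.

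Second, I would bound the derivative $\rmd/\rmd u_\CF$ by a termwise calculation. Writing $f=\sum_{i\in\BZ}a_iu_\CF^i$, one has
\[
v^{\{s'\}}(i a_i u_\CF^{i-1})=v_p(i a_i)+(i-1)s'\geq v_p(a_i)+is'-s'=v^{\{s'\}}(a_iu_\CF^i)-s',
\]
using only $v_p(i)\geq 0$. Taking the infimum over $i$ gives $v^{\{s'\}}(\rmd f/\rmd u_\CF)\geq v^{\{s'\}}(f)-s'$, and combined with the previous step,
\[
v^{\{s'\}}(\partial f)\geq v^{\{s'\}}(f)-s'\qquad(\text{all }s'>0).
\]

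Finally, for $s'\in[s,r]$ we have $-s'\geq -r$, so
\[
v^{[s,r]}(\partial f)=\inf_{s'\in[s,r]}v^{\{s'\}}(\partial f)\geq \inf_{s'\in[s,r]}\bigl(v^{\{s'\}}(f)-r\bigr)=v^{[s,r]}(f)-r,
\]
which is the claim. There is no real obstacle here; the inequality is actually slightly wasteful in that the sharper pointwise bound is $-s'$ rather than $-r$, but that sharper version is not needed for the lemma as stated.
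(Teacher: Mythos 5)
Your proof is correct and follows essentially the same route as the paper: neutralize the factor $\frac{\partial F_\CF}{\partial Y}(u_\CF,0)$ (the paper notes its valuation is exactly $0$ on the open unit disk, you derive $\geq 0$ from its membership in $1+u_\CF\CO_F[[u_\CF]]$, either of which suffices), then bound $\rmd/\rmd u_\CF$ termwise using $v_p(n)\geq 0$ and conclude with $-s'\geq -r$. No issues.
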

\begin{proof}
Observe that \(v_p(\frac{\partial F_\CF}{\partial Y}(z,0) ) = 0\)
for all \(z\) in the disk \(\left\lvert z\right\rvert < 1\). Thus
\(v^{[s,r]}(\partial f) = v^{[s,r]}\left(\frac{\rd f}{\rd u_\CF}
\right)\). Write \( f = \sum_{n \in \BZ} a_n u_\CF^n \). Then we
have
\begin{align*}
v^{[s,r]}\left(\frac{\rd f}{\rd u_\CF} \right)
    &= \inf_{\substack{ r \geqslant v_p(z) \geqslant s \\
        n \in \BZ}} v_p\left(n a_n z^{n-1}\right) \\
    &\geqslant \inf_{\substack{ r \geqslant v_p(z) \geqslant s \\
        n \in \BZ}} \left(v_p(a_n) + n v_p(z) - v_p(z)\right) \\
    &\geqslant \inf_{\substack{ r \geqslant v_p(z) \geqslant s \\
        n \in \BZ}} \left(v_p(a_n) + n v_p(z)\right) -r \\
    &\geqslant v^{[s,r]}(f) -r,
\end{align*} as desired.
\end{proof}

\begin{lem} \label{lemma-comm-1} We have
$$ \partial\cdot \sigma_a=a\sigma_a \cdot
\partial, \hskip 10pt  \partial \cdot
\varphi_q=\pi  \varphi_q \cdot
\partial , \hskip 10pt \partial \circ \psi=\pi ^{-1} \psi\circ \partial. $$
\end{lem}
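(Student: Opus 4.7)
The plan is to reduce (a) and (b) to commutations with $\nabla = t_\CF\partial$ (equation \eqref{eqn-partial-0}) and then to derive (c) from (b) together with the identity $q\varphi_q\circ\psi = \tr$ from Proposition~\ref{prop:tr}.

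First I will establish that $\nabla$ commutes with every $\sigma_a$ and with $\varphi_q$. Both facts are consequences of the way $\nabla$ is constructed from the $\Gamma$-action in Section~\ref{ss:an}. Since $\Gamma$ is abelian, any $\sigma_a$ commutes with every $\gamma\in\Gamma$, hence with the operator $\log\gamma = \sum_{i\geq 1}(-1)^{i-1}(\gamma-1)^i/i$ for $\gamma$ close to $1$, and thus with $\rd\Gamma(\beta)$ for all $\beta\in\CO_F$, in particular with $\nabla$. Exactly the same argument, applied using the fact that $\varphi_q$ commutes with every element of $\Gamma$ (part of the definition of a $(\varphi_q,\Gamma)$-module), shows $[\varphi_q,\nabla]=0$. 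Combining $\nabla\sigma_a = \sigma_a\nabla$ with $\sigma_a(t_\CF)=at_\CF$ gives
\[
t_\CF\,\partial\sigma_a = \nabla\sigma_a = \sigma_a\nabla = \sigma_a(t_\CF)\sigma_a\partial = a\,t_\CF\,\sigma_a\partial,
\]
and canceling the regular element $t_\CF$ yields (a). Likewise $\nabla\varphi_q = \varphi_q\nabla$ together with $\varphi_q(t_\CF)=\pi t_\CF$ yields (b).

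For (c) I will use Proposition~\ref{prop:tr}, which asserts that $\tr = q\varphi_q\circ\psi$ extends continuously to all of $\SR_L$. The key point is that $\partial$ commutes with $\tr$. On the dense subring $\SE_L^{\gg -\infty}$ one has the explicit formula $\tr(f) = \sum_{\eta\in\ker[\pi]_\CF} f(u_\CF +_\CF \eta)$, and $\partial$ is (up to a constant) the invariant derivation attached to $F_\CF$: indeed $\rd t_\CF = (\partial F_\CF/\partial Y)(u_\CF,0)\,\rd u_\CF$ and $t_\CF = \log_\CF(u_\CF)$ satisfies $\log_\CF(u +_\CF \eta) = \log_\CF(u) + \log_\CF(\eta)$, so the translation $T_\eta\colon f(u_\CF)\mapsto f(u_\CF +_\CF \eta)$ satisfies $\partial\circ T_\eta = T_\eta\circ \partial$. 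Summing over $\eta\in\ker[\pi]_\CF$ gives $\partial\circ\tr = \tr\circ\partial$ on $\SE_L^{\gg -\infty}$, and the continuity of $\partial$ (Lemma~\ref{lemme-continuite-partial}) and of $\tr$ extend the identity to all of $\SR_L$. Applying (b) and the identity $\tr = q\varphi_q\psi$ we get
\[
q\pi\,\varphi_q\partial\psi = q\,\partial\varphi_q\psi = \partial\tr = \tr\partial = q\,\varphi_q\psi\partial,
\]
and the injectivity of $\varphi_q$ forces $\pi\,\partial\psi = \psi\partial$, i.e.\ (c).

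The main obstacle is making precise the translation invariance of $\partial$. Concretely it reduces to the identity $(\partial F_\CF/\partial Y)(u,0)\cdot(\partial F_\CF/\partial X)(u,\eta) = (\partial F_\CF/\partial Y)(u +_\CF \eta,0)$, which is obtained by differentiating the associativity $F_\CF(F_\CF(u,\eta),v)=F_\CF(u,F_\CF(\eta,v))$ in the variable $v$ at $v=0$; equivalently it expresses that the invariant differential $\omega = \rd u_\CF/(\partial F_\CF/\partial Y)(u_\CF,0) = \rd t_\CF$ is translation invariant, a standard property of formal group laws. Once this is in hand the rest of the proof is just the bookkeeping above.
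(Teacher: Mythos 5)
Your argument for the first two identities is essentially the paper's own: show that $\nabla=t_\CF\partial$ commutes with $\sigma_a$ and with $\varphi_q$ (because these commute with $\Gamma$, hence with $\log\gamma$ and with $\rd\Gamma(\beta)$), then use the semilinearity together with $\sigma_a(t_\CF)=at_\CF$ and $\varphi_q(t_\CF)=\pi t_\CF$ and cancel the regular element $t_\CF$. For the third identity, however, you take a genuinely different route. The paper stays inside the same framework: $\psi$ commutes with $\Gamma$ (Lemma \ref{lem:psi}), hence with $\nabla$, and then $\psi(t_\CF\partial f)=\psi(\pi^{-1}\varphi_q(t_\CF)\partial f)=\pi^{-1}t_\CF\psi(\partial f)$ by the projection formula, which gives $\partial\circ\psi=\pi^{-1}\psi\circ\partial$ after cancelling $t_\CF$ --- a one-line deduction once the commutation of $\psi$ with $\Gamma$ is in hand. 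You instead go through the explicit trace formula $\tr(f)=\sum_{\eta\in\ker[\pi]_\CF}f(u_\CF+_\CF\eta)$ on the dense subring $\SE_L^{\gg-\infty}$, prove that $\partial$ commutes with the formal-group translations $T_\eta$ (the translation invariance of $\rd t_\CF$), extend by continuity, and then combine $\partial\circ\tr=\tr\circ\partial$ with $\tr=q\varphi_q\psi$, part (b), and the injectivity of $\varphi_q$. This is correct and self-contained; it trades the soft input (commutation of $\psi$ with $\Gamma$) for a more computational one (invariance of the derivation under translation), and has the small virtue of exhibiting $\partial$ concretely as the invariant derivation $\rd/\rd t_\CF$. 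One cosmetic remark: your cleanest justification of $\partial\circ T_\eta=T_\eta\circ\partial$ is the one via the logarithm ($t_\CF(u_\CF+_\CF\eta)=t_\CF(u_\CF)+\log_\CF(\eta)$ differs from $t_\CF$ by a constant, so $\rd/\rd t_\CF$ is unchanged); the derivation you sketch by differentiating associativity needs to be set up in the form $\log_\CF'(F_\CF(u,\eta))\cdot\frac{\partial F_\CF}{\partial X}(u,\eta)=\log_\CF'(u)$ to land exactly on the identity you state, but this is a standard fact about the invariant differential and not a gap.
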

\begin{proof}
From the definition of $\nabla$ we see that $\nabla=t_\CF\partial$
commutes with $\Gamma$, $\varphi_q$ and $\psi$. So the equalities
$$\sigma_a(t_\CF)=at_\CF, \ \varphi_q(t_\CF)=\pi t_\CF,
\ \psi(t_\CF)=\psi(\pi^{-1}\varphi_q(t_\CF))=\pi^{-1}t_\CF $$ imply
the lemma.
% the equality $\log_\CF([a](u))=a \log_\CF (u)$ for $a\in \CO_L$ yields the result.
\end{proof}

Let $\res:\SR_L \rd u_\CF\rightarrow L$ be the residue map
$\res(\sum\limits_{i\in \BZ} a_i u_\CF^i \rd u_\CF)=a_{-1}$,
and let $\Res:\SR_L\rightarrow L$ be the map defined by
$\Res(f)=\res(f \rd t_\CF)$.

\begin{prop}\label{prop:res-partial} We have the following exact sequence
$$ \xymatrix{ 0 \ar[r] & L \ar[r] & \SR_L \ar[r]^{\partial}
& \SR_L \ar[r]^{\Res} & L \ar[r] & 0   } $$ where  $L\rightarrow
\SR_L$ is the inclusion map.
\end{prop}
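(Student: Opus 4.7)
\emph{Proof plan.} Set $h := \frac{\partial F_\CF}{\partial Y}(u_\CF, 0)$. Since $F_\CF(X,Y) = X + Y + \cdots$, we have $h \in 1 + u_\CF \CO_F[[u_\CF]]$, so $h$ is a unit in $\CO_F[[u_\CF]] \subset \SR_L$, and by definition $\partial = h \cdot d/du_\CF$. From the identity $\partial t_\CF = 1$ recorded in Section~\ref{ss:an} one obtains $dt_\CF = h^{-1}\,du_\CF$, and hence
\[
\Res(f) \;=\; \res\bigl((f/h)\,du_\CF\bigr) \;=\; [u_\CF^{-1}](f/h).
\]
Because multiplication by $h$ is an automorphism of $\SR_L$, we get $\ker(\partial) = \ker(d/du_\CF)$, $\im(\partial) = h \cdot \im(d/du_\CF)$ and $\ker(\Res) = h \cdot \ker(\res)$, so the whole four-term sequence is equivalent to the analogous one with $(\partial,\Res)$ replaced by $(d/du_\CF,\res)$.

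The injection $L \hookrightarrow \SR_L$ is tautological, and $\ker(d/du_\CF) = L$ is immediate from inspecting Laurent coefficients. Surjectivity of $\Res$ follows from $\Res(h/u_\CF) = [u_\CF^{-1}](1/u_\CF) = 1$. The inclusion $\im(d/du_\CF) \subseteq \ker(\res)$ is clear since the $u_\CF^{-1}$-coefficient of $df/du_\CF$ is $0\cdot a_0 = 0$.

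The only real content is the reverse inclusion $\ker(\res) \subseteq \im(d/du_\CF)$ in $\SR_L$. Given $g = \sum_{i \in \BZ} b_i u_\CF^i \in \SE_L^{]0,r]}$ with $b_{-1} = 0$, I would write down the formal antiderivative
\[
f \;:=\; \sum_{i \neq -1} \frac{b_i}{i+1}\, u_\CF^{i+1},
\]
which term-by-term satisfies $df/du_\CF = g$, and verify that $f \in \SE_L^{]0,r']}$ for every $r' \in (0,r)$. The main obstacle is this convergence check: dividing the coefficient $b_i$ by $i+1$ incurs a $p$-adic loss $v_p(i+1) \leq \log_p|i+1|$, which is logarithmic in $|i|$. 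For $i \to +\infty$ the loss is absorbed because $v_p(b_i) + i\varepsilon \to +\infty$ for every $\varepsilon > 0$ (the positive part of $g$ is holomorphic on the open unit disk). For $i \to -\infty$ the hypothesis only gives $v_p(b_i) + ir \to +\infty$; the trick is to work with a strictly smaller outer exponent $r' < r$, so that
\[
v_p(b_i/(i+1)) + i r' \;\geq\; \bigl(v_p(b_i) + ir\bigr) + i(r'-r) - \log_p|i+1|,
\]
where the term $i(r'-r) = |i|(r-r')$ grows linearly to $+\infty$ and dominates the logarithmic loss. Hence $f \in \SE_L^{]0,r']} \subset \SR_L$ and the sequence is exact at the second $\SR_L$.
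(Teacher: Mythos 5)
Your proof is correct and follows essentially the same route as the paper: reduce $(\partial,\Res)$ to $(\rd/\rd u_\CF,\res)$ via the unit $\frac{\partial F_\CF}{\partial Y}(u_\CF,0)$, exhibit $a\cdot\frac{\partial F_\CF}{\partial Y}(u_\CF,0)/u_\CF$ to get surjectivity of $\Res$, and produce the term-by-term antiderivative for exactness in the middle. Your explicit verification that the antiderivative converges (absorbing the logarithmic loss $v_p(i+1)$ by shrinking the outer radius to $r'<r$) is a detail the paper's proof passes over silently, and it is handled correctly.
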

\begin{proof} The kernel of $\partial$ is just the kernel of $\rd/\rd
u_\CF$ and thus is $L$. For any $a\in L$ we have $\Res(
\frac{a}{u_\CF} \cdot (\frac{\rd t_\CF}{\rd u_\CF})^{-1})=a$, which
implies that $\Res$ is surjective. If $f=\partial g$, then $f\rmd
t_\CF=\mathrm{d}g$ and so $\Res(f)=\res(\rd g)=0$. It follows that
$\Res\circ \partial=0$. Conversely, if $f\in \SR_L$ satisfies
$\Res(f)=0$, then $f$ can be written as $f=(\frac{\rmd t_\CF}{\rmd
u_\CF})^{-1} \cdot \sum_{i\neq -1}a_i u_\CF^i$. Put $g=\sum_{i\neq
-1}\frac{a_i}{i+1}u_\CF^{i+1}$. Then $f=\partial g$.
\end{proof}
\

\begin{prop} ~ \label{prop-tate-trace}
\begin{enumerate}
\item \label{it:res-1}
$\Res\circ \sigma_a =a^{-1}\Res $.
\item \label{it:res-2}
$\Res\circ \varphi_q =\frac{q}{\pi }\Res $ and $\Res\circ \psi
=\frac{\pi }{q}\Res $.
\end{enumerate}
\end{prop}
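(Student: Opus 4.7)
The plan is to interpret $\Res(f)=\res(f\,\rd t_\CF)$ as the residue of a $1$-form on the formal annulus and to exploit change-of-variable formulas. For (a), $\sigma_a\colon u_\CF\mapsto[a]_\CF(u_\CF)$ is an invertible change of variable because $a\in\CO_F^\times$, and $\sigma_a(t_\CF)=at_\CF$ means that $\rd t_\CF$ rescales by $a$ under this map. Invariance of $\res$ under bijective changes of variable then gives $\Res(f)=\res(f\,\rd t_\CF)=\res(\sigma_a(f)\cdot a\,\rd t_\CF)=a\Res(\sigma_a f)$, which is (a).

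For the $\varphi_q$ identity in (b), I would establish the degree-$q$ analogue of this change-of-variable formula: for every $g\in\SR_L$,
$$\res_{u_\CF}\!\bigl(g([\pi]_\CF(u_\CF))\cdot[\pi]_\CF'(u_\CF)\,\rd u_\CF\bigr)=q\cdot\res(g\,\rd u_\CF).$$
This can be checked on the topological basis $g=u_\CF^n$: for $n\neq-1$ both sides vanish because the integrand equals $\rd([\pi]_\CF(u_\CF)^{n+1}/(n+1))$, an exact differential; for $n=-1$ the integrand is $\rd\log[\pi]_\CF(u_\CF)$, whose Robba-ring $u_\CF^{-1}$-coefficient counts the zeros of $[\pi]_\CF$ in the open unit disk, namely the $q$ elements of $\ker[\pi]_\CF$. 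Applying the formula with $g(v)=f(v)\log_\CF'(v)$, and using $\log_\CF'([\pi]_\CF(u))\cdot[\pi]_\CF'(u)=\pi\log_\CF'(u)$ (which follows by differentiating $\log_\CF\circ[\pi]_\CF=\pi\log_\CF$), the left-hand side becomes $\pi\Res(\varphi_q(f))$ while the right-hand side equals $q\Res(f)$, giving $\Res\circ\varphi_q=(q/\pi)\Res$.

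For the $\psi$ identity I would use $\psi=q^{-1}\varphi_q^{-1}\circ\Tr$ from Proposition~\ref{prop:tr} and reduce to proving $\Res\circ\Tr=q\Res$. Since $\Tr(f)=\sum_{\zeta\in\ker[\pi]_\CF}f(u_\CF+_\CF\zeta)$, each summand is the image of $f$ under the formal translation $T_\zeta\colon u_\CF\mapsto u_\CF+_\CF\zeta$, which is a bijection of the Robba-ring annulus $p^{-r}\le|u_\CF|<1$ for $r$ small enough that $|\zeta|<p^{-r}$ (the ultrametric then forces $|u_\CF+_\CF\zeta|=|u_\CF|$); the identity $t_\CF(u+_\CF\zeta)=t_\CF(u)+t_\CF(\zeta)$ implies that $\rd t_\CF$ is translation-invariant, so each summand preserves $\Res$. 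Summing over the $q$ elements of $\ker[\pi]_\CF$ yields $\Res\circ\Tr=q\Res$, and combining with the $\varphi_q$ identity gives $\Res\circ\psi=q^{-1}\cdot(\pi/q)\cdot q\,\Res=(\pi/q)\Res$.

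The delicate step is the $n=-1$ case of the degree-$q$ change-of-variable formula: one must identify the Robba-ring $u_\CF^{-1}$-coefficient of $\rd\log[\pi]_\CF$ with the total number of zeros of $[\pi]_\CF$ in the open unit disk. This is the $p$-adic analogue of the residue theorem for Laurent expansions on the outer boundary of an annulus and must be carefully distinguished from the naive Laurent expansion of $[\pi]_\CF^{-1}$ around $u_\CF=0$, which would capture only the simple pole at the origin rather than the full contribution from all $q$ points of $\ker[\pi]_\CF$.
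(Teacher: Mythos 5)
Your proof is correct, but it takes a genuinely different route from the paper's. The paper deduces everything from the exact sequence of Proposition~\ref{prop:res-partial} together with the commutation relations of Lemma~\ref{lemma-comm-1}: these show that $\Res\circ\sigma_a-a^{-1}\Res$ and $\Res\circ\psi-\frac{\pi}{q}\Res$ vanish on the codimension-one subspace $\partial\SR_L$, so it suffices to evaluate on the single element $1/u_\CF$, which for $\psi$ is done by reducing to the special Lubin--Tate group where $\psi(1/u_\CF)=\frac{\pi}{q}u_\CF^{-1}$ is known explicitly (Proposition~\ref{prop:psi-sp}); the $\varphi_q$ formula is then read off from $\psi\circ\varphi_q=\id$. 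You instead work directly with residues of $1$-forms: $\sigma_a$ and the translations $u_\CF\mapsto u_\CF+_\CF\zeta$ are automorphisms of a thin annulus scaling $\rd t_\CF$ by $a$ (resp.\ fixing it), giving (a) and $\Res\circ\Tr=q\Res$; a degree-$q$ pushforward formula gives the $\varphi_q$ identity, and $\psi$ is deduced last rather than first. Your route is more geometric and avoids the special Lubin--Tate reduction, at the price of two facts asserted without proof. The invariance of $\res$ under automorphisms of the annulus is most cleanly obtained from the very exact sequence the paper uses ($\res(\omega)=0$ iff $\omega$ is exact, plus normalization on $\rd u_\CF/u_\CF$), so the two arguments are closer than they appear. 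The $n=-1$ count you rightly flag as the delicate step is true and can be nailed down as follows: write $[\pi]_\CF(u_\CF)=u_\CF Q(u_\CF)$ and apply Weierstrass preparation to $Q$, so that $[\pi]_\CF'/[\pi]_\CF=u_\CF^{-1}+\sum_{\eta}(u_\CF-\eta)^{-1}+(\text{an element of }\SR_L^+)$, the sum running over the $q-1$ roots $\eta$ of the distinguished factor, all of valuation $v_p(\eta)=\frac{1}{(q-1)e_F}$; on an annulus with $v_p(u_\CF)\leq r<\frac{1}{(q-1)e_F}$ each $(u_\CF-\eta)^{-1}$ expands as $u_\CF^{-1}\sum_{k\geq 0}(\eta/u_\CF)^k$ and contributes $1$ to the $u_\CF^{-1}$-coefficient, whence the total $q$. (For $n\leq -2$ your exactness argument is also fine, since $[\pi]_\CF$ has no zeros on such an annulus and hence $[\pi]_\CF^{n+1}\in\SR_L$.)
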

\begin{proof}
First we prove (\ref{it:res-1}). Let $g$ be in $\SR_L$ and put
$f=\partial g$. By Lemma \ref{lemma-comm-1}  we have
$$\sigma_a(f)=\sigma_a\circ\partial (g) = a^{-1}\partial(\sigma_a(g)), \hskip 10pt \psi(f)=\psi\circ\partial (g) = \pi \partial(\psi(g)).$$
Thus by Proposition \ref{prop:res-partial} we have $\Res\circ
\sigma_a=a^{-1}\Res=0$ and $\Res\circ \psi=\frac{\pi }{q}\Res=0$ on
$\partial \SR_L$. From
$$\sigma_a(1/u_\CF)=\frac{1}{[a]_\CF(u_\CF)}\equiv \frac{1}{au_\CF}\mod \SR_L^+,$$ we see
that $\Res\circ
\sigma_a(\frac{1}{u_\CF})=a^{-1}\Res(\frac{1}{u_\CF})$. Assertion
(\ref{it:res-1}) follows.

To prove $ \Res\circ \psi =\frac{\pi }{q}\Res $, without loss of
generality we suppose that $\CF$ is the special Lubin-Tate group. In
this case $\psi(\frac{1}{u_\CF})=\frac{\pi}{q u_\CF}$, and so
$\Res(\psi(1/u_\CF)) =\frac{\pi }{q}\Res(1/u_\CF).$ It follows that
$\Res\circ \psi=\frac{\pi }{q} \Res$. Finally we have $
\Res(\varphi_q(z)) =\frac{q}{\pi }\Res(\psi(\varphi_q(z)))
=\frac{q}{\pi }\Res(z) $ for any $z\in \SR_L$. In other words,
$\Res\circ \varphi_q = \frac{q}{\pi }\Res$.
\end{proof}

Using $\Res$ we can define a pairing $\{\cdot, \cdot\}: \SR_L\times
\SR_L\rightarrow L$ by $\{f, g\}=\Res(fg)$.

\begin{prop} \label{prop:dual} \begin{enumerate}
\item \label{it:pairing-perfect}
The pairing $\{\cdot, \cdot\}$ is perfect and induces a continuous
isomorphism from $\SR_L$ to its dual.
\item \label{it:pairing-property}
We have $$\{\sigma_a (f), \sigma_a(g)\}=a^{-1}\{ f, g\}, \hskip 10pt
\{ \varphi_q(f), \varphi_q(g)\}=\frac{q}{\pi}\{f, g\}, \hskip 10pt
\{\psi (f), \psi(g)\}=\frac{\pi}{q}\{f, g\}. % \hskip 10pt \{\psi (f),g\}=\frac{\pi}{q}\{f, \varphi_q(g)\}.
$$ \end{enumerate}
\end{prop}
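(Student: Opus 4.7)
The plan is to handle the two parts of the proposition separately. For part (\ref{it:pairing-perfect}), I would proceed in three steps. Continuity of $\{f,g\}=\Res(fg)$ reduces to continuity of $\Res$ (which extracts the $u_\CF^{-1}$-coefficient after multiplying by the unit $\rd t_\CF/\rd u_\CF \in \SR_L^\times$, both continuous on the LF-space $\SR_L$) together with separate continuity of multiplication in $\SR_L$. Non-degeneracy follows by exhibiting explicit test elements: if $f = \sum_n a_n u_\CF^n$ has $a_{n_0}\neq 0$, then $g = u_\CF^{-n_0-1}(\rd t_\CF/\rd u_\CF)^{-1}\in \SR_L$ yields $\Res(fg) = a_{n_0}$. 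Finally, for surjectivity of the induced map onto the continuous dual, any $\lambda \in \SR_L^\ast$ restricts to a continuous functional on each Fr\'echet piece $\SE_L^{]0,r]}$, where it is represented by a coefficient sequence satisfying the dual of the defining growth bound; these coefficients assemble into a unique element $g_\lambda \in \SR_L$ with $\lambda(\cdot) = \Res(\cdot\, g_\lambda)$.

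For part (\ref{it:pairing-property}), the $\sigma_a$ and $\varphi_q$ identities follow immediately from Proposition \ref{prop-tate-trace}: both $\sigma_a$ and $\varphi_q$ are continuous ring (endo)morphisms of $\SR_L$, so $\sigma_a(fg)=\sigma_a(f)\sigma_a(g)$ and $\varphi_q(fg)=\varphi_q(f)\varphi_q(g)$, and one composes with the scalar identities $\Res\circ\sigma_a = a^{-1}\Res$ and $\Res\circ\varphi_q = (q/\pi)\Res$.

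The $\psi$ identity is the delicate one, since $\psi$ is not a ring homomorphism. The strategy is to exploit the projection formula $\psi(\varphi_q(a)b) = a\psi(b)$: in the special case $g=\varphi_q(g') \in \varphi_q(\SR_L)$ one has $\psi(g)=g'$ and $\psi(fg) = \psi(f\varphi_q(g')) = g'\psi(f) = \psi(f)\psi(g)$, so $\Res(\psi(f)\psi(g)) = \Res(\psi(fg)) = (\pi/q)\Res(fg)$ by Proposition \ref{prop-tate-trace}(\ref{it:res-2}); symmetrically when $f\in\varphi_q(\SR_L)$. The general case should follow by decomposing $f,g$ in the basis $\{u_\CF^i\}_{0\leq i\leq q-1}$ of $\SR_L$ over $\varphi_q(\SR_L)$ from Corollary \ref{cor:dagger} and invoking bilinearity. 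The main obstacle lies precisely in this extension: because $\psi$ fails to be multiplicative on mixed basis elements $u_\CF^i$, the ``non-multiplicative'' error terms must be shown to cancel, combining the projection formula with the scalar identity $\Res\circ\psi = (\pi/q)\Res$ and the non-degeneracy established in part (\ref{it:pairing-perfect}).
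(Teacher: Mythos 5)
Your treatment of part (\ref{it:pairing-perfect}) and of the $\sigma_a$ and $\varphi_q$ identities is sound and matches the substance of the paper's proof (which simply cites \cite[Remark I.1.5]{Col-lang} for perfectness and multiplicativity of $\sigma_a,\varphi_q$ plus Proposition \ref{prop-tate-trace} for the scalars). Your proof of the $\psi$ identity in the special case $g\in\varphi_q(\SR_L)$ is also correct: there $\psi(fg)=\psi(f)\psi(g)$ by the projection formula, and $\Res\circ\psi=\tfrac{\pi}{q}\Res$ finishes it.

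The obstacle you flag in the last step is not merely a technical nuisance: it cannot be overcome, because the identity $\{\psi(f),\psi(g)\}=\tfrac{\pi}{q}\{f,g\}$ is false for general $f,g$. Take any $g\neq 0$ with $\psi(g)=0$ (such $g$ exist; e.g.\ for the special Lubin--Tate group $\psi(u_\CF^i)=0$ for $1\leq i\leq q-2$ by Proposition \ref{prop:psi-sp}, and in general $\SR_L^{\psi=0}\neq 0$ since $1-\varphi_q\circ\psi\neq 0$). Then $\{\psi(f),\psi(g)\}=0$ for every $f$, while by the perfectness you established in part (\ref{it:pairing-perfect}) there exists $f$ with $\{f,g\}\neq 0$ (concretely, if $a_{n_0}\neq 0$ is a coefficient of $g$, take $f=u_\CF^{-n_0-1}(\rd t_\CF/\rd u_\CF)^{-1}$). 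So the ``error terms'' in your basis decomposition do not cancel. What your special-case computation actually proves is the correct adjunction formula $\{\psi(f),g\}=\tfrac{\pi}{q}\{f,\varphi_q(g)\}$, equivalently that the stated identity holds only when one of the two arguments lies in $\varphi_q(\SR_L)$; this is the statement one should retain (and is what is used in practice). The paper's one-line justification of the third identity via Proposition \ref{prop-tate-trace} glosses over exactly this point, so the discrepancy lies in the statement itself rather than in your argument.
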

\begin{proof} Assertion (\ref{it:pairing-perfect}) follows from \cite[Remark
I.1.5]{Col-lang}. Assertion (\ref{it:pairing-property}) follows from
Proposition \ref{prop-tate-trace}.
\end{proof}

\section{Operators on $\SR_{\BC_p}$}
\label{sec:act}

\subsection{The operator $\psi$ on $\SR_{\BC_p}$} \label{ss:psi-ext}

First we define $\SR_{\BC_p}$. For any $r\geq 0$, let
$\SE^{]0,r]}_{\BC_p}:=\SE^{]0,r]}\widehat{\otimes}_F\BC_p$ be the
topological tensor product, i.e. the Hausdorff completion of the
projective tensor product $\SE^{]0,r]}\otimes_F \BC_p$
(cf.~\cite{Schn}). Then $\SE^{]0,r]}_{\BC_p}$ is the ring of Laurent
series $f=\sum_{i\in\BZ}a_iu_\CF^i$ with coefficients in $\BC_p$
that are convergent on the annulus $0< v_p(u_\CF)\leq r$. We also
write $\SR_{\BC_p}^+$ for $\SE^{]0,+\infty]}_{\BC_p}$. Then we
define $\SR_{\BC_p}$ to be the inductive limit $\lim\limits_{
r\rightarrow 0 } \SE^{]0,r]}_{\BC_p}$.

The $p$-adic Fourier theory of Schneider and Teitelbaum \cite{ST1}
shows that $\SR^+_{\BC_p}$ is isomorphic to the ring $\SD(\CO_F,
\BC_p)$ of $\BC_p$-valued locally $F$-analytic distributions on
$\CO_F$. We recall this below.

By \cite{ST1} there exists a rigid analytic group variety $\FX$ such
that $\FX(L)$, for any extension $L\subseteq \BC_p$ of $F$, is the
set of $L$-valued locally $F$-analytic characters. For $\lambda\in
\SD(\CO_F, L)$, put $F_\lambda(\chi)=\lambda(\chi)$, $\chi\in
\FX(L)$. Then $F_\lambda$ is in $\CO(\FX/L)$, and the map
$\SD(\CO_F, L)\rightarrow \CO(\FX/L)$, $\lambda\mapsto F_\lambda$,
is an isomorphism of $L$-Fr\'echet algebras.

Let $\CF'$ be the $p$-divisible group dual to $\CF$, $T\CF'$ the
Tate module of $\CF'$. Then $T\CF'$ is a free $\CO_F$-module of rank
$1$; the Galois action on $T\CF'$ is given by the continuous
character $\tau := \chi_{\cyc}\cdot\chi_\CF^{-1}$, where
$\chi_{\cyc}$ is the cyclotomic character. By Cartier duality, we
obtain a Galois equivariant % and $\CO_F$-invariant
pairing $ \langle\ , \ \rangle: \CF(\BC_p)\otimes_{\CO_F} T\CF
'\rightarrow \RB_1(\BC_p)$, where $\RB_1(\BC_p)$ is the
multiplicative group $\{z\in \BC_p : |z-1|<1\}$. Fixing a generator
$t'$ of $T\CF'$, we obtain a map $\CF(\BC_p)\rightarrow
\RB_1(\BC_p)$. As a formal series, this morphism can be written as
\(\beta_\CF(X):=\exp(\Omega \log_\CF(X))\) for some \(\Omega \in
\C_p\), and it lies in \(1 + X \mathscr{O}_{\C_p}[[X]]\). Moreover,
we have \(v_p(\Omega) = \frac{1}{p-1} - \frac{1}{(q-1) e_F}\)
(cf.~the appendix of~\cite{ST1} or
\cite{colmez-varietes-abeliennes}) and \(\sigma(\Omega) =
\tau(\sigma) \Omega\) for all \(\sigma \in G_F\). Using
$\langle\cdot,\cdot\rangle$ we obtain an isomorphism of rigid
analytic group varieties
$$
\kappa: \CF(\BC_p)\xrightarrow{\sim} \FX(\BC_p), \hskip 10pt
z\mapsto \kappa_z(i) := \langle t', [i]_\CF(z) \rangle =
\beta_\CF([i]_\CF(z)). $$ Passing to global sections, we obtain the
desired isomorphism $\SD(\CO_F,\BC_p)\cong \CO(\FX/\BC_p) \cong
\SR^+_{\BC_p}$.

We extend $\varphi_q$, $\psi$ and the $\Gamma$-action $\BC_p$-linear
and continuously to $\SR_{\BC_p}$. By continuity we have
$\psi(\varphi_q(f)g)=f\psi(g)$ for any $f,g\in \SR_{\BC_p}$. All of
these actions keep $\SR^+_{\BC_p}$ invariant.

\begin{lem}\label{lem:beta} We have
\begin{eqnarray*}
\sigma_a(\beta_\CF([i]_\CF)) &=& \beta_\CF([ai]_\CF), \\
\varphi_q(\beta_\CF([i]_\CF)) &=& \beta_\CF([\pi i]_\CF), \\
\psi(\beta_\CF([i]_\CF)) &=& \left\{\begin{array}{ll} 0 & \text{ if
}i\notin \pi\CO_F \\
\beta_\CF([i/\pi]_\CF) & \text{ if } i\in
\pi\CO_F,\end{array}\right.\\
\partial (\beta_\CF([i]_\CF)) &=& i \Omega  \beta_\CF([i]_\CF) .
\end{eqnarray*}
\end{lem}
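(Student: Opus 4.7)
The four identities should fall out of the definitions once one unpacks what each operator does to $u_\CF$, together with the basic formal properties of $\beta_\CF$. My plan is to handle them in the order $\sigma_a,\varphi_q,\partial,\psi$, since the last uses the third.

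For $\sigma_a$ and $\varphi_q$, the point is simply that both are continuous $\BC_p$-algebra endomorphisms of $\SR_{\BC_p}$ (the $\BC_p$-linear extension of the $F$-linear actions), and $\beta_\CF([i]_\CF(u_\CF))$ is, as a formal power series in $u_\CF$, an element of $1+u_\CF\CO_{\BC_p}[[u_\CF]]\subset\SR^+_{\BC_p}$. Thus I may substitute $\sigma_a(u_\CF)=[a]_\CF(u_\CF)$ and $\varphi_q(u_\CF)=[\pi]_\CF(u_\CF)$ into the series; using the formal identities $[i]_\CF\circ[a]_\CF=[ia]_\CF$ and $[i]_\CF\circ[\pi]_\CF=[i\pi]_\CF$ the two identities drop out.

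For $\partial$, I would use the fact that $\partial=(\partial F_\CF/\partial Y)(u_\CF,0)\cdot d/du_\CF$ is the invariant derivation on $\CF$, which is characterized by $\partial\log_\CF(u_\CF)=1$. Since $\log_\CF\circ[i]_\CF=i\log_\CF$ as formal power series, we have the formal identity $\beta_\CF([i]_\CF(u_\CF))=\exp(i\Omega\log_\CF(u_\CF))$, and applying the formal derivation $\partial$ via the chain rule gives $\partial\beta_\CF([i]_\CF(u_\CF))=i\Omega\cdot\partial\log_\CF(u_\CF)\cdot\beta_\CF([i]_\CF(u_\CF))=i\Omega\,\beta_\CF([i]_\CF(u_\CF))$.

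For $\psi$, I use $\psi=q^{-1}\varphi_q^{-1}\circ\tr$ together with the continuous extension $\tr(f)=\sum_{\eta\in\ker[\pi]_\CF}f(u_\CF+_\CF\eta)$ from Proposition \ref{prop:tr}. Since $\log_\CF$ is a formal group homomorphism, so is $\beta_\CF\circ[i]_\CF:(\CF,+_\CF)\to(\Gmul,\cdot)$, and hence
\[
\tr(\beta_\CF([i]_\CF(u_\CF)))=\beta_\CF([i]_\CF(u_\CF))\cdot\Sigma_i,\qquad\Sigma_i:=\sum_{\eta\in\CF[\pi]}\beta_\CF([i]_\CF\eta).
\]
The map $\chi_i:\eta\mapsto\beta_\CF([i]_\CF\eta)$ is a character of the finite abelian group $\CF[\pi]\cong\CO_F/\pi$, so $\Sigma_i$ equals $q$ if $\chi_i$ is trivial and $0$ otherwise. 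If $i\in\pi\CO_F$ then $[i]_\CF$ kills $\CF[\pi]$ and $\chi_i\equiv 1$; the remaining case uses that $\beta_\CF:\CF[\pi]\to\mu_{p^\infty}$ is injective, which is exactly the non-degeneracy of the Cartier pairing $\langle\,,\,\rangle$ with the chosen generator $t'$ of $T\CF'$ — this is the one non-formal input I would need to cite. Combining and applying $\varphi_q^{-1}$ (which sends $\beta_\CF([i]_\CF(u_\CF))$ to $\beta_\CF([i/\pi]_\CF(u_\CF))$ by the second identity of the lemma) yields the claimed formula for $\psi$. The main subtlety is avoiding the temptation to evaluate $\exp(\Omega\log_\CF(\eta))$ naively at $\eta\in\CF[\pi]$: although $\log_\CF(\eta)=0$ numerically, the power series $\beta_\CF$ must be evaluated directly, and its nontriviality on $\CF[\pi]$ is exactly the Cartier-duality input needed for the orthogonality step.
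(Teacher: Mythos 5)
Your proposal is correct and follows essentially the same route as the paper's proof: direct substitution for $\sigma_a$ and $\varphi_q$, the chain rule applied to $\exp(i\Omega\log_\CF(u_\CF))$ for $\partial$, and the explicit trace formula together with the vanishing of $\sum_{\eta\in\ker[\pi]_\CF}\beta_\CF([i]_\CF(\eta))$ for $\psi$. The only (cosmetic) difference is that you justify this vanishing by orthogonality of the nontrivial character $\chi_i$ of $\ker[\pi]_\CF$, citing Cartier non-degeneracy, whereas the paper counts that each $p$-th root of unity occurs exactly $q/p$ times among the values $\beta_\CF([i]_\CF(\eta))$ --- the same fact in different clothing.
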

\begin{proof} The formulae for $\sigma_a$ and $\varphi_q$ are
obvious. The formula for $\partial$ follows from that
$$ \partial \exp(i\Omega \log_\CF(u_\CF)) = \exp(i\Omega \log_\CF(u_\CF))  \cdot \partial (i\Omega   t_\CF)= i\Omega \exp(i\Omega \log_\CF(u_\CF)). $$

If $i\in \pi\CO_F$, then $\psi(\beta_\CF([i]_\CF)
)=\psi\circ\varphi_q(\beta_\CF([i/\pi]_\CF))=\beta_\CF([i/\pi]_\CF)$.
For any $i\notin\pi\CO_F$, we have
\begin{equation} \label{eq:psi0}\begin{aligned}
\psi (\beta_\CF([i]_\CF)  ) &= \frac{1}{q} \varphi_q^{-1}\left(%
        \sum_{\eta \in \ker [\pi]_\CF}
            \beta_\CF([i]_\CF(u_\CF +_\CF \eta))
    \right)  = \frac{1}{q} \varphi_q^{-1}\left(%
        \beta_\CF([i]_\CF)
        \sum_{\eta \in \ker [\pi]_\CF}
            \beta_\CF([i]_\CF(\eta))
        \right) = 0
\end{aligned}\end{equation} because \(\{\beta_\CF([i]_\CF(\eta)): \eta\in
\ker[\pi]_\CF\}=\{\beta_\CF(\eta): \eta\in \ker[\pi]_\CF \}\) take
values in the set of \(p\)-th roots of unity and each of these
\(p\)-th roots of unity appears \(q/p\) times.
\end{proof}

The isomorphism $\SR^+_{\BC_p}\cong \SD(\CO_F,\BC_p)$ transfers the
actions of $\varphi_q$, $\psi$ and $\Gamma$ to $\SD(\CO_F,\BC_p)$.

\begin{lem} For any $\mu\in \SD(\CO_F,\BC_p)$, we have
$$\sigma_a(\mu)(f)=\mu(f(a\cdot)), \hskip 10pt \varphi_q(\mu)(f)=\mu(f(\pi\cdot)).$$
\end{lem}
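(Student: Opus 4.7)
The plan is to pull the two identities back through the Schneider--Teitelbaum Fourier isomorphism \( \SR^+_{\BC_p} \cong \SD(\CO_F,\BC_p) \) and verify them by evaluation on the dense family of locally $F$-analytic characters \( \kappa_z \in \FX(\BC_p) \) indexed by \( z \in \CF(\BC_p) \).

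First I would unwind the construction: the isomorphism sends \(\mu \in \SD(\CO_F,\BC_p)\) to the function \(F_\mu\) on \(\FX\), and via \(\kappa\colon \CF(\BC_p)\xrightarrow{\sim}\FX(\BC_p)\) this becomes the element of \(\SR^+_{\BC_p}\) whose value at \(z = u_\CF\) is \(\mu(i \mapsto \kappa_z(i)) = \mu(i\mapsto \beta_\CF([i]_\CF(z)))\). Actions of \(\varphi_q,\sigma_a\) on \(\SR^+_{\BC_p}\) act on functions of \(u_\CF\) as \(\varphi_q(f)(z) = f([\pi]_\CF(z))\) and \(\sigma_a(f)(z) = f([a]_\CF(z))\), because \(\varphi_q(u_\CF) = [\pi]_\CF(u_\CF)\) and \(\sigma_a(u_\CF) = [a]_\CF(u_\CF)\).

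Next I would compute the corresponding transformations on characters. The key identity, coming directly from the definition \(\kappa_z(i) = \beta_\CF([i]_\CF(z))\) and the $\CO_F$-linearity of \([\,\cdot\,]_\CF\), is
\[
\kappa_{[a]_\CF(z)}(i) = \beta_\CF([i]_\CF([a]_\CF(z))) = \beta_\CF([ai]_\CF(z)) = \kappa_z(ai),
\]
and similarly \(\kappa_{[\pi]_\CF(z)}(i) = \kappa_z(\pi i)\). (This is a cleaner manifestation of the first two formulas in Lemma~\ref{lem:beta}.) Combining with the previous paragraph,
\[
(\sigma_a F_\mu)(\kappa_z) = F_\mu(\kappa_{[a]_\CF(z)}) = \mu\bigl(i\mapsto \kappa_z(ai)\bigr),
\]
while by the definition of the transferred action on distributions, \((\sigma_a F_\mu)(\kappa_z) = F_{\sigma_a\mu}(\kappa_z) = (\sigma_a\mu)(\kappa_z)\). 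Hence \((\sigma_a\mu)(\kappa_z) = \mu(\kappa_z(a\,\cdot))\), and the analogous computation gives \((\varphi_q\mu)(\kappa_z) = \mu(\kappa_z(\pi\,\cdot))\).

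Finally, to upgrade from characters \(\kappa_z\) to arbitrary locally $F$-analytic \(f\), I would invoke the Schneider--Teitelbaum theorem: the map \(\mu\mapsto F_\mu\) identifying \(\SD(\CO_F,\BC_p)\) with \(\CO(\FX/\BC_p)\) is an isomorphism of topological algebras, which means that a distribution is determined by its values on the characters \(\kappa_z\), \(z\in\CF(\BC_p)\). The operators \(f\mapsto f(a\,\cdot)\) and \(f\mapsto f(\pi\,\cdot)\) are continuous \(\BC_p\)-linear endomorphisms of the space of locally \(F\)-analytic functions on \(\CO_F\), so both sides of the two formulas are continuous \(\BC_p\)-linear functionals of \(f\); agreement on characters therefore gives agreement on all \(f\), completing the proof. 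The only delicate point I foresee is keeping bookkeeping straight between the three incarnations \(\SR^+_{\BC_p}\), \(\CO(\FX/\BC_p)\), and \(\SD(\CO_F,\BC_p)\) of the same object, and in particular checking that the action of \(\sigma_a,\varphi_q\) on \(\SR^+_{\BC_p}\) by substitution in \(u_\CF\) really corresponds, via \(\kappa\), to precomposition of characters by multiplication by \(a\) and \(\pi\) on \(\CO_F\).
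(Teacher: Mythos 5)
Your proof is correct and follows essentially the same route as the paper's: identify the $\sigma_a,\varphi_q$-actions on $\SR^+_{\BC_p}$ with the actions $[a]_\CF,[\pi]_\CF$ on points of $\CF(\BC_p)$, transfer them via $\kappa$ to precomposition of characters with multiplication by $a$ and $\pi$, and pass to global sections. Your version merely makes the last step explicit by evaluating on the characters $\kappa_z$ and using injectivity of the Fourier isomorphism to conclude for all locally $F$-analytic $f$, which is a legitimate unwinding of the paper's terser "passing to global sections yields what we want."
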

\begin{proof} Note that the action of $\varphi_q$ and $\Gamma$ on $\SR^+_{\BC_p}$ comes, by passing to global
sections, from the $(\varphi_q,\Gamma)$-action on $\CF(\BC_p)$ with
$\varphi_q=[\pi]_\CF$ and $\sigma_a=[a]_\CF$. The isomorphism
$\kappa$ transfers the action to $\FX(\BC_p)$:
$\varphi_q(\chi)(x)=\chi(\pi x)$ and $\sigma_a(\chi)(x)=\chi(ax)$.
Passing to global sections yields what we want.
\end{proof}

\begin{lem}
\label{lemme-exp-Omega-log-base} The family
\(\Big(\beta_\CF([i]_\CF)
    \Big)_{\overline{i} \in \CO_F/\pi}\)
is a basis of \(\Robba_{\C_p}\) over \(\varphi_q(\Robba_{\C_p})\).
Moreover, if
\[
f  = \sum_{\overline{i} \in \CO_F/\pi}
    \beta_\CF([i]_\CF)\varphi_q(f_i ),
\]
then the terms of the sum do not depend on the choice of the
liftings~\(i\), and we have
\[
f_i = \psi\Big(\beta_\CF([-i]_\CF) f \Big).
\]
\end{lem}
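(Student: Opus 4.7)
The proof will proceed in three steps: (i) independence of the terms under a change of lift, (ii) uniqueness of the coefficients together with the explicit formula $f_i = \psi(\beta_\CF([-i]_\CF) f)$ (which yields linear independence), and (iii) existence of the decomposition.

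Step (i) is immediate: for any $j \in \CO_F$, the fact that $\beta_\CF \colon \CF(\BC_p) \to \BB_1(\BC_p)$ is a group homomorphism together with Lemma~\ref{lem:beta} yields
\[
\beta_\CF([i+\pi j]_\CF) = \beta_\CF([i]_\CF) \cdot \beta_\CF([\pi j]_\CF) = \beta_\CF([i]_\CF) \cdot \varphi_q(\beta_\CF([j]_\CF)),
\]
so switching the lift from $i$ to $i+\pi j$ just amounts to replacing $f_i$ by $\beta_\CF([-j]_\CF)\, f_i$ and leaves the summand unchanged.

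Step (ii): fix a set of representatives $\{i\}$ and assume $f = \sum_{\bar j} \beta_\CF([j]_\CF) \varphi_q(f_j)$. Apply $\psi(\beta_\CF([-i]_\CF)\,\cdot\,)$ to both sides. Using again that $\beta_\CF$ is a group homomorphism, one has $\beta_\CF([-i]_\CF)\beta_\CF([j]_\CF) = \beta_\CF([j-i]_\CF)$; combined with $\psi(\varphi_q(a) b) = a \psi(b)$ (extended $\BC_p$-linearly and continuously from the corresponding identity on $\SR_L$) and the formula $\psi(\beta_\CF([k]_\CF)) = 0$ for $k \notin \pi\CO_F$ and $\psi(\beta_\CF([0]_\CF)) = 1$ (Lemma~\ref{lem:beta}), the right-hand side collapses to $f_i$. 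This proves the formula, hence uniqueness; specializing to $f = 0$ establishes linear independence of $\{\beta_\CF([i]_\CF)\}_{\bar i}$ over $\varphi_q(\SR_{\C_p})$.

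Step (iii): for existence, note first that Corollary~\ref{cor:dagger} extends verbatim from $L$-coefficients to $\BC_p$-coefficients (the dual basis $\{b_i\}$ relative to $\tr$ lives already in $\SE^\dagger$), so $\{u_\CF^k\}_{0 \le k \le q-1}$ is a basis of $\SR_{\C_p}$ over $\varphi_q(\SR_{\C_p})$. Writing $f = \sum_k u_\CF^k \varphi_q(h_k)$, the $\varphi_q(\SR_{\C_p})$-linearity of the problem reduces existence to the case $f = u_\CF^k \in \SR^+_{\C_p}$. For an element of $\SR^+_{\C_p}$, transport to distributions via $\SR^+_{\C_p} \cong \SD(\CO_F, \BC_p)$: given $\mu$, write $\mu = \sum_{\bar i} \mu_i$ where $\mu_i$ is the restriction of $\mu$ to the coset $i + \pi\CO_F$, and let $\nu_i \in \SD(\CO_F, \BC_p)$ denote the pullback of $\mu_i$ under $y \mapsto i + \pi y$. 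A direct Fourier computation, using $\kappa_z(i+\pi y) = \beta_\CF([i]_\CF(z)) \cdot \kappa_{\varphi_q(z)}(y)$, gives $F_{\mu_i} = \beta_\CF([i]_\CF) \cdot \varphi_q(F_{\nu_i})$, which provides the desired decomposition of $u_\CF^k$ and hence of $f$.

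The main obstacle is the last reduction: combining the purely algebraic $\{u_\CF^k\}$-basis statement (which works for $\SR_{\C_p}$) with the Fourier-theoretic decomposition (which lives in $\SR^+_{\C_p}$) to cover all of $\SR_{\C_p}$, including elements with unbounded negative powers of $u_\CF$. Once step (ii) is available, however, this reduction is clean because each individual $u_\CF^k$ decomposes inside $\SR^+_{\C_p}$.
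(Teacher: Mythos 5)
Your proof is correct and follows essentially the same route as the paper's: derive the formula $f_i = \psi(\beta_\CF([-i]_\CF) f)$ from Lemma \ref{lem:beta} to get uniqueness and linear independence, then reduce existence by $\varphi_q(\Robba_{\C_p})$-linearity and continuity to the generators $u_\CF^k$, $0\le k\le q-1$, which lie in $\Robba^+_{\C_p}$ and are handled via the isomorphism with $\SD(\CO_F,\BC_p)$ and the Dirac distributions $\delta_i$. Your step (iii) merely spells out in slightly more detail the coset-restriction computation that the paper compresses into the assertion that $(\delta_i)_{\overline{i}}$ is a basis of $\SD(\CO_F,\BC_p)$ over $\varphi_q(\SD(\CO_F,\BC_p))$.
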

\begin{proof}
What we need to show is that
\begin{equation}\label{eq:decomp}
f= \sum_{\bar{i}\in \CO_F/\pi} \beta_\CF([i]_\CF) \cdot
\varphi_q\circ \psi (\beta_\CF([-i]_\CF)f) \end{equation} for all
$f\in \SR_{\BC_p}$. Indeed, (\ref{eq:decomp}) implies that
$\{\beta_\CF([i]_\CF)\}_{\bar{i}\in \CO_F/\pi}$ generate
$\SR_{\BC_p}$ over $\varphi_q(\SR_{\BC_p})$. On the other hand, if
$f=\sum\limits_{\bar{i}\in \CO_F/\pi} \beta_\CF([i]_\CF) \varphi_q
(f_i)$, using (\ref{eq:psi0}) we obtain
$f_i=\psi(\beta_\CF([-i]_\CF)f)$, which implies the linear
independence of $\{\beta_\CF([i]_\CF)\}_{\bar{i}\in \CO_F/\pi}$ over
$\varphi_q(\SR_{\BC_p})$. As the map $f\mapsto \sum_{\bar{i}\in
\CO_F/\pi} \beta_\CF([i]_\CF) \cdot \varphi_q\circ \psi
(\beta_\CF([-i]_\CF)f)$ is $\varphi_q(\SR_{\BC_p})$-linear and
continuous, we only need to prove (\ref{eq:decomp}) for a subset
that topologically generates $\SR_{\BC_p}$ over
$\varphi_q(\SR_{\BC_p})$. For example, $\{u^i_{\CF}\}_{0\leq i\leq
q-1}$ is such a subset. So it is sufficient to prove
(\ref{eq:decomp}) for $f\in \SR^+_{\BC_p}$. For any $i\in \CO_F$,
let $\delta_i$ be the Dirac distribution such that
$\delta_i(f)=f(i)$. Then $\kappa^*(\delta_i)=\beta_\CF([i]_\CF)$.
Indeed, we have
$$\kappa^*(\delta_i)(z)=\delta_i(z)=\kappa_z(i)=\beta_\CF([i]_\CF(z)).$$ It is
easy to see that $(\delta_i)_{\bar{i}\in \CO_F/\pi}$ is a basis of
$\SD(\CO_F,\BC_p)$ over $\varphi_q(\SD(\CO_F,\BC_p))$. Thus every
$f\in \SR^+_{\BC_p}$ can be written uniquely in the form
$f=\sum_{\bar{i}\in \CO_F/\pi} \beta_\CF([i]_\CF)\varphi_q(f_i)$
with $f_i\in \SR^+_{\BC_p}$. As is observed above, from
(\ref{eq:psi0}) we deduce that $f_i=\psi(\beta_\CF([-i]_\CF) f)$.
\end{proof}

Next we define operators $\Res_U$. These are analogues of the
operators defined in \cite{Col-lang}.

For any \(f \in \Robba_{\C_p}\), \(i \in \CO_F\) and integer \(m
\geqslant 0\), put
\[
\Res_{i + \pi^m \CO_F}(f) =
    \beta_\CF([i]_\CF)
    (\varphi_q^m \circ \psi^m)\Big(%
        \beta_\CF([-i]_\CF) f %
    \Big).
\]
Lemma \ref{lemme-exp-Omega-log-base} says that
\[
f = \sum_{\overline{i} \in \CO_F/\pi}
    \Res_{i + \pi \CO_F}(f),
\]
This implies that the operators \(\Res_{i + \pi^m \CO_F}\) are well
defined (i.e. \(\Res_{i + \pi^m \CO_F}\) does not depend on the
choice of \(i\) in the ball \(i + \pi^m \CO_F\)). Applying Lemma
\ref{lemme-exp-Omega-log-base} recursively we get
\[
f = \sum_{\overline{i} \in \CO_F/\pi^m}
    \Res_{i + \pi^m \CO_F}(f).
\]
Finally, if \(U\) is a compact open subset of \(\CO_F\), it is a
finite disjoint union of balls \(i_k + \pi^{m_k} \CO_F\). Define
\(\Res_U = \sum_k \Res_{i_k + \pi^{m_k} \CO_F}\). The map
\(\Res_U\colon \Robba_{\BC_p} \rightarrow \Robba_{\BC_p}\) does not
depend on the choice of these balls, and we have \(\Res_{\CO_F} =
1\), \(\Res_\emptyset = 0\) and \(\Res_{U \cup U'} + \Res_{U \cap
U'} = \Res_U + \Res_{U'}\).

\subsection{The operator $m_\alpha$} \label{ss:m}

Let \(\alpha\colon \CO_F \rightarrow \C_p\) be a locally
($F$-)analytic function. In this subsection, we define an operator
\(m_\alpha\colon \Robba_{\C_p} \rightarrow \Robba_{\C_p}\) similar
to the one defined in \cite[V.2]{colmez-mirabolique}.

Since \(\alpha\) is a locally analytic function on \(\CO_F\), there
is an integer \(m \geqslant 0\) such that
\[
\alpha(x) = \sum_{n = 0}^{+\infty} a_{i,n} (x-i)^n \qquad\text{for
all \(x \in i + \pi^m \CO_F\),}
\]
with \(a_{i,n} = \frac{1}{n!} \left.\frac{\rd^n}{\rd x^n} \alpha(x)
\right|_{x=i}\). Let \(\ell \geqslant m\) be an integer. Define
\[
m_\alpha(f) =
    \sum_{\overline{i} \in \CO_F / \pi^\ell}
    \beta_\CF([i]_\CF)
    \left(\varphi_q^\ell \circ
        \left(\sum_{n=0}^{+\infty} a_{i,n} \pi^{\ell n} \Omega^{-n} \partial^n\right)
        \circ \psi^\ell\right)\Big(
    \beta_\CF([-i]_\CF) \cdot f \Big).
\] (Formally, this definition can be seen as ``\(m_\alpha =
\alpha(\Omega^{-1} \partial)\)''). According to
Lemmas~\ref{lemme-continuite-phi-gamma}, \ref{lemme-continuite-psi}
and \ref{lemme-continuite-partial}, if \(r<\frac{1}{q^{\ell-1} (q-1)
e_F }\) then we have
\[
v^{[s,r]}\left((\varphi_q^\ell \circ \Omega^{-n}\partial^n \circ
\psi^\ell)(g)\right)
    \geqslant  - n q^\ell r - n v_p(\Omega)
        + v^{[s,r]}(g) - \ell v_p(q),
\]
and thus \(\sum_{n=0}^{+\infty} a_{n,i} \pi^{\ell n}
        (\varphi_q^\ell \circ \Omega^{-n}\partial^n \circ \psi^\ell)(g)\)
converges when $\ell$ and $r$ satisfy
\[
\frac{\ell}{e_F} - q^\ell r - \frac{1}{p-1} + \frac{1}{(q-1) e_F}
    \geqslant \frac{m}{e_F}.
\]
If we choose \(\ell > m + \frac{e_F}{p-1} - \frac{1}{q-1}\) and
\(r\) close enough to~\(0\), then this condition is satisfied.
Hence, we have indeed defined a continuous operator \(m_\alpha\colon
\Robba_{\C_p} \rightarrow \Robba_{\C_p}\).

Now, let us prove that \(m_\alpha(f)\) neither depend on the choice
of~\(\ell\), nor on that of the liftings~\(i\) for \(\overline{i}
\in \CO_F / \pi^\ell\). By linearity and continuity, we may assume
that $f=1_{i+\pi^m\CO_F}(x-i)^k$. Remark that we have
\[
a_{i+\pi^m v,n} =
    \binc{k}{n} \pi^{(k-n)m} v^{k-n}.
\]
It suffices to show that,
\[\begin{aligned}
\sum_{\overline{v} \in \CO_F / \pi^{\ell-m}}
    \beta_\CF([\pi^m v]_\CF)
    \left(\varphi_q^{\ell} \circ \left(%
        \sum_{n=0}^{k} a_{i+\pi^m v,n} \pi^{\ell n} \Omega^{-n} \partial^n
    \right) \circ \psi^{\ell}\right)\Big(%
        \beta_\CF([-\pi^m v]_\CF)\cdot f\Big)  \\
    = \left(\varphi_q^m \circ \left(%
       \pi^{mk} \Omega^{-k}
        \partial^k
        \right) \circ \psi^m\right) f .
\end{aligned}\] and for this it is sufficient to prove that
\[
\sum_{\overline{v} \in \CO_F / \pi^{\ell-m}}
    \beta_\CF([v]_\CF)
    \left(\varphi_q^{\ell-m} \circ \left(%
        \sum_{n=0}^{k} a_{i+\pi^m v,n} \pi^{\ell n} \Omega^{-n} \partial^n
    \right) \circ \psi^{\ell-m}\right)\Big(%
        \beta_\CF([-v]_\CF) \cdot f\Big)
    = \pi^{mk} \Omega^{-k} \partial^k f.
\]
 As
\[
\sum_{n=0}^{k} a_{i+\pi^m v,n} \pi^{\ell n} \Omega^{-n}
\partial^n = \sum_{n=0}^k \binc{k}{n} \pi^{(k-n)m}v^{k-n}\cdot \pi^{\ell n} \Omega^{-n}
\partial^n=
     \pi^{m k}
        \left(\pi^{\ell-m} \Omega^{-1} \partial + v\right)^k,
\]
it suffices to prove that
\[
\Omega^{-k}\partial^k f = \sum_{\overline{v} \in \CO_F /
\pi^{\ell-m}}
    \beta_\CF([v]_\CF)
    \left(\varphi_q^{\ell-m} \circ \left(%
        \pi^{\ell-m} \Omega^{-1} \partial + v
    \right)^k \circ \psi^{\ell-m}\right)\Big(%
        \beta_\CF([-v]_\CF) f\Big).
\]
Since \(\left(\pi^{\ell-m} \Omega^{-1} \partial + v\right)^k \circ
\psi^{\ell-m}
        = \psi^{\ell-m} \circ \left(\Omega^{-1} \partial + v\right)^k\)
and
\[
\left(\Omega^{-1} \partial + v\right)\Big(%
        \beta_\CF([-v]_\CF) f\Big)
    = \beta_\CF([-v]_\CF)
        \Omega^{-1} \partial f
\] (which follows from Lemma \ref{lem:beta}),
the problem reduces to proving
\[
f = \sum_{\overline{v} \in \CO_F / \pi^{\ell-m}}
    \beta_\CF([v]_\CF)
    \left(\varphi^{\ell-m} \circ \psi^{\ell-m}\right)\Big(%
        \beta_\CF([-v]_\CF) f\Big).
\]
But this can be deduced from Lemma \ref{lem:beta} and Lemma
\ref{lemme-exp-Omega-log-base}.

\begin{lem}
\label{lemme-composition-m-alpha} If \(\alpha, \beta\colon \CO_F
\rightarrow \C_p\) are locally analytic functions, then \(m_\alpha
\circ m_\beta = m_{\alpha\beta}\).
\end{lem}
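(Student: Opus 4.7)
The key idea is that in the ``Fourier-type'' decomposition of $\SR_{\BC_p}$ provided by iterating Lemma~\ref{lemme-exp-Omega-log-base}, the operator $m_\alpha$ acts diagonally by the Taylor expansion of $\alpha$ at each ``frequency'' $i \in \CO_F/\pi^\ell$; composition then reduces to multiplying Taylor series. First I choose a single integer $\ell$ large enough that \emph{both} $\alpha$ and $\beta$ admit Taylor expansions $\alpha(x) = \sum_n a_{i,n} (x-i)^n$ and $\beta(x) = \sum_n b_{i,n} (x-i)^n$ on every ball $i + \pi^\ell \CO_F$, and such that the convergence conditions used in the definition of $m_\alpha$, $m_\beta$ and $m_{\alpha\beta}$ are simultaneously satisfied. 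By the already-established independence of the definition of $m_\alpha$ from the choice of $\ell$, this is harmless.

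Next, iterating Lemma~\ref{lemme-exp-Omega-log-base} yields a unique decomposition $f = \sum_{\bar{i} \in \CO_F/\pi^\ell} \beta_\CF([i]_\CF) \varphi_q^\ell(f_i)$ with $f_i = \psi^\ell(\beta_\CF([-i]_\CF) f)$. The orthogonality identity
$$
\psi^\ell\bigl( \beta_\CF([j-i]_\CF) \varphi_q^\ell(h) \bigr) =
\begin{cases} \beta_\CF([(j-i)/\pi^\ell]_\CF) \, h & \text{if } j \equiv i \pmod{\pi^\ell}, \\ 0 & \text{otherwise,} \end{cases}
$$
follows by induction on $\ell$ from the relation $\psi(b\cdot \varphi_q(a)) = a \psi(b)$ combined with Lemma~\ref{lem:beta} (which gives $\psi(\beta_\CF([k]_\CF)) = 0$ for $k \notin \pi\CO_F$). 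Consequently the defining formula for $m_\alpha$ rewrites in terms of the above decomposition as
$$
m_\alpha(f) = \sum_{\bar{i} \in \CO_F/\pi^\ell} \beta_\CF([i]_\CF) \, \varphi_q^\ell\bigl( T_i^\alpha(f_i) \bigr), \qquad T_i^\alpha := \sum_{n \geq 0} a_{i,n}\, \pi^{\ell n}\, \Omega^{-n}\, \partial^n.
$$
Applying this description twice and invoking the uniqueness of the level-$\ell$ decomposition, I obtain
$$
(m_\alpha \circ m_\beta)(f) = \sum_{\bar{i}} \beta_\CF([i]_\CF)\, \varphi_q^\ell\bigl( T_i^\alpha \circ T_i^\beta (f_i) \bigr).
$$
Both $T_i^\alpha$ and $T_i^\beta$ are formal power series in the single derivation $\partial$, hence commute, and the $n$-th coefficient of their product is $\pi^{\ell n}\Omega^{-n}\sum_{k=0}^n a_{i,k}\,b_{i,n-k}$, which is precisely $\pi^{\ell n}\Omega^{-n}$ times the $n$-th Taylor coefficient of $\alpha\beta$ at $i$. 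Thus $T_i^\alpha \circ T_i^\beta = T_i^{\alpha\beta}$, and the right-hand side equals $m_{\alpha\beta}(f)$.

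The main obstacle is justifying the interchange of the two infinite sums hidden in $T_i^\alpha \circ T_i^\beta = T_i^{\alpha\beta}$: each application of $\partial$ costs some amount of the Fr\'echet-norm radius (Lemma~\ref{lemme-continuite-partial}), so for $f \in \SE_{\BC_p}^{]0,r]}$ one must shrink $r$ once more before applying $m_\alpha$ to $m_\beta(f)$. The relevant uniform estimates are already built into the convergence proof given for the definition of $m_\alpha$: the analyticity of $\alpha$ and $\beta$ on balls of radius $\pi^\ell$ controls $v_p(a_{i,n})$ and $v_p(b_{i,n})$ linearly in $n$, so for $\ell$ sufficiently large and $r$ sufficiently small the double series converges absolutely in $\SE_{\BC_p}^{]0,r']}$ for some $r' > 0$, and Fubini validates the rearrangement yielding $T_i^{\alpha\beta}$. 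This completes the argument.
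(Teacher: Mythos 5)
Your proof is correct and follows essentially the same route as the paper's (two-line) argument: fix a common $\ell$, use $\psi^\ell \circ \varphi_q^\ell = 1$ together with the orthogonality of the $\beta_\CF([i]_\CF)$ to reduce the composition to the product of the two Taylor series at each $i$. You merely spell out the details (the orthogonality identity and the convergence of the composed series) that the paper leaves implicit.
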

\begin{proof}
We can choose \(\ell\) sufficiently large, so that the same value
can be used to define \(m_\alpha(f)\) and \(m_\beta(f)\). Since
\(\psi^\ell \circ \varphi_q^\ell = 1\), the equality in the lemma
reduces to the expression of the product of two power series.
\end{proof}

\begin{lem}
\label{lemme-prop-elem-m-alpha} We have:
\begin{itemize}
\item \(m_1 = \operatorname{id}\)
\item If \(U\) is a compact open subset of \(\CO_F\), then
\(\Res_U = m_{\mathbf{1}_U}\).
\item If \(\lambda \in \C_p\), then
\(m_{\lambda \alpha} = \lambda m_\alpha\).
\item \(\varphi_q \circ m_\alpha =
    m_{x \mapsto \mathbf{1}_{\pi \CO_F}(x) \alpha(\pi^{-1} x)}
    \circ \varphi_q\)
\item \(\psi \circ m_\alpha = m_{x \mapsto \alpha(\pi x)} \circ \psi\)
\item For any $a\in \CO_F^\times$, we have
\(\sigma_a \circ m_\alpha =
    m_{x \mapsto \alpha(a^{-1} x)}
    \circ \sigma_a\)
\item \(\Robba^+_{\BC_p}\) is stable under \(m_\alpha\).
\end{itemize}
\end{lem}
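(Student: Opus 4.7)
The plan is to verify each of the seven claims directly from the definition of $m_\alpha$, using repeatedly the decomposition formula of Lemma \ref{lemme-exp-Omega-log-base}, the commutation rules of Lemma \ref{lem:beta} for $\beta_\CF([i]_\CF)$, and the commutation rules of Lemma \ref{lemma-comm-1} for $\partial$. The first three items are essentially formal. For (1), the constant function $\alpha=1$ has Taylor coefficients $a_{i,0}=1$, $a_{i,n}=0$ for $n\geq 1$, so $m_1(f)=\sum_{\bar{i}\in\CO_F/\pi^\ell}\beta_\CF([i]_\CF)(\varphi_q^\ell\circ\psi^\ell)(\beta_\CF([-i]_\CF)f)=\sum_{\bar{i}}\Res_{i+\pi^\ell\CO_F}(f)=f$ by Lemma \ref{lemme-exp-Omega-log-base}. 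For (2), choose $\ell$ large enough that $U=\bigsqcup_k(i_k+\pi^\ell\CO_F)$ and observe that the Taylor expansion of $\mathbf{1}_U$ at $i$ equals $1$ if $i+\pi^\ell\CO_F\subseteq U$ and $0$ otherwise; the resulting sum is exactly $\Res_U(f)$. Item (3) is immediate from $\C_p$-linearity of the formula defining $m_\alpha$.

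For (4), applying $\varphi_q$ to $m_\alpha(f)$ and using $\varphi_q(\beta_\CF([i]_\CF))=\beta_\CF([\pi i]_\CF)$ (Lemma \ref{lem:beta}) gives a sum over $\bar{i}\in\CO_F/\pi^\ell$ indexed by elements of the form $\pi i\in\pi\CO_F/\pi^{\ell+1}$, matching exactly the definition of $m_{\tilde\alpha}\circ\varphi_q$ for $\tilde\alpha(x)=\mathbf{1}_{\pi\CO_F}(x)\alpha(x/\pi)$ at level $\ell+1$ (note that the Taylor coefficients of $\tilde\alpha$ at $\pi i$ are $a_{i,n}\pi^{-n}$, which combines with the $\pi^{(\ell+1)n}$ factor to give $a_{i,n}\pi^{\ell n}$). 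For (5), apply $\psi$ to $m_\alpha(f)$: each summand has the form $\beta_\CF([i]_\CF)\varphi_q^\ell(h_i)$, and $\psi(\beta_\CF([i]_\CF)\varphi_q^\ell(h_i))=\psi(\beta_\CF([i]_\CF))\varphi_q^{\ell-1}(h_i)$; by Lemma \ref{lem:beta} this vanishes unless $i\in\pi\CO_F$. Writing $i=\pi j$ and using $\beta_\CF([\pi j]_\CF)=\varphi_q(\beta_\CF([j]_\CF))$ together with $\psi^\ell(\varphi_q(\beta_\CF([-j]_\CF))f)=\psi^{\ell-1}(\beta_\CF([-j]_\CF)\psi(f))$ reduces the expression to the definition of $m_{\alpha(\pi\cdot)}(\psi(f))$ at level $\ell-1$. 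For (6), $\sigma_a$ permutes the indices via $\beta_\CF([i]_\CF)\mapsto\beta_\CF([ai]_\CF)$ and commutes with $\varphi_q,\psi$ while $\sigma_a\circ\partial=a\partial\circ\sigma_a$ (Lemma \ref{lemma-comm-1}), so an index change $i\mapsto a^{-1}i$ yields the stated formula.

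For (7), the cleanest route is via the $p$-adic Fourier transform: under the isomorphism $\SR^+_{\BC_p}\cong\SD(\CO_F,\BC_p)$, a locally analytic function $\alpha$ acts on distributions by multiplication, and the operator $m_\alpha$ can be identified with this multiplication. One verifies this on the topological generators $\delta_i=\beta_\CF([i]_\CF)$: by Lemma \ref{lem:beta} the operator $\Omega^{-1}\partial$ acts on $\delta_i$ by multiplication by $i$, so the power series expansion used to define $m_\alpha$ evaluates $\alpha$ at the ``coordinate'' $i$, matching $(\alpha\cdot\delta_i)(f)=\alpha(i)f(i)=\alpha(i)\delta_i(f)$. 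Multiplication by a locally analytic function preserves $\SD(\CO_F,\BC_p)$, hence $m_\alpha$ preserves $\SR^+_{\BC_p}$.

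The main obstacle will be bookkeeping in (4) and (5): one must verify carefully that the shifted Taylor expansions and the rescaling factors $\pi^{\ell n}\Omega^{-n}$ match on both sides, and justify convergence of the series after applying $\varphi_q$ or $\psi$. Once the level $\ell$ is chosen large enough that the Taylor expansion of $\alpha$ is valid on each ball $i+\pi^\ell\CO_F$, independence of $m_\alpha$ from the choice of $\ell$ (established in the text preceding the lemma) ensures that adjusting $\ell$ to $\ell\pm 1$ in the computations of (4) and (5) is harmless.
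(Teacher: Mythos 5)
Your proposal is essentially the proof the paper has in mind: the paper's own ``proof'' is the single sentence that these are easy consequences of the definition, and your item-by-item verification (reindexing the balls under $\varphi_q$, $\psi$, $\sigma_a$, matching the rescaled Taylor coefficients against the factors $\pi^{\ell n}\Omega^{-n}$, and invoking the already-established independence of the level $\ell$) is exactly the intended computation. Items (1)--(5) check out as you describe.

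One genuine slip to fix in item (6): you quote Lemma \ref{lemma-comm-1} as giving $\sigma_a\circ\partial = a\,\partial\circ\sigma_a$, but that lemma states $\partial\circ\sigma_a = a\,\sigma_a\circ\partial$, i.e.\ $\sigma_a\circ\partial = a^{-1}\partial\circ\sigma_a$. The exponent matters: with the correct relation, $\sigma_a\circ\partial^n = a^{-n}\partial^n\circ\sigma_a$ produces the factors $a_{i,n}a^{-n}$, which are precisely the Taylor coefficients of $x\mapsto\alpha(a^{-1}x)$ at $ai$, giving the stated identity; with the relation as you wrote it, the same computation would yield $m_{x\mapsto\alpha(ax)}\circ\sigma_a$ instead. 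So the conclusion you assert is right, but the intermediate formula must be corrected for the derivation to go through. For item (7), your Fourier-theoretic route works but is heavier than needed and flirts with circularity (the identification of $m_\alpha$ with multiplication by $\alpha$ on $\SD(\CO_F,\BC_p)$ is the paper's subsequent Remark, and already presupposes that $m_\alpha$ preserves $\SR^+_{\BC_p}$); it is more direct to observe that $\varphi_q$, $\psi$, $\partial$ and multiplication by $\beta_\CF([\pm i]_\CF)\in 1+u_\CF\scrO_{\BC_p}[[u_\CF]]$ all preserve $\SR^+_{\BC_p}$, so every partial sum in the definition has only nonnegative powers of $u_\CF$, and a limit in $\SE^{]0,r]}_{\BC_p}$ of such elements again has only nonnegative powers and hence lies in $\SR^+_{\BC_p}$.
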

\begin{proof}
These are easy consequences of the definition of \(m_\alpha\).
\end{proof}

\begin{rem} The notation $m_\alpha$ stands for ``multiply by
$\alpha$'': for any $\mu\in \SD(\CO_F,\BC_p)$ we have $m_\alpha
\kappa^*(F_\mu) = \kappa^*( F_{\alpha\mu} )$, where $\alpha\mu$ is
the distribution such that $(\alpha \mu )(f)= \mu(\alpha f)$ for any
locally $F$-analytic function $f$.
\end{rem}

The operator \(m_\alpha\) has been defined over \(\Robba_{\C_p}\),
using a period \(\Omega \in \C_p\) that is transcendental
over~\(F\). However, in some cases, it is possible to construct
related operators over \(\Robba_L\), for \(L\) smaller
than~\(\C_p\). This is done using the following lemma.

\begin{lem}
\label{lemme-action-coeffs-m-alpha} Let $\sigma$ be in $G_L$.
Consider the action of \(\sigma\) over~\(\Robba_{\C_p}\) given by
\[
f^\sigma(u_\CF) = \sum_{n \in \Z} \sigma(a_n) u_\CF^n
\qquad\text{if}\qquad f(u_\CF) = \sum_{n \in \Z} a_n u_\CF^n \in
\Robba_{\C_p}.
\]
Then, we have \(m_\alpha(f)^\sigma = m_\beta(f^\sigma)\), for
\(\beta(x) = \sigma\left(\alpha\left(%
    \frac{\chi_\CF(\sigma)}{\chi_\Gmul(\sigma)} x\right)\right)\).
\end{lem}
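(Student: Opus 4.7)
The plan is to apply the Galois action $\sigma$ coefficient-wise to the defining formula of $m_\alpha$ and track how each ingredient transforms. The key mechanical observation is that the operators $\varphi_q$, $\psi$ and $\partial$ commute with the coefficient-wise action of $\sigma$ on $\Robba_{\BC_p}$. Indeed, $\varphi_q = [\pi]_\CF$ is given by a power series in $\CO_F[[u_\CF]]$, and $\partial = \frac{\partial F_\CF}{\partial Y}(u_\CF,0) \cdot \rd/\rd u_\CF$ has its transcendental factor in $\CO_F[[u_\CF]]$, so both are $F$-linear with coefficients fixed by $\sigma$. For $\psi = \frac{1}{q}\varphi_q^{-1}\circ \tr$, the key point is that $\tr$ sends $u_\CF^n$ into $\varphi_q(\SE_F) \subset \SE_F$, so it too has the form "$F$-linear combination" and thus commutes with $\sigma$.

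Next, I would compute $\beta_\CF([i]_\CF)^\sigma$. Since $\beta_\CF([i]_\CF(u_\CF)) = \exp(i\Omega\log_\CF(u_\CF)) = \exp(i\Omega t_\CF)$ and $t_\CF\in F[[u_\CF]]$ is fixed coefficient-wise by $\sigma$, the formula $\sigma(\Omega) = \tau(\sigma)\Omega$ with $\tau = \chi_\cyc\cdot\chi_\CF^{-1} = \chi_\Gmul/\chi_\CF$ yields
\[
\beta_\CF([i]_\CF)^\sigma \;=\; \exp\bigl(i\,\tau(\sigma)\Omega\, t_\CF\bigr) \;=\; \beta_\CF\bigl([\tau(\sigma)i]_\CF\bigr).
\]
Applying $\sigma$ to the defining formula for $m_\alpha(f)$ and using these commutations therefore gives
\[
m_\alpha(f)^\sigma = \sum_{\bar i \in \CO_F/\pi^\ell}\!\!
 \beta_\CF([\tau(\sigma)i]_\CF)\,\varphi_q^\ell\!\left(\sum_{n\geq 0} \sigma(a_{i,n})\,\pi^{\ell n}\tau(\sigma)^{-n}\Omega^{-n}\partial^n\circ\psi^\ell\bigl(\beta_\CF([-\tau(\sigma)i]_\CF)\,f^\sigma\bigr)\right).
\]

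Now I would perform the change of variable $j = \tau(\sigma)i$, which is a bijection on $\CO_F/\pi^\ell$ since $\tau(\sigma)\in\CO_F^\times$. It remains to identify the coefficients $\sigma(a_{i,n})\tau(\sigma)^{-n}$ with the Taylor coefficients $b_{j,n} = \tfrac{1}{n!}\beta^{(n)}(j)$ of $\beta(x) := \sigma(\alpha(\tau(\sigma)^{-1}x))$ at $j$. Because $\sigma$ is a continuous (isometric) automorphism of $\BC_p$, one checks directly from the difference quotient that if $g\colon\CO_F\to\BC_p$ is differentiable at $x_0$ then $\sigma\circ g$ is differentiable at $x_0$ with derivative $\sigma(g'(x_0))$. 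Iterating, $\beta^{(n)}(j) = \tau(\sigma)^{-n}\sigma(\alpha^{(n)}(\tau(\sigma)^{-1}j))$, so $b_{j,n} = \tau(\sigma)^{-n}\sigma(a_{i,n})$ for $i=\tau(\sigma)^{-1}j$. Substituting back yields the desired equality $m_\alpha(f)^\sigma = m_\beta(f^\sigma)$.

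The main obstacle, while small, is the verification in the first paragraph that $\psi$ commutes with the coefficient-wise action of $\sigma$; this is where one must use the explicit structure of $\tr = \tr_{\SE_L/\varphi_q(\SE_L)}$ (or equivalently the formula $\tr(f)=\sum_{\eta\in\ker[\pi]_\CF}f(u_\CF+_\CF\eta)$ extended by continuity from $\SE_L^{\gg-\infty}$, as in the proof of Proposition \ref{prop:tr}) together with the Galois invariance of the set $\ker[\pi]_\CF$ under $\Gamma$, in order to see that $\tr$ has an expression with $F$-valued matrix in the basis $\{u_\CF^k\}$ and hence commutes with $\sigma$ on $\Robba_{\BC_p}$.
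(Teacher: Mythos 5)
Your proof is correct and is exactly the argument the paper intends (its own proof is the one-line remark that the lemma "can be deduced easily from the definition of $m_\alpha$ and the action of $\sigma$ on $\Omega$"): you apply $\sigma$ coefficient-wise to the defining formula, use that $\varphi_q$, $\psi$, $\partial$ and $t_\CF$ are defined over $F$, that $\sigma(\Omega)=\tau(\sigma)\Omega$ gives $\beta_\CF([i]_\CF)^\sigma=\beta_\CF([\tau(\sigma)i]_\CF)$, and then reindex by $j=\tau(\sigma)i$ while matching Taylor coefficients. All the details, including the slightly delicate commutation of $\psi$ with the coefficient-wise action via the $F$-rationality of $\tr$ in the basis $\{u_\CF^n\}$, check out.
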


\begin{proof}
This can be deduced easily from the definition of \(m_\alpha\) and
the action of \(\sigma\) on~\(\Omega\).
\end{proof}

\subsection{The $L[\Gamma]$-module $\SR_L(\delta)^{\psi=0}$}
\label{ss:psi0}

Let \(\delta\colon F^\times \rightarrow L^\times\) be a locally
\(F\)-analytic character. Then the map \(x \mapsto
\mathbf{1}_{\CO_F^\times}(x) \delta(x)\) is locally analytic on
\(\CO_F\). Thus, we have an operator
\(m_{\mathbf{1}_{\CO_F^\times}\delta}\) on \(\Robba_{\C_p}\).

\begin{lem}
Let \(f \) be in \( \Robba_L\). If
\(m_{\mathbf{1}_{\CO_F^\times}\delta}(f)
    = \sum_{n \in \Z} a_n u_\CF^n \in \Robba_{\C_p}\), then the coefficients
\(a_n\) are all on the same line of the \(L\)-vector space~\(\C_p\).
Moreover, this line does not depend on~\(f\).
\end{lem}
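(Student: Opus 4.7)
The overall plan is to apply Lemma~\ref{lemme-action-coeffs-m-alpha} with $\alpha := \mathbf{1}_{\CO_F^\times}\delta$, show that the twisted function $\beta$ it produces is merely a scalar multiple of $\alpha$, and then conclude via Ax--Sen--Tate. This converts the statement into a Galois-descent problem for a character of $G_L$.

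First, for $\sigma \in G_L$, set $\tau(\sigma) := \chi_\CF(\sigma)/\chi_\Gmul(\sigma)$. Since $\chi_\CF$ takes values in $\CO_F^\times$ and $\chi_\Gmul = \chi_\cyc$ in $\Z_p^\times \subset \CO_F^\times$, we have $\tau(\sigma) \in \CO_F^\times$, so multiplication by $\tau(\sigma)$ preserves the indicator $\mathbf{1}_{\CO_F^\times}$. Together with the identity $\delta(\tau(\sigma)x) = \delta(\tau(\sigma))\delta(x)$ (character property of $\delta$), this yields $\alpha(\tau(\sigma)x) = \eta(\sigma)\alpha(x)$ with $\eta(\sigma) := \delta(\tau(\sigma)) \in L^\times$. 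Because $\alpha$ is $L$-valued and $\sigma$ fixes $L$, applying $\sigma$ has no effect, so $\beta(x) = \eta(\sigma)\alpha(x)$. The $\BC_p$-linearity of $m_{(-)}$ in its subscript (Lemma~\ref{lemme-prop-elem-m-alpha}) then gives $m_\beta = \eta(\sigma) m_\alpha$.

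Next, since $f \in \SR_L$ has $L$-coefficients, $f^\sigma = f$, and Lemma~\ref{lemme-action-coeffs-m-alpha} provides
\[
m_\alpha(f)^\sigma \;=\; m_\beta(f^\sigma) \;=\; \eta(\sigma)\, m_\alpha(f).
\]
Comparing coefficients of $u_\CF^n$ on both sides forces $\sigma(a_n) = \eta(\sigma)\, a_n$ for every $n \in \Z$ and every $\sigma \in G_L$.

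Finally, I would invoke the standard observation that the $L$-subspace
\[
V_\eta \;:=\; \{\, c \in \BC_p \,:\, \sigma(c) = \eta(\sigma)\,c \text{ for all } \sigma \in G_L \,\}
\]
of $\BC_p$ is at most one-dimensional: for nonzero $c_1, c_2 \in V_\eta$, the quotient $c_1/c_2$ is $G_L$-fixed and thus lies in $\BC_p^{G_L} = L$ by Ax--Sen--Tate. Because $V_\eta$ depends only on $\delta$ (through $\eta$) and not on $f$, all coefficients $a_n$ lie on one common $L$-line of $\BC_p$, independent of $f$ (vacuously if $m_\alpha(f) = 0$). The only delicate step is pinning down the form of $\beta$, which crucially uses both the character property of $\delta$ and the fact that $\tau(\sigma) \in \CO_F^\times$; the remainder is an automatic consequence of Ax--Sen--Tate.
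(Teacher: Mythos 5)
Your proof is correct and follows essentially the same route as the paper: apply Lemma~\ref{lemme-action-coeffs-m-alpha} to get $m_{\mathbf{1}_{\CO_F^\times}\delta}(f)^\sigma = \delta\bigl(\chi_\CF(\sigma)/\chi_\Gmul(\sigma)\bigr)\, m_{\mathbf{1}_{\CO_F^\times}\delta}(f)$, read off $\sigma(a_n)=\eta(\sigma)a_n$, and conclude by Ax--Sen--Tate that the eigenspace is an $L$-line independent of $f$. You merely spell out the computation of $\beta$ in more detail than the paper does; no gaps.
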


\begin{proof}
Let~\(\sigma \) be in \(G_L\). From
Lemma~\ref{lemme-action-coeffs-m-alpha} and
Lemma~\ref{lemme-prop-elem-m-alpha} we see that
\[
m_{\mathbf{1}_{\CO_F^\times}\delta}(f)^\sigma =
    \delta\left(\frac{\chi_\CF(\sigma)}{\chi_\Gmul(\sigma)}\right)
    m_{\mathbf{1}_{\CO_F^\times}\delta}(f),
\]
and thus \(\sigma(a_n) =
    \delta\left(\frac{\chi_\CF(\sigma)}{\chi_\Gmul(\sigma)}\right)
a_n\)
for all~\(n\).

Ax-Sen-Tate's theorem (see e.g. \cite{Ax-thm-AST} or
\cite{Le-Borgne-AST}) says that \(\C_p^{G_L} = L\). Hence,
\[
\left\{\, z \in \C_p : \;
    \sigma(z) = \delta\left(\frac{\chi_\CF(\sigma)}{\chi_\Gmul(\sigma)}\right)
    z
    \;\forall\; \sigma \in G_L  \right\}
\]
is an \(L\)-vector subspace of \(\C_p\) with dimension \(0\)
or~\(1\), which proves the lemma.
\end{proof}

Since \(m_{\mathbf{1}_{\CO_F^\times}\delta} \circ
    m_{\mathbf{1}_{\CO_F^\times}\delta^{-1}}
    = \Res_{\CO_F^\times} = 1 - \varphi_q \circ \psi\) is not
null, there is a unique \(L\)-line in \(\C_p\) (which depends only
on~\(\delta\)) in which all the coefficients of the series
\(m_{\mathbf{1}_{\CO_F^\times}\delta}(f)\), for \(f \in \Robba_L\),
lie. Choose some non-zero \(a_\delta\) on this line.

As
\[
\varphi_q \circ \psi \circ
    m_{\mathbf{1}_{\CO_F^\times}\delta} =
m_{\mathbf{1}_{\pi \CO_F}
    \mathbf{1}_{\CO_F^\times}\delta} = 0
\]
and \(\varphi_q\) is injective,
\(m_{\mathbf{1}_{\CO_F^\times}\delta}(f)\) is in
\(\Robba_{\C_p}^{\psi=0}\).

\begin{lem}
\label{lemme-def-S-kappa} Define:
\[
M_\delta\colon\begin{array}[t]{rll}
\Robba_L^{\psi=0}&\longrightarrow&\Robba_L^{\psi=0}, \\
f&\longmapsto&%
a_\delta^{-1} m_{\mathbf{1}_{\CO_F^\times}\delta}(f).
\end{array}
\]
(These maps are defined up to homothety, with ratio in~\(L\),
because of the choice of constants~\(a_\delta\)). Then:
\begin{itemize}
\item \(M_1\) is a homothety (with ratio in~\(L^\times\))
of \(\Robba_L^{\psi=0}\);
\item \(M_{\delta_1} \circ M_{\delta_2} = M_{\delta_1 \delta_2}\),
up to homothety;
\item \(M_\delta\) is a bijection, and its inverse is \(M_{\delta^{-1}}\)
up to homothety;
\item for all \(\gamma \in \Gamma\), we have
\(\delta(\gamma) \gamma \circ M_\delta = M_\delta \circ \gamma\);
\item \((\Robba_L^+)^{\psi=0}\) is stable under~\(M_\delta\).
\end{itemize}
\end{lem}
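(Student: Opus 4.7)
The plan is to deduce all five bullets mechanically from Lemma~\ref{lemme-composition-m-alpha} and Lemma~\ref{lemme-prop-elem-m-alpha}, together with the Galois transformation law $\sigma(a_\delta) = \delta(\chi_\CF(\sigma)/\chi_\Gmul(\sigma))\,a_\delta$ for $\sigma\in G_L$ recorded in the preceding lemma. First I would verify well-definedness of $M_\delta\colon \SR_L^{\psi=0}\to \SR_L^{\psi=0}$: the preceding lemma places all coefficients of $m_{\mathbf{1}_{\CO_F^\times}\delta}(f)$ on the line $L\cdot a_\delta \subset \BC_p$, so $M_\delta(f) = a_\delta^{-1} m_{\mathbf{1}_{\CO_F^\times}\delta}(f)$ already lies in $\SR_L$; and the identity $\varphi_q\circ\psi\circ m_{\mathbf{1}_{\CO_F^\times}\delta} = m_{\mathbf{1}_{\pi\CO_F}\mathbf{1}_{\CO_F^\times}\delta} = 0$ (already recorded in the text above), combined with injectivity of $\varphi_q$, forces $\psi\circ m_{\mathbf{1}_{\CO_F^\times}\delta}=0$, placing the image in $\SR_L^{\psi=0}$.

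For bullet~(i), $m_{\mathbf{1}_{\CO_F^\times}} = \Res_{\CO_F^\times} = 1-\varphi_q\circ\psi$ restricts to the identity on $\SR_L^{\psi=0}$, so $M_1 = a_1^{-1}\cdot\mathrm{id}$; here $a_1$ can be chosen in $L^\times$ since the $L$-line associated to the trivial character is $L$ itself. For bullet~(ii), Lemma~\ref{lemme-composition-m-alpha} gives $m_{\mathbf{1}_{\CO_F^\times}\delta_1}\circ m_{\mathbf{1}_{\CO_F^\times}\delta_2} = m_{\mathbf{1}_{\CO_F^\times}\delta_1\delta_2}$ (the product of indicator functions collapses), so $M_{\delta_1}\circ M_{\delta_2} = \lambda\,M_{\delta_1\delta_2}$ with $\lambda = a_{\delta_1\delta_2}\,(a_{\delta_1}a_{\delta_2})^{-1}$; multiplicativity in $\delta$ of the cocycle character $\sigma\mapsto \delta(\chi_\CF(\sigma)/\chi_\Gmul(\sigma))$ makes $\lambda$ fixed by $G_L$, hence $\lambda\in L^\times$ by Ax--Sen--Tate. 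Bullet~(iii) then follows: $M_\delta\circ M_{\delta^{-1}}$ and $M_{\delta^{-1}}\circ M_\delta$ are $L^\times$-multiples of $M_1$, hence homotheties with ratio in $L^\times$, so $M_\delta$ is invertible with inverse an $L^\times$-multiple of $M_{\delta^{-1}}$.

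For bullet~(iv), I would specialize $\sigma_a\circ m_\alpha = m_{x\mapsto \alpha(a^{-1}x)}\circ\sigma_a$ from Lemma~\ref{lemme-prop-elem-m-alpha} to $a=\chi_\CF(\gamma)\in\CO_F^\times$ and $\alpha = \mathbf{1}_{\CO_F^\times}\delta$: since $a\in\CO_F^\times$ we have $\mathbf{1}_{\CO_F^\times}(a^{-1}x)=\mathbf{1}_{\CO_F^\times}(x)$ and $\delta(a^{-1}x)=\delta(a)^{-1}\delta(x)$, giving $\sigma_a\circ m_{\mathbf{1}_{\CO_F^\times}\delta} = \delta(a)^{-1}\,m_{\mathbf{1}_{\CO_F^\times}\delta}\circ\sigma_a$; the $\BC_p$-linear action of $\sigma_a$ on $\SR_{\BC_p}$ commutes with multiplication by $a_\delta^{-1}$, so dividing by $a_\delta$ yields the desired intertwining $\delta(\gamma)\,\gamma\circ M_\delta = M_\delta\circ\gamma$. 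Bullet~(v) follows immediately from the stability of $\SR_{\BC_p}^+$ under $m_\alpha$ (same lemma): rescaling by $a_\delta^{-1}\in\BC_p$ preserves the vanishing of negative-power coefficients, and $\SR_L\cap\SR_{\BC_p}^+=\SR_L^+$. The only point requiring anything beyond direct invocation of the two main lemmas is the verification that the proportionality constants appearing in~(i)--(iii) lie in $L^\times$ rather than in $\BC_p^\times$; this is precisely where the explicit cocycle formula for $\sigma(a_\delta)$ and Ax--Sen--Tate (via $\BC_p^{G_L}=L$) are used in an essential way, and beyond this bookkeeping no genuine obstacle arises.
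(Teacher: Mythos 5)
Your proposal is correct and follows essentially the same route as the paper, which simply cites Lemma~\ref{lemme-prop-elem-m-alpha} (together with Lemma~\ref{lemme-composition-m-alpha} and the identity $\Im(\Res_{\CO_F^\times})=\Robba_{\C_p}^{\psi=0}$) and leaves the bookkeeping to the reader. Your explicit verification that the proportionality constants are $G_L$-invariant, hence in $L^\times$ by Ax--Sen--Tate, is exactly the point the paper relies on implicitly via the choice of $a_\delta$.
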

\begin{proof}
This follows from Lemma \ref{lemme-prop-elem-m-alpha} and the fact
that \(\Im (\Res_{\CO_F^\times}) = \Ker (\Res_{\pi\CO_F})
    = \Robba_{\C_p}^{\psi=0}\).
\end{proof}

If $\delta$ is in $\SI_\an(L)$, we put
$\SR_L^-(\delta)=\SR_L(\delta)/\SR^+_L(\delta)$. Since
$\SR^+_L(\delta)$ is $\varphi_q,\psi, \Gamma$-stable,
$\SR_L^-(\delta)$ also has $\varphi_q$, $\psi$, $\Gamma$-actions.

\begin{lem} We have an exact sequence
$$ \xymatrix{ 0 \ar[r] & \SR^+_L(\delta)^{\psi=0} \ar[r] & \SR_L(\delta)^{\psi=0} \ar[r] & \SR^-_L(\delta)^{\psi=0} \ar[r] & 0. } $$
\end{lem}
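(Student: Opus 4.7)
The plan is to observe that the claimed sequence is obtained by applying the left-exact functor $(-)^{\psi=0}$ to the exact sequence
\[
0 \to \SR^+_L(\delta) \to \SR_L(\delta) \to \SR^-_L(\delta) \to 0,
\]
so left-exactness is automatic and the only real content is the surjectivity of $\SR_L(\delta)^{\psi=0} \to \SR^-_L(\delta)^{\psi=0}$. I would prove this by a direct lifting argument using $\psi \circ \varphi_q = \mathrm{id}$ (which, by the defining relations~(\ref{eq:psi-D}), holds on any $(\varphi_q,\Gamma)$-module, in particular on $\SR_L(\delta)$).

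In detail: let $\bar f \in \SR^-_L(\delta)^{\psi=0}$ and pick an arbitrary lift $f \in \SR_L(\delta)$. Since $\SR^+_L(\delta)$ is $\psi$-stable, the quotient map is $\psi$-equivariant, so $\psi(f)$ reduces to $\psi(\bar f) = 0$, i.e.\ $\psi(f) \in \SR^+_L(\delta)$. Now set
\[
f' := f - \varphi_q(\psi(f)).
\]
Because $\varphi_q$ sends $u_\CF$ to $[\pi]_\CF(u_\CF) \in u_\CF \CO_F[[u_\CF]]$ and $\varphi_q(e_\delta) = \delta(\pi) e_\delta$, the operator $\varphi_q$ preserves $\SR^+_L(\delta)$; hence $\varphi_q(\psi(f)) \in \SR^+_L(\delta)$ and $f'$ is still a lift of $\bar f$. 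Using $\psi \circ \varphi_q = \mathrm{id}$ we compute
\[
\psi(f') = \psi(f) - \psi(\varphi_q(\psi(f))) = \psi(f) - \psi(f) = 0,
\]
so $f' \in \SR_L(\delta)^{\psi=0}$ is the desired lift.

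There is no real obstacle here; the argument is essentially the snake-lemma observation that the connecting map $\SR^-_L(\delta)^{\psi=0} \to \SR^+_L(\delta)/\psi\,\SR^+_L(\delta)$ vanishes because $\psi$ is surjective on $\SR^+_L(\delta)$ (it has the section $\varphi_q$). The only points to verify are the stability of $\SR^+_L(\delta)$ under both $\psi$ and $\varphi_q$ and the identity $\psi \circ \varphi_q = \mathrm{id}$, both of which are already in place from Section~\ref{ss:psi}.
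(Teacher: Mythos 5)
Your proof is correct and is essentially the paper's argument made explicit: the paper simply invokes the snake lemma together with the surjectivity of $\psi$ on $\SR^+_L(\delta)$, and your lift $f' = f - \varphi_q(\psi(f))$ is exactly the witness for that surjectivity (via the section $\varphi_q$). Nothing is missing.
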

\begin{proof} This follows from the snake lemma and the surjectivity
of the map $\psi:\SR^+_L(\delta)\rightarrow \SR^+_L(\delta)$.
\end{proof}

Observe that $\SR_L(\delta)^{\psi=0}=\SR_L^{\psi=0}\cdot e_\delta$
and $\SR^+_L(\delta)^{\psi=0}=(\SR^+_L)^{\psi=0}\cdot e_\delta$. As
$\psi$ commutes with $\Gamma$, $\SR_L(\delta)^{\psi=0}$,
$\SR^+_L(\delta)^{\psi=0}$ and $\SR^-_L(\delta)^{\psi=0}$ are all
$\Gamma$-invariant.

\begin{prop}\label{prop:key} Let $\delta_1$ and $\delta_2$ be two locally $F$-analytic
characters $F^\times\rightarrow L^\times$. Then as
\(L[\Gamma]\)-modules,
 \(\Robba_L(\delta_1)^{\psi=0}\) is isomorphic to
\(\Robba_L(\delta_2)^{\psi=0}\), $\SR^+_L(\delta_1)^{\psi=0}$ is
isomorphic to $\SR^+_L(\delta_2)^{\psi=0}$, and
$\SR_L^-(\delta_1)^{\psi=0}$ is isomorphic to
$\SR_L^-(\delta_2)^{\psi=0}$. \end{prop}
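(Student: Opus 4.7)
The proof is essentially an immediate corollary of the operators $M_\delta$ constructed in Lemma~\ref{lemme-def-S-kappa}. The plan is to use $M_{\delta_2 \delta_1^{-1}}$ to transport one eigenspace to the other, absorbing the difference between the $\Gamma$-actions on $\SR_L(\delta_1)$ and $\SR_L(\delta_2)$ via the twisting property $\delta(\gamma)\gamma \circ M_\delta = M_\delta \circ \gamma$.

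More precisely, set $\delta = \delta_2 \delta_1^{-1}$ and define
\[
\Phi : \SR_L(\delta_1)^{\psi=0} \longrightarrow \SR_L(\delta_2)^{\psi=0},
\qquad f e_{\delta_1} \longmapsto M_\delta(f) e_{\delta_2}.
\]
This is well-defined on the identifications $\SR_L(\delta_i)^{\psi=0} = \SR_L^{\psi=0} \cdot e_{\delta_i}$. To check $\Gamma$-equivariance, I would compute, for $\gamma \in \Gamma$,
\[
\gamma(\Phi(f e_{\delta_1})) = \gamma(M_\delta(f)) \delta_2(\gamma) e_{\delta_2}
= \delta(\gamma)^{-1} M_\delta(\gamma(f)) \delta_2(\gamma) e_{\delta_2}
= M_\delta(\delta_1(\gamma) \gamma(f)) e_{\delta_2}
= \Phi(\gamma(f e_{\delta_1})),
\]
using the twisting relation and the $L$-linearity of $M_\delta$. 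Bijectivity of $\Phi$ follows from the fact that $M_{\delta^{-1}}$ provides an inverse up to a nonzero scalar in $L$ (Lemma~\ref{lemme-def-S-kappa}), which does not affect being an $L[\Gamma]$-isomorphism.

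For the statements about $\SR^+_L(\delta_i)^{\psi=0}$ and $\SR^-_L(\delta_i)^{\psi=0}$, the same map $\Phi$ does the job: Lemma~\ref{lemme-def-S-kappa} asserts that $(\SR_L^+)^{\psi=0}$ is stable under $M_\delta$ and under $M_{\delta^{-1}}$, so the restriction of $\Phi$ to the $+$-parts is an $L[\Gamma]$-isomorphism $\SR^+_L(\delta_1)^{\psi=0} \xrightarrow{\sim} \SR^+_L(\delta_2)^{\psi=0}$. Passing to the quotient in the $\Gamma$-equivariant short exact sequence
\[
0 \longrightarrow \SR^+_L(\delta_i)^{\psi=0} \longrightarrow \SR_L(\delta_i)^{\psi=0} \longrightarrow \SR^-_L(\delta_i)^{\psi=0} \longrightarrow 0
\]
then yields the isomorphism on the $-$-parts.

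There is essentially no hard step here: the entire substance of the proposition is packaged in the construction of $M_\delta$ (which relies on Ax--Sen--Tate to descend $m_{\mathbf{1}_{\CO_F^\times}\delta}$ from $\SR_{\BC_p}$ to $\SR_L$, and on the multiplicativity $m_\alpha \circ m_\beta = m_{\alpha\beta}$ to ensure $M_\delta$ is bijective on $\psi = 0$ parts). The only point that requires momentary care is making sure the signs/characters match in the $\Gamma$-equivariance computation, i.e.\ that one twists by $\delta_2 \delta_1^{-1}$ rather than $\delta_1 \delta_2^{-1}$.
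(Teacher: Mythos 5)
Your proof is correct and is exactly the paper's argument: the paper's own proof is the one-line statement that all three isomorphisms are induced by $M_{\delta_1^{-1}\delta_2}$ (the same character as your $\delta_2\delta_1^{-1}$), and your write-up simply supplies the routine verifications of $\Gamma$-equivariance, bijectivity via $M_{\delta^{-1}}$, and stability of the $+$-part that the paper leaves implicit.
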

\begin{proof} All of the isomorphisms in question are induced by \(M_{\delta_1^{-1}
\delta_2}\).
\end{proof}

\begin{prop} \label{prop:partial-iso-0}
The map $\partial$ induces $\Gamma$-equivariant isomorphisms
$(\SR_L(\delta))^{\psi=0} \rightarrow (\SR_L(x\delta))^{\psi=0}$,
$(\SR_L^+(\delta))^{\psi=0} \rightarrow (\SR_L^+(x\delta))^{\psi=0}$
and $(\SR_L^-(\delta))^{\psi=0} \rightarrow
(\SR_L^-(x\delta))^{\psi=0}$.
\end{prop}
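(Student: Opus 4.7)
The plan is to define $\partial$ on $\SR_L(\delta)$ by $\partial(z e_\delta) := (\partial z)\, e_{x\delta}$ and then verify three things in order: (i) this map is $\varphi_q$-, $\psi$- and $\Gamma$-equivariant, so in particular it sends the $\psi=0$ part to the $\psi=0$ part in a $\Gamma$-equivariant way; (ii) on $\SR_L^{\psi=0}$ the operator $\partial$ is bijective; (iii) everything respects the $\SR_L^+ \subseteq \SR_L$ subring, so passing to the quotient gives the $\SR_L^-$ statement.

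For (i), note that the $\psi$-action on $\SR_L(\delta)$ is given by $\psi(z e_\delta) = \delta(\pi)^{-1} \psi(z) e_\delta$ and similarly for $x\delta$, so the equivariance reduces to the three identities of Lemma~\ref{lemma-comm-1}: $\partial \sigma_a = a \sigma_a \partial$, $\partial \varphi_q = \pi \varphi_q \partial$ and $\partial \psi = \pi^{-1} \psi \partial$. A direct computation using these identities together with $(x\delta)(\pi) = \pi \delta(\pi)$ and $(x\delta)(a) = a\delta(a)$ shows that the map $z e_\delta \mapsto (\partial z) e_{x\delta}$ intertwines all three actions; in particular, if $\psi(z e_\delta) = 0$ then $\psi((\partial z) e_{x\delta}) = 0$.

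For (ii), identifying $\SR_L(\delta)^{\psi=0}$ with $\SR_L^{\psi=0}$ as $L$-modules, it suffices to prove that $\partial \colon \SR_L^{\psi=0} \to \SR_L^{\psi=0}$ is bijective. The exact sequence of Proposition~\ref{prop:res-partial} has kernel $L$, and since $\psi \circ \varphi_q = \id$ gives $\psi(1)=1 \neq 0$ we have $L \cap \SR_L^{\psi=0} = 0$, whence injectivity. For surjectivity, given $y \in \SR_L^{\psi=0}$, Proposition~\ref{prop-tate-trace}(\ref{it:res-2}) gives $\Res(y) = \frac{q}{\pi}\Res(\psi(y)) = 0$, so by Proposition~\ref{prop:res-partial} there exists $z_0 \in \SR_L$ with $\partial z_0 = y$. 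From $\psi \partial = \pi \partial \psi$ we get $0 = \psi(y) = \pi \partial (\psi(z_0))$, so $\psi(z_0)$ lies in $\ker \partial = L$. Setting $z := z_0 - \psi(z_0)$ and using $\psi|_L = \id_L$ (since $\psi \circ \varphi_q = \id$) yields $\psi(z) = 0$ and $\partial z = y$, proving surjectivity.

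For (iii), the operator $\partial$ preserves $\SR_L^+$ (as $\frac{\partial F_\CF}{\partial Y}(u_\CF,0) \in \SR_L^+$ and $d/du_\CF$ preserves $\SR_L^+$), so the construction above descends to $\SR_L^-$ and fits in the commutative diagram
\[
\xymatrix@C=16pt{
0 \ar[r] & \SR_L^+(\delta)^{\psi=0} \ar[r] \ar[d]^{\partial} & \SR_L(\delta)^{\psi=0} \ar[r] \ar[d]^{\partial} & \SR_L^-(\delta)^{\psi=0} \ar[r] \ar[d]^{\partial} & 0 \\
0 \ar[r] & \SR_L^+(x\delta)^{\psi=0} \ar[r] & \SR_L(x\delta)^{\psi=0} \ar[r] & \SR_L^-(x\delta)^{\psi=0} \ar[r] & 0
}
\]
whose rows are the exact sequence of the lemma preceding Proposition~\ref{prop:key}. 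The middle vertical arrow is an isomorphism by (ii); for the left one, essentially the same argument of (ii) works, the only additional point being that the antiderivative $z_0$ of a $y \in \SR_L^+$ can be chosen in $\SR_L^+$ (formal term-by-term integration converges on the same open disc). The five-lemma then gives the isomorphism for $\SR_L^-$. The main (small) subtlety is step (iii), specifically confirming that $\psi$ preserves $\SR_L^+$ and that antiderivatives of elements of $\SR_L^+$ stay in $\SR_L^+$; both follow from the explicit formulas for $\psi$ on $u_\CF^i$ (Proposition~\ref{prop:psi-sp}) and basic rigid-analytic convergence.
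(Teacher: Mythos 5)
Your proof is correct and follows essentially the same route as the paper's: reduce to the identification with $\SR_L^{\psi=0}$, get injectivity from $\Ker(\partial)=L$ together with $L\cap\SR_L^{\psi=0}=0$, and get surjectivity by producing an antiderivative via $\Res\circ\psi=\frac{\pi}{q}\Res$ and Proposition~\ref{prop:res-partial}, then correcting by the constant $\psi(z_0)$; $\Gamma$-equivariance comes from Lemma~\ref{lemma-comm-1} in both treatments. The only cosmetic difference is in the $\SR_L^{\pm}$ statements: the paper simply observes that for $z\in\SR_L^{\psi=0}$ one has $\partial z\in\SR_L^+$ if and only if $z\in\SR_L^+$ and restricts the bijection accordingly, whereas you rerun the surjectivity argument on $\SR_L^+$ and then apply the five lemma to the short exact sequences.
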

\begin{proof} We first show that the maps in question are bijective. For
this we only need to consider the case of $\delta=1$. As
$\Ker(\partial)=L$, $\partial$ is injective on $\SR_L^{\psi=0}$. For
any $z\in \SR_L^{\psi=0}$, $\Res(z)=\frac{q}{\pi}\Res(\psi(z))=0$.
Thus by Proposition \ref{prop:res-partial} there exists $z'\in
\SR_L$ such that $\partial z' =z$. As
$\partial(\psi(z'))=\frac{1}{\pi}\psi(\partial z' )=0$, $\psi(z)=c$
for some $c\in L$. Then $z'-c\in \SR_L^{\psi=0}$ and
$\partial(z'-c)=z$. This shows that the map $\SR_L^{\psi=0}
\rightarrow \SR_L ^{\psi=0}$ is bijective. It is clear that, for any
$z\in \SR_L^{\psi=0}$, $\partial z\in \SR^+_L$ if and only if $z\in
\SR_L^+$. Thus the restriction $\partial:
(\SR^+_L)^{\psi=0}\rightarrow (\SR^+_L)^{\psi=0}$ and the induced
map $\partial: (\SR^-_L)^{\psi=0}\rightarrow (\SR^-_L)^{\psi=0}$ are
also bijective.

That these isomorphisms are $\Gamma$-equivariant follows from Lemma
\ref{lemma-comm-1}.
\end{proof}

Put \begin{equation} \label{eq:S-delta} S_\delta :=
\SR_L^-(\delta)^{\Gamma=1, \psi=0 }.\end{equation} As before, let
$\nabla_\delta$ be the operator on $\SR^+_L$ or $\SR_L$ such that
$(\nabla_\delta a)e_\delta=\nabla (ae_\delta)$, i.e.
$\nabla_\delta=t_\CF\partial +w_\delta$. The set
$\SR^+_L(\delta)/\nabla_\delta\SR^+_L(\delta)$ also admits actions
of $\Gamma$, $\varphi_q$ and $\psi$.  Put
$$
T_\delta:=(\SR^+_L(\delta)/\nabla_\delta\SR^+_L(\delta))^{\Gamma=1,\psi=0}.$$
Both $S_\delta$ and $T_\delta$ are $L$-vector spaces and only depend
on $\delta|_{\CO_F ^\times}$.

\begin{lem}\label{lem:eq} $S_\delta= \SR_L^-(\delta)^{\psi=0, \nabla_\delta=0, \ \Gamma=1}$,
i.e. $S_\delta$ coincides with the set of $\Gamma$-invariant
solutions of $\nabla_\delta z=0$ in $\SR^-_L(\delta)^{\psi=0}$.
\end{lem}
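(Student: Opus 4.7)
The plan is to prove both inclusions, the only substantive one being that $\Gamma$-invariance on $\SR_L^-(\delta)$ forces annihilation by $\nabla_\delta$. The inclusion $S_\delta \supseteq \SR_L^-(\delta)^{\psi=0,\nabla_\delta=0,\Gamma=1}$ is immediate, since the right-hand side is defined by the same conditions as $S_\delta$ together with the extra constraint $\nabla_\delta=0$, so it is trivially contained in $S_\delta$.

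For the non-trivial inclusion, I would first set up the infinitesimal $\Gamma$-action on the quotient. Since $\delta \in \SI_\an(L)$, the module $\SR_L(\delta)$ is $\CO_F$-analytic by Proposition \ref{prop:rank-one}, so Section \ref{ss:an} supplies the $\CO_F$-linear map $\rd\Gamma_{\SR_L(\delta)}\colon \Lie\Gamma \to \End_L \SR_L(\delta)$, with $\nabla = \rd\Gamma(1)$ under the identification $\Lie\Gamma = \CO_F$. Because $\SR_L^+(\delta)$ is $\Gamma$-stable and is also stable under $\nabla = t_\CF\partial + w_\delta$ (as $t_\CF \in \SR_L^+$ and $\partial$ preserves $\SR_L^+$), both the $\Gamma$-action and the operator $\nabla$ descend to $\SR_L^-(\delta)$, and the descended operators still satisfy $\nabla = \log(\exp\beta)/\beta$ for $\beta$ small.

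Now take $z \in S_\delta$. For any $\gamma \in \Gamma$ we have $(\gamma-1)z = 0$, hence $(\gamma-1)^i z = 0$ for every $i \geq 1$. Therefore, for $\gamma$ close enough to $1$ that the series $\log\gamma = \sum_{i\geq 1}(-1)^{i-1}(\gamma-1)^i/i$ converges as an operator on $\SR_L^-(\delta)$ (the convergence granted by Lemma \ref{lem:gamma-conv} transferred to the quotient), every term applied to $z$ is zero, giving $\log\gamma(z) = 0$. Writing $\gamma = \exp(\beta)$ for $\beta \in \Lie\Gamma$ small, this is $\rd\Gamma(\beta)z = 0$. Extending by $\BZ_p$-linearity of $\rd\Gamma$, we obtain $\rd\Gamma(\beta)z = 0$ for every $\beta \in \Lie\Gamma$; taking $\beta$ to be $1 \in \CO_F \cong \Lie\Gamma$ yields $\nabla z = 0$, i.e.\ $\nabla_\delta a = 0$ if $z = a e_\delta$.

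The only real obstacle is the bookkeeping of passing to the quotient: one must verify that the operators $\nabla$ and $\log\gamma$ are well defined on $\SR_L^-(\delta)$ by checking stability of $\SR_L^+(\delta)$ and that the convergence from Section \ref{ss:an} is preserved. Once this descent is in place the lemma is a formal consequence of the fact that $\nabla$ is the infinitesimal generator of the $\Gamma$-action.
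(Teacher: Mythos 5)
Your proposal is correct and takes exactly the route the paper intends: the paper's entire proof is the one-line observation that $z\in \SR_L^-(\delta)^{\Gamma=1}$ forces $\nabla_\delta z=0$, because $\nabla_\delta$ is the infinitesimal generator of the $\Gamma$-action and every term of $\log\gamma$ applied to a $\Gamma$-fixed element vanishes. You have simply supplied the routine details (stability of $\SR_L^+(\delta)$ under $\nabla_\delta$ and descent to the quotient) that the paper leaves implicit.
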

\begin{proof} In fact, if $z\in \SR^-_L(\delta)^{
\Gamma=1}$, then $\nabla_\delta z=0$.
\end{proof}

\begin{cor} \label{prop:dim-1}
$\dim_L S_\delta = \dim_L S_{1}$ and $\dim_L T_\delta=\dim_L T_1$
for all $\delta\in\SI_\an(L)$.
\end{cor}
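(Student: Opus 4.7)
Both equalities will be deduced from Proposition~\ref{prop:key}. That proposition furnishes, for each $\ast\in\{+,-\}$, an $L[\Gamma]$-equivariant isomorphism $\tilde{M}\colon \SR^{\ast}_L(1)^{\psi=0} \xrightarrow{\sim} \SR^{\ast}_L(\delta)^{\psi=0}$.

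The equality $\dim_L S_\delta = \dim_L S_1$ is immediate: taking $\Gamma$-invariants of the isomorphism for $\ast=-$ yields $S_1\cong S_\delta$.

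For $\dim_L T_\delta = \dim_L T_1$, the first step is to upgrade $\tilde{M}$ (for $\ast=+$) to an isomorphism that also intertwines the Lie derivatives. Being continuous and $\Gamma$-equivariant, $\tilde{M}$ satisfies $\tilde{M}\circ\gamma = \gamma\circ\tilde{M}$, and differentiating at $\gamma\to 1$ gives $\tilde{M}\circ\nabla = \nabla\circ\tilde{M}$ on the $\psi=0$ subspaces; equivalently, $M_\delta\circ\nabla_1 = \nabla_\delta\circ M_\delta$ on $\SR_L^{+,\psi=0}$. Hence $\tilde{M}$ descends to an $L[\Gamma]$-equivariant isomorphism of cokernels
\[
\SR^+_L(1)^{\psi=0}\big/\nabla_1\SR^+_L(1)^{\psi=0}
\;\xrightarrow{\sim}\;
\SR^+_L(\delta)^{\psi=0}\big/\nabla_\delta\SR^+_L(\delta)^{\psi=0}.
\]

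The second step is to identify the $\Gamma$-invariants of either side with $T_1$ and $T_\delta$ respectively. Since $\psi\circ\varphi_q=\id$, the operator $\psi$ is surjective on $\SR^+_L(\delta)$; together with the commutation $\psi\circ\nabla_\delta=\nabla_\delta\circ\psi$, a short diagram chase produces the exact sequence
\[
0\to\ker(\nabla_\delta)\big/\psi(\ker(\nabla_\delta))
\to \SR^+_L(\delta)^{\psi=0}\big/\nabla_\delta\SR^+_L(\delta)^{\psi=0}
\xrightarrow{\alpha_\delta}\bigl(\SR^+_L(\delta)/\nabla_\delta\SR^+_L(\delta)\bigr)^{\psi=0}
\to 0,
\]
where $\ker(\nabla_\delta)$ is computed in $\SR^+_L(\delta)$. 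One checks that $\alpha_\delta$ is always an isomorphism: either $\ker(\nabla_\delta)=0$ (when $w_\delta\notin\BZ_{\leq 0}$), or $w_\delta=-n$ for some integer $n\geq 0$ and then solving $(t_\CF\partial-n)z=0$ gives $\ker(\nabla_\delta) = L\cdot t_\CF^n e_\delta$, on which $\psi$ acts by the non-zero scalar $\pi^{-n}\delta(\pi)^{-1}$ (using $\psi(t_\CF^n)=\pi^{-n}t_\CF^n$ and $\psi(e_\delta)=\delta(\pi)^{-1}e_\delta$). Taking $\Gamma$-invariants of the $\tilde{M}$-descent then yields $\dim_L T_\delta = \dim_L T_1$.

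The main subtlety is the last identification in the exceptional case $w_\delta\in\BZ_{\leq 0}$: one must exhibit the explicit generator $t_\CF^n e_\delta$ of $\ker(\nabla_\delta)$ and verify that $\psi$ acts on it as a non-zero scalar, so that the correction term $\ker(\nabla_\delta)/\psi(\ker(\nabla_\delta))$ vanishes in every case.
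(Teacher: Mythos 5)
Your proof is correct and rests on the same input as the paper's, namely Proposition~\ref{prop:key}; the paper simply asserts that the corollary ``follows directly'' from that proposition. The extra work you supply for the $T_\delta$ statement --- differentiating the relation $\delta(\gamma)\,\gamma\circ M_\delta=M_\delta\circ\gamma$ to obtain $\nabla_\delta\circ M_\delta=M_\delta\circ\nabla_1$, and then comparing $\SR^+_L(\delta)^{\psi=0}/\nabla_\delta\SR^+_L(\delta)^{\psi=0}$ with $\bigl(\SR^+_L(\delta)/\nabla_\delta\SR^+_L(\delta)\bigr)^{\psi=0}$ via the computation $\ker\nabla_\delta=L\,t_\CF^{-w_\delta}e_\delta$ (on which $\psi$ acts by the nonzero scalar $\pi^{w_\delta}\delta(\pi)^{-1}$) --- is precisely the bridging the paper leaves implicit, and you carry it out correctly.
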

\begin{proof} This follows directly from Proposition \ref{prop:key}.
\end{proof}

\begin{cor}\label{prop:tran-S} The map $z\mapsto \partial^nz $ induces
isomorphisms $S_{\delta}\rightarrow S_{x^n \delta}$ and
$T_\delta\rightarrow T_{x^n \delta}$.
\end{cor}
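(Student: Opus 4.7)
The corollary has two assertions. For $S_\delta$, the statement is immediate: Proposition~\ref{prop:partial-iso-0} already produces a $\Gamma$-equivariant isomorphism $\partial\colon \SR^-_L(\delta)^{\psi=0}\to \SR^-_L(x\delta)^{\psi=0}$, so passing to $\Gamma$-invariants and iterating $n$ times gives the map $\partial^n\colon S_\delta\to S_{x^n\delta}$.

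For $T_\delta$, the plan rests on the commutation identity $\partial\circ\nabla_\delta=\nabla_{x\delta}\circ\partial$ on $\SR^+_L$, which is a direct computation using $\partial t_\CF=1$ and $w_{x\delta}=1+w_\delta$: one has $\partial(t_\CF\partial f+w_\delta f)=\partial f+t_\CF\partial^2 f+w_\delta\partial f=t_\CF\partial(\partial f)+w_{x\delta}\partial f$. Iterating gives $\partial^n\nabla_\delta=\nabla_{x^n\delta}\partial^n$, so $\partial^n$ descends to a map
\[
\bar\partial^n\colon \SR^+_L(\delta)/\nabla_\delta\SR^+_L(\delta)\to \SR^+_L(x^n\delta)/\nabla_{x^n\delta}\SR^+_L(x^n\delta).
\]
This map is $\Gamma$-equivariant and sends $\psi=0$ to $\psi=0$ (using $\partial\psi=\pi^{-1}\psi\partial$), so it restricts to $\bar\partial^n\colon T_\delta\to T_{x^n\delta}$, and it suffices to prove bijectivity when $n=1$.

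To prove bijectivity of $\bar\partial\colon T_\delta\to T_{x\delta}$, I would apply the snake lemma to the short exact sequence $0\to L\to \SR^+_L\xrightarrow{\partial}\SR^+_L\to 0$ (noting that $\partial$ is surjective on $\SR^+_L$ with kernel the constants $L$) endowed with the vertical maps $w_\delta$, $\nabla_\delta$, $\nabla_{x\delta}$. This yields an exact sequence
\[
\Coker(w_\delta\colon L\to L)\longrightarrow \Coker\nabla_\delta\xrightarrow{\,\bar\partial\,}\Coker\nabla_{x\delta}\longrightarrow 0,
\]
so $\bar\partial$ is always surjective on the full cokernels, with kernel equal to the image of $\Coker(w_\delta)$: this is zero when $w_\delta\neq 0$, and a copy of $L$ when $w_\delta=0$ (using $L\cap t_\CF\SR^+_L=0$, since $\nabla_\delta\SR^+_L=t_\CF\SR^+_L$ in that case). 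Any $\bar f\in \ker\bar\partial$ is then represented by some $c\in L$; since $\psi c=c$ on $\SR^+_L$, the condition $\bar\psi\bar f=0$ forces $c\in\nabla_\delta\SR^+_L$, hence $\bar f=0$, giving injectivity of $\bar\partial|_{T_\delta}$. For surjectivity, pick any preimage $\bar f_0\in\Coker\nabla_\delta$ of $\bar g\in T_{x\delta}$; both $\bar\psi\bar f_0$ and $(\sigma_a-1)\bar f_0$ (with $\sigma_a$ acting in the $\delta$-twist) then lie in $\ker\bar\partial$, and applying $\bar\psi$ to the second and using $\psi\sigma_a=\sigma_a\psi$ on $\SR^+_L$ yields the key identity $(\sigma_a-1)\bar f_0=(\delta(a)-1)\bar\psi\bar f_0$ inside $\ker\bar\partial$. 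Hence the single correction $\bar f_0\mapsto \bar f_0-\bar\psi\bar f_0$ simultaneously kills the $\psi$-defect and the $\Gamma$-defect, producing the desired element of $T_\delta$ mapping to $\bar g$.

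The main obstacle is precisely this simultaneous correction in the exceptional case $w_\delta=0$, where $\bar\partial$ acquires a one-dimensional kernel on the full cokernels: it is the commutation of $\psi$ with $\Gamma$ that allows a single constant to correct both defects at once, and one should check the cocycle-style identity above with care.
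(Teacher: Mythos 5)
Your proof is correct, and it supplies substantially more than the paper does: the paper's entire proof is the one line ``this follows directly from Proposition~\ref{prop:partial-iso-0}.'' For $S_\delta$ that citation (plus taking $\Gamma$-invariants) really is the whole argument, and there you and the paper coincide. For $T_\delta$, however, Proposition~\ref{prop:partial-iso-0} only concerns the $\psi=0$ parts of $\SR_L(\delta)$, $\SR_L^+(\delta)$, $\SR_L^-(\delta)$, and does not by itself control the quotient $\SR^+_L(\delta)/\nabla_\delta\SR^+_L(\delta)$; your snake-lemma argument on $0\to L\to\SR^+_L\xrightarrow{\partial}\SR^+_L\to 0$ (using the surjectivity of $\partial$ on $\SR^+_L$ and the intertwining $\partial\nabla_\delta=\nabla_{x\delta}\partial$) is exactly the kind of verification the paper leaves implicit. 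I checked the delicate points: the kernel of $\bar\partial$ on the full cokernels is indeed $0$ for $w_\delta\neq 0$ and the image of $L$ for $w_\delta=0$ (since then $\nabla_\delta\SR^+_L=t_\CF\SR^+_L$ meets $L$ trivially), and your ``key identity'' $(\sigma_{a;\delta}-1)\bar f_0=(\delta(a)-1)\bar\psi\bar f_0$ holds provided $\bar\psi$ denotes the \emph{untwisted} operator on $\SR^+_L$ (the one with $\psi(c)=c$ for $c\in L$), which is what you use; note that with the module operator $\psi_\delta=\delta(\pi)^{-1}\psi$ the correction constant would have to be rescaled, since $\bar\psi_\delta^2\bar f_0=\delta(\pi)^{-1}\bar\psi_\delta\bar f_0$. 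As the kernels of the two normalizations agree, your corrected class $\bar f_0-\bar\psi\bar f_0$ does land in $T_\delta$ and maps to $\bar g$, so both injectivity and surjectivity go through. In short: same route as the paper for $S_\delta$, and a legitimate, self-contained completion of the paper's unargued claim for $T_\delta$.
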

\begin{proof} This follows directly from
Proposition \ref{prop:partial-iso-0}.
\end{proof}

We determine $\dim_LS_\delta$ and $\dim_L T_\delta$ below.

\begin{lem} \label{prop:inj}
The map $\nabla_\delta$ induces an injection $\bar{\nabla}_\delta:
S_\delta\rightarrow T_\delta$.
\end{lem}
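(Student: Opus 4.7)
The plan is to define $\bar\nabla_\delta(\bar z) := \nabla_\delta z \bmod \nabla_\delta\SR^+_L(\delta)$ for $\bar z \in S_\delta$ with lift $z \in \SR_L(\delta)$, then prove injectivity by analyzing $\Ker(\nabla_\delta)$ on $\SR_L(\delta)$. For well-definedness, Lemma~\ref{lem:eq} identifies $S_\delta$ with $\SR_L^-(\delta)^{\Gamma=1,\psi=0,\nabla_\delta=0}$, so any lift satisfies $\nabla_\delta z \in \SR^+_L(\delta)$, and two lifts differ by an element of $\SR^+_L(\delta)$, whose image under $\nabla_\delta$ lies in $\nabla_\delta \SR^+_L(\delta)$. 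To see the image lies in $T_\delta$, I would invoke the commutations $\gamma \nabla = \nabla \gamma$ (by definition of $\nabla$ as the Lie derivative of the $\Gamma$-action) and $\psi \nabla = \nabla \psi$ (which follows from Lemma~\ref{lemma-comm-1} via $\psi(t_\CF \partial x) = \pi^{-1} t_\CF \psi(\partial x) = t_\CF \partial \psi(x)$, using $t_\CF = \pi^{-1}\varphi_q(t_\CF)$): $\Gamma$-invariance and $\psi$-vanishing of $\bar z$ then transport to the same properties for $\overline{\nabla_\delta z}$ in $T_\delta$.

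For the injectivity, if $\bar\nabla_\delta(\bar z) = 0$, then $\nabla_\delta z = \nabla_\delta y$ for some $y \in \SR^+_L(\delta)$, so $w := z - y$ lies in $\Ker(\nabla_\delta : \SR_L(\delta) \to \SR_L(\delta))$ with $\bar w = \bar z$. I would then determine this kernel explicitly. Writing $w = a e_\delta$, the equation becomes $\nabla a = -w_\delta a$ in $\SR_L$; since $\nabla$ is a derivation on the field of fractions of $\SR_L$ with constants $L$, the solution space is at most one-dimensional. When $w_\delta \in \BZ$ the substitution $a = t_\CF^{-w_\delta} b$ is allowed in $\SR_L$ and reduces to $\nabla b = 0$, which forces $b \in L$ via Proposition~\ref{prop:res-partial} and the injectivity of multiplication by $t_\CF$; when $w_\delta \notin \BZ$, $t_\CF^{-w_\delta}$ is not a Laurent series in $\SR_L$, so the kernel is trivial. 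Hence $w = c\, t_\CF^{-w_\delta} e_\delta$ for some $c \in L$ (with the convention that no such generator exists when $w_\delta \notin \BZ$). When $w_\delta \leq 0$, the generator lies in $\SR^+_L(\delta)$, so $w \in \SR^+_L(\delta)$ and $\bar z = 0$; when $w_\delta \notin \BZ$, $c = 0$ forces $\bar z = 0$.

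The main obstacle is the case $w_\delta \in \BZ_{>0}$, where $t_\CF^{-w_\delta} e_\delta$ escapes $\SR^+_L(\delta)$ and one cannot conclude $w \in \SR^+_L(\delta)$ from $\nabla_\delta w = 0$ alone. Here I would exploit the $\psi$-vanishing of $\bar z$: since $\psi(\SR^+_L(\delta)) \subset \SR^+_L(\delta)$ by Proposition~\ref{prop:psi-con}, we have $\psi(y) \in \SR^+_L(\delta)$, and $\psi(\bar z) = 0$ gives $\psi(z) \in \SR^+_L(\delta)$; hence $\psi(w) \in \SR^+_L(\delta)$. A direct computation using $\psi \circ \varphi_q = \id$ applied to $t_\CF^k = \pi^{-k} \varphi_q(t_\CF^k)$ gives $\psi(t_\CF^k) = \pi^{-k} t_\CF^k$, and $\psi(e_\delta) = \delta(\pi)^{-1} e_\delta$, whence $\psi(w) = c\, \delta(\pi)^{-1} \pi^{w_\delta} t_\CF^{-w_\delta} e_\delta$. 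Since $t_\CF^{-w_\delta} e_\delta \notin \SR^+_L(\delta)$ for $w_\delta > 0$, this forces $c = 0$, so $\bar z = \bar w = 0$, completing the injectivity argument.
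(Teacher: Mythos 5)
The well-definedness half of your argument is essentially the paper's (commutation of $\nabla_\delta$ with $\Gamma$ and with $\psi$, the latter exactly as you derive it from Lemma \ref{lemma-comm-1}), and your plan for injectivity --- identify $\Ker(\nabla_\delta)$ on $\SR_L(\delta)$ and check that it lies in $\SR^+_L(\delta)$ --- has the right shape. Where you diverge from the paper is that the paper first reduces everything to $\delta=1$ via the twisting operators $M_\delta$ of Proposition \ref{prop:key} (which intertwine $\nabla$ with $\nabla_\delta$ on $\SR_L^{\psi=0}$), after which the only kernel needed is $\Ker(t_\CF\partial)=L\subset\SR^+_L$; you instead compute $\Ker(\nabla_\delta)$ for an arbitrary locally $F$-analytic $\delta$, and that is where a gap appears.

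The gap is the case $w_\delta\notin\BZ$: you conclude the kernel is trivial ``since $t_\CF^{-w_\delta}$ is not a Laurent series in $\SR_L$''. But the at-most-one-dimensionality of the solution space (the ratio of two solutions is a $\partial$-constant) only bounds the dimension; the non-existence of the particular candidate $t_\CF^{-w_\delta}$ does not exclude some other nonzero $a\in\SR_L$ with $t_\CF\partial a=-w_\delta a$. A genuine argument is needed, e.g.: at any zero $z_0$ of $t_\CF$ (and $t_\CF$ has zeros in every annulus $0<v_p(u_\CF)\leq r$, which is also why $t_\CF^{-1}\notin\SR_L$), comparing leading terms in $t_\CF\partial a=-w_\delta a$ forces $\ord_{z_0}(a)=-w_\delta$, so a nonzero solution can exist only if $w_\delta\in\BZ_{\leq 0}$, in which case $t_\CF^{-w_\delta}$ divides $a$ and $a\in L\,t_\CF^{-w_\delta}\subset\SR^+_L$. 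This local computation also repairs a smaller gap in your $w_\delta\in\BZ_{\leq 0}$ case: the substitution $a=t_\CF^{-w_\delta}b$ with $b\in\SR_L$ presupposes that divisibility, and Proposition \ref{prop:res-partial} identifies $\Ker\partial$ only on $\SR_L$, not the constants of $\Frac(\SR_L)$. Finally, for $w_\delta\in\BZ_{>0}$ your ``main obstacle'' is illusory: $t_\CF^{-w_\delta}\notin\SR_L$, so the kernel is already zero and the $\psi$-detour, while harmless, is unnecessary. The cleanest repair of the whole injectivity step is the paper's: transport the problem to $\delta=1$ with $M_\delta$ and quote $\Ker(t_\CF\partial)=L$.
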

\begin{proof}
By Proposition \ref{prop:key} we only need to consider the case of
$\delta=1$.

Let $z$ be an element of $S_1$. Let $\tilde{z}\in \SR_L^{\psi=0}$ be
a lifting of $z$. By Lemma \ref{lem:eq}, $\nabla \tilde{z} $ is in
$\SR^+_L $. We show that the image of $\nabla \tilde{z} $ in
$\SR^+_L /\nabla \SR^+_L $ belongs to $T_1$. Since
$\psi(\tilde{z})=0$, $\psi(\nabla  \tilde{z} )=\nabla
(\psi(\tilde{z}))=0$. For any $\gamma\in\Gamma$ there exists
$a_\gamma\in \SR^+_L$ such that $\gamma \tilde{z}
=\tilde{z}+a_\gamma$. Thus $\gamma(\nabla \tilde{z})=\nabla
\tilde{z}+\nabla  a_\gamma$. Hence the image of $\tilde{z}$ in
$\SR^+_L /\nabla \SR^+_L (\delta)$ is fixed by $\Gamma$. Furthermore
the image only depends on $z$. Indeed, if $\tilde{z}'\in
\SR_L^{\psi=0}$ is another lifting of $z$, then $\nabla
(\tilde{z}'-\tilde{z})$ is in $\nabla  \SR^+_L$. Therefore we obtain
a map $\bar{\nabla}: S_1\rightarrow T_1$.

We prove that $\bar{\nabla}$ is injective. Suppose that $z\in S_1$
satisfies $\bar{\nabla} z=0$. Let $\tilde{z}\in\SR_L^{\psi=0}$ be a
lifting of $z$. Since $\nabla\tilde{z}$ is in $\nabla \SR^+_L$,
there exists $y\in \SR^+_L$ such that $\nabla y=\nabla \tilde{z}$.
Thus $\nabla (\tilde{z}-y)=0$. Then $\tilde{z}-y$ is in $L$, which
implies that $\tilde{z}\in \SR^+_L$ or equivalently $z=0$.
\end{proof}

\begin{lem}\label{lem:dim} $\dim_L T_1=1$.
\end{lem}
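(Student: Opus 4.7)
The plan is to translate $T_1$ into $(\SR_L^+/t_\CF\SR_L^+)^{\Gamma=1,\psi=0}$, establish the upper bound $\dim_L T_1 \leq 1$ by evaluating at the $\pi$-power torsion of $\CF$, and produce one explicit nonzero class for the lower bound.

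First, since $\rd/\rd u_\CF\colon\SR_L^+\to\SR_L^+$ is surjective with kernel $L$ and $\partial$ differs from it by the unit $\frac{\partial F_\CF}{\partial Y}(u_\CF,0)\in\CO_F[[u_\CF]]^\times$, the operator $\partial$ is surjective on $\SR_L^+$; hence $\nabla\SR_L^+ = t_\CF\partial\SR_L^+ = t_\CF\SR_L^+$, so $T_1 = (\SR_L^+/t_\CF\SR_L^+)^{\Gamma=1,\psi=0}$. The corollary to Lemma~\ref{lem:Berger} shows that $t_\CF$ vanishes on the open unit disc precisely at the $\pi$-power torsion points of $\CF$, each with simple zero, so evaluation at these points injects $\SR_L^+/t_\CF\SR_L^+$ into a product of residue fields.

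For the upper bound, a $\Gamma$-invariant class $[f]$ satisfies $f(\gamma z) = (\gamma f)(z) = f(z)$ for every $\pi$-power torsion $z$ and every $\gamma\in\Gamma$. Transitivity of $\Gamma$ on primitive $\pi^n$-torsion points, combined with the compatibility $\gamma(f(z)) = f(\gamma z)$ and an Ax--Sen--Tate descent, forces $c_n := f(z)$ to lie in $L$ and to be independent of the chosen $z$. Then I would use the pointwise formula $\psi(f)(z) = q^{-1}\sum_{[\pi]_\CF w=z} f(w)$ coming from $\psi = q^{-1}\varphi_q^{-1}\tr$: the condition $\psi[f]=0$ yields $c_{n+1}=0$ at each primitive $\pi^n$-torsion point with $n\geq 1$ (all $q$ preimages are primitive $\pi^{n+1}$-torsion, so the average is $c_{n+1}$) and $c_0 + (q-1)c_1=0$ at $z=0$ (preimages are $\CF[\pi]$). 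Hence $\dim_L T_1 \leq 1$.

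For the lower bound I would exhibit the class of $g_0 := t_\CF/u_\CF \in \SR_L^+$, which satisfies $g_0(0) = 1$ and vanishes at every primitive $\pi^n$-torsion point with $n\geq 1$ (visible from the product formula \eqref{eq:t-decom}). Then $[g_0]$ is $\Gamma$-invariant in the quotient, and comparing torsion-values shows $\psi(g_0)$ and $g_0/q$ agree at every $\pi$-power torsion point, giving $[\psi(g_0)] = [g_0]/q$. Setting $h := g_0 - q^{-1}\varphi_q(g_0)$, the relation $\psi\varphi_q = \id$ gives $\psi[h]=0$, $\Gamma$-equivariance of $\varphi_q$ preserves $\Gamma$-invariance, and $h(0) = 1-q^{-1} = (q-1)/q \neq 0$ ensures $[h]\neq 0$. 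So $[h]$ is a nonzero element of $T_1$, forcing $\dim_L T_1 = 1$.

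The main obstacle is justifying that evaluation at $\pi$-power torsion really captures the quotient (i.e.\ that a power series in $\SR_L^+$ vanishing on every $\pi$-power torsion point is divisible by $t_\CF$ in $\SR_L^+$) and that $\Gamma$-invariance forces values into $L$ itself rather than merely $L\cdot F_\infty$; both rest on the simple-zero structure of $t_\CF$ provided by the product decomposition, combined with a standard Galois descent.
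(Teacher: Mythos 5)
Your argument is correct and is essentially the paper's: the identification $T_1=(\SR^+_L/t_\CF\SR^+_L)^{\Gamma=1,\psi=0}$, the product decomposition of the ideal $(t_\CF)$ (your ``evaluation at $\pi$-power torsion points'' is exactly the paper's isomorphism $\jmath$ onto $\SR^+_L/([\pi]_\CF(u_\CF))\times\prod_{n\geq1}\SR^+_L/(\varphi_q^{n}(Q))$, justified the same way by the Fr\'echet--Stein property), the computation of $\Gamma$-invariants by finite Galois descent (Ax--Sen--Tate is more than you need here), and the observation that $\psi$ shifts torsion levels, which kills every $c_n$ with $n\geq 2$ and imposes the single relation $c_0+(q-1)c_1=0$. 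The only real difference is cosmetic: you produce the explicit nonzero class of $t_\CF/u_\CF-q^{-1}\varphi_q(t_\CF/u_\CF)$ for the lower bound, whereas the paper gets both bounds at once by rank--nullity for $\psi_0$ on the two-dimensional invariant subspace of $\SR^+_L/([\pi]_\CF(u_\CF))$ (compare Remark \ref{rem:constant-non-zero}).
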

\begin{proof}
Note that $T_1=(\SR^+_L/\SR^+_Lt_\CF)^{\Gamma=1,\psi=0}.$ As
$\SR^+_L$ is a Fr\'echet-Stein algebra, from the decomposition
(\ref{eq:t-decom}) of the ideal $(t_\CF)$ we obtain an isomorphism
\begin{equation} \label{eq:inc-iso}
\jmath: \SR^+_L/\SR^+_Lt_\CF \xrightarrow{\sim}
\SR^+_L/([\pi]_\CF(u_\CF))\times\prod_{n\geq 1} \SR^+_L/
(\varphi_q^{n}(Q)).
\end{equation}

The operator $\psi$ induces maps $\psi_0:
\SR^+_L/([\pi]_\CF(u_\CF))\rightarrow \SR^+_L/\SR^+_L u_\CF$ and
$\psi_n: \SR^+_L/ (\varphi_q^{n}(Q)) \rightarrow \SR^+_L/
(\varphi_q^{n-1}(Q))$, $n\geq 1$. Thus
$\jmath((\SR^+_L/\SR^+_Lt_\CF)^{\Gamma=1,\psi=0})$ is exactly the
subset
$$\{ (y_n)_{n\geq 0}: y_0\in (\SR^+_L/([\pi]_\CF(u_\CF)))^\Gamma, \psi_0(y_0)=0,
\ y_n\in (\SR^+_L/ (\varphi_q^{n}(Q)))^{\Gamma}, \psi_n(y_n)=0 \
\forall n\geq 1 \}  $$   of
$\SR^+_L/([\pi]_\CF(u_\CF))\times\prod_{n\geq 1} \SR^+_L/
(\varphi_q^{n}(Q))$.

If $n\geq 1$, then $\SR^+_F /\varphi_q^{n}(Q)$ is a finite extension
of $F$ and the action of $\Gamma$ factors through the whole Galois
group of this extension. Thus $(\SR^+_F /(\varphi_q^{n}(Q)))^\Gamma=
F $ and $(\SR^+_L/(\varphi_q^n(Q)))^\Gamma=L$. Since $\psi_n(a)=a$
for any $a\in L$, $(\SR^+_F /(\varphi_q^{n}(Q)))^\Gamma\cap
\ker(\psi_n)=0$ for any $n\geq 1$. Similarly
$(\SR^+_L/([\pi]_\CF(u_\CF)))^\Gamma=(\SR^+_L/(u_\CF))^\Gamma\times
(\SR^+_L/(Q))^\Gamma$ is $2$-dimensional over $L$. As $\psi_0(1)=1$
and the image of $\psi_0$, i.e. $\SR^+_L/\SR^+_L u_\CF$, is
$1$-dimensional over $L$, the kernel of
$\psi_0|_{(\SR^+_L/([\pi]_\CF(u_\CF)))^\Gamma}$ is of dimension $1$.
It follows that $T_1=(\SR^+_L/\SR^+_Lt_\CF)^{\Gamma=1,\psi=0}$ is of
dimension $1$.
\end{proof}

\begin{cor} \label{cor:dim} $\dim_LS_1=1$.
\end{cor}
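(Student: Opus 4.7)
The upper bound $\dim_L S_1 \le 1$ is immediate from Lemma~\ref{prop:inj} (which provides an injection $\bar\nabla\colon S_1\hookrightarrow T_1$) combined with Lemma~\ref{lem:dim} ($\dim_L T_1=1$). It remains to exhibit a non-zero element of $S_1$.

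My plan is to invert $\bar\nabla$ at the distinguished generator of $T_1$ produced in the proof of Lemma~\ref{lem:dim}. That generator is represented by an element $\alpha\in\SR^+_L$ with $\alpha\equiv 0\pmod{u_\CF}$ and $\alpha\equiv 1\pmod{Q(u_\CF)}$; a concrete choice is $\alpha:=1-Q(u_\CF)/Q(0)$. Using the factorization $t_\CF=u_\CF\prod_{n\ge 0}\varphi_q^n(Q/Q(0))$ from~\eqref{eq:t-decom}, the quotient $\alpha/t_\CF$ is meromorphic on the rigid open unit disc, regular at $u_\CF=0$ (both numerator and denominator vanish to first order there), and has simple poles exactly at the nontrivial $\pi$-torsion of $\CF$. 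I would then take $z\in\SR_L$ to be a $\partial$-antiderivative of $\alpha/t_\CF$ on an annulus $0<v_p(u_\CF)\le r$ chosen small enough to avoid these poles, modified by a correction term in $\SR^+_L$. This gives $\nabla z=t_\CF\partial z\equiv\alpha\pmod{t_\CF\SR^+_L}$, so $\bar\nabla$ applied to the class of $z$ recovers the generator of $T_1$; and $z\notin \SR^+_L$ because $\alpha/t_\CF$ fails to be analytic on the full open unit disc.

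The remaining conditions $\psi z\in\SR^+_L$ and $(\gamma-1)z\in\SR^+_L$ for all $\gamma\in\Gamma$ will follow from the corresponding equivariance of $\alpha$ in $T_1$ together with the commutation relations $\partial\circ\psi=\pi^{-1}\psi\circ\partial$ and $\partial\circ\sigma_a=a\,\sigma_a\circ\partial$ of Lemma~\ref{lemma-comm-1}, provided the correction term in $\SR^+_L$ is chosen compatibly with both $\psi$ and $\Gamma$. The main technical obstacle is arranging this simultaneous $\psi$- and $\Gamma$-equivariance of the correction while keeping convergence on a common annulus; this reduces to a residue-matching calculation guided by Propositions~\ref{prop:res-partial} and~\ref{prop-tate-trace}. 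As an alternative, more structural path one could use the Fourier-theoretic identification $\SR^+_{\BC_p}\cong\SD(\CO_F,\BC_p)$ of Section~\ref{ss:psi-ext} together with Proposition~\ref{prop:key} to reduce the problem to a $\delta$ for which $\SR^-_L(\delta)^{\psi=0}$ can be described directly as a piece of a locally analytic distribution module, producing a non-zero $\Gamma$-invariant element by inspection.
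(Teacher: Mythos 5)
Your upper bound $\dim_L S_1\le 1$ is exactly the paper's reduction (Lemma~\ref{prop:inj} plus Lemma~\ref{lem:dim}), and you are right that the remaining content is to exhibit a nonzero element of $S_1$ (the paper itself only asserts that the image of $\bar\nabla$ is $1$-dimensional). But your construction breaks at the first step: $\alpha=1-Q(u_\CF)/Q(0)$ does \emph{not} represent the generator of $T_1$ --- its class in $\SR^+_L/t_\CF\SR^+_L$ is not even killed by $\psi$. For the special Lubin--Tate group, $\alpha=-u_\CF^{q-1}/\pi$ and Proposition~\ref{prop:psi-sp} gives $\psi(\alpha)=(q-1)/q$, a nonzero constant, which does not lie in $t_\CF\SR^+_L$. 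Your normalization $\alpha\equiv 0\pmod{u_\CF}$ is in fact incompatible with Remark~\ref{rem:constant-non-zero}, which says every lift of a nonzero class of $T_1$ is nonzero at $u_\CF=0$; the actual generator is congruent to $u_\CF^{q-1}+(q-1)\pi/q$ modulo $[\pi]_\CF(u_\CF)$ and to $0$ modulo $\varphi_q^{n}(Q)$ for every $n\ge 1$. The error propagates: with your $\alpha$, the function $\alpha/t_\CF$ has simple poles at \emph{all} nonzero torsion points of $\CF$ (by \eqref{eq:t-decom} these are all the zeros of $t_\CF$ besides $0$), and since they accumulate at the outer boundary $v_p(u_\CF)=0$, no annulus $0<v_p(u_\CF)\le r$ avoids them; hence $\alpha/t_\CF\notin\SR_L$ and the antiderivative $z$ you want does not exist. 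With the correct generator the picture is the opposite: $\alpha/t_\CF$ is holomorphic on a thin outer annulus but has a simple pole at $u_\CF=0$, and that pole is precisely what should force $z\notin\SR^+_L$.

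Even granting the right $\alpha$, the two points you defer are the whole proof: one must check $\Res(\alpha/t_\CF+\beta)=0$ before Proposition~\ref{prop:res-partial} yields any antiderivative, and one must make the correction compatible with $\psi$ and $\Gamma$ simultaneously; "this reduces to a residue-matching calculation" is not an argument. A way to close the gap using only tools already in the paper: the element $m=\frac1q\log\frac{\varphi_q(u_\CF)}{u_\CF^q}$ of Sublemma~\ref{lem:H1-exam} lies in $(\SE^\dagger_L)^{\psi=0}$ but not in $\SR^+_L$ (one checks $\psi(\log u_\CF)=\frac1q\log u_\CF$ from $\prod_{\eta\in\ker[\pi]_\CF}(u_\CF+_\CF\eta)=\pm[\pi]_\CF(u_\CF)$), and $\sigma_a(m)-m=(q^{-1}\varphi_q-1)\log\frac{[a]_\CF(u_\CF)}{u_\CF}\in\SR^+_L$, so its class is a nonzero element of $S_{\delta_\unr}$; Proposition~\ref{prop:key} (the operator $M_{\delta_\unr^{-1}}$) then transports it to a nonzero element of $S_1$.
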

\begin{proof} The map $\nabla$ injects $S_1$ into $T_1$ with image of
dimensional $1$.
\end{proof}

\begin{rem} \label{rem:constant-non-zero}
If $z\in T_1$ is non-zero, then any lifting $\tilde{z}\in \SR^+_L$
of $z$ is not in $u_\CF\SR^+_L$ or equivalently
$\tilde{z}|_{u_\CF=0}\neq 0$. We only need to verify this for the
special Lubin-Tate group. In this case,
$\SR^+_L/([\pi]_\CF(u_\CF))=\oplus_{i=0}^{q-1}Lu_\CF^i$. We have
$(\SR^+_L/([\pi]_\CF(u_\CF)))^\Gamma=L\oplus L u_\CF^{q-1}$. Indeed,
an element of $\SR^+_L/([\pi]_\CF(u_\CF))$ is fixed by $\Gamma$ if
and only if it is fixed by the operators $z\mapsto \sigma_\xi(z)$
with $\xi\in \mu_{q-1}$; but $\sigma_\xi(u_\CF)=[\xi]_\CF(u_\CF)=\xi
u_\CF$ and so $\sigma_\xi(u_\CF^i)=\xi^iu_\CF^i$ for any $i\in \BN$.
Then $\big(\SR^+_L/([\pi]_\CF(u_\CF))\big)^{\Gamma=1,\psi=0} = L
\cdot (u_\CF^{q-1}-(1-q)\pi/q )$.
\end{rem}

\begin{prop} \label{thm:dim}
For any $\delta\in \SI_\an(L)$, $\dim_LS_\delta=\dim_L T_\delta= 1$
and the map $\bar{\nabla}_\delta$ is an isomorphism.
\end{prop}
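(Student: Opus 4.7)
The plan is to reduce the statement to the case $\delta = 1$ and then assemble the pieces already proved just above. By Corollary \ref{prop:dim-1}, for any $\delta \in \SI_\an(L)$ we have $\dim_L S_\delta = \dim_L S_1$ and $\dim_L T_\delta = \dim_L T_1$, so it suffices to know the dimensions in the case $\delta = 1$.

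Now Lemma \ref{lem:dim} gives $\dim_L T_1 = 1$ by analyzing the decomposition of $\SR_L^+/\SR_L^+ t_\CF$ coming from the factorization $(t_\CF) = (u_\CF \prod_{n\geq 0} \varphi_q^n(Q(u_\CF)/Q(0)))$ and computing $\Gamma$-invariants together with the kernel of $\psi$ on each factor. Corollary \ref{cor:dim} then gives $\dim_L S_1 = 1$ by using the injection $\bar\nabla : S_1 \hookrightarrow T_1$ established in Lemma \ref{prop:inj}, whose image has $L$-dimension at most $\dim_L T_1 = 1$, and is nonzero since one can exhibit a nonzero element of $S_1$ (e.g.\ obtained from the generator of $T_1$ via a lifting, or directly from Remark \ref{rem:constant-non-zero}).

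Combining these results, we get $\dim_L S_\delta = \dim_L T_\delta = 1$ for every $\delta \in \SI_\an(L)$. Finally, the map $\bar\nabla_\delta : S_\delta \to T_\delta$ is an $L$-linear injection by Lemma \ref{prop:inj} between two $L$-vector spaces of the same finite dimension $1$, so it is automatically an isomorphism. No genuinely new computation is needed; the proposition is just the clean packaging of the preceding lemmas, and the only nontrivial input — the careful dimension count of $T_1$ — has already been carried out in Lemma \ref{lem:dim}.
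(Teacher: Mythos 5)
Your proof is correct and follows exactly the paper's own route: the paper's proof of this proposition is literally the one-line citation of Corollary \ref{prop:dim-1}, Lemma \ref{prop:inj}, Lemma \ref{lem:dim} and Corollary \ref{cor:dim}, assembled precisely as you describe. Your remark that one must still exhibit a nonzero element of $S_1$ to upgrade the injection $\bar\nabla : S_1 \hookrightarrow T_1$ to equality of dimensions is a fair observation about what Corollary \ref{cor:dim} is implicitly using, but since that corollary is already established in the paper, citing it as you do is legitimate.
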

\begin{proof} This follows from Corollary \ref{prop:dim-1},
Lemma \ref{prop:inj}, Lemma \ref{lem:dim} and Corollary
\ref{cor:dim}.
\end{proof}

\section{Cohomology theories for $(\varphi_q,\Gamma)$-modules}
\label{sec:coh}

For a $(\varphi_q,\Gamma)$-module $D$ over $\SR_L$, the
$(\varphi_q,\Gamma)$-module structure induces an action of the
semi-group $G^+:=\varphi_q^{\BN}\times \Gamma$ on $D$. Following
\cite{Col-an} we define $H^\bullet(D)$ as the cohomology of the
semi-group $G^+$. Let $C^\bullet(G^+, D)$ be the complex $$
\xymatrix{ 0 \ar[r] & C^0(G^+,D) \ar[r]^{d_1} & C^1(G^+,D)
\ar[r]^{d_2} & \cdots, }
$$ where $C^0(G^+,D)=D$,  $C^n(G^+,D)$ for $n\geq 1$ is the set
of continuous functions from $(G^+)^n$ to $D$, and $d_{n+1}$ is the
differential
$$ d_{n+1} c(g_0,\cdots, g_n)= g_0\cdot c(g_1,\cdots, g_n) +\sum_{i=1}^{n-1} (-1)^{i+1} c(g_0, \cdots, g_ig_{i+1},\cdots g_n) + (-1)^{n+1} c(g_0,\cdots g_{n-1}).
$$ Then $H^i(D)=H^i(C^\bullet(G^+, D))$.

If $D_1$ and $D_2$ are two $(\varphi_q,\Gamma)$-modules over
$\SR_L$, we use $\Ext(D_1,D_2)$ to denote the set, in fact an
$L$-vector space, of extensions of $D_1$ by $D_2$ in the category of
$(\varphi_q,\Gamma)$-modules over $\SR_L$.

We construct a natural map $\Theta^D: \Ext(\SR_L, D)\rightarrow
H^1(D)$ for any $(\varphi_q,\Gamma)$-module $D$. Let $\widetilde{D}$
be an extension of $\SR_L$ by $D$. Let $e\in\widetilde{D}$ be a
lifting of $1\in \SR_L$. Then $g\mapsto g(e)-e$, $g\in G^+$, is a
$1$-cocycle, and induces an element of $H^1(D)$ independent of the
choice of $e$. Thus we obtain the desired map
$$\Theta^D: \Ext(\SR_L, D)\rightarrow H^1(D).$$

\begin{prop} For any $(\varphi_q,\Gamma)$-module $D$ over $\SR_L$, $\Theta^D$ is an isomorphism.
\end{prop}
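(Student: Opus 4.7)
The plan is to write down an explicit inverse map $\Psi^D\colon H^1(D)\to \Ext(\SR_L,D)$ and check that $\Psi^D$ and $\Theta^D$ are mutually inverse. Given a continuous $1$-cocycle $c\colon G^+\to D$, I would let $\widetilde{D}_c=\SR_L\cdot e\oplus D$ as an $\SR_L$-module and define the semilinear $G^+$-action by
\[
g(ae+m)=g(a)e+g(a)\,c(g)+g(m),\qquad a\in\SR_L,\;m\in D,\;g\in G^+.
\]
A direct computation shows that this gives an action of the semigroup precisely because $c$ satisfies the cocycle relation $c(g_1g_2)=c(g_1)+g_1c(g_2)$; in particular the commutativity $\varphi_q\gamma=\gamma\varphi_q$ inside $G^+$ forces the two expansions of $c(\varphi_q\gamma)$ and $c(\gamma\varphi_q)$ to coincide, so $\varphi_q$ and $\Gamma$ commute on $\widetilde{D}_c$. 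Continuity of the $\Gamma$-action follows from continuity of $c$, and the fact that $\varphi_q$ sends a basis to a basis is immediate: if $\{e_1,\dots,e_d\}$ is a basis of $D$, then $\{\varphi_q(e),\varphi_q(e_1),\dots,\varphi_q(e_d)\}$ generates $\widetilde{D}_c$ because $\varphi_q(e)=e+c(\varphi_q)$ with $c(\varphi_q)\in D$, and linear independence is automatic. By construction $\widetilde{D}_c$ sits in a short exact sequence $0\to D\to\widetilde{D}_c\to\SR_L\to 0$, so $\Psi^D(c):=[\widetilde{D}_c]\in\Ext(\SR_L,D)$.

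Next I would check well-definedness in both directions. If $c$ and $c'=c+db$ differ by the coboundary of some $b\in D$, i.e.\ $c'(g)=c(g)+g(b)-b$, then $\phi\colon\widetilde{D}_c\to\widetilde{D}_{c'}$ defined by $\phi(ae+m)=a(e-b)+m$ is an $\SR_L$-linear isomorphism commuting with the $G^+$-actions, as one checks from the direct formula; hence $\Psi^D$ factors through $H^1(D)$. Conversely, in the construction of $\Theta^D$, replacing a lift $e$ by $e-b$ with $b\in D$ shifts the cocycle $g\mapsto g(e)-e$ by $-(g(b)-b)=-db(g)$, so $\Theta^D$ is well-defined.

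Finally, I would check that $\Theta^D\circ\Psi^D$ and $\Psi^D\circ\Theta^D$ are the identity. In the first composition, $\Theta^D(\Psi^D(c))$ is computed by taking $e\in\widetilde{D}_c$ as the distinguished lift of $1$, and one has $g(e)-e=c(g)$ tautologically. In the second composition, starting from $\widetilde{D}\in\Ext(\SR_L,D)$, picking any lift $e$ of $1$ identifies $\widetilde{D}$ with $\SR_L\cdot e\oplus D$ as $\SR_L$-modules, and the $G^+$-action is exactly the one defined above using the cocycle $c(g)=g(e)-e$; hence $\Psi^D(\Theta^D([\widetilde{D}]))\cong\widetilde{D}$.

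The only real subtlety is the verification that $\widetilde{D}_c$ is a genuine $(\varphi_q,\Gamma)$-module in the sense of Section~\ref{ss:galois}: one must know that $c$ is continuous (which is built into the definition of $C^1(G^+,D)$) and that this continuity propagates to the $\Gamma$-action on $\widetilde{D}_c$; the basis and commutation conditions are then formal. This is where essentially all the work goes, but it is routine once the cocycle formulation is set up.
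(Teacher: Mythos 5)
Your proposal is correct and takes essentially the same route as the paper: the paper proves surjectivity by exactly your construction of $\widetilde{D}_c=D\oplus\SR_L e$ with $\varphi_q(e)=e+c(\varphi_q)$, $\gamma(e)=e+c(\gamma)$, and proves injectivity by observing that a coboundary cocycle yields a $G^+$-fixed lift $e-d$ of $1$, which splits the extension. Your packaging as an explicit two-sided inverse $\Psi^D$, with the verification of the cocycle/coboundary compatibilities, is just a slightly more detailed write-up of the same argument.
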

\begin{proof}
Let $\widetilde{D}$ be an extension of $\SR_L$ by $D$ in the
category of $(\varphi_q,\Gamma)$-modules whose image under
$\Theta^D$ is zero. Let $e\in \widetilde{D}$ be a lifting of
$1\in\SR_L$. As the image of $g\mapsto g(e)-e$, $g\in G^+$, in
$H^1(D)$ is zero, there exists some $d\in D$ such that
$(g-1)e=(g-1)d$ for all $g\in G^+$. Then $g(e-d)=e-d$ for all $g\in
G^+$. Thus $\widetilde{D}=D\oplus \SR_L(e-d)$ as a
$(\varphi_q,\Gamma)$-module. This proves the injectivity of
$\Theta^D$. Next we prove the surjectivity of $\Theta^D$. Given a
$1$-cocycle $g\mapsto c(g)\in D$, correspondingly we can extend the
$(\varphi_q,\Gamma)$-module structure on $D$ to the $\SR_L$-module
$\widetilde{D}=D\oplus \SR_L e$ such that $\varphi_q(e)=e+
c(\varphi_q)$ and $\gamma(e)=e+c(\gamma)$ for $\gamma\in \Gamma$.
\end{proof}

If $D_1$ and $D_2$ are two $\CO_F $-analytic
$(\varphi_q,\Gamma)$-modules over $\SR_L$, we use
$\Ext_\an(D_1,D_2)$ to denote the  $L$-vector space of extensions of
$D_1$ by $D_2$ in the category of $\CO_F $-analytic
$(\varphi_q,\Gamma)$-modules over $\SR_L$. We will introduce another
cohomology theory $H^*_\an(-)$, wherein for any $\CO_F $-analytic
$(\varphi_q,\Gamma)$-module $D$ the first cohomology group
$H_\an^1(D)$ coincides with $\Ext_\an(\SR_L,D)$.

If $D$ is $\CO_F$-analytic, we consider the following complex
$$C^\bullet_{\varphi_q,\nabla}(D): \hskip 10pt
\xymatrix{ 0\ar[r] & D \ar[r]^{f_1} & D \oplus D \ar[r]^{f_2} & D
\ar[r] & 0 } ,
$$ where $f_1: D\rightarrow D \oplus D$ is the map $ m \mapsto ((\varphi_q-1)m,
\nabla m)$ and $f_2: D \oplus D\rightarrow D$ is $(m, n)\mapsto
\nabla m-(\varphi_q-1)n$. As $f_1$ and $f_2$ are
$\Gamma$-equivariant, $\Gamma$ acts on the cohomology groups
$H^i_{\varphi_q,\nabla}(D):=H^i(C^\bullet_{\varphi_q,\nabla}(D))$,
$i=0,1,2$. Put $ H_\an^i(D):=H^i_{\varphi_q,\nabla} ( D )^{\Gamma}.$

By a simple calculation we obtain
$$H^0(D)=H_\an^0(D)=D^{\varphi_q=1,\Gamma=1}. $$ Note that $D^{\varphi_q=1}$
is finite dimensional over $L$, and so is $H^0(D)$. If $D$ is
\'etale and if $V$ is the $L$-linear Galois representation of $G_F $
attached to $D$, then $$H^0(D)=H^0_\an(D)=H^0(G_F ,V)=V^{G_F }.$$

For our convenience we introduce some notations. Put
$Z^1_{\varphi_q,\nabla}(D):=\ker (f_2)$ and $B^1(D):=\im(f_1)$. For
any $(m_1, n_1)$ and $(m_2, n_2)$ in $Z^1_{\varphi_q,\nabla}(D)$, we
write $(m_1, n_1)\sim (m_2, n_2)$ if $(m_1-m_2, n_1-n_2)\in B^1(D)$.
Put
$$Z^1(D):=\{(m,n)\in Z^1_{\varphi_q,\nabla}(D): (m,n)\sim \gamma(m,n) \text{ for any }\gamma\in \Gamma
\}.$$  Then $H_\an^1(D)=Z^1(D)/B^1(D)$.

Let $\widetilde{D}$ be an $\CO_F $-analytic extension of $\SR_L$ by
$D$. Let $e\in\widetilde{D}$ be a lifting of $1\in \SR_L$. Then
$((\varphi_q-1)e, \nabla_{\widetilde{D}}e)$ belongs to $Z^1(D)$ and
induces an element of $H_\an^1(D)$ independent of the choice of $e$.
In this way we obtain a map
$$\Theta_{\an}^D: \Ext_\an(\SR_L, D)\rightarrow H_\an^1(D).$$

\begin{thm}\label{thm:an-iso} (= Theorem \ref{thm:intro-coh})
For any $\CO_F$-analytic $(\varphi_q,\Gamma)$-module $D$ over
$\SR_L$, $\Theta_\an^D$ is an isomorphism.
\end{thm}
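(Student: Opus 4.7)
The plan is to mimic the proof of the preceding proposition (for $\Theta^D$), incorporating the $\CO_F$-analytic structure through the derivation $\nabla$, and using vanishing of finite-group cohomology with $L$-vector space coefficients to bridge the gap between the infinitesimal $\Gamma$-action (encoded by $\nabla$) and the global $\Gamma$-action.

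For injectivity, let $\widetilde{D}\in\Ext_\an(\SR_L,D)$ with $\Theta_\an^D(\widetilde{D})=0$, and pick a lift $e\in\widetilde{D}$ of $1\in\SR_L$. The hypothesis yields $d\in D$ with $(\varphi_q-1)(e-d)=0$ and $\nabla(e-d)=0$; replacing $e$ by $e-d$, I may assume $\varphi_q(e)=e$ and $\nabla(e)=0$. Because the $\Gamma$-action on $\widetilde{D}$ is $\CO_F$-analytic with infinitesimal generator $\nabla$, for $\gamma$ in a sufficiently small open subgroup $\Gamma_n$ one has $\gamma(e)=\exp(\log\chi_\CF(\gamma)\cdot\nabla)(e)=e$, so the $1$-cocycle $c\colon\Gamma\to D$, $\gamma\mapsto \gamma(e)-e$, vanishes on $\Gamma_n$. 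A direct verification using $\varphi_q(e)=e$ and the abelianness of $\Gamma$ shows that $c$ takes values in the $L$-vector space $D^{\varphi_q=1,\Gamma_n}$, so it descends to a $1$-cocycle on the finite quotient $\Gamma/\Gamma_n$. Since $\Gamma/\Gamma_n$ is finite and the coefficient module is over a field of characteristic zero, this cocycle is a coboundary: $c(\gamma)=(\gamma-1)d'$ for some $d'\in D^{\varphi_q=1,\Gamma_n}$. Replacing $e$ by $e-d'$ gives an element fixed by both $\varphi_q$ and all of $\Gamma$, splitting the extension.

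For surjectivity, given $(m,n)\in Z^1(D)$, I would form $\widetilde{D}=D\oplus\SR_L e$ as an $\SR_L$-module, set $\varphi_q(e)=e+m$ (semi-linearly) and $\nabla(e)=n$ (extended via Leibniz from $\nabla$ on $\SR_L$), and check that $\varphi_q$ and $\nabla$ commute on $\widetilde{D}$ thanks to the identity $\nabla m=(\varphi_q-1)n$ enforced by $(m,n)\in Z^1_{\varphi_q,\nabla}(D)$. The heart of the construction is assembling a compatible $\CO_F$-analytic $\Gamma$-action with $\nabla$ as infinitesimal generator. On a sufficiently small open subgroup $\Gamma_n$ the action is forced by analyticity to be
\[
\gamma(e) = e + \sum_{k\ge 1}\frac{(\log\chi_\CF(\gamma))^k}{k!}\,\nabla^{k-1}(n),
\]
whose convergence follows from the $\CO_F$-analyticity of $D$ and whose compatibility with $\varphi_q$ is again ensured by $\nabla m=(\varphi_q-1)n$. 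To extend the action to $\Gamma$, for each representative of a coset of $\Gamma/\Gamma_n$ the condition $(m,n)\sim \gamma(m,n)$ produces $d_\gamma\in D$ with $(\varphi_q-1)d_\gamma=m-\gamma m$ and $\nabla d_\gamma=n-\gamma n$, and one sets $\gamma(e)=e-d_\gamma$. The obstruction to patching these choices into a single $\Gamma$-cocycle lies in $H^2(\Gamma/\Gamma_n,D^{\varphi_q=1,\nabla=0,\Gamma_n})$, which vanishes because $\Gamma/\Gamma_n$ is finite and the coefficients form an $L$-vector space.

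The main obstacle is this last patching step in the surjectivity argument: verifying that the analytic formula near the identity and the highly non-unique lifts $d_\gamma$ away from the identity can be consistently arranged into one $\CO_F$-analytic $\Gamma$-action on $\widetilde{D}$ that commutes with $\varphi_q$ and has $\nabla$ as its infinitesimal generator. All residual obstructions live in higher cohomology of the finite group $\Gamma/\Gamma_n$ with $L$-vector space coefficients, hence vanish in characteristic zero, which is what makes the argument work.
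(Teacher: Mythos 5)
Your strategy is essentially the paper's: injectivity via the semisimplicity (local constancy) of the $\Gamma$-action on the finite-dimensional space $\widetilde{D}^{\varphi_q=1,\nabla=0}$, and surjectivity by exponentiating $\nabla$ to manufacture the action of a small open subgroup $\Gamma'$ and then killing the remaining obstruction over the finite quotient $\Gamma/\Gamma'$ using that the coefficient module is a $\BQ$-vector space. The injectivity half is correct as written.

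The genuine gap is the clause ``whose convergence follows from the $\CO_F$-analyticity of $D$''. The series $\sum_{k\ge 1}\frac{(\log\chi_\CF(\gamma))^k}{k!}\nabla^{k-1}(n)$ is exactly the operator $E(\gamma)$ of the paper applied to $n$, and its convergence \emph{in $D$} is the technical heart of the proof, not a formal consequence of analyticity. The operator $\nabla$ is bounded on each Banach space $D^{[s,r]}$, but its operator norm there depends on $s$: already $\partial$ costs a factor controlled only by $r$ (Lemma \ref{lemme-continuite-partial}), and the matrix of $\nabla$ in a fixed basis has entries in $\SE_L^{]0,r]}$ which need not stay bounded as $s\to 0$. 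Hence for a fixed $\gamma$ the series converges in $D^{[s,r]}$ for a given $s$, but not a priori in $D^{]0,r]}=\varprojlim_s D^{[s,r]}$, and you cannot shrink $\Gamma'$ uniformly in $s$. The paper resolves this by fixing $s<r/q$, getting $E(\gamma)n\in D^{[s,r]}$, and then bootstrapping with Frobenius: the identity $(\varphi_q-1)(E(\gamma)n)=(\gamma-1)m$ together with $\varphi_q(D^{[s,r]})\subseteq D^{[s/q,r/q]}$ forces $E(\gamma)n\in D^{[s/q,r]}$, then $D^{[s/q^2,r]}$, and so on, whence $E(\gamma)n\in D^{]0,r]}$. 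Without this step your construction of the $\Gamma'$-action is incomplete. A smaller unaddressed point: the $1$-cochain produced by the vanishing of the $H^2$-obstruction may modify $\gamma(e)$ for $\gamma\in\Gamma'$ by elements of $D^{\varphi_q=1,\nabla=0}$, so the final action restricted to $\Gamma'$ agrees with the exponential one only up to a homomorphism $\Gamma'\to D^{\varphi_q=1,\nabla=0}$; one must still verify (the paper does this with the averaging operator $T_S$ and the extension of homomorphisms from $\Gamma'$ to $\Gamma$) that the resulting extension is $\CO_F$-analytic and that $\Theta_\an^D$ sends it to the prescribed class $(m,n)$.
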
 The proof below is due to the referee and much
simpler than the proof in our original version.
\begin{proof} First we show that $\Theta_\an^D$ is injective. Let $\widetilde{D}$ be an $\CO_F$-analytic extension of $\SR_L$ by $D$
whose image under $\Theta_\an^D$ is zero. Let $e\in \widetilde{D}$
be a lifting of $1\in \SR_L$. As the image of $((\varphi_q-1)e,
\nabla_{\widetilde{D}}e)$ in $H^1_{\varphi_q,\nabla}(D)$ is zero,
there exists some $d\in D$ such that $(\varphi_q-1)e=(\varphi_q-1)d$
and $\nabla_{\widetilde{D}}e=\nabla_{\widetilde{D}}d$. Then $e-d$ is
in $\widetilde{D}^{\varphi_q=1,\nabla=0}$. The $\Gamma$-action on
$\widetilde{D}^{\varphi_q=1,\nabla=0}$ is locally constant and thus
is semisimple. So $1\in \SR_L$ has a lifting $e'\in
\widetilde{D}^{\varphi_q=1,\nabla=0}$ fixed by $\Gamma$. This proves
the injectivity of $\Theta_\an^D$.

Next we prove the surjectivity of $\Theta_\an^D$.

Let $z$ be in $H^1_\an(D)$ and let $(x,y)$ represent $z$, so that
$\nabla x=(\varphi_q-1)y$. The invariance of $z$ by $\Gamma$ ensures
the existence of $y_\sigma\in D$ for each $\sigma\in \Gamma$ such
that $(\sigma-1)(x,y)=((\varphi_q-1)y_\sigma, \nabla y_\sigma)$. As
$y_\sigma$ is unique up to an element of $D^{\varphi_q=1,\nabla=0}$,
the $2$-cocycle $y_{\sigma, \tau}=y_{\sigma\tau}-\sigma y_\tau-
y_\sigma$ takes values in $D^{\varphi_q=1,\nabla=0}$. If $z=0$, then
there exists $a\in D$ such that $x=(\varphi_q-1)a$ and $y=\nabla a$.
We have $\nabla(y_\sigma- (\sigma-1)a)=0$. In other words, we can
write $y_\sigma=(\sigma-1)a+a_\sigma$ with $a_\sigma\in
D^{\varphi_q=1,\nabla=0}$. Then $y_{\sigma,\tau}=
a_{\sigma\tau}-\sigma a_\tau- a_\sigma$ and thus
$y_{\bullet,\bullet}$ is a coboundary. So we obtain a map
$H^1_\an(D)\rightarrow H^2(\Gamma, D^{\varphi_q=1, \nabla=0})$.

We will show that the image of $z$ by this map is zero. Fix a basis
$\{e_1,\cdots, e_d\}$ of $D$ over $\SR_L$. Let $r>0$ be sufficiently
small such that the matrices of $\varphi_q$ and $\sigma \in \Gamma$
relative to $\{e_i\}_{i=1}^d$ are all in $\GL_d(\SE_L^{]0,r]})$. Put
$D^{]0,r]}=\oplus_{i=1}^d \SE^{]0,r]}_L e_i$; if $s\in (0,r]$ put
$D^{[s,r]}=\oplus_{i=1}^d \SE^{[s,r]}_L e_i$. Then $D^{]0,r]}$ and
$D^{[s,r]}$ are stable by $\Gamma$. As the matrix of $\varphi_q$ is
invertible in $\RM_d(\SE_L^{]0,r]})$, $\{\varphi_q(e_i)\}_{i=1}^d$
is also a basis of $D^{]0,r]}$. Shrinking $r$ if necessarily we may
assume that $\varphi_q$ maps $D^{[s,r]}$ to $D^{[s/q,r/q]}$; we may
also suppose that $x$ and $y$ are in $D^{]0,r]}$, and that $t_\CF\in
\SE_L^{]0,r]}$. By the relation $\nabla=t_\CF
\partial$ on $\SE^{[s,r]}_L$, Lemma \ref{lemme-continuite-partial} and
the fact that $\nabla$ is a differential operator i.e. satisfies a
relation similar to (\ref{eq:diff}), we can show that the action of
$\Gamma$ induces a bounded infinitesimal action $\nabla$ on the
Banach space $D^{[s,r]}$. We leave this to the reader. Let us denote
$\ell(\sigma)=\log (\chi_\CF(\sigma))$. For $\sigma$ close enough to
$1$ (depending on $D$ and $s, r$) the series of operators
$$ E(\sigma) =\ell(\sigma)+ \frac{\ell(\sigma)^2}{2}\nabla +\frac{\ell(\sigma)^3}{3!}\nabla^2+\cdots
$$ converges on $D^{[s,r]}$ and
also on $D^{[s/q,r/q]}$. Note that, for $\sigma$ close enough to $1$
we have $\sigma=\exp(\ell(\sigma)\nabla)$ on $D^{[s/q,r/q]}$. Let
$\Gamma'$ be an open subgroup of $\Gamma$ such that for $\sigma\in
\Gamma'$ the above two facts hold. Then for $\sigma\in\Gamma'$ we
have
\begin{equation}\label{eq:frob}
(\varphi_q-1) (E(\sigma)y) = E(\sigma)(\varphi_q-1)y
=E(\sigma)\nabla x=\nabla E(\sigma)x=(\sigma-1)x.
\end{equation}  Note that $\varphi_q(E(\sigma)y)$ is
in $D^{[s/q,r/q]}$. So by (\ref{eq:frob}) we have $E(\sigma)y\in
D^{[s/q,r/q]}\cap D^{[s,r]}=D^{[s/q,r]}$ if $s$ is chosen such that
$s<r/q$. Doing this repeatedly we will obtain $E(\sigma)y\in
D^{]0,r]}$. Taking $y_\sigma=E(\sigma)y$ for $\sigma\in \Gamma'$ we
will have $y_{\sigma,\tau}=0$ for $\sigma,\tau\in \Gamma'$. In other
words, the restriction to $\Gamma'$ of the image of $z$ in
$H^2(\Gamma, D^{\varphi_q=1,\nabla=0})$ is $0$. Since
$\Gamma/\Gamma'$ is finite and $D^{\varphi_q=1,\nabla=0}$ is a
$\BQ$-vector space, the image of $z$ is itself $0$. So we can modify
$y_\sigma$ by an element of $D^{\varphi_q=1,\nabla=0}$ so that
$y_{\sigma,\tau}$ is identically $0$. But this means that
$(\sigma-1)y_\tau=(\tau-1)y_\sigma$, so the $1$-cocycle
$\varphi_q\mapsto x$, $\sigma\mapsto y_\sigma$ defines an element of
$H^1(D)$ hence also an extension of $\SR_L$ by $D$.

We will show that the resulting extension in fact belongs to
$\Ext^1_\an(\SR_L, D)$. As $\Gamma$ is locally constant on
$D^{\varphi_q=1,\nabla=0}$, shrinking $\Gamma'$ if necessary we may
assume that $\Gamma'$ acts trivially on $D^{\varphi_q=1,\nabla=0}$.
Then $\sigma\mapsto y_\sigma-E(\sigma)y $ is a continuous
homomorphism from $\Gamma'$ to $D^{\varphi_q=1,\nabla=0}$. Note that
any homomorphism from $\Gamma'$ to $D^{\varphi_q=1,\nabla=0}$ can be
extended to $\Gamma$. Thus $y_\sigma-E(\sigma)y=\lambda(\sigma)$ for
some $\lambda\in \Hom(\Gamma, D^{\varphi_q=1,\nabla=0})$ and all
$\sigma\in\Gamma'$. If $S$ is a set of representatives of
$\Gamma/\Gamma'$ in $\Gamma$, the map
$T_S=\frac{1}{|\Gamma:\Gamma'|}\sum_{\sigma\in S}\sigma$ is the
identity on $H^1_\an(D)$ and a projection from
$D^{\varphi_q=1,\nabla=0}$ to $H^0(D)$; moreover it commutes with
$\varphi_q$, $\nabla$ and $\Gamma$. This means that we can apply
$T_S$ to $(x,y)$ and $y_\sigma$; then we have
$y_\sigma-E(\sigma)y=\lambda(\sigma)$ for some $\lambda\in
\Hom(\Gamma, H^0(D))$ and all $\sigma\in \Gamma'$. As $\sigma\mapsto
E(\sigma)y$ is analytic, the extension in question is
$\CO_F$-analytic.
\end{proof}

As above, let $\Hom(\Gamma, H^0(D))$ be the set of homomorphisms of
groups from $\Gamma$ to $H^0(D)$. A homomorphism $h:
\Gamma\rightarrow H^0(D)$ is said to be {\it locally analytic} if
$h(\exp(a\beta))=a h(\exp(\beta))$ for all $a\in \CO_F$ and
$\beta\in \Lie\Gamma$. Let $\Hom_\an(\Gamma, H^0(D))$ be the subset
of $\Hom(\Gamma, H^0(D))$ consisting of locally analytic
homomorphisms.

Note that we have natural injections $$ \Hom_\an(\Gamma,
H^0(D))\rightarrow \Ext^1_\an(\SR_L, D) \hskip 10pt  \text{ and }
\hskip 10pt \Hom(\Gamma, H^0(D))\rightarrow \Ext^1(\SR_L, D).$$

\begin{thm} \label{thm:exact-sq}
Assume that $D$ is an $\CO_F$-analytic $(\varphi_q,\Gamma)$-module over $\SR_L$. Then we have an exact sequence
$$ \xymatrix{ 0\ar[r] & \Hom_\an(\Gamma, H^0(D)) \ar[r] & \Hom(\Gamma, H^0(D)) \oplus \Ext^1_\an(\SR_L,D) \ar[r] & \Ext^1(\SR_L, D) \ar[r] & 0. }
$$
\end{thm}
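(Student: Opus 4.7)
The sequence involves five natural maps: the inclusion $\alpha\colon \Hom_\an\hookrightarrow\Hom$; the maps $\beta\colon \Hom_\an\to\Ext^1_\an$ and $\gamma\colon \Hom\to\Ext^1$ sending $h$ to the extension with a lift $e$ satisfying $\varphi_q e=e$ and $\sigma e=e+h(\sigma)$ (which is $\CO_F$-analytic iff $h$ is); and the forgetful map $\delta\colon \Ext^1_\an\to\Ext^1$. The first arrow $h\mapsto(\alpha(h),\beta(h))$ is injective, and $\gamma(\alpha(h))=\delta(\beta(h))$ makes the composition zero. For exactness at the middle: if $\gamma(g)=\delta(E)$, the underlying $(\varphi_q,\Gamma)$-extension inherits an $\CO_F$-analytic structure from $E$, which forces $g\in\Hom_\an$, after which $E=\beta(g)$ by injectivity of $\delta$. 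Injectivity of $\delta$ itself: if an $\CO_F$-analytic extension splits as a $(\varphi_q,\Gamma)$-module via $\varphi_q e=e,\ \sigma e=e$, then $\rd\Gamma(\alpha)e=0$ for all $\alpha\in\Lie\Gamma$, hence $\nabla e=0$, so the analytic structure splits as well.

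The main content is the surjectivity of $(g,E)\mapsto\gamma(g)-\delta(E)$. Given $E_0\in\Ext^1$, let $e$ be a lift of $1\in\SR_L$ in the corresponding $\tilde D$, and set $x=(\varphi_q-1)e$, $y=\rd\Gamma(1)_{\tilde D}(e)$, and the \emph{non-linearity}
\[
r(\alpha):=\rd\Gamma(\alpha)_{\tilde D}(e)-\alpha y,\qquad \alpha\in\Lie\Gamma=\CO_F.
\]
The $\CO_F$-linearity of $\rd\Gamma$ on $\SR_L$ forces $y,\,r(\alpha)\in D$; the $\CO_F$-linearity on $D$ combined with $(\varphi_q-1)e=x$ and $(\sigma-1)e=c_\Gamma(\sigma)$ (where $(x,c_\Gamma)$ represents $E_0$) gives $(\varphi_q-1)r(\alpha)=\rd\Gamma(\alpha)_D x-\alpha\nabla x=0$ and similarly $(\sigma-1)r(\alpha)=0$, so $r\colon\CO_F\to H^0(D)$ is $\BZ_p$-linear and vanishes on $\BZ_p$. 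Since $\rd\Gamma(1)$ commutes with $\varphi_q$ and with $\Gamma$ (abelian), $(x,y)\in Z^1(D)$, yielding an analytic extension $E\in\Ext^1_\an$ whose cocycle satisfies $c'_\Gamma(\sigma)=E(\sigma)y$ on a small open subgroup $\Gamma'\subset\Gamma$ where $\sigma=\exp(\log\chi_\CF(\sigma)\nabla)$ converges.

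Expanding $\sigma e=\exp(\rd\Gamma(\alpha))e$ for $\sigma\in\Gamma'$ and $\alpha=\log\chi_\CF\sigma$: the $k=1$ term is $\alpha y+r(\alpha)$, and for $k\geq 2$ one has $\rd\Gamma(\alpha)^ke=\alpha^{k-1}\nabla^{k-1}(\alpha y+r(\alpha))=\alpha^k\nabla^{k-1}y$ because $\nabla r(\alpha)=0$ ($r(\alpha)$ being $\Gamma$-fixed). Summing gives
\[
c_\Gamma(\sigma)=\sigma e-e=E(\sigma)y+r(\log\chi_\CF\sigma)\quad\text{on } \Gamma'.
\]
Define $h(\sigma):=r(\log\chi_\CF\sigma)$ on the pro-$p$ part of $\Gamma$ and extend by $0$ on the torsion: $h$ is a continuous homomorphism $\Gamma\to H^0(D)$ agreeing with $c_\Gamma-c'_\Gamma$ on $\Gamma'$. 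The cocycle $c_\Gamma-c'_\Gamma-h\colon\Gamma\to M:=D^{\varphi_q=1}$ vanishes on $\Gamma'$; since $M$ is a $\BQ$-vector space and $\Gamma/\Gamma'$ is finite, $H^i(\Gamma/\Gamma',M^{\Gamma'})=0$ for $i\geq 1$, and by inflation--restriction the map $H^1(\Gamma,M)\to H^1(\Gamma',M)^{\Gamma/\Gamma'}$ is injective, so $[c_\Gamma-c'_\Gamma-h]=0$. Writing $c_\Gamma-c'_\Gamma-h=(\sigma-1)d$ with $d\in M$, we get $(0,(\sigma-1)d)=f_1(d)\in B^1(D)$ since $(\varphi_q-1)d=0$; thus $E_0-\delta(E)=\gamma(h)$ in $\Ext^1$, so $(h,-E)\mapsto E_0$ under $\gamma-\delta$.

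The main obstacle is introducing and analyzing the non-linearity $r$: verifying that it takes values in $H^0(D)$ (and not merely $D$) is what forces us to use the $\CO_F$-linearity of $\rd\Gamma$ on both $D$ and $\SR_L$, and the Taylor expansion collapses to the clean formula for $c_\Gamma-c'_\Gamma$ precisely because $r(\alpha)$ is $\Gamma$-fixed, killing every higher-order $\nabla$-term. The remaining cohomological step via inflation--restriction is routine given that we work over a field of characteristic zero.
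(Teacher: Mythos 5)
Your proof is correct, but it takes a genuinely different route from the paper's. The paper first sets up the auxiliary cohomology $H^1_{\an,\gamma}(D)$ attached to a single infinite-order $\gamma\in\Gamma$, shows $\Theta^D_{\an,\gamma}$ and $\Upsilon^D_{\an,\gamma}$ are isomorphisms, and thereby obtains a (non-canonical) projection $\Ext^1(\SR_L,D)\to\Ext^1_\an(\SR_L,D)$; surjectivity is then reduced to the case where this projection kills the class, after which the residual cocycle can be made to vanish on $\varphi_q$ and $\gamma$, hence lands in the finite-dimensional $\Gamma$-module $D^{\varphi_q=1,\gamma=1}$, and a decomposition of that module into $H^0(D)$ plus nontrivial irreducibles finishes the argument. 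You instead work directly with the derived action $\rd\Gamma_{\widetilde D}$ on the given, possibly non-analytic, extension: $(x,y)=((\varphi_q-1)e,\rd\Gamma_{\widetilde D}(1)e)$ is the ``analytic shadow'', the defect $r(\alpha)=\rd\Gamma_{\widetilde D}(\alpha)e-\alpha y$ is shown to land in $H^0(D)$ and is literally the failure of $\CO_F$-linearity, and the expansion $\sigma e=e+E(\sigma)y+r(\log\chi_\CF(\sigma))$ on a small open $\Gamma'$ together with inflation--restriction over the finite quotient $\Gamma/\Gamma'$ identifies the class. This is arguably more transparent for the theorem itself (it exhibits the $\Hom(\Gamma,H^0(D))$ component as the non-linearity of $\rd\Gamma_{\widetilde D}$, matching the codimension count in Corollary \ref{thm:comp}), but it does not produce the explicit $\gamma$-dependent splitting $\Ext^1\to\Ext^1_\an$ that the paper reuses later (diagram (\ref{eq:comm-iota}), Proposition \ref{prop:iota-k}). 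Two points to tighten: (i) the analytic extension attached to $[(x,y)]$ by Theorem \ref{thm:an-iso} has cocycle $E(\sigma)y$ on $\Gamma'$ only up to a continuous homomorphism $\Gamma\to H^0(D)$; since any such homomorphism on $\Gamma'$ extends to $\Gamma$, the discrepancy is absorbed into your $h$, but this should be said; (ii) the convergence of $\exp(\rd\Gamma_{\widetilde D}(\alpha))$ applied to $e$, and the fact that $E(\sigma)y$ lies in $D^{]0,r]}$ rather than merely in some $D^{[s,r]}$, require the $v^{[s,r]}$ estimates from the proof of Theorem \ref{thm:an-iso}, which you should invoke explicitly.
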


For the proof we introduce an auxiliary cohomology theory. Let
$\gamma$ be an element of $\Gamma$ of infinite order, i.e.
$\log(\chi_\CF(\gamma))\neq 0$. We consider the complex
$$C^\bullet_{\varphi_q,\gamma}(D):  \hskip 10pt
\xymatrix{ 0\ar[r] & D \ar[r]^{g_1} & D \oplus D \ar[r]^{g_2} & D
\ar[r] & 0 },$$ where $g_1: D\rightarrow D \oplus D$ is the map $ m
\mapsto ((\varphi_q-1)m, (\gamma-1)m)$ and $g_2: D \oplus
D\rightarrow D$ is $(m,n)\mapsto (\gamma-1)m-(\varphi_q-1)n$. As
$g_1$ and $g_2$ are $\Gamma$-equivariant, $\Gamma$ acts on
$H^i_{\varphi_q,\gamma}(D):=H^i(C^\bullet_{\varphi_q,\gamma}(D))$,
$i=0,1,2$. Put $
H^i_{\an,\gamma}(D):=H^i_{\varphi_q,\gamma}(D)^{\Gamma}.$ A simple
calculation shows that $H^0_{\an,\gamma}(D)=H^0_{\an}(D)$.

For any $\gamma\in\Gamma$ we use $\overline{\langle \gamma \rangle}$
to denote the closed subgroup of $\Gamma$ topologically generated by
$\gamma$. If $\gamma$ is of infinite order and if $D$ is an
$\SR_L$-module together with a semilinear
$\overline{\langle\gamma\rangle}$-action, let $\nabla_\gamma$ be the
operator on $D$ that can be written as
$\lim\limits_{\overrightarrow{\;\;\gamma'\;}}\frac{\log(\gamma')}{\log(\chi_\CF(\gamma'))}$
formally, where $\gamma'$ runs through all elements of
$\overline{\langle \gamma \rangle}$ with $\log \chi_\CF(\gamma')\neq
0$. (For a precise definition we only need to imitate the definition
of $\nabla$.)

Let $\widetilde{D}$ be an $\CO_F $-analytic extension of $\SR_L$ by
$D$. Let $e\in\widetilde{D}$ be a lifting of $1\in \SR_L$. Then
$((\varphi_q-1)e, (\gamma-1)e)$ induces an element of
$H^1_{\an,\gamma}(D)$ independent of the choice of $e$. This yields
a map $\Theta_{\an,\gamma}^D: \Ext_\an(\SR_L,D)\rightarrow
H^1_{\an,\gamma}(D).$ Given an element of $H^1_{\an,\gamma}(D)$, we
can attach to it an extension $\widetilde{D}$ of $\SR_L$ by $D$ in
the category of free $\SR_L$-modules of finite rank together with
semilinear actions of $\varphi_q$ and
$\overline{\langle\gamma\rangle}$. Let $e\in\widetilde{D}$ be a
lifting of $1\in \SR_L$. Then $\big((\varphi_q-1)e, \nabla_\gamma e
\big)$ belongs to $Z^1(D)$ and induces an element of $H_{\an}^1(D)$
independent of the choice of $e$. This gives a map $
\Upsilon_{\an,\gamma}^D: H^1_{\an,\gamma}(D)\rightarrow
H_{\an}^1(D).$ Observe that
$\Upsilon_{\an,\gamma}^D\circ\Theta_{\an,\gamma}^D = \Theta_\an^D$.
By an argument similar to the proof of the injectivity of
$\Theta_\an^D$, we can show that both $\Theta_{\an,\gamma}^D$ and
$\Upsilon_{\an,\gamma}^D$ are injective. Hence it follows from
Theorem \ref{thm:an-iso} that $\Theta_{\an,\gamma}^D$ and
$\Upsilon_{\an,\gamma}^D$ are isomorphisms.

If $c$ is a $1$-cocycle representing an element $z$ of $H^1(D)$,
then $(c(\varphi_q), c(\gamma))$ induces an element in
$H^1_{\an,\gamma}(D)$ which only depends on $z$. This yields a map
$\Upsilon_\gamma^D: H^1(D)\rightarrow H^1_{\an, \gamma}(D)$. Hence,
$\Theta_{\an,\gamma}^D: \Ext_\an(\SR_L, D)\rightarrow
H^1_{\an,\gamma}(D)$ extends to a map $\Ext(\SR_L,D)\rightarrow
H^1_{\an,\gamma}(D)$, which will also be denoted by
$\Theta_{\an,\gamma}^D$. We have the following commutative diagram
\begin{equation}\label{eq:comm-HH} \xymatrix{ \Ext(\SR_L,D)
\ar[r]^{\Theta^D}_{\sim} \ar[rd]^{\Theta^D_{\an,\gamma}} &
H^1(D)\ar[d]^{\Upsilon_\gamma^D}
\\ \Ext_\an(\SR_L, D) \ar[r]^{\sim}_{\Theta^D_{\an,\gamma}} \ar@{^(->}[u] &
H^1_{\an,\gamma}(D). }\end{equation} The composition
$(\Theta^D_{\an,\gamma^{-1}})^{-1}\circ\Upsilon_\gamma^D\circ
\Theta^D$ is a projection from $\Ext(\SR_L,D)$ to
$\Ext_\an(\SR_L,D)$, which depends on $\gamma$.

\vskip 10pt

\noindent{\it Proof of Theorem \ref{thm:exact-sq}.} The only
nontrivial thing to be proved is the surjectivity of $\Hom(\Gamma,
H^0(D)) \oplus \Ext^1_\an(\SR_L,D) \rightarrow \Ext^1(\SR_L, D)$.
Let $\widetilde{D}$ be in $\Ext^1(\SR_L, D)$. Without loss of
generality we may assume that the image of $\widetilde{D}$ by the
projection
$(\Theta^D_{\an,\gamma^{-1}})^{-1}\circ\Upsilon_\gamma^D\circ
\Theta^D$ is zero. Let $e\in \widetilde{D}$ be a lifting of $1\in
\SR_L$. Then let $c$ be the $1$-cocycle defined by $\varphi_q\mapsto
(\varphi_q-1)e$, $\sigma\mapsto (\sigma-1)e$ for $\sigma\in \Gamma$,
so that $\bar{c}$, the class of $c$ in $H^1(D)$, corresponds to
$\widetilde{D}$. So the image of $\bar{c}$ by the map
$\Upsilon^D_\gamma$ is zero. This means that there exists $d\in D$
such that $(\varphi_q-1)d=c(\varphi_q)$ and $(\gamma-1)d=c(\gamma)$.
Replacing $e$ by $e-d$, we may assume that
$c(\varphi_q)=c(\gamma)=0$. Then for any $\sigma\in \Gamma$, we have
$(\varphi_q-1)c(\sigma)=(\sigma-1)c(\varphi_q)=0$ and
$(\gamma-1)c(\sigma)=(\sigma-1)c(\gamma)=0$. This means that
$c(\sigma)\in D^{\varphi_q=1,\gamma=1}$.  Note that $M:=
D^{\varphi_q=1,\gamma=1}$ is of finite rank over $L$. We write
$M=H^0(D)\oplus\oplus_j M_j$ as a $\Gamma$-module, where each of
$M_j$ is an irreducible $\Gamma$-module. Write $c=c' + \sum_j c_j$
by this decomposition. Observe that $c'$ and $c_j$ are all
$1$-cocycles. As $M_j$ is irreducible and the $\Gamma$-action on
$M_j$ is nontrivial, there exists some $\gamma_j\in \Gamma$ such
that $\gamma_j-1$ is invertible on $M_j$. Then there exists $m_j\in
M_j$ such that $c_j(\gamma_j)=(\gamma_j-1)m_j$. A simple calculation
shows that $c_j(\sigma)=(\sigma-1)m_j$ for all $\sigma\in \Gamma$.
Replacing $e$ by $e-\sum_j m_j$, we may assume that $c=c'$. Then
$c(\varphi_q)=0$ and $c|_{\Gamma}$ is a homomorphism from $\Gamma$
to $H^0(D)$. \qed

\begin{cor} \label{thm:comp}  (=Theorem \ref{thm:ext}) $\Ext_\an(\SR_L, D)$ is of codimension $([F:\BQ_p]-1)\dim_L
H^0(D)$ in $\Ext(\SR_L, D)$. In particular, if $H^0(D)=0$, then
$\Ext_\an(\SR_L,D) = \Ext(\SR_L, D)$; in other words, all extensions
of $\SR_L$ by $D$ are $\CO_F$-analytic.
\end{cor}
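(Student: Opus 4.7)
The plan is to read off the codimension directly from the exact sequence provided by Theorem~\ref{thm:exact-sq}. Taking $L$-dimensions in that four-term exact sequence and rearranging gives
\[
\dim_L \Ext(\SR_L,D) - \dim_L \Ext_\an(\SR_L,D) = \dim_L \Hom(\Gamma,H^0(D)) - \dim_L \Hom_\an(\Gamma,H^0(D)),
\]
so the whole problem reduces to computing the dimensions of the two $\Hom$ groups on the right. Let $h = \dim_L H^0(D)$.

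Next I would compute $\dim_L \Hom(\Gamma, H^0(D))$. Via $\chi_\CF$ we identify $\Gamma \cong \CO_F^\times = \mu \times (1+\pi\CO_F)$, where $\mu$ is a finite group of order prime to $p$ and $1+\pi\CO_F$ is a free $\BZ_p$-module of rank $[F:\BQ_p]$ (using the $p$-adic logarithm). Since $H^0(D)$ is an $L$-vector space (hence torsion-free), any continuous homomorphism kills $\mu$, so
\[
\Hom(\Gamma, H^0(D)) = \Hom_{\BZ_p\text{-cts}}(1+\pi\CO_F, H^0(D)) \cong \Hom_{\BZ_p}(\CO_F, H^0(D)) \cong H^0(D)^{[F:\BQ_p]},
\]
giving $\dim_L \Hom(\Gamma, H^0(D)) = [F:\BQ_p]\cdot h$.

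For the locally analytic part, I would unwind the definition of locally analytic homomorphism: such an $h$ satisfies $h(\exp(a\beta)) = a\,h(\exp(\beta))$ for $a\in\CO_F$ and $\beta \in \Lie\Gamma$. Identifying $\Lie\Gamma$ with $\CO_F$ via $\chi_\CF$, this says exactly that the associated map $\Lie\Gamma \to H^0(D)$ is $\CO_F$-linear. Since $\CO_F$ is a free $\CO_F$-module of rank one, such a map is determined by the image of $1$, so $\Hom_\an(\Gamma, H^0(D)) \cong H^0(D)$ and $\dim_L \Hom_\an(\Gamma, H^0(D)) = h$. Substituting the two dimensions into the displayed equation yields $\dim_L \Ext(\SR_L, D) - \dim_L \Ext_\an(\SR_L, D) = ([F:\BQ_p]-1)\cdot h$, which is the assertion; the ``in particular'' statement follows by taking $h = 0$.

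There is essentially no obstacle here once Theorem~\ref{thm:exact-sq} is in hand; the only mild subtlety is making sure to discard the torsion subgroup $\mu \subset \CO_F^\times$ when computing $\Hom(\Gamma, H^0(D))$, and confirming that the local-analyticity condition really cuts $\CO_F$-multilinearity down to an $\CO_F$-module of rank one rather than a $\BZ_p$-module of rank $[F:\BQ_p]$.
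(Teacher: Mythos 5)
Your proof is correct and follows the same route as the paper: the paper's own proof of this corollary consists precisely of citing Theorem \ref{thm:exact-sq} together with the two dimension equalities $\dim_L \Hom(\Gamma,H^0(D)) = [F:\BQ_p]\dim_L H^0(D)$ and $\dim_L \Hom_\an(\Gamma,H^0(D)) = \dim_L H^0(D)$, which it states without proof and which you verify (correctly, via the structure of $\CO_F^\times$ up to torsion and the $\CO_F$-linearity forced by local analyticity). The only point worth noting is that the subtraction of dimensions should be read as the isomorphism $\Ext(\SR_L,D)/\Ext_\an(\SR_L,D)\cong \Hom(\Gamma,H^0(D))/\Hom_\an(\Gamma,H^0(D))$ furnished by the exact sequence, so that the codimension statement holds without assuming $\Ext_\an(\SR_L,D)$ is finite dimensional.
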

\begin{proof} This follows from Theorem \ref{thm:exact-sq} and
the equalities $\dim_L \Hom(\Gamma, H^0(D))=[F:\BQ_p]\dim_L H^0(D)$
and $\dim_L \Hom_\an(\Gamma, H^0(D))=\dim_L H^0(D)$.
\end{proof}

\section{Computation of $H_\an^1(\delta)$ and $H^1(\delta)$} \label{sec:comp}

In the case of $F=\BQ_p$, Colmez \cite{tri} computed $H^1$ for not
necessarily \'etale $(\varphi,\Gamma)$-modules of rank $1$ over the
Robba ring. In this case, Liu \cite{Liu} computed $H^2$ for this
kind of $(\varphi,\Gamma)$-modules, and used it and Colmez's result
to build analogues, for not necessarily \'etale
$(\varphi,\Gamma)$-modules over the Robba ring, of the
Euler-Poincar\'e characteristic formula and Tate local duality.
Later, Chenevier \cite{Chenevier} obtained the Euler-Poincar\'e
characteristic formula for families of trianguline
$(\varphi,\Gamma)$-modules and some related results.

In this section we compute $H_\an^1(\delta)=H_\an^1(\SR_L(\delta))$
(for $\delta\in \SI_\an(L)$) and $H^1(\delta)=H^1(\SR_L(\delta))$
(for $\delta\in \SI(L)$) following Colmez's approach. In Sections
\ref{ss:H0} and \ref{ss:H1} we assume that $\delta$ is in $\SI(L)$,
and in Sections \ref{ss:H1an}, \ref{ss:H1an-par} and \ref{ss:iota-k}
we assume that $\delta$ is in $\SI_\an(L)$.

\subsection{Preliminary lemmas} \label{ss:pre}

\begin{lem} \label{lem:no1} \label{lem:no5}
\begin{enumerate}
\item \label{it:no1-a}
If $\alpha\in L^\times$ is not of the form $\pi^{-i}$, $i\in
\BN$, then $\alpha\varphi_q-1: \SR^+_L\rightarrow \SR^+_L$ is an
isomorphism.
\item \label{it:no1-b}
If $\alpha=\pi^{-i}$ with $i\in \BN$, then the kernel of
$\alpha\varphi_q-1: \SR^+_L\rightarrow \SR^+_L$ is $L\cdot t_\CF^i$,
and $a\in \SR^+_L$ is in the image of $\alpha\varphi_q-1$ if and
only if $\partial^i a|_{u_\CF=0}=0$. Further, $\alpha \varphi_q-1$
is bijective on the subset $\{a\in \SR^+_L : \partial^i
a|_{u_\CF=0}=0\}$.
\end{enumerate}
\end{lem}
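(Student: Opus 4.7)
My approach is to reduce everything to an infinite lower-triangular system on Taylor coefficients, supplemented by a single Fr\'echet convergence estimate. Writing $a = \sum_{n\geq 0} a_n u_\CF^n$ and $b = \sum_{n\geq 0} b_n u_\CF^n$, the containment $[\pi]_\CF(u_\CF) \in \pi u_\CF + u_\CF^q \CO_F[[u_\CF]]$ implies that $\varphi_q(u_\CF^k) = [\pi]_\CF(u_\CF)^k$ has lowest-order term $\pi^k u_\CF^k$, so the equation $(\alpha\varphi_q - 1)a = b$ is equivalent to
$$(\alpha\pi^n - 1)\,a_n = b_n - \alpha\sum_{k<n} c_{k,n}\,a_k, \qquad n\geq 0,$$
with $c_{k,n}\in\CO_F$ the coefficient of $u_\CF^n$ in $[\pi]_\CF(u_\CF)^k$. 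The diagonal $\alpha\pi^n - 1$ vanishes exactly when $\alpha = \pi^{-n}$, which drives the dichotomy.

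For part (a), with $\alpha \neq \pi^{-i}$ for every $i \in \BN$, no diagonal vanishes and the system has a unique formal solution $a$; I would then verify $a \in \SR^+_L$ by inductively bounding $v_p(a_n)+nr$ on each Fr\'echet seminorm $v^{\{r\}}$ ($r>0$), using that $|\alpha\pi^n - 1|$ is bounded below (indeed $|\alpha\pi^n - 1|\to 1$ as $n\to\infty$) and $v_\pi(c_{k,n}) \geq 0$.

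For part (b) with $\alpha = \pi^{-i}$, the triangular recursion forces $a_n = 0$ for $n < i$, $a_i\in L$ free, and $a_n$ for $n > i$ determined by $a_i$, so $\dim_L \ker(\alpha\varphi_q-1) \leq 1$; the equality $\alpha\varphi_q(t_\CF^i) = \pi^{-i}(\pi t_\CF)^i = t_\CF^i$ together with $t_\CF = u_\CF + O(u_\CF^2)$ produces the non-zero element $t_\CF^i$ in the kernel, yielding $\ker(\alpha\varphi_q - 1) = L\cdot t_\CF^i$. For the image, applying $\partial^i$ to $(\alpha\varphi_q - 1)a = b$ and using $\partial^i\circ\varphi_q = \pi^i\varphi_q\circ\partial^i$ (an iterate of Lemma~\ref{lemma-comm-1}) together with $\alpha\pi^i = 1$ gives
$$\varphi_q(\partial^i a) - \partial^i a = \partial^i b;$$
evaluating at $u_\CF = 0$, where $\varphi_q$ preserves values since $[\pi]_\CF(0) = 0$, forces $\partial^i b|_{u_\CF=0} = 0$. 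The reverse inclusion follows from a codimension count combined with the same convergence argument as in (a): the unique obstruction to solving the triangular system is the single linear condition at step $n=i$, and the functional $b\mapsto\partial^i b|_{u_\CF=0}$ is non-zero (its value on $u_\CF^i$ being $i!$), so both hyperplanes have codimension one in $\SR^+_L$. For the final bijectivity claim, $\partial t_\CF = 1$ from Section~\ref{ss:an} gives $\partial^i(t_\CF^i) = i!$, so $t_\CF^i \notin \{a : \partial^i a|_{u_\CF=0} = 0\}$; this subspace therefore complements $L\cdot t_\CF^i$ in $\SR^+_L$, so the restriction of $\alpha\varphi_q - 1$ to it is injective and surjects onto the image, which by the previous step is precisely this same subspace.

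The principal obstacle I expect is the convergence estimate in part (a), and the parallel convergence needed to complete the image equality in (b): verifying that the formal solutions of the triangular system actually belong to $\SR^+_L$ requires coefficient-by-coefficient bounds uniform across each Fr\'echet seminorm. Everything else is formal linear algebra on power series coefficients or a direct application of the $\partial$-$\varphi_q$ commutation relation.
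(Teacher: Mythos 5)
Your route is genuinely different from the paper's. The paper fixes $k>-v_\pi(\alpha)$, writes $\SR^+_L=\bigoplus_{j=0}^{k-1}Lt_\CF^j\oplus u_\CF^k\SR^+_L$, notes that $\alpha\varphi_q-1$ respects this decomposition, acts as the scalar $\alpha\pi^j-1$ on $Lt_\CF^j$, and is inverted on the tail $u_\CF^k\SR^+_L$ by the convergent geometric series $-\sum_{n\geq 0}(\alpha\varphi_q)^n$; both parts of the lemma then read off from which scalars vanish. Your coefficient-wise triangular system is an honest substitute, and the algebraic parts are sound: the kernel computation, the necessity of $\partial^i b|_{u_\CF=0}=0$ via $\partial^i\circ\varphi_q=\pi^i\varphi_q\circ\partial^i$, and the identification of the two codimension-one conditions (your necessity argument applies verbatim to formal solutions, so formal solvability already implies $\partial^i b|_{u_\CF=0}=0$, which is what forces the two hyperplanes to coincide).

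The gap is in the convergence step, exactly where you predicted trouble: the two ingredients you cite, namely $\inf_n\lvert\alpha\pi^n-1\rvert>0$ and $v_\pi(c_{k,n})\geq 0$, do not close the induction. For $n$ large enough that $v_p(\alpha\pi^n-1)=0$, the term $k=n-1$ in your recursion only yields $v_p(a_n)+nr\geq v_p(\alpha)+r+\bigl(v_p(a_{n-1})+(n-1)r\bigr)$, so when $v_\pi(\alpha)<0$ and $r<-v_p(\alpha)$ the quantity $v_p(a_n)+nr$ may drift to $-\infty$ linearly in $n$, and it is precisely the small seminorms $v^{\{r\}}$ that govern membership in $\SR^+_L$. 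This case is unavoidable: part (a) allows any $\alpha\in L^\times$ not of the form $\pi^{-i}$ (e.g.\ a non-trivial unit times $\pi^{-1}$), and part (b) is exactly $v_\pi(\alpha)=-i$. The fix is that $c_{k,n}$ is much more divisible by $\pi$ than you use: since $[\pi]_\CF(X)\equiv X^q\bmod \pi$, a monomial of $[\pi]_\CF(u_\CF)^k$ of total degree $n$ has at least $(kq-n)/(q-1)$ factors of degree $<q$, each carrying a $\pi$, so $v_\pi(c_{k,n})\geq\max\bigl(0,(kq-n)/(q-1)\bigr)$. With this bound the quantity $\max\bigl(0,(kq-n)/(q-1)\bigr)v_p(\pi)+(n-k)r$ is bounded below by $\min\bigl(n(1-1/q)r,\,nv_p(\pi)\bigr)$, uniformly in $k<n$, which tends to $+\infty$; your induction then closes after discarding finitely many initial $n$ for each $r$. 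This estimate is the coefficient-level shadow of $v^{\{r\}}(\varphi_q(f))=v^{\{qr\}}(f)$, i.e.\ of the very fact that makes the paper's geometric series converge on $u_\CF^k\SR^+_L$ for $k>-v_\pi(\alpha)$; without it, or something equivalent, the proposal does not prove surjectivity in (a) or the image characterization in (b).
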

\begin{proof} The argument is similar to the proof of \cite[Lemma
A.1]{tri}.  If $k> -v_\pi(\alpha)$, then $-\sum_{n=0}^{+\infty}
(\alpha \varphi_q)^n  $ is the continuous inverse of
$\alpha\varphi_q-1$ on $u_\CF^k\SR^+_L$. The assertions follows from
the fact that $ \SR^+_L = \oplus_{i=0}^{k-1} L \cdot t_\CF^i \oplus
 u_\CF^k\SR^+_L $ and the formula $\varphi_q(t_\CF^i)=\pi^it_\CF^i$. We
 just need to remark that $\partial^i a|_{u_\CF=0}=0$ if and only if $a$ is in
$\oplus_{j=0}^{i-1} L t_\CF^j \oplus u_\CF^{i+1}\SR_L^+$.
\end{proof}

\begin{lem} \label{lem:no2} If $\alpha\in L$ satisfies $v_\pi(\alpha)<1-v_\pi(q)$,
then for any $b\in \SE^\dagger_L$ there exists $c\in \SE^\dagger_L$
such that $b'=b-(\alpha\varphi_q-1)c$ is in
$(\SE^\dagger_L)^{\psi=0}$.
\end{lem}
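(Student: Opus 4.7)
The plan is to construct $c$ as an explicit Neumann-type series in $\psi$ and deduce its convergence in $\SE^\dagger_L$ from the boundedness statement Proposition~\ref{prop:psi-con}(\ref{it:psi-bon}).

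First I would reformulate the desired condition. Since $\psi\circ\varphi_q=\id$ by Lemma~\ref{lem:psi}(\ref{it:psi-3}), applying $\psi$ to $b'=b-(\alpha\varphi_q-1)c$ gives
$$\psi(b')=\psi(b)-\alpha c+\psi(c),$$
so $\psi(b')=0$ is equivalent to $(\alpha-\psi)c=\psi(b)$. The hypothesis forces $\alpha\ne 0$, and formally inverting $\alpha-\psi=\alpha(1-\alpha^{-1}\psi)$ suggests setting
$$c:=\sum_{n\ge 1}\alpha^{-n}\psi^n(b).$$
Once this converges in $\SE^\dagger_L$, the continuity of $\psi$ (Proposition~\ref{prop:psi-con}(\ref{it:psi-con-3})) lets me compute $\psi(c)=\sum_{n\ge 2}\alpha^{-n+1}\psi^n(b)$, and by telescoping $\alpha c-\psi(c)=\psi(b)$; hence $\psi(b')=0$ as required.

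The convergence step is the heart of the matter. I would choose $r>0$ small enough that $b\in \SE^{(0,r]}_L$; by Proposition~\ref{prop:psi-con}(\ref{it:psi-con-3}) each $\psi^n(b)$ stays in $\SE^{(0,r]}_L$, and by Proposition~\ref{prop:psi-con}(\ref{it:psi-bon}) the sequence $\bigl((q/\pi)^n\psi^n(b)\bigr)_{n\ge 0}$ is bounded in $\SE^{(0,r]}_L$ for the weak topology --- concretely, contained in $\pi^{-M}\CO_{\SE^{(0,r]}_L}[1/u_\CF]$ for some fixed $M\ge 0$, with $v^{\{r\}}\bigl((q/\pi)^n\psi^n(b)\bigr)\ge -M$ for all $n$. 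Rewriting
$$\alpha^{-n}\psi^n(b)\;=\;\Bigl(\tfrac{\pi}{\alpha q}\Bigr)^{\!n}\Bigl(\tfrac{q}{\pi}\Bigr)^{\!n}\psi^n(b),$$
and using $v_\pi\!\bigl(\pi/(\alpha q)\bigr)=1-v_\pi(\alpha)-v_\pi(q)>0$ (the hypothesis, translated), I obtain
$$v^{\{r\}}\!\bigl(\alpha^{-n}\psi^n(b)\bigr)\;\ge\; n\bigl(1-v_\pi(\alpha)-v_\pi(q)\bigr)-M\;\longrightarrow\;+\infty.$$
Thus the partial sums of $\sum_n\alpha^{-n}\psi^n(b)$ are Cauchy in the Banach space $\bigl(\pi^{-M}\CO_{\SE^{(0,r]}_L}[1/u_\CF],\,v^{\{r\}}\bigr)$, so the series converges there and $c\in\SE^{(0,r]}_L\subset\SE^\dagger_L$.

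The only genuine obstacle is this convergence estimate, and the numerical hypothesis $v_\pi(\alpha)<1-v_\pi(q)$ is tailored exactly to it: it is precisely the condition that the $\pi$-adic decay of $(\pi/(\alpha q))^n$ dominates the uniform growth rate of $\psi^n$ permitted by Proposition~\ref{prop:psi-con}(\ref{it:psi-bon}). No finer structural analysis of $\psi$ on $\SE^\dagger_L$ is needed, and the identity $\psi(b')=0$ then follows formally from the construction of $c$.
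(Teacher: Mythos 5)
Your proof is correct and follows essentially the same route as the paper: the paper also sets $c=\sum_{k\ge 1}\alpha^{-k}\psi^k(b)$, invokes Proposition~\ref{prop:psi-con}(\ref{it:psi-bon}) for convergence in $\SE^\dagger_L$, and checks $\alpha c-\psi(c)=\psi(b)$. Your write-up merely makes the convergence estimate and the equivalence $\psi(b')=0\Leftrightarrow(\alpha-\psi)c=\psi(b)$ more explicit than the paper does.
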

\begin{proof} By Proposition \ref{prop:psi-con} (\ref{it:psi-bon}), $c=\sum_{k=1}^{+\infty}\alpha^{-k}\psi^k(b)$
is convergent in $\SE^\dagger_L$. It is easy to check that $\alpha
c- \psi(c)=\psi(b)$, which proves the lemma.
\end{proof}

\begin{cor} \label{cor:no1} If $\alpha\in L$ satisfies $v_\pi(\alpha)<1-v_\pi(q)$, then for any $b\in \SR_L$ there
exists $c\in \SR_L$ such that $b'=b-(\alpha \varphi_q-1)c$ is in
$(\SE_L^\dagger)^{\psi=0}$.
\end{cor}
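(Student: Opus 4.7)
The strategy is to reduce to Lemma~\ref{lem:no2} by first absorbing the high-order part of the Laurent expansion of $b$ into the image of $\alpha\varphi_q-1$, using the fact that this operator is already bijective on $u_\CF^N \SR^+_L$ when $N$ is sufficiently large compared to $-v_\pi(\alpha)$.

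Concretely, I would fix an integer $N > -v_\pi(\alpha)$, which is possible since the hypothesis forces $v_\pi(\alpha)<1-v_\pi(q)\leq 0$. Writing $b=\sum_{n\in \BZ} a_n u_\CF^n \in \SR_L$, decompose $b=b^{(<N)}+b^{(\geq N)}$ with $b^{(\geq N)}=\sum_{n\geq N} a_n u_\CF^n$ and $b^{(<N)}$ the complementary piece. On the one hand $b^{(<N)}$ lies in $\SE^\dagger_L$, because all of its coefficients of index $\geq N$ vanish and hence its positive-index coefficients are trivially bounded. On the other hand $b^{(\geq N)}$ lies in $u_\CF^N \SR^+_L$: a series involving only powers $u_\CF^n$ with $n\geq N$ that converges on some annulus $0 < v_p(u_\CF)\leq r$ converges automatically on the whole open unit disk, since shrinking $|u_\CF|$ only helps the convergence of a series with nonnegative powers.

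Now the proof of Lemma~\ref{lem:no1} shows that for any integer $k>-v_\pi(\alpha)$, the operator $\alpha\varphi_q-1$ is a bijection of $u_\CF^k \SR^+_L$ onto itself, with continuous inverse $-\sum_{n\geq 0}(\alpha\varphi_q)^n$. Applying this with $k=N$ produces $c_1\in u_\CF^N \SR^+_L \subset \SR_L$ with $(\alpha\varphi_q-1)c_1=b^{(\geq N)}$, so that $b-(\alpha\varphi_q-1)c_1 = b^{(<N)} \in \SE^\dagger_L$. Lemma~\ref{lem:no2} then supplies $c_2\in \SE^\dagger_L$ with $b^{(<N)}-(\alpha\varphi_q-1)c_2 \in (\SE^\dagger_L)^{\psi=0}$, and taking $c:=c_1+c_2 \in \SR_L$ gives $b-(\alpha\varphi_q-1)c \in (\SE^\dagger_L)^{\psi=0}$, as required.

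I do not anticipate any real obstacle; the only point demanding some care is the verification that $b^{(\geq N)}$ genuinely belongs to $u_\CF^N \SR^+_L$ rather than merely to $\SR_L$, which is the automatic extension of convergence mentioned above. Everything else is a direct invocation of Lemma~\ref{lem:no1} and Lemma~\ref{lem:no2}.
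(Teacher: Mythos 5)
Your proof is correct and follows essentially the same route as the paper: the paper likewise picks an integer $k>-v_\pi(\alpha)$, uses Lemma~\ref{lem:no1} (i.e.\ the inverse $-\sum_{n\geq 0}(\alpha\varphi_q)^n$ on $u_\CF^k\SR^+_L$) to reduce $b$ modulo the image of $\alpha\varphi_q-1$ to an element of the form $\sum_{i<k}a_iu_\CF^i\in\SE^\dagger_L$, and then applies Lemma~\ref{lem:no2}. Your explicit check that the tail $b^{(\geq N)}$ lies in $u_\CF^N\SR^+_L$ is the only detail the paper leaves implicit, and it is handled correctly.
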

\begin{proof} Let $k$ be an integer  $> -v_\pi(\alpha)$. By Lemma
\ref{lem:no1}, there exists $c_1\in \SR_L$ such that
$b-(\alpha\varphi_q-1)c_1$ is of the form $\sum_{i<k}a_iu_\CF^i$ and
thus is in $\SE_L^\dagger$. Then we apply Lemma \ref{lem:no2}.
\end{proof}

\begin{lem} \label{lem:no3}
If $\alpha\in L$ satisfies $v_\pi(\alpha)<1-v_\pi(q)$, and if $z\in
\SR_L$ satisfies $\psi(z)-\alpha z\in \SR_L^+$, then $z\in \SR_L^+$.
\end{lem}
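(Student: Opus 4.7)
The plan is to isolate the negative-power part of $z$, show that on its own it satisfies $\psi(y) = \alpha y$, and then kill it by a contraction argument inside $\SE_L$. First I would reduce to the case when $\CF$ is the special Lubin-Tate group. The isomorphism $\eta_{\CF_0,\CF}$ is $L$-linear, intertwines $\psi_{\CF_0}$ with $\psi_\CF$, and identifies the ``plus'' subrings on the two sides, so both the hypothesis and the conclusion transport across. This reduction is essential because the complementary subspace $\SE_L^-$ of $\SR_L^+$ that I intend to exploit depends on the coordinate $u_\CF$, while Corollary \ref{cor:psi-sp} only asserts $\psi(\SE_L^-)\subseteq \SE_L^-$ in the special Lubin-Tate setting.

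Having fixed $\CF$ to be the special Lubin-Tate group, I would pick $r_0>0$ with $z\in\SE_L^{]0,r_0]}$ and decompose $z = z^+ + z^-$ where $z^+ = \sum_{i\geq 0} a_i u_\CF^i$ and $z^- = \sum_{i<0} a_i u_\CF^i$. Then $z^+\in \SR_L^+$ by construction, and the convergence condition $v_p(a_i) + ir_0 \to +\infty$ as $i\to -\infty$ forces $v_p(a_i)\to +\infty$; in particular $z^-$ has $\pi$-adically bounded coefficients and lies in $\SE_L^\dagger\subset \SE_L$. Combining Proposition \ref{prop:psi-con}(\ref{it:psi-con-1}) (which gives $\psi(\SR_L^+)\subseteq \SR_L^+$) with Corollary \ref{cor:psi-sp} (which gives $\psi(\SE_L^-)\subseteq \SE_L^-$), the operator $\psi$ respects the direct-sum decomposition $\SR_L = \SR_L^+ \oplus (\SE_L^-\cap \SR_L)$ of $L$-vector spaces. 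Projecting the hypothesis $\psi(z)-\alpha z \in \SR_L^+$ onto the negative-power component then yields the sharp equation $\psi(z^-) = \alpha z^-$.

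The final step is to show that $\psi(y)=\alpha y$ admits only the trivial solution in $\SE_L$ whenever $v_\pi(\alpha) < 1-v_\pi(q)$. Since $\SE_L = \bigcup_{N\geq 0}\pi^{-N}\CO_{\SE_L}$, I would pick $N$ with $y:=\pi^N z^-\in \CO_{\SE_L}$; then Proposition \ref{prop:psi-con}(\ref{it:psi-con-1}) provides the contraction $\psi(\CO_{\SE_L})\subseteq (\pi/q)\CO_{\SE_L}$, and the equation $\psi(y) = \alpha y$ forces $y\in (\pi/q)\alpha^{-1}\CO_{\SE_L}$. Since $(\pi/q)\alpha^{-1}$ has $v_\pi$-valuation $1 - v_\pi(q) - v_\pi(\alpha)>0$, iterating this argument places $y$ in arbitrarily high powers of $\pi\CO_{\SE_L}$; the $\pi$-adic separatedness of $\CO_{\SE_L}$ then gives $y=0$, hence $z^-=0$ and $z \in \SR_L^+$. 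I do not see any serious obstacle in this plan; the most delicate point is verifying that the negative-power part of an arbitrary element of $\SR_L$ automatically lies in $\SE_L$, which rests on the precise convergence condition defining $\SR_L$ rather than merely on formal considerations.
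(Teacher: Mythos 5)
Your proof is correct, and it runs on the same engine as the paper's: isolate the negative-power tail $z^-$, note that it lies in $\SE_L^\dagger$, and play the contraction $\psi(\CO_{\SE_L})\subset\frac{\pi}{q}\CO_{\SE_L}$ of Proposition \ref{prop:psi-con} against the hypothesis $v_\pi(\alpha)<1-v_\pi(q)$. The executions differ in minor but real ways. The paper does not pass to the special Lubin--Tate group and never invokes $\psi(\SE_L^-)\subseteq\SE_L^-$: it normalizes $y=z^-$ so that $\inf_{k\leq -1} v_p(a_k)=0$, rewrites the hypothesis to show that $y-\alpha^{-1}\psi(y)$ lies in $\SR_L^+$, and gets an immediate contradiction because $y-\alpha^{-1}\psi(y)\equiv y$ modulo the maximal ideal of $\CO_L$ (as $\alpha^{-1}\psi(y)\in\frac{\pi}{q\alpha}\CO_{\SE_L}$), so it still has a unit coefficient in negative degree --- a single reduction mod $\pi$ in place of your iteration. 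Your route first upgrades the hypothesis to the exact eigen-equation $\psi(z^-)=\alpha z^-$, which costs you the reduction to the special group (to invoke Corollary \ref{cor:psi-sp}) but then makes the final contraction-plus-separatedness step mechanical. Both are fine; the only points to keep honest in a write-up are (i) that the $\psi$ of Lemma \ref{lem:psi} and the $\psi$ of $\SR_L$ agree on $\SE_L^\dagger$ (this is how $\Tr$ was constructed in Proposition \ref{prop:tr}), and (ii) that $\psi(\SR_L^+)\subseteq\SR_L^+$, which is slightly more than the literal statement $\psi(\SE_L^+)=\SE_L^+$ of Proposition \ref{prop:psi-con} but follows from continuity of $\psi$ together with the closedness of $\SR_L^+$ in $\SR_L$ (or directly from the estimates in Proposition \ref{prop:psi-sp}).
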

\begin{proof} Write $z$ in the form $\sum_{k\in
\BZ}a_k  u_\CF^k$ and put $y=\sum_{k\leq -1}a_k  u_\CF^k\in
\SE_L^\dagger$. If $y\neq 0$, multiplying $z$ by a scalar in $L$ we
may suppose that $\inf_{k\leq -1}v_p(a_k)=0$. Then
$$ y-\alpha^{-1}\psi(y) =\alpha^{-1} ( \alpha z -\psi(z) )
+ \sum_{k\geq 0} a_k (\alpha^{-1}\psi(u_\CF^k)-u_\CF^k)$$ belongs to
$ \CO_{\SE_L^\dagger}\cap \SR_L^+ = \CO_L[[u_\CF]]$.  But this is a
contradiction since $ y-\alpha^{-1}\psi(y) \equiv y \mod \pi$. Hence
$y=0$.
\end{proof}

\begin{cor} \label{cor:no2}
If $\alpha\in L$ satisfies $v_\pi(\alpha)<1-v_\pi(q)$, and if $z \in
\SR_L$ satisfies $(\alpha \varphi_q-1)z\in \SR_L^{\psi=0}$, then $z$
is in $\SR_L^+$.
\end{cor}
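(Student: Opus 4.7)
}
The plan is to apply $\psi$ to the hypothesis and then invoke Lemma \ref{lem:no3}. Write $w = (\alpha \varphi_q - 1)z = \alpha \varphi_q(z) - z$, so that by assumption $\psi(w) = 0$. Using the fundamental relation $\psi \circ \varphi_q = \mathrm{id}$ together with $L$-linearity of $\psi$, I compute
\[
0 = \psi(w) = \alpha\, \psi(\varphi_q(z)) - \psi(z) = \alpha z - \psi(z).
\]
Hence $\psi(z) - \alpha z = 0$, which in particular lies in $\SR_L^+$. The hypothesis $v_\pi(\alpha) < 1 - v_\pi(q)$ is exactly what is required to apply Lemma \ref{lem:no3} to $z$, and the conclusion $z \in \SR_L^+$ follows immediately.

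There is essentially no obstacle here: the proof is a one-line reduction to Lemma \ref{lem:no3}, and the only thing to be careful about is that $\psi$ is $L$-linear (so that the scalar $\alpha$ passes through) and that $\psi\circ\varphi_q = \mathrm{id}$, both of which are recorded in Lemma \ref{lem:psi}. Thus the whole content of the corollary lies in recognizing that the $\psi$-equation $\psi(z) = \alpha z$ produced by the hypothesis is a special (in fact the strongest) case of the inequality treated in Lemma \ref{lem:no3}.
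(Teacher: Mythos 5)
Your proof is correct and is essentially identical to the paper's own argument: both compute $\psi(z)-\alpha z=\psi\bigl(z-\alpha\varphi_q(z)\bigr)=0$ using $\psi\circ\varphi_q=\id$ and $L$-linearity, and then conclude by Lemma \ref{lem:no3}. No issues.
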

\begin{proof} We have $\psi(z)-\alpha z = \psi ( z - \alpha \varphi_q (z)
)=0$. Then we apply Lemma \ref{lem:no3}.
\end{proof}

\subsection{Computation of $H^0(\delta)$} \label{ss:H0}

Recall that, if $\delta\in \SI_\an(L)$, then
$H^0_\an(\delta)=H^0(\delta)$.

\begin{prop}\label{prop:H0} Let $\delta$ be in $ \SI(L)$.
\begin{enumerate}
\item If $\delta$ is not of the form $x^{-i}$ with $i\in\BN$, then
$H^0(\delta)=0$.
\item If $i\in\BN$, then $H^0(x^{-i})=Lt_\CF^i$.
\end{enumerate}
\end{prop}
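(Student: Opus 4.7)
Plan. An element of $H^0(\delta) = \SR_L(\delta)^{\varphi_q=1,\Gamma=1}$ has the form $z = f \cdot e_\delta$ with $f \in \SR_L$ satisfying the two eigenvalue relations
\[
\varphi_q(f) = \delta(\pi)^{-1} f, \qquad \sigma_a(f) = \delta(a)^{-1} f \quad (a \in \CO_F^\times).
\]
My strategy has two parts: (i) show that any such $f$ must lie in $\SR_L^+$; (ii) finish inside $\SR_L^+$ by applying Lemma~\ref{lem:no1}.

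For (ii), the equation $(\delta(\pi)\varphi_q - 1)f = 0$ restricted to $\SR_L^+$ forces $f = 0$ unless $\delta(\pi) = \pi^{-i}$ for some $i \in \BN$, in which case Lemma~\ref{lem:no1} gives $f \in L \cdot t_\CF^i$. In the latter case the $\Gamma$-condition $\sigma_a(t_\CF^i) = a^i t_\CF^i = \delta(a)^{-1} t_\CF^i$ forces $\delta(a) = a^{-i}$ on $\CO_F^\times$; combined with $\delta(\pi) = \pi^{-i}$ this pins down $\delta = x^{-i}$ exactly. Both clauses of the proposition then follow.

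For (i), applying $\psi$ to the $\varphi_q$-equation and using $\psi \circ \varphi_q = \id$ yields $\psi(f) = \delta(\pi) f$, so $(\delta(\pi)\varphi_q - 1)f = 0 \in \SR_L^{\psi=0}$. When $v_\pi(\delta(\pi)) < 1 - v_\pi(q)$, Corollary~\ref{cor:no2} (equivalently Lemma~\ref{lem:no3} applied to $\psi(f) - \delta(\pi) f = 0$) gives $f \in \SR_L^+$ immediately. The main obstacle is the complementary range $v_\pi(\delta(\pi)) \geq 1 - v_\pi(q)$, in which neither criterion applies; this includes the cases $\delta = x^{-i}$ with small $i$ when $F \neq \BQ_p$. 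To handle it I would pass to the image $\bar f$ of $f$ in $\SR_L/\SR_L^+$, which inherits compatible $\varphi_q$- and $\Gamma$-actions and the same eigenvalue relations, and show $\bar f = 0$. The key input is that $\varphi_q(u_\CF^m)$ has leading (most negative) term $\pi^m u_\CF^m$ and $\sigma_a(u_\CF^m)$ has leading term $a^m u_\CF^m$, so a coefficient-wise analysis of the Laurent expansion $f = \sum_i a_i u_\CF^i$ converts the two eigenvalue equations into a recursion on $\{a_i\}_{i < 0}$ that, together with the convergence conditions defining $\SR_L$, can only be satisfied trivially—and even in the exceptional case $\delta = x^{-i}$ the $\SR_L^+$-solution $t_\CF^i$ already exhausts the eigenspace, so the negative-power part of $f$ must still vanish.
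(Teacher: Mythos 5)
Your overall architecture is the same as the paper's: reduce to $\SR_L^+$ by showing the eigenvector equation has no nonzero solution in $\SR_L/\SR_L^+$, then apply Lemma~\ref{lem:no1} and the $\Gamma$-condition on $L\cdot t_\CF^i$. Step (ii) is correct and matches the paper exactly. The paper dismisses the reduction step with a one-line ``observe that $(\SR_L^-)^{\delta(\pi)\varphi_q=1}=0$,'' so the real content of your write-up is step (i), and that is where there is a genuine problem.

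The ``key input'' you state for the complementary range is false. For $m<0$ the identity $\varphi_q(u_\CF^m)=\pi^m u_\CF^m+\cdots$ is the expansion of $[\pi]_\CF(u_\CF)^m$ at the origin, and it converges only on $v_p(u_\CF)>\tfrac{1}{(q-1)e_F}$, i.e.\ \emph{not} on the annuli $0<v_p(u_\CF)\leq r$, $r\to 0$, that define $\SR_L$ (the obstruction is that $[\pi]_\CF$ vanishes at the $\pi$-torsion points). On the relevant annuli one has instead (for the special Lubin--Tate group) $[\pi]_\CF(u_\CF)^{-1}=u_\CF^{-q}\bigl(1-\pi u_\CF^{1-q}+\pi^2u_\CF^{2-2q}-\cdots\bigr)$, so the polar part of $\varphi_q(u_\CF^{-n})$ is supported in degrees $\leq -qn$, has no lowest-degree term at all, and its top (least negative) term is $u_\CF^{-qn}$ with coefficient $1$, not $\pi^{-n}u_\CF^{-n}$. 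Consequently the coefficient recursion you propose to set up is based on a wrong premise and, as described, does not go through. The gap is fixable, and in fact the correct bookkeeping makes your case division unnecessary: if $f\notin\SR_L^+$ and $-N$ ($N\geq 1$) is the \emph{least negative} degree occurring in the polar part of $f$, then the polar part of $\varphi_q(f)$ lives in degrees $\leq -qN<-N$, so the coefficient of $u_\CF^{-N}$ in $\delta(\pi)\varphi_q(f)$ is $0\neq a_{-N}$, contradicting $\delta(\pi)\varphi_q(f)-f\in\SR_L^+$; this works for every $\delta(\pi)\in L^\times$ (for a general $\CF$ one first transports to the special Lubin--Tate group via $\eta_{\CF_0,\CF}$, which preserves $\SR_L^+$ and degrees of leading terms). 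Your appeal to Corollary~\ref{cor:no2}/Lemma~\ref{lem:no3} in the range $v_\pi(\delta(\pi))<1-v_\pi(q)$ is correct but, as you note, that range misses exactly the cases one cares about most (e.g.\ $\delta=x^{-i}$ with $0\leq i\leq[F:\BQ_p]-1$, including $\delta=1$), so the uncorrected part of the argument is not a peripheral case.
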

\begin{proof} Observe that $\SR^-_L(\delta)^{\varphi_q=1}=(\SR^-_L)^{\delta(\pi)\varphi_q=1}\cdot e_\delta=0$,
where $\SR_L^-(\delta)=\SR_L(\delta)/\SR^+_L(\delta)$. Thus
$\SR_L(\delta)^{\varphi_q=1,\Gamma=1}=\SR^+_L(\delta)^{\varphi_q=1,\Gamma=1}$.
If $\delta(\pi)$ is not of the form $\pi^{-i}$ with $i\in\BN$, by
Lemma \ref{lem:no1} (\ref{it:no1-a}) we have
$\SR^+_L(\delta)^{\varphi_q=1}=0$ and so
$\SR^+_L(\delta)^{\varphi_q=1,\Gamma=1}=0$. If
$\delta(\pi)=\pi^{-i}$, then
$$\SR^+_L(\delta)^{\varphi_q=1,\Gamma=1}=(Lt_\CF^i \cdot
e_\delta)^{\Gamma=1}=\left\{ \begin{array}{ll} Lt_\CF^i \cdot
e_\delta & \text{ if }\delta=x^{-i}, \\ 0 & \text{ otherwise,}
\end{array} \right.$$ as desired.
\end{proof}

\begin{cor}\label{cor:H0} If $\delta_1$ and $\delta_2$ are two different
characters in $\SI(L)$, then $\SR_L(\delta_1)$ is not isomorphic to
$\SR_L(\delta_2)$.
\end{cor}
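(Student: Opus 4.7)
The plan is to reduce the corollary directly to Proposition~\ref{prop:H0} via a standard $\Hom$ computation. First I would observe that for any two characters $\delta_1, \delta_2 \in \SI(L)$, there is a natural $L$-linear bijection
\[
\Hom_{(\varphi_q,\Gamma)}(\SR_L(\delta_1), \SR_L(\delta_2)) \xrightarrow{\sim} H^0(\delta_2 \delta_1^{-1}),
\]
sending a morphism $f$ to the unique element $a \in \SR_L$ with $f(e_{\delta_1}) = a e_{\delta_2}$; the conditions $f \circ \varphi_q = \varphi_q \circ f$ and $f \circ \sigma = \sigma \circ f$ translate exactly into $\varphi_q(a) = \delta_2(\pi)\delta_1(\pi)^{-1} a$ and $\sigma_b(a) = \delta_2(b)\delta_1(b)^{-1} a$, i.e.\ $a e_{\delta_2 \delta_1^{-1}} \in \SR_L(\delta_2\delta_1^{-1})^{\varphi_q = 1,\Gamma=1}$.

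Next, assume $\SR_L(\delta_1) \cong \SR_L(\delta_2)$. Then the above Hom space is nonzero, so $H^0(\delta_2\delta_1^{-1}) \neq 0$. By Proposition~\ref{prop:H0} this forces $\delta_2\delta_1^{-1} = x^{-i}$ for some $i \in \BN$, and every nonzero element of the Hom space is an $L^\times$-multiple of the morphism $\iota_i \colon e_{\delta_1} \mapsto t_\CF^i e_{\delta_2}$, i.e.\ multiplication by $t_\CF^i$.

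Finally I would argue that $\iota_i$ is an isomorphism if and only if $i = 0$. Indeed, $\iota_i$ being an isomorphism requires multiplication by $t_\CF^i$ to be surjective on $\SR_L$, which means $t_\CF^i \in \SR_L^\times$. But by the factorization \eqref{eq:t-decom}, $t_\CF$ vanishes at infinitely many points on the open unit disk, so it is not a unit in $\SR_L$; hence $t_\CF^i$ is a unit only when $i=0$, which forces $\delta_1 = \delta_2$. There is no real obstacle here — the whole argument is a direct unwinding of Proposition~\ref{prop:H0} once the Hom-to-$H^0$ identification is set up.
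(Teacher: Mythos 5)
Your proof is correct and rests on exactly the same inputs as the paper's: Proposition~\ref{prop:H0} together with the fact that $t_\CF$ is not a unit in $\SR_L$ (zeros of $t_\CF$ accumulate at the boundary of the disk, so it is non-invertible on every annulus). The paper phrases this as ``$\SR_L(\delta_1\delta_2^{-1})$ is not generated by its $H^0$, whereas $\SR_L$ is,'' while you compute the Hom space via $H^0(\delta_2\delta_1^{-1})$ directly, but this is only a cosmetic difference.
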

\begin{proof} We only need to show that
$\SR_L(\delta_1\delta_2^{-1})$ is not isomorphic to $\SR_L$. By
Proposition \ref{prop:H0}, $\SR_L(\delta_1\delta_2^{-1})$ is not
generated by $H^0(\delta_1\delta_2^{-1})$, but $\SR_L$ is generated
by $H^0(1)$. Thus $\SR_L(\delta_1\delta_2^{-1})$ is not isomorphic
to $\SR_L$.
\end{proof}

\subsection{Computation of $H_{\an}^1(\delta)$ for $\delta\in
\SI_\an(L)$ with $v_\pi(\delta(\pi))<1-v_\pi(q)$} \label{ss:H1an}

Until the end of Section \ref{sec:comp} we will write
$\SR_L(\delta)$ by $\SR_L$ with the twisted
$(\varphi_q,\Gamma)$-action given by
$$ \varphi_{q;\delta}(x) =\delta(\pi)\varphi_q(x), \hskip 10pt \sigma_{a;\delta}(x)=\delta(a)\sigma_a(x).
$$ Recall that $\nabla_\delta=t_\CF \partial +w_\delta$. Write $\delta(\sigma_a)=\delta(a)$.

\begin{lem}\label{lem:no4}
Suppose that $\delta\in\SI_\an(L)$ satisfies
$v_\pi(\delta(\pi))<1-v_\pi(q)$. For any $(a,b)\in
Z^1_{\varphi_q,\nabla}(\delta)$, there exists $(m, n)\in
Z^1_{\varphi_q,\nabla}(\delta)$ with $m\in (\SE_L^\dagger)^{\psi=0}$
and $n \in \SR_L^+$ such that $(a,b)\sim (m,n)$.
\end{lem}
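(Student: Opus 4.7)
The overall approach is to produce the equivalent cocycle $(m,n)$ in two stages: we first normalise the first coordinate using Corollary~\ref{cor:no1}, and then observe that the second coordinate is automatically forced into $\SR_L^+$ by the cocycle identity together with Corollary~\ref{cor:no2}. Throughout write $\alpha=\delta(\pi)$; by hypothesis $v_\pi(\alpha)<1-v_\pi(q)$, which is exactly the assumption needed to invoke both corollaries from Section~\ref{ss:pre}.

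For the first step, I apply Corollary~\ref{cor:no1} to $a\in\SR_L$ to produce $c\in\SR_L$ such that
\[
m:=a-(\alpha\varphi_q-1)c\in(\SE_L^\dagger)^{\psi=0}.
\]
Setting $n:=b-\nabla_\delta c$, the pair $\bigl((\alpha\varphi_q-1)c,\nabla_\delta c\bigr)=f_1(c)$ lies in $B^1(\delta)$, so $(a,b)\sim(m,n)$ and $(m,n)$ is still an element of $Z^1_{\varphi_q,\nabla}(\delta)$, i.e.\
\[
\nabla_\delta m=(\alpha\varphi_q-1)n.
\]
At this point the first coordinate already has the desired property, and only the second coordinate remains to be controlled.

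For the second step, I claim $n$ is automatically in $\SR_L^+$. The key input is the commutation $\psi\circ\nabla=\nabla\circ\psi$ on $\SR_L$, which is the content of (the proof of) Lemma~\ref{lemma-comm-1}: it follows at once from $\nabla=t_\CF\partial$, $\varphi_q(t_\CF)=\pi t_\CF$ and $\partial\circ\psi=\pi^{-1}\psi\circ\partial$. Since $\nabla_\delta=\nabla+w_\delta$ and $\psi(m)=0$, we obtain
\[
\psi\bigl((\alpha\varphi_q-1)n\bigr)=\psi(\nabla_\delta m)=\nabla\psi(m)+w_\delta\psi(m)=0,
\]
so $(\alpha\varphi_q-1)n\in\SR_L^{\psi=0}$. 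Corollary~\ref{cor:no2} applied to $z=n$ then yields $n\in\SR_L^+$, completing the proof.

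There is no real obstacle here beyond putting the ingredients in the right order: the two corollaries of Section~\ref{ss:pre} balance exactly, with Corollary~\ref{cor:no1} eliminating the unwanted part of $a$ up to coboundaries and the commutation $[\psi,\nabla]=0$ combined with Corollary~\ref{cor:no2} forcing the new $n$ into $\SR_L^+$ for free, without any second modification by a coboundary.
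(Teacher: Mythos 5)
Your proof is correct and follows essentially the same route as the paper's: Corollary~\ref{cor:no1} normalises the first coordinate into $(\SE_L^\dagger)^{\psi=0}$ up to a coboundary, and then the identity $\psi(\nabla_\delta m)=\nabla_\delta\psi(m)=0$ together with Corollary~\ref{cor:no2} forces $n\in\SR_L^+$. Your explicit verification of $\psi\circ\nabla=\nabla\circ\psi$ just makes precise a step the paper leaves implicit.
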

\begin{proof} As $v_\pi(\delta(\pi))<1-v_\pi(q)$, by Corollary
\ref{cor:no1} there exists $c\in \SR_L$ such that
$m=a-(\delta(\pi)\varphi_q-1)c$ is in $(\SE_L^\dagger)^{\psi=0}$.
Put $n=b-\nabla_\delta c $. Then $(m,n)$ is in
$Z^1_{\varphi_q,\nabla}(\delta)$ and $(m,n)\sim (a,b)$. As $
(\delta(\pi)\varphi_q-1)n = \nabla_\delta m = t_\CF \partial m +
w_\delta m $ is in $\SR_L^{\psi=0}$, by Corollary \ref{cor:no2}, $n$
is in $\SR_L^+$.
\end{proof}

\begin{lem} \label{lem:pre}
Suppose that $v_\pi(\delta(\pi))<1-v_\pi(q)$ and $\delta$ is not of
the form $x^{-i}$. Let $(m, n)$ be in $
Z_{\varphi_q,\nabla}^1(\delta)$ with $m\in
(\SE^{\dagger}_L)^{\psi=0}$ and $n\in \SR_L^+$. Then $(m,n)$ is in
$B^1(\delta)$ if and only if

$\bullet$ $m\in(\SE_L^+)^{\psi=0}$ when $\delta(\pi)$ is not of the
form $\pi^{-i}$, $i\in \BN$;

$\bullet$ $m\in(\SE_L^+)^{\psi=0}$ and $\partial^i m|_{u_\CF=0}=0$
when $\delta(\pi)=\pi^{-i}$ and $w_\delta\neq -i$ for some
$i\in\BN$.

$\bullet$ $m\in(\SE_L^+)^{\psi=0}$ and $\partial^i
m|_{u_\CF=0}=\partial^i n|_{u_\CF=0}=0$ when $\delta(\pi)=\pi^{-i}$
and $w_\delta=-i$ for some $i\in\BN$.
\end{lem}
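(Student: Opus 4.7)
The plan is to proceed by proving the two implications separately, with the non‐trivial case analysis concentrated on the backward direction. Throughout, the key inputs are Lemma \ref{lem:no1} (describing image and kernel of $\delta(\pi)\varphi_q-1$ on $\SR_L^+$), Lemma \ref{lem:no3} (forcing $z\in\SR_L^+$ from a $\psi$-condition), and the identity $\nabla_\delta=t_\CF\partial+w_\delta$ together with $\partial t_\CF=1$.

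For the forward direction, suppose $(m,n)=((\delta(\pi)\varphi_q-1)c,\nabla_\delta c)$ for some $c\in\SR_L$. Applying $\psi$ to the first equation and using $\psi\circ\varphi_q=\id$ and $\psi(m)=0$ yields $\psi(c)=\delta(\pi)c$, so $\psi(c)-\delta(\pi)c=0\in\SR_L^+$; since $v_\pi(\delta(\pi))<1-v_\pi(q)$, Lemma \ref{lem:no3} forces $c\in\SR_L^+$, hence $m\in\SR_L^+\cap(\SE_L^\dagger)^{\psi=0}=(\SE_L^+)^{\psi=0}$. When $\delta(\pi)=\pi^{-i}$, Lemma \ref{lem:no1}(b) shows that any element of $(\pi^{-i}\varphi_q-1)\SR_L^+$ satisfies $\partial^i(\cdot)|_{u_\CF=0}=0$, giving the condition on $m$. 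Finally, when also $w_\delta=-i$, a direct Leibniz calculation using $\partial t_\CF=1$ yields
\[
\partial^k(\nabla_\delta c)=\partial^k(t_\CF\partial c-ic)=(k-i)\partial^k c+t_\CF\partial^{k+1}c,
\]
so at $u_\CF=0$ the $k=i$ evaluation gives $\partial^i n|_{u_\CF=0}=\partial^i(\nabla_\delta c)|_{u_\CF=0}=0$.

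For the backward direction, I split by case. In the first case ($\delta(\pi)\neq\pi^{-i}$), Lemma \ref{lem:no1}(a) provides a unique $c\in\SR_L^+$ with $(\delta(\pi)\varphi_q-1)c=m$; the cocycle identity $\nabla_\delta m=(\delta(\pi)\varphi_q-1)n$, combined with the commutation of $\nabla$ and $\varphi_q$, gives $(\delta(\pi)\varphi_q-1)(\nabla_\delta c-n)=0$, and injectivity yields $n=\nabla_\delta c$. In the second case ($\delta(\pi)=\pi^{-i}$, $w_\delta\neq-i$), Lemma \ref{lem:no1}(b) together with $\partial^i m|_{u_\CF=0}=0$ provides a $c\in\SR_L^+$ with $(\delta(\pi)\varphi_q-1)c=m$, unique up to $Lt_\CF^i$; the cocycle identity now gives $\nabla_\delta c-n=\alpha t_\CF^i$ for some $\alpha\in L$, and since $\nabla_\delta t_\CF^i=(i+w_\delta)t_\CF^i\neq0$, replacing $c$ by $c-\tfrac{\alpha}{i+w_\delta}t_\CF^i$ achieves $\nabla_\delta c=n$ while preserving the first equation.

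The main obstacle is the third case ($\delta(\pi)=\pi^{-i}$, $w_\delta=-i$), where the kernel $Lt_\CF^i$ is annihilated by $\nabla_\delta$, so we cannot simply adjust $c$. Here I apply $\partial^i(\cdot)|_{u_\CF=0}$ to the equation $\nabla_\delta c-n=\alpha t_\CF^i$. The Leibniz formula above shows $\partial^i(\nabla_\delta c)|_{u_\CF=0}=0$, while $\partial^i n|_{u_\CF=0}=0$ is the extra hypothesis of this case, and $\partial^i t_\CF^i|_{u_\CF=0}=i!$. Thus $0=\alpha\cdot i!$, forcing $\alpha=0$ and completing the proof. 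The genuinely delicate point is precisely this: the extra condition $\partial^i n|_{u_\CF=0}=0$ is exactly what is needed to cancel the failure of $\nabla_\delta$ to hit $Lt_\CF^i$, and the Leibniz computation is what makes this cancellation work.
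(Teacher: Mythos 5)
Your proof is correct and follows essentially the same route as the paper's: forcing $z\in\SR_L^+$ via the $\psi$-condition (Lemma \ref{lem:no3}/Corollary \ref{cor:no2}) for the forward direction, and solving $(\delta(\pi)\varphi_q-1)c=m$ with Lemma \ref{lem:no1} and adjusting by the kernel $Lt_\CF^i$ for the converse. The paper only writes out the case $\delta(\pi)=\pi^{-i}$, $w_\delta\neq -i$; your Leibniz computation $\partial^i(\nabla_\delta c)|_{u_\CF=0}=0$, which pins down $\alpha=0$ in the case $w_\delta=-i$, is exactly the "similar argument" the paper leaves to the reader, and it is carried out correctly.
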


\begin{proof}
We only prove the assertion for the case that $\delta(\pi)=\pi^{-i}$
and $w_\delta\neq -i$ for some $i\in\BN$. The arguments for the
other two cases are similar.

If $(m, n)$ is in $B^1(\delta)$, then there exists $z\in \SR_L$ such
that $(\delta(\pi)\varphi_q-1)z=m$ and $\nabla_\delta z=n$. Since
$m$ is in $\SR_L^{\psi=0}$, by Corollary \ref{cor:no2} we have $z\in
\SR_L^+$. It follows that $m$ is in $\SR^+_L\cap
\SE_L^\dagger=\SE_L^+$. By Lemma \ref{lem:no1} (\ref{it:no1-b}), we
have $\partial^i m|_{u_\CF=0}=0$.

Now we assume that $m$ is in $\SE_L^+$ and $\partial^i
m|_{u_\CF=0}=0$. By Lemma \ref{lem:no1} (\ref{it:no1-b}), there
exists $z\in \SR_L^+ $ with $\partial^i z|_{u_\CF=0}=0$ such that
$(\delta(\pi)\varphi_q-1)z=m$. Then
$(\delta(\pi)\varphi_q-1)(\nabla_\delta z-n)= \nabla_\delta
(\delta(\pi)\varphi_q-1) z - (\delta(\pi)\varphi_q-1) n =
\nabla_\delta m -  (\delta(\pi)\varphi_q-1) n  = 0$. Again by Lemma
\ref{lem:no1} (\ref{it:no1-b}), we have $ \nabla_\delta z-n = c \:
t_\CF^i $ for some $ c\in L $. Put $
z'=z-\frac{c\:t^i_\CF}{w_\delta+i} $. Then $
(\delta(\pi)\varphi_q-1)z'=m $ and $ \nabla_\delta z'=n $. Hence $
(m,n) $ is in $ B^1(\delta) $.
\end{proof}

Recall that $S_\delta=\SR_L^-(\delta)^{\Gamma=1, \psi=0}$.

\begin{prop} \label{prop:H1-case1}
Suppose that $v_\pi(\delta(\pi))<1-v_\pi(q)$.
\begin{enumerate}
\item\label{it:dim-1} If $\delta$ is not of the form $x^{-i}$, then $H_{\an}^1(\delta)$ is
isomorphic to the $L$-vector space $S_\delta$ and is
$1$-dimensional.
\item\label{it:dim-2} If $\delta=x^{-i}$, then $H_{\an}^1(\delta)$ is $2$-dimensional
over $L$ and is generated by the images of $(t_\CF^i,0)$ and
$(0,t_\CF^i)$.
\end{enumerate}
\end{prop}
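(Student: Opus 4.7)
The plan is to introduce and analyze a natural map $\Phi \colon H_\an^1(\delta) \to S_\delta$ that will be an isomorphism in part (\ref{it:dim-1}) and identically zero in part (\ref{it:dim-2}). By Lemma \ref{lem:no4}, any class has a representative $(m, n)$ with $m \in (\SE_L^\dagger)^{\psi=0}$ and $n \in \SR_L^+$; on such representatives I set $\Phi([(m, n)]) = [m]$ in $\SR_L^-(\delta)$. Well-definedness, and the fact that $[m]$ lands in $S_\delta$, both reduce via Corollary \ref{cor:no2} to the observation that any $z$ with $(\varphi_q - 1)z \in (\SE_L^\dagger)^{\psi=0}$ (in the twisted action) must lie in $\SR_L^+$: this handles both the $\Gamma$-invariance of $[m]$ arising from the $Z^1(\delta)$-condition and the independence of the choice of nice representative. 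Combined with $\dim_L S_\delta = 1$ from Proposition \ref{thm:dim}, this framework bookends the computation in both cases.

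For part (\ref{it:dim-1}) the goal is to show $\Phi$ is an isomorphism. For injectivity, if $[m] = 0$ then $m \in (\SE_L^+)^{\psi=0}$, and by Lemma \ref{lem:pre} only the auxiliary vanishings $\partial^i m|_{u_\CF=0} = \partial^i n|_{u_\CF=0} = 0$ remain to check when $\delta(\pi) = \pi^{-i}$. If $w_\delta \neq -i$, I apply $\partial^i|_{u_\CF=0}$ to $\nabla_\delta m = (\delta(\pi)\varphi_q - 1) n$ using the intertwining $\partial^i \nabla_\delta = \nabla_{x^i\delta} \partial^i$ together with $\delta(\pi)\pi^i = 1$, yielding $(w_\delta + i)\partial^i m|_{u_\CF=0} = 0$. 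If $w_\delta = -i$ but $\delta \neq x^{-i}$, I pick $a \in \CO_F^\times$ with $\delta(a) a^i \neq 1$ and apply $\partial^i|_{u_\CF=0}$ to the $\Gamma$-coboundary relation (with $z \in \SR_L^+$ by Corollary \ref{cor:no2}). For surjectivity, I lift $[s] \in S_\delta$ to $s \in \SR_L^{\psi=0}$, so that $\nabla_\delta s \in (\SR_L^+)^{\psi=0}$ by Lemma \ref{lem:eq}, and solve for $n$ via Lemma \ref{lem:no1} after modifying $s$ by an element of $(\SR_L^+)^{\psi=0}$ to kill the obstruction $\partial^i(\nabla_\delta s)|_{u_\CF=0}$ when $w_\delta \neq -i$; when $w_\delta = -i$ but $\delta \neq x^{-i}$, the invariant descends to $T_\delta$ and transforms by $\delta(a) a^i$ under $\sigma_{a;\delta}$, hence vanishes automatically.

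For part (\ref{it:dim-2}), both $(t_\CF^i, 0)$ and $(0, t_\CF^i)$ manifestly lie in $Z^1(\delta)$ since $\nabla_\delta t_\CF^i = 0 = (\pi^{-i}\varphi_q - 1) t_\CF^i$ and $\sigma_{a;\delta}$ fixes $t_\CF^i$. Their linear independence in $H_\an^1(\delta)$ follows by analyzing a putative coboundary relation $(\alpha t_\CF^i, \beta t_\CF^i) = ((\pi^{-i}\varphi_q - 1) z, \nabla_\delta z)$: a leading-negative-power analysis (using $\pi^{-i-k} \neq 1$ for $k \geq 1$, $i \in \BN$) forces $z \in \SR_L^+$, then Lemma \ref{lem:no1}(b) yields $\alpha = 0$ and $z \in L\cdot t_\CF^i$, whence $\nabla_\delta z = 0 = \beta t_\CF^i$ gives $\beta = 0$. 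For the matching upper bound $\dim_L H_\an^1(\delta) \leq 2$, I plan to show $\Phi \equiv 0$: given a nice representative $(m, n)$ with $m = m_+ + m_-$ (where $\psi(m_\pm) = 0$ separately since $\psi$ preserves both $\SE_L^+$ and $\SE_L^-$), solve $(\pi^{-i}\varphi_q - 1) c = m_-$ for some $c \in \SR_L$ and replace $(m, n)$ by the equivalent $(m_+, n - \nabla_\delta c)$. The key point is that $n - \nabla_\delta c \in \SR_L^+$, which follows from Lemma \ref{lem:no3} (applicable under the standing assumption $v_\pi(\pi^{-i}) < 1 - v_\pi(q)$) combined with the computation $\psi(n - \nabla_\delta c) = \pi^{-i}(n - \nabla_\delta c)$, using $\psi(n) = \pi^{-i} n$ and $\psi(c) = \pi^{-i} c$ from the defining relations together with the commutation of $\psi$ with $\nabla_\delta$. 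Then Lemma \ref{lem:pre}(case 3) embeds $\ker \Phi = H_\an^1(\delta)$ injectively into $L \oplus L$ via $(\partial^i m|_{u_\CF=0}, \partial^i n|_{u_\CF=0})$.

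The main obstacle is the vanishing of $\Phi$ in part (\ref{it:dim-2}): producing a solution $c \in \SR_L$ to $(\pi^{-i}\varphi_q - 1) c = m_-$ whose interaction with $\psi$ is compatible with the eigenvalue relation $\psi(c) = \pi^{-i} c$ (so that Lemma \ref{lem:no3} can be applied to bring the modified $n$-component into $\SR_L^+$) is the most delicate step, since the naive Neumann series in $\varphi_q$ diverges and one must instead exploit the $\psi$-structure, similar in spirit to the construction used in the proof of Lemma \ref{lem:no2}.
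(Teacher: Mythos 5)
Part (\ref{it:dim-1}) of your proposal follows essentially the paper's own route: reduce to representatives $(m,n)$ with $m\in(\SE_L^\dagger)^{\psi=0}$ and $n\in\SR_L^+$ via Lemma \ref{lem:no4}, map such a class to the image of $m$ in $S_\delta$, and use the non-triviality of $\delta x^{i}$ on $\CO_F^\times$ (your choice of $a$ with $\delta(a)a^i\neq 1$ plays the role of the paper's $\gamma_1$) together with Lemma \ref{lem:pre} to control the auxiliary invariants $\partial^i(\cdot)|_{u_\CF=0}$. The identities you invoke ($\partial^i\nabla_\delta(\cdot)|_{u_\CF=0}=(w_\delta+i)\partial^i(\cdot)|_{u_\CF=0}$ on $\SR_L^+$, coboundaries contributing $0$) are exactly the ``two useful facts'' of the paper's proof, and $\dim_L S_\delta=1$ from Proposition \ref{thm:dim} closes the argument. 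This part is sound, up to routine details about realizing the correction term inside $(\SR_L^+)^{\psi=0}$.

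Part (\ref{it:dim-2}) has a genuine gap at the step you yourself flag as the main obstacle, and the obstacle is not merely technical: the equation $(\pi^{-i}\varphi_q-1)c=m_-$ with $m_-\in(\SE_L^-)^{\psi=0}$ admits a solution $c\in\SR_L$ \emph{only if} $m_-=0$. Indeed, since $m_-\in\SR_L^{\psi=0}$ and $v_\pi(\pi^{-i})<1-v_\pi(q)$, Corollary \ref{cor:no2} forces any such $c$ to lie in $\SR_L^+$, whence $m_-=(\pi^{-i}\varphi_q-1)c\in\SR_L^+\cap\SE_L^-=0$. So your plan to absorb $m_-$ into a coboundary presupposes exactly the vanishing $\Phi\equiv 0$ you are trying to prove; no exploitation of the $\psi$-structure can produce such a $c$ when $m_-\neq 0$. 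The paper establishes $\bar m=0$ by a different mechanism: by Corollary \ref{prop:tran-S} it suffices to kill the image of $\partial^i m$ in $S_1$; by Lemma \ref{prop:inj} and Remark \ref{rem:constant-non-zero} this reduces to showing $\nabla\partial^i m|_{u_\CF=0}=0$, which holds because $\nabla\partial^i m=\partial^i\big((\pi^{-i}\varphi_q-1)n\big)$ with $n\in\SR_L^+$, and Lemma \ref{lem:no1} (\ref{it:no1-b}) says elements in the image of $\pi^{-i}\varphi_q-1$ on $\SR_L^+$ have vanishing $i$-th coefficient at $u_\CF=0$. You need this input --- i.e.\ the explicit computation of $S_1\hookrightarrow T_1$ coming from the factorization of $(t_\CF)$ --- to complete part (\ref{it:dim-2}); the remainder of your part (2) (linear independence of $(t_\CF^i,0)$ and $(0,t_\CF^i)$, and the embedding of the kernel into $L\oplus L$ via Lemma \ref{lem:pre}) is correct once $\bar m=0$ is known.
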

\begin{proof}
For (\ref{it:dim-1}) we only consider the case that
$\delta(\pi)=\pi^{-i}$ and $w_\delta=-i$ for some $i\in \BN$. The
arguments for the other cases are similar. As $\delta\neq x^{-i}$,
there exists an element $\gamma_1\in\Gamma$ of infinite order such
that $\delta(\gamma_1)\neq \chi_\CF(\gamma_1)^{-i}$.

We give two useful facts: for any $z\in \SR^+_L$, $\partial^i
z|_{u_\CF=0}=0$ if and only if
$\partial^i(\delta(\gamma_1)\gamma_1-1)z|_{u_\CF=0}=0$; if
$\partial^i z|_{u_\CF=0}=0$, then $\partial^i
(\delta(\gamma)\gamma-1) z|_{u_\CF=0}=0$ for any $\gamma\in\Gamma$.
Both of these two facts follow from Lemma \ref{lem:no1}
(\ref{it:no1-b}). We will use them freely below.

Let $(m, n)$ be in $ Z^1(\delta)$ with $m\in
(\SE^\dagger_L)^{\psi=0}$ and $n\in \SR^+_L$. For any $\gamma\in
\Gamma$, since $\gamma(m,n)-(m,n)\in B^1(\delta)$, by Lemma
\ref{lem:pre}, $(\delta(\gamma)\gamma-1)m$ is in $\SR^+_L$ , i.e.
the image of $m$ in $\SR_L^-(\delta)$ belongs to $S_\delta$.

We will show that, for any $\bar{m}\in S_\delta$, there exists a
lifting $m\in (\SE^\dagger_L)^{\psi=0}$ of $\bar{m}$ such that
$\partial^i(\delta(\gamma)\gamma-1)m|_{u_\CF=0}=0$ for all
$\gamma\in \Gamma$. Let $m'\in (\SE^\dagger_L)^{\psi=0}$ be an
arbitrary lifting of $\bar{m}$. Assume that
$\partial^i(\delta(\gamma_1)\gamma_1-1)m'|_{u_\CF=0}=c $. Put
$m=m'-\frac{1}{i!}\frac{c\:
t^i_\CF}{\delta(\gamma_1)\chi_\CF(\gamma_1)^i-1}$. Then
$\partial^i(\delta(\gamma_1)\gamma_1-1)m|_{u_\CF=0}=0$ and thus
$\partial^i\nabla_\delta m|_{u_\CF=0}=0$. Hence, by Lemma
\ref{lem:no1} (\ref{it:no1-b}) there exists $n\in \SR^+_L$ with
$\partial^i n|_{u_\CF=0}=0$ such that
$(\delta(\pi)\varphi_q-1)n=\nabla_\delta m$. This means that
$(m,n)\in Z^1_{\varphi_q,\nabla}(\delta)$. For any
$\gamma\in\Gamma$, as
$\partial^i(\delta(\gamma_1)\gamma_1-1)(\delta(\gamma)\gamma-1)m|_{u_\CF=0}=\partial^i(\delta(\gamma)\gamma-1)(\delta(\gamma_1)\gamma_1-1)m|_{u_\CF=0}=0$,
we have $\partial^i(\delta(\gamma)\gamma-1)m|_{u_\CF=0}=0$. In a
word, for any $\gamma\in \Gamma$, $(\delta(\gamma)\gamma-1)m$ is in
$\SR^+_L$ and
$\partial^i(\delta(\gamma)\gamma-1)m|_{u_\CF=0}=\partial^i(\delta(\gamma)\gamma-1)n|_{u_\CF=0}=0$.
This means that $\gamma(m,n)-(m,n)$ is in $B^1(\delta)$ for any
$\gamma\in \Gamma$. In other words, $(m,n)$ is in $Z^1(\delta)$.

Now let $(m_1, n_1)$ and $(m_2,n_2)$ be two elements of
$Z^1(\delta)$ with $m_1, m_2\in (\SE_L^\dagger)^{\psi=0}$ and
$n_1,n_2\in \SR^+_L$.  By Lemma \ref{lem:pre}, % we have
$$ \partial^i(\delta(\gamma_1)\gamma_1-1)m_1|_{u_\CF=0}= \partial^i(\delta(\gamma_1)\gamma_1-1)m_2|_{u_\CF=0}
=\partial^i(\delta(\gamma_1)\gamma_1-1)n_1|_{u_\CF=0}=\partial^i(\delta(\gamma_1)\gamma_1-1)n_2|_{u_\CF=0}=0.
$$ Suppose that the image of $m_1$ in $S_\delta$
coincides with that of $m_2$, which implies that $m_1-m_2\in
\SE^+_L$. From
$$\partial^i(\delta(\gamma_1)\gamma_1-1)(m_1-m_2)|_{u_\CF=0}=
\partial^i(\delta(\gamma_1)\gamma_1-1)(n_1-n_2)|_{u_\CF=0}=0$$ we obtain $\partial^i(m_1-m_2)|_{u_\CF=0}=
\partial^i(n_1-n_2)|_{u_\CF=0}=0$. This means that
$(m_1,n_1)\sim (m_2, n_2)$.

Combining all of the above discussions, we obtain an isomorphism
$S_\delta\xrightarrow{\sim} H_{\an}^1(\delta)$. Then by Proposition
\ref{thm:dim}, $\dim_L H_{\an}^1(\delta)=\dim_L S_\delta=1$.

Next we prove (\ref{it:dim-2}). Again let $(m, n)$ be in $
Z^1(\delta)$ with $m\in (\SE^\dagger_L)^{\psi=0}$ and $n\in
\SR^+_L$. Then the image of $m$ in $\SR_L^-(\delta)$, denoted by
$\bar{m}$, is in $S_\delta$. We show that $m$ in fact belongs to
$(\SR^+_L)^{\psi=0}$, i.e. $\bar{m}=0$. By Corollary
\ref{prop:tran-S}, $\partial^i: S_\delta\rightarrow S_1$ is an
isomorphism. So we only need to prove that the image of $\partial^i
m$ in $S_1$ is zero. By Remark \ref{rem:constant-non-zero}, it
suffices to show that $\nabla\partial^i m |_{u_\CF=0}=0$. But
$\nabla\partial^i m=\partial^i\nabla_\delta m$. Since $\nabla_\delta
m=(\delta(\pi)\varphi_q-1)n$, by Lemma \ref{lem:no5}
(\ref{it:no1-b}) we have $\partial^i\nabla_\delta m |_{u_\CF=0}= 0$.

Write $m=at^i_\CF+m'$ with $a\in L$ and $m'\in \SR^+_L$ satisfying
$\partial^i m'|_{u_\CF=0}=0$. By Lemma \ref{lem:no5}
(\ref{it:no1-b}) there exists $z\in \SR^+_L$ such that
$(\delta(\pi)\varphi_q-1)z=m'$. Then $(m,n)\sim (at_\CF^i,
n-\nabla_\delta z)$. Thus we may suppose that $m=a t_\CF^i$. Then
$(\delta(\pi)\varphi_q-1)n=\nabla_\delta(at^i_\CF)=0$. So, by Lemma
\ref{lem:no5} (\ref{it:no1-b}), we have $n=bt_\CF^i$ for some $b\in
L$. Suppose $(at_\CF^i, bt_\CF^i)$ is in $B^1(\delta)$. Then there
exists $z\in \SR_L$ such that $(\delta(\pi)\varphi_q-1)z=at_\CF^i$
and $\nabla_\delta z=bt_\CF^i$. So $\psi(z)-\delta(\pi)z =
\psi((1-\delta(\pi)\varphi_q)z) = \psi( -a t_\CF^i ) \in \SR_L^+$.
By Lemma \ref{lem:no3} we get $z\in \SR_L^+$. By Lemma \ref{lem:no5}
(\ref{it:no1-b}) again we have $a=0$ and $z\in Lt_\CF^i$. Then
$bt^i_\CF=\nabla_\delta z=0$.
\end{proof}

\subsection{$\partial: H_{\varphi_q,\nabla}^1(x^{-1}\delta)\rightarrow
H_{\varphi_q,\nabla}^1(\delta)$ and $\partial:
H_{\an}^1(x^{-1}\delta)\rightarrow H_{\an}^1(\delta)$}
\label{ss:H1nabla-par} \label{ss:H1an-par}

Observe that, if $(m,n)$ is in
$Z^1_{\varphi_q,\nabla}(x^{-1}\delta)$ (resp. $B^1(x^{-1}\delta)$),
then $(\partial m,\partial n)$ is in
$Z^1_{\varphi_q,\nabla}(\delta)$ (resp. $B^1(\delta)$). Thus we have
a map $\partial: H_{\varphi_q,\nabla}^1(x^{-1}\delta)\rightarrow
H_{\varphi_q,\nabla}^1(\delta)$. Further, the map is
$\Gamma$-equivariant and it induces a map $\partial:
H_{\an}^1(x^{-1}\delta)\rightarrow H_{\an}^1(\delta)$.

Put $\bar{Z}_{\varphi_q,\nabla}^1(\delta):=\{(m,n)\in
Z^1_{\varphi_q,\nabla}(\delta): \Res(m)=\Res(n)=0\}$ and
$\bar{B}^1(\delta):=\{(m,n)\in B^1(\delta): \Res(m)=\Res(n)=0\}$.
Then $\bar{H}^1_{\varphi_q,\nabla}(\delta):=
\bar{Z}_{\varphi_q,\nabla}^1(\delta)/\bar{B}_{\varphi_q,\nabla}^1(\delta)$
is a subspace of $H^1_{\varphi_q,\nabla}(\delta)$.

\begin{lem}\label{lem:bar-nonbar} If $\delta(\pi)\neq \pi/q$ or $w_\delta\neq 1$, then for
any $(m,n)\in Z^1_{\varphi_q,\nabla}(\delta)$, there exists
$(m_1,n_1)\in \bar{Z}_{\varphi_q,\nabla}^1(\delta)$ such that
$(m,n)\sim (m_1,n_1)$, and so
$H^1_{\varphi_q,\nabla}(\delta)=\bar{H}_{\varphi_q,\nabla}^1(\delta)$.
\end{lem}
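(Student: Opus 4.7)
The plan is to show that every class in $H^1_{\varphi_q,\nabla}(\delta)$ has a representative $(m_1,n_1)$ with $\Res(m_1)=\Res(n_1)=0$, by subtracting off a coboundary $f_1(c)=((\delta(\pi)\varphi_q-1)c,\nabla_\delta c)$ for a cleverly chosen $c\in\SR_L$. Since by definition $\bar B^1(\delta)=B^1(\delta)\cap\bar Z^1_{\varphi_q,\nabla}(\delta)$, the natural map $\bar H^1_{\varphi_q,\nabla}(\delta)\to H^1_{\varphi_q,\nabla}(\delta)$ is automatically injective, so this would also give surjectivity and hence the equality.

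First I would work out how $\Res$ interacts with the differential $f_1$. From Proposition \ref{prop-tate-trace} one has $\Res\circ\varphi_q=(q/\pi)\Res$, so
$$\Res\bigl((\delta(\pi)\varphi_q-1)c\bigr)=\Bigl(\tfrac{q\delta(\pi)}{\pi}-1\Bigr)\Res(c).$$
For $\nabla_\delta c=t_\CF\partial c+w_\delta c$, I would use that $\partial c\cdot \rmd t_\CF=\rmd c$ (from $\partial t_\CF=1$) and that residues of exact differentials vanish: applying $\res$ to $\rmd(t_\CF c)=t_\CF\rmd c+c\,\rmd t_\CF$ gives $\res(t_\CF\rmd c)=-\Res(c)$, whence $\Res(t_\CF\partial c)=-\Res(c)$ and therefore
$$\Res(\nabla_\delta c)=(w_\delta-1)\Res(c).$$
(Equivalently, one checks $\Res\circ\partial=0$ is already in Proposition \ref{prop:res-partial}, and the extra contribution comes only from $w_\delta c$ after an integration by parts.)

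Applying $\Res$ to the cocycle relation $\nabla_\delta m=(\delta(\pi)\varphi_q-1)n$ then yields the consistency identity
$$(w_\delta-1)\Res(m)=\Bigl(\tfrac{q\delta(\pi)}{\pi}-1\Bigr)\Res(n).$$
Since $\Res\colon\SR_L\to L$ is surjective (Proposition \ref{prop:res-partial}), I can prescribe $\Res(c)$ freely. I would then split into two cases according to the hypothesis. If $\delta(\pi)\neq\pi/q$, so that $q\delta(\pi)/\pi-1\neq 0$ and also $q\delta(\pi)/\pi\neq 1$, I choose $c$ with $\Res(c)=\Res(m)/(q\delta(\pi)/\pi-1)$; the first component of $(m,n)-f_1(c)$ then has zero residue, and the consistency identity forces the second to vanish as well. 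If instead $\delta(\pi)=\pi/q$ but $w_\delta\neq 1$, the consistency identity already forces $\Res(m)=0$, and I choose $c$ with $\Res(c)=\Res(n)/(w_\delta-1)$; now $(\delta(\pi)\varphi_q-1)c$ has zero residue (the coefficient $q\delta(\pi)/\pi-1$ vanishes), so $\Res(m_1)$ is still $0$, and $\Res(n_1)=0$ by construction.

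In both cases $(m_1,n_1):=(m,n)-f_1(c)\in\bar Z^1_{\varphi_q,\nabla}(\delta)$ and $(m,n)\sim(m_1,n_1)$, proving the lemma. The only nontrivial point is the integration-by-parts identity $\Res(t_\CF\partial c)=-\Res(c)$; this is the computational heart of the argument but is essentially immediate from $\partial c\cdot \rmd t_\CF=\rmd c$ together with $\res\circ\rmd=0$ on $\SR_L$.
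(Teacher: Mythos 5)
Your proof is correct and follows essentially the same route as the paper's: subtract a coboundary $f_1(c)$ with $\Res(c)$ prescribed (the paper writes down an explicit such $c$, namely $\Res(m)(\rmd t_\CF/\rmd u_\CF)^{-1}/((\delta(\pi)\tfrac{q}{\pi}-1)u_\CF)$, where you invoke surjectivity of $\Res$), and then use the identities $\Res\circ\varphi_q=\tfrac{q}{\pi}\Res$ and $\Res(\nabla_\delta c)=(w_\delta-1)\Res(c)$ together with the cocycle relation to kill both residues. Your integration-by-parts verification of $\Res(t_\CF\partial c)=-\Res(c)$ matches the paper's step $\nabla_\delta m=\partial(t_\CF m)+(w_\delta-1)m$ combined with $\Res\circ\partial=0$.
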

\begin{proof} Let $(m,n)$ be in $ Z^1_{\varphi_q,\nabla}(\delta)$. Then
$\nabla_\delta m = (\delta(\pi)\varphi_q-1)n$. If $\delta(\pi)\neq
\frac{\pi}{q}$, by Proposition \ref{prop-tate-trace} and the
definition of $\Res$ we have
$$ \Res\Big( m-(\delta(\pi)\varphi_q-1)\big(\Res(m)\frac{(\frac{\rmd t_\CF}{\rmd
u_\CF})^{-1}}{(\delta(\pi)\frac{q}{\pi}-1)u_\CF}\big) \Big) = 0 . $$
Replacing $(m,n)$ by
$$\Big(m-(\delta(\pi)\varphi_q-1)\big(\Res(m)\frac{(\frac{\rmd t_\CF}{\rmd
u_\CF})^{-1}}{(\delta(\pi)\frac{q}{\pi}-1)u_\CF}\big), n-
\nabla_\delta \big(\Res(m)\frac{(\frac{\rmd t_\CF}{\rmd
u_\CF})^{-1}}{(\delta(\pi)\frac{q}{\pi}-1)u_\CF}\big)\Big),$$ we may
assume that $\Res(m)=0$. Then
$$(\frac{q}{\pi}\delta(\pi)-1)\Res(n)=\Res((\delta(\pi)\varphi_q-1)n)=
\Res(\nabla_\delta m)=\Res(\partial (t_\CF
m)+(w_\delta-1)m)=(w_\delta-1)\Res(m) =0,$$ and so $\Res(n)=0$.

The argument for the case of $w_\delta\neq 1$ is similar.
\end{proof}

As $\Res\circ \partial=0$, the map $\partial:
H_{\varphi_q,\nabla}^1(x^{-1}\delta)\rightarrow
H_{\varphi_q,\nabla}^1(\delta)$ factors through $\partial:
H_{\varphi_q,\nabla}^1(x^{-1}\delta)\rightarrow
\bar{H}_{\varphi_q,\nabla}^1(\delta)$.

\begin{lem} \label{lem:partial-1}
\begin{enumerate}
\item\label{it:sur-partial-a} If $\delta(\pi)\neq \pi$ or $w_\delta\neq 1$, then $\partial:
H_{\varphi_q,\nabla}^1(x^{-1}\delta)\rightarrow
\bar{H}_{\varphi_q,\nabla}^1(\delta)$ is surjective.
\item If $\delta(\pi)=\pi$ and $w_\delta=1$, then we have an exact
sequence of $\Gamma$-modules
\[ \xymatrix{
H_{\varphi_q,\nabla}^1(x^{-1}\delta)\ar[r]^{\partial} &
\bar{H}_{\varphi_q,\nabla}^1(\delta) \ar[r] & L(x^{-1}\delta) \ar[r]
& 0. }\]
\end{enumerate}
\end{lem}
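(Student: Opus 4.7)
The plan is to define an explicit obstruction map $A$ that measures when a class in $\bar{H}^1_{\varphi_q,\nabla}(\delta)$ lifts through $\partial$. Given $(m, n) \in \bar{Z}^1_{\varphi_q,\nabla}(\delta)$, Proposition~\ref{prop:res-partial} provides antiderivatives $m', n' \in \SR_L$ with $\partial m' = m$ and $\partial n' = n$, unique up to $L = \ker\partial$. From the identities $w_{x^{-1}\delta} = w_\delta - 1$ and $(x^{-1}\delta)(\pi) = \pi^{-1}\delta(\pi)$, together with Lemma~\ref{lemma-comm-1} and $\partial t_\CF = 1$, one verifies the commutations $\partial\circ\nabla_{x^{-1}\delta} = \nabla_\delta\circ\partial$ and $\partial\circ((x^{-1}\delta)(\pi)\varphi_q - 1) = (\delta(\pi)\varphi_q - 1)\circ\partial$. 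Consequently the quantity
\[
A := \nabla_{x^{-1}\delta}(m') - ((x^{-1}\delta)(\pi)\varphi_q - 1)(n')
\]
is sent by $\partial$ to the cocycle relation $\nabla_\delta m - (\delta(\pi)\varphi_q - 1)n = 0$, so $A \in L$. Changing $(m', n')$ to $(m' + a, n' + b)$ shifts $A$ by $(w_\delta - 1) a - (\pi^{-1}\delta(\pi) - 1) b$.

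For part (a), at least one of the coefficients $w_\delta - 1$, $\pi^{-1}\delta(\pi) - 1$ is nonzero, so $(a, b)$ can be chosen to force $A = 0$; the resulting $(m', n')$ lies in $Z^1_{\varphi_q,\nabla}(x^{-1}\delta)$ and is sent by $\partial$ to $(m, n)$, proving surjectivity. For part (b), where $w_\delta = 1$ and $\delta(\pi) = \pi$, both coefficients vanish and $A$ descends to a well-defined $L$-linear map $A\colon \bar{Z}^1_{\varphi_q,\nabla}(\delta) \to L$. I would then check in turn: that $A$ kills $\bar{B}^1(\delta)$ (given a coboundary $(m,n) = ((\delta(\pi)\varphi_q - 1)z, \nabla_\delta z)$ with $\Res(m) = 0$, Proposition~\ref{prop-tate-trace} forces $\Res(z) = 0$ since $q - 1 \neq 0$, so $z = \partial w$ by Proposition~\ref{prop:res-partial}, and the antiderivatives $m' = ((x^{-1}\delta)(\pi)\varphi_q - 1)w$, $n' = \nabla_{x^{-1}\delta}w$ give $A = 0$); that $A$ is $\Gamma$-equivariant with $\Gamma$ acting on the target through $x^{-1}\delta$ (using $\partial\circ\sigma_{a;x^{-1}\delta} = \sigma_{a;\delta}\circ\partial$ from Lemma~\ref{lemma-comm-1} and the fact that $\nabla_{x^{-1}\delta}$, $(x^{-1}\delta)(\pi)\varphi_q$ commute with the twisted $\Gamma$-action); and that $\ker A = \Im\partial$ tautologically, since $A(m,n) = 0$ is exactly the condition for $(m', n')$ to be a cocycle for $x^{-1}\delta$.

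The main obstacle is the surjectivity of $A$ onto $L(x^{-1}\delta)$. I would identify $A$ with the connecting morphism of the long exact sequence of $H^\bullet_{\varphi_q,\nabla}$ attached to the short exact sequence of $(\varphi_q,\Gamma)$-modules
\[
0 \to L(x^{-1}\delta) \to \SR_L(x^{-1}\delta) \xrightarrow{\partial} \bar{\SR}_L(\delta) \to 0,
\]
where $\bar{\SR}_L(\delta) = \ker\bigl(\Res\colon\SR_L(\delta)\to L\bigr)$ is a $(\varphi_q,\Gamma)$-submodule by Proposition~\ref{prop-tate-trace} (since $\delta(\pi)q/\pi = q$ preserves $\Res = 0$). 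Because $(x^{-1}\delta)(\pi) = 1$ and $w_{x^{-1}\delta} = 0$, the differentials of $C^\bullet_{\varphi_q,\nabla}(L(x^{-1}\delta))$ vanish and $H^2_{\varphi_q,\nabla}(L(x^{-1}\delta)) = L$, so surjectivity of $A$ is equivalent to the vanishing of the induced map $H^2_{\varphi_q,\nabla}(L(x^{-1}\delta)) \to H^2_{\varphi_q,\nabla}(\SR_L(x^{-1}\delta))$, i.e., to exhibiting $(m, n) \in \SR_L\times\SR_L$ with $t_\CF\partial m - (\varphi_q - 1)n = 1$. I expect to produce such a solution by exploiting the decomposition $\SR_L = \SR_L^{\psi=0}\oplus\varphi_q(\SR_L)$ from Corollary~\ref{cor:dagger} together with the commutation $\nabla\psi = \psi\nabla$, constructing the solution iteratively on the $\psi$-eigenspaces.
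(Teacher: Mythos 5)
Your construction of the obstruction map $A$, the treatment of part (a), the verification that $A$ kills coboundaries and is $\Gamma$-equivariant with target $L(x^{-1}\delta)$, and the identification $\ker A=\Im\partial$ are all correct and essentially identical to the paper's argument (your homological reformulation via the sequence $0\to L(x^{-1}\delta)\to\SR_L(x^{-1}\delta)\xrightarrow{\partial}\bar\SR_L(\delta)\to 0$ is a clean repackaging, and your reduction of surjectivity to exhibiting $(m,n)$ with $t_\CF\partial m-(\varphi_q-1)n=1$ is valid).

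The gap is precisely at that last step, which is the real content of part (b): you never produce the required $(m,n)$, and the strategy you sketch is unlikely to work as stated. Note that no $m\in\SR_L^+$ can do the job: for such $m$ one has $(t_\CF\partial m)|_{u_\CF=0}=0$, while Lemma \ref{lem:no1}(\ref{it:no1-b}) (with $\alpha=1$) shows the image of $\varphi_q-1$ on $\SR_L^+$ consists exactly of the $a$ with $a|_{u_\CF=0}=0$; so the constant $1$ cannot be reached from $\SR_L^+$, and the decomposition $\SR_L=\SR_L^{\psi=0}\oplus\varphi_q(\SR_L)$ together with $\nabla\psi=\psi\nabla$ gives no evident iteration converging to a solution (the equation is stable under $\psi$, so projecting to $\psi$-eigenspaces does not decouple it). The paper instead writes down the explicit element $m'=\log\frac{\varphi_q(u_\CF)}{u_\CF^q}$, which lies in $\SR_L$ but not in $\SR_L^+$, computes $\nabla m'\equiv 1-q \mod u_\CF\SR_L^+$, and then solves $(\varphi_q-1)n'=\nabla m'-(1-q)$ inside $u_\CF\SR_L^+$ by Lemma \ref{lem:no1}(\ref{it:no1-b}); dividing by $1-q$ gives your sought pair. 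Without this (or an equivalent nonvanishing input, e.g.\ a computation of $H^2_{\varphi_q,\nabla}$ of the trivial module, which the paper does not supply either), the surjectivity onto $L(x^{-1}\delta)$ is unproved.
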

\begin{proof} Let $(m,n)$ be in $\bar{Z}^1_{\varphi_q,\nabla}(\delta)$. Then there exist $m'$ and $n'$ such that $\partial m'=m$ and $\partial n'=n$.
Then $\nabla_{x^{-1}\delta} m'-(\pi^{-1}\delta(\pi)\varphi_q-1)n'=c$
is in $L$. If $\delta(\pi)\neq \pi$, we replace $n'$ by
$n'+\frac{c}{\pi^{-1}\delta(\pi) -1}$. If $w_\delta\neq 1$, we
replace $m'$ by $m'-\frac{c}{w_\delta-1}$. Then $(m',n')$ is in
$Z^1_{\varphi_q,\nabla}(x^{-1}\delta)$. This proves
(\ref{it:sur-partial-a}). When $\delta(\pi)=\pi$ and $w_\delta=1$,
$\nabla m'-(\varphi_q-1)n'$ does not depend on the choice of $m'$
and $n'$. This induces a map
$\bar{H}_{\varphi_q,\nabla}^1(\delta)\rightarrow L$ whose kernel is
exactly $\partial H^1_{\varphi_q,\nabla}(x^{-1}\delta)$. We show
that $\bar{H}_{\varphi_q,\nabla}^1(\delta)\rightarrow L$ is
surjective. Put $m'=\log\frac{\varphi_q(u_\CF)}{u_\CF^q}$. A simple
calculation shows that
$$\nabla m'= ( \frac{t_\CF \cdot [\pi]'_\CF(u_\CF)}{[\pi]_\CF(u_\CF)} -q \frac{ t_\CF}{u_\CF} ) \partial u_\CF \equiv (1-q) \mod u_\CF\SR^+_L.$$
Thus by Lemma \ref{lem:no1} (\ref{it:no1-b}) there exists $n'\in
u_\CF\SR^+_L$ such that $(\varphi_q-1)n'=\nabla m'-(1-q)$. Put
$m=\partial m'$ and $n=\partial n'$. Then $(m,n)$ is in
$\bar{Z}^1_{\varphi_q,\nabla}(\delta)$ whose image in $L$ is
nonzero. The $\Gamma$-action on
$\bar{H}_{\varphi_q,\nabla}^1(\delta)$ induces an action on $L$.
From
$$ (\delta(a)\sigma_a (m), \delta(a)\sigma_a(n))= (\partial(a^{-1}\delta(a)\sigma_a (m')),\partial(a^{-1}\delta(a)\sigma_a (n')))$$
and $$ \nabla(a^{-1}\delta(a)\sigma_a
(m'))-(\varphi_q-1)(a^{-1}\delta(a)\sigma_a (n')) = a^{-1}\delta(a)
\sigma_a(\nabla m'- (\varphi_q-1)n') \equiv a^{-1}\delta(a)(1-q)
\mod u_\CF \SR^+_L
$$ we see that
the induced action comes from the character $x^{-1}\delta$.
\end{proof}

\begin{sublem} \label{sublem:partial-1}
Let $a, b$ be in $ L$. If $(a,b)$ is in
$Z^1_{\varphi_q,\nabla}(x^{-1}\delta)$ but not in
$B^1(x^{-1}\delta)$, then $\delta(\pi)=\pi$ and $w_\delta=1$.
\end{sublem}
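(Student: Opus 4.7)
The plan is to prove the contrapositive: if $\delta(\pi) \neq \pi$ or $w_\delta \neq 1$, then the pair $(a,b) \in L^2$ lies in $B^1(x^{-1}\delta)$. The key observation is that for $a,b \in L$, the cocycle condition collapses to a single scalar equation that I can solve by looking for $z$ in the constants $L \subset \SR_L$.

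More precisely, write $\alpha = \pi^{-1}\delta(\pi)$, so that $\varphi_{q;x^{-1}\delta}$ acts on $\SR_L$ as $\alpha\varphi_q$, and recall $\nabla_{x^{-1}\delta} = t_\CF\partial + (w_\delta-1)$. For $a,b \in L$ we have $t_\CF\partial a = 0$ and $\varphi_q b = b$, so the cocycle relation $\nabla_{x^{-1}\delta}a - (\alpha\varphi_q-1)b = 0$ reduces to
\begin{equation*}
(w_\delta - 1)\,a \;=\; (\alpha - 1)\,b.
\end{equation*}
Similarly, for a scalar $z \in L$ we have $(\alpha\varphi_q-1)z = (\alpha-1)z$ and $\nabla_{x^{-1}\delta} z = (w_\delta-1)z$, so $(a,b)$ is the coboundary of $z \in L$ iff $(\alpha-1)z = a$ and $(w_\delta-1)z = b$.

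In Case 1, assume $\delta(\pi) \neq \pi$, i.e.\ $\alpha \neq 1$. Set $z = a/(\alpha-1) \in L$. Then $(\alpha\varphi_q-1)z = a$ by construction, and $\nabla_{x^{-1}\delta} z = (w_\delta-1)a/(\alpha-1) = b$ thanks to the cocycle identity. In Case 2, assume $\alpha = 1$ but $w_\delta \neq 1$. The cocycle relation forces $a = 0$; set $z = b/(w_\delta-1) \in L$. Then $(\varphi_q-1)z = 0 = a$ and $\nabla_{x^{-1}\delta} z = b$. In either case $(a,b) \in B^1(x^{-1}\delta)$, contradicting the hypothesis, so we must have $\delta(\pi) = \pi$ and $w_\delta = 1$.

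There is essentially no obstacle: the whole argument fits in a few lines once one notes that on constants both operators $\varphi_{q;x^{-1}\delta}-1$ and $\nabla_{x^{-1}\delta}$ reduce to multiplication by $\alpha-1$ and $w_\delta-1$ respectively. Neither Lemma~\ref{lem:no1} nor any more delicate analysis is needed, since the candidate $z$ may be chosen scalar.
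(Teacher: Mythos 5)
Your proof is correct and follows essentially the same route as the paper: both arguments work entirely with scalars, where $\varphi_{q;x^{-1}\delta}-1$ and $\nabla_{x^{-1}\delta}$ reduce to multiplication by $\pi^{-1}\delta(\pi)-1$ and $w_\delta-1$, and both use the coboundary of $z=a/(\pi^{-1}\delta(\pi)-1)$ (resp.\ $z=b/(w_\delta-1)$) together with the collapsed cocycle relation $(w_\delta-1)a=(\pi^{-1}\delta(\pi)-1)b$. The paper phrases it as reducing $(a,b)$ to $(0,b')$ and then forcing $b'=0$, but the computation is identical.
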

\begin{proof}  If $\delta(\pi)\neq \pi$, then $(a,b)\sim (0, b-\frac{\nabla_{x^{-1}\delta}}{\pi^{-1}\delta(\pi)-1}
a)$. So
$$(\pi^{-1}\delta(\pi)-1)(b-\frac{\nabla_{x^{-1}\delta}}{\pi^{-1}\delta(\pi)-1}
a)=(\pi^{-1}\delta(\pi)\varphi_q-1)(b-\frac{\nabla_{x^{-1}\delta}}{\pi^{-1}\delta(\pi)-1}
a)=0.$$ As $\delta(\pi)\neq \pi$, we have
$b-\frac{\nabla_{x^{-1}\delta}}{\pi^{-1}\delta(\pi)-1} a=0$.
Similarly, if $w_\delta\neq 1$, then $(a,b)\in
Z^1_{\varphi_q,\nabla}(x^{-1}\delta)$ if and only if $(a,b)\sim
(0,0)$.
\end{proof}

Recall that $\delta_\unr$ is the character of $F^\times$ such that
$\delta_\unr(\pi)=q^{-1}$ and $\delta_{\unr}|_{\CO_F^\times}=1$.

\begin{sublem} \label{lem:H1-exam}
$(\frac{1}{q}\log\frac{\varphi_q(u_\CF)}{u_\CF^q}, \frac{t_\CF
\partial u_\CF}{u_\CF})$ induces a nonzero element of
$H_{\an}^1(\delta_\unr)$.
\end{sublem}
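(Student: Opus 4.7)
The plan is to check three things in order: that $(m,n) := \bigl(\tfrac{1}{q}\log\tfrac{\varphi_q(u_\CF)}{u_\CF^q},\ \tfrac{t_\CF \partial u_\CF}{u_\CF}\bigr)$ lies in $Z^1_{\varphi_q,\nabla}(\delta_\unr)$, that it lies moreover in $Z^1(\delta_\unr)$, and that it is not a coboundary.

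First, since $w_{\delta_\unr} = 0$, we have $\nabla_{\delta_\unr} = \nabla = t_\CF\partial$, so the required cocycle identity is $\nabla m = (q^{-1}\varphi_q - 1)n$. Differentiating $m$ directly, using $\partial \log y = \partial y / y$, will yield
\[
\nabla m = \frac{t_\CF [\pi]'_\CF(u_\CF)\partial u_\CF}{q\,[\pi]_\CF(u_\CF)} - n.
\]
On the other side, applying the relation $\partial \circ \varphi_q = \pi \varphi_q \circ \partial$ of Lemma \ref{lemma-comm-1} to $u_\CF$ gives $\pi\varphi_q(\partial u_\CF) = [\pi]'_\CF(u_\CF)\partial u_\CF$, from which $q^{-1}\varphi_q(n) = t_\CF[\pi]'_\CF(u_\CF)\partial u_\CF/(q[\pi]_\CF(u_\CF))$; the two expressions agree.

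For the $\Gamma$-invariance, I will exhibit an explicit primitive: for $a \in \CO_F^\times$, set $\eta_a := \log([a]_\CF(u_\CF)/u_\CF)$ (with any choice of $p$-adic log). The commutation $[a]_\CF \circ [\pi]_\CF = [\pi]_\CF \circ [a]_\CF$, together with $\varphi_q(\eta_a) = \eta_a([\pi]_\CF(u_\CF))$, yields $\sigma_a m - m = (q^{-1}\varphi_q - 1)\eta_a$. For the second coordinate, the relation $\partial \circ \sigma_a = a\sigma_a \circ \partial$ applied to $u_\CF$ gives $\sigma_a\partial u_\CF = a^{-1}[a]'_\CF(u_\CF)\partial u_\CF$; combined with $\sigma_a(t_\CF) = at_\CF$ this produces $\sigma_a n - n = t_\CF\bigl([a]'_\CF(u_\CF)/[a]_\CF(u_\CF) - 1/u_\CF\bigr)\partial u_\CF = \nabla\eta_a$. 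Since $\delta_\unr$ is trivial on $\CO_F^\times$, the twisted $\Gamma$-action equals the untwisted one, so $\sigma_a(m,n) - (m,n) = \bigl((q^{-1}\varphi_q - 1)\eta_a,\,\nabla\eta_a\bigr) \in B^1(\delta_\unr)$ for every $a$.

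The crucial step, and the only real obstacle, is to show $(m,n) \notin B^1(\delta_\unr)$. Suppose towards a contradiction that $z \in \SR_L$ satisfies $\nabla z = n$. Then $t_\CF\partial z = t_\CF\partial u_\CF/u_\CF$, hence $\partial z = \partial u_\CF/u_\CF$, and by Proposition \ref{prop:res-partial} such a $z$ exists if and only if $\Res(\partial u_\CF/u_\CF) = 0$. Using $\rd t_\CF = (\partial u_\CF)^{-1}\rd u_\CF$, I compute $\Res(\partial u_\CF/u_\CF) = \res(u_\CF^{-1}\rd u_\CF) = 1$, a contradiction. The overall subtlety throughout is to carry out each manipulation via the intrinsic commutation relations of $\partial$ with $\varphi_q$ and $\sigma_a$ rather than via explicit power series, which keeps every step to a couple of lines.
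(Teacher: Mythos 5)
Your proposal is correct, and its first two steps coincide with the paper's: the cocycle identity comes from the (formal) primitive $\log u_\CF$, and $\Gamma$-invariance is checked with exactly the same element $\eta_a=\log([a]_\CF(u_\CF)/u_\CF)$ that the paper uses. The one genuine difference is the non-coboundary step. The paper argues that if $(m,n)=((q^{-1}\varphi_q-1)z,\nabla z)$ then $\nabla(\ell_u-z)=0$, hence $\ell_u-z\in L$, which contradicts the fact that $\ell_u=\log u_\CF$ is a formal variable not lying in $\SR_L$. You instead stay entirely inside $\SR_L$: from $\nabla z=n$ you cancel $t_\CF$ to get $\partial z=\partial u_\CF/u_\CF$, and then the exact sequence of Proposition \ref{prop:res-partial} forces $\Res(\partial u_\CF/u_\CF)=0$, whereas $\Res(\partial u_\CF/u_\CF)=\res(u_\CF^{-1}\,\rd u_\CF)=1$. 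The two arguments are equivalent in substance --- the nonvanishing of that residue is precisely the obstruction to $\log u_\CF$ having a primitive in $\SR_L$ --- but yours is more self-contained, since it does not invoke the auxiliary ring $\SR[1/t_\CF,\ell_u]$ or the (unproved in the paper) assertion that $\ell_u\notin\SR_L$; the paper's version buys brevity by treating $\log u_\CF$ as an honest element on which $\nabla$ acts. One small point worth recording in a final write-up: the identity $\sigma_a m-m=(q^{-1}\varphi_q-1)\eta_a$ pins down the additive constant in the branch of $\log$ used for $\eta_a$ (since $(q^{-1}\varphi_q-1)$ does not kill constants), so ``any choice of $p$-adic log'' should be read as ``the choice making both sides match,'' which your computation in fact produces.
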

\begin{proof} Write $(m,n)=(\frac{1}{q}\log\frac{\varphi_q(u_\CF)}{u_\CF^q}, \frac{t_\CF \partial
u_\CF}{u_\CF})$. Note that $m= (\delta_\unr(\pi)\varphi_q-1)\log
u_\CF $ and $n= \nabla \log u_\CF $. Thus $(m,n)$ is in
$Z^1_{\varphi_q,\nabla}(\delta_\unr)$. For any $\gamma\in \Gamma$ we
have $\gamma(m,n)\sim (m,n)$. Indeed,
$\gamma(m,n)-(m,n)=((\delta_\unr(\pi)\varphi_q-1)\log
\frac{\gamma(u_\CF)}{u_\CF} , \nabla \log
\frac{\gamma(u_\CF)}{u_\CF})$. So $(m,n)$ is in $Z^1(\delta_\unr)$.
We show that $(m,n)$ is not in $B^1(\delta_\unr)$. Otherwise there
exists $z\in \SR_L$ such that $m=(\delta_\unr(\pi)\varphi_q-1)z$ and
$n=\nabla z$. This will implies that $\nabla(\log u_\CF -z)=0$ or
equivalently $\log u_\CF -z$ is in $L$, a contradiction.
\end{proof}

\begin{cor}\label{cor-of-sublem} If $\delta(\pi)=\pi/q$ and $w_\delta=1$, then $(\frac{1}{q}\log\frac{\varphi_q(u_\CF)}{u_\CF^q}, \frac{t_\CF
\partial u_\CF}{u_\CF})$ is in $Z^1_{\varphi_q,\nabla}(x^{-1}\delta)$ but not in
$B^1(x^{-1}\delta)$.
\end{cor}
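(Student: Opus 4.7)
The plan is to observe that the sets $Z^1_{\varphi_q,\nabla}(\SR_L(\delta'))$ and $B^1(\SR_L(\delta'))$ defined in Section~\ref{sec:coh} depend on the character $\delta'$ only through the scalar $\delta'(\pi)$ (which determines the twisted Frobenius $\varphi_{q;\delta'} = \delta'(\pi)\varphi_q$) and the scalar $w_{\delta'}$ (which determines $\nabla_{\delta'} = t_\CF\partial + w_{\delta'}$). The $\Gamma$-action plays no role in $Z^1_{\varphi_q,\nabla}$ or $B^1$; it only enters when one passes to $Z^1 \subset Z^1_{\varphi_q,\nabla}$.

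Under the hypothesis $\delta(\pi) = \pi/q$ and $w_\delta = 1$, one computes directly $(x^{-1}\delta)(\pi) = \pi^{-1}\cdot\pi/q = 1/q = \delta_\unr(\pi)$ and $w_{x^{-1}\delta} = -1 + 1 = 0 = w_{\delta_\unr}$. Consequently $\varphi_{q;x^{-1}\delta} = \varphi_{q;\delta_\unr}$ and $\nabla_{x^{-1}\delta} = \nabla_{\delta_\unr}$, so the complexes $C^\bullet_{\varphi_q,\nabla}(\SR_L(x^{-1}\delta))$ and $C^\bullet_{\varphi_q,\nabla}(\SR_L(\delta_\unr))$ are literally identical as complexes of $L$-vector spaces. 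In particular $Z^1_{\varphi_q,\nabla}(x^{-1}\delta) = Z^1_{\varphi_q,\nabla}(\delta_\unr)$ and $B^1(x^{-1}\delta) = B^1(\delta_\unr)$.

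By Sublemma~\ref{lem:H1-exam}, the pair $(\tfrac{1}{q}\log\tfrac{\varphi_q(u_\CF)}{u_\CF^q},\, \tfrac{t_\CF\partial u_\CF}{u_\CF})$ lies in $Z^1(\delta_\unr) \subseteq Z^1_{\varphi_q,\nabla}(\delta_\unr)$ but not in $B^1(\delta_\unr)$; transporting this via the identification above gives the desired conclusion. There is no real obstacle here — the content is entirely in Sublemma~\ref{lem:H1-exam}, and the corollary is a bookkeeping matching of parameters $(\delta(\pi), w_\delta)$. The only point to verify carefully is the independence of $Z^1_{\varphi_q,\nabla}$ and $B^1$ from the $\Gamma$-structure, which is immediate from the definitions of $f_1$ and $f_2$.
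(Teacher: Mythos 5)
Your proposal is correct and matches the paper's (implicit) argument: the corollary is stated without proof precisely because $Z^1_{\varphi_q,\nabla}$ and $B^1$ depend only on the scalars $\delta'(\pi)$ and $w_{\delta'}$, and the computation $(x^{-1}\delta)(\pi)=1/q=\delta_\unr(\pi)$, $w_{x^{-1}\delta}=0=w_{\delta_\unr}$ identifies the relevant complexes, so Sublemma \ref{lem:H1-exam} transports directly. Your explicit check that the $\Gamma$-action is irrelevant to $Z^1_{\varphi_q,\nabla}$ and $B^1$ (as opposed to $Z^1$) is exactly the right point to isolate.
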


\begin{lem}\label{lem:partial-2}
\begin{enumerate}
\item If $\delta(\pi)\neq \pi, \pi/q$ or if $w_\delta\neq 1$, then
$\partial: H^1_{\varphi_q,\nabla}(x^{-1}\delta) \rightarrow
\bar{H}^1_{\varphi_q,\nabla}(\delta) $ is injective.
\item If $\delta(\pi)=\pi$ and $w_\delta=1$, then we have an exact
sequence of $\Gamma$-modules
$$ \xymatrix{ 0 \ar[r] & L(x^{-1}\delta) \oplus L(x^{-1}\delta) \ar[r] & H^1_{\varphi_q,\nabla}(x^{-1}\delta) \ar[r]^{\partial} & \bar{H}^1_{\varphi_q,\nabla}(\delta)
. }$$
\item If If $\delta(\pi)=\pi/q$ and $w_\delta=1$, then we have an exact
sequence of $\Gamma$-modules
$$ \xymatrix{ 0 \ar[r] & L(x^{-1}\delta) \ar[r] & H^1_{\varphi_q,\nabla}(x^{-1}\delta)
\ar[r]^{\partial} & \bar{H}^1_{\varphi_q,\nabla}(\delta)
. }$$
\end{enumerate}
\end{lem}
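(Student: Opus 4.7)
The approach is to unwind $\ker\partial$ in terms of an explicit residue invariant. Given $(m,n) \in Z^1_{\varphi_q,\nabla}(x^{-1}\delta)$ with $\partial[(m,n)] = 0$, by hypothesis there exists $z \in \SR_L$ with $(\delta(\pi)\varphi_q - 1)z = \partial m$ and $\nabla_\delta z = \partial n$. Applying $\Res$ to both equations and using $\Res \circ \varphi_q = (q/\pi)\Res$ yields
\[
(\delta(\pi)q/\pi - 1)\Res(z) = 0 \quad \text{and} \quad (w_\delta - 1)\Res(z) = 0.
\]
Hence $\Res(z) = 0$ in Cases~(a) and~(b), while in Case~(c) both coefficients vanish and $\Res(z) \in L$ is \emph{a priori} arbitrary.

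When $\Res(z) = 0$, Proposition~\ref{prop:res-partial} provides $z' \in \SR_L$ with $\partial z' = z$, and the commutation relations $\partial\varphi_q = \pi\varphi_q\partial$ and $\partial\nabla_{x^{-1}\delta} = \nabla_\delta \partial$ show that $(c_1,c_2) := f_1(z') - (m,n)$ lies in $L^2 \cap Z^1_{\varphi_q,\nabla}(x^{-1}\delta)$, with $[(m,n)]$ vanishing in $H^1_{\varphi_q,\nabla}(x^{-1}\delta)$ if and only if $(c_1,c_2) \in B^1(x^{-1}\delta)$. In Case~(a) the hypothesis forces $\delta(\pi) \neq \pi$ or $w_\delta \neq 1$, so Sublemma~\ref{sublem:partial-1} gives $(c_1,c_2) \in B^1(x^{-1}\delta)$ and hence injectivity of $\partial$. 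In Case~(b) the character $x^{-1}\delta$ is trivial on $F^\times$, so $L^2 \subseteq Z^1_{\varphi_q,\nabla}(x^{-1}\delta)$, $L^2 \cap B^1(x^{-1}\delta) = 0$, and the choice of lift $z' \mapsto z' + \lambda$ leaves $(c_1,c_2)$ unchanged (because $(x^{-1}\delta)(\pi)-1$ and $w_{x^{-1}\delta}$ both vanish). The assignment $[(m,n)] \mapsto (c_1,c_2)$ then produces a $\Gamma$-equivariant isomorphism $\ker\partial \cong L(x^{-1}\delta) \oplus L(x^{-1}\delta)$; surjectivity is witnessed by taking $(m,n) = -(c_1,c_2)$ itself, which is a constant cocycle with $(\partial m,\partial n) = 0 \in B^1(\delta)$.

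The main obstacle is Case~(c), where $\Res(z)$ can fail to vanish. Here however $\ker f_1 = 0$ in $\SR_L(\delta)$: a nonzero element of $\SR_L(\delta)^{\varphi_q=1,\nabla=0}$ forces $w_\delta = -i$ and $\delta(\pi) = \pi^{-i}$ for some $i \in \BZ$, which with $w_\delta = 1$ gives $i = -1$ and $\delta(\pi) = \pi$, contradicting $\delta(\pi) = \pi/q$. Thus $z$ is unique and $\Res(z) \in L$ is a genuine invariant of $[(m,n)]$, giving an $L$-linear map $\kappa\colon\ker\partial \to L$, $[(m,n)] \mapsto \Res(z)$, which is $\Gamma$-equivariant for the $x^{-1}\delta$-twist on $L$ (from $\Res \circ \sigma_a = a^{-1}\Res$ combined with $\sigma_\delta \circ \partial = \partial \circ \sigma_{x^{-1}\delta}$). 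Injectivity of $\kappa$ is precisely the Case~(a) argument applied when $\Res(z)=0$. For surjectivity, Corollary~\ref{cor-of-sublem} supplies $(m_0,n_0) = \bigl(\tfrac{1}{q}\log(\varphi_q(u_\CF)/u_\CF^q),\ \nabla\log u_\CF\bigr) \in Z^1(x^{-1}\delta) \setminus B^1(x^{-1}\delta)$ whose associated $z$ equals $\partial\log u_\CF$; a direct residue computation using $\rd t_\CF = \frac{\partial F_\CF}{\partial Y}(u_\CF,0)\,\rd u_\CF$ together with $\frac{\partial F_\CF}{\partial Y}(0,0) = 1$ gives $\Res(\partial\log u_\CF) = 1$, hence $\kappa([(m_0,n_0)]) = 1$. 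This yields the claimed exact sequence with kernel $L(x^{-1}\delta)$.
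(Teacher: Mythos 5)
Your proof is correct and follows essentially the same route as the paper: apply $\Res$ to the element $z$ with $f_1(z)=(\partial m,\partial n)$, lift $z=\partial z'$ when $\Res(z)=0$ so that $(m,n)$ differs from a coboundary by a constant cocycle, and invoke Sublemma \ref{sublem:partial-1} and Corollary \ref{cor-of-sublem} to identify the kernel; your residue invariant $\kappa$ in case (c) is a clean repackaging of the paper's decomposition of $z$ into a residue-zero part plus a multiple of $\partial \log u_\CF$. One minor slip: in case (b) the character $x^{-1}\delta$ need not be trivial on $\CO_F^\times$ (only $(x^{-1}\delta)(\pi)=1$ and $w_{x^{-1}\delta}=0$ hold), but since your argument uses only these two facts and you record the kernel with its correct twist $L(x^{-1}\delta)\oplus L(x^{-1}\delta)$, nothing is affected.
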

\begin{proof} Let $
(m,n)$ be in $ Z^1_{\varphi_q,\nabla}(x^{-1}\delta) $, and suppose
that $(\partial m,
\partial n)\in \bar{B}^1(\delta)$. Let $z$ be an element of $\SR_L$ such that
$(\delta(\pi)\varphi_q-1)z=\partial m$ and $\nabla_\delta z=\partial
n$. If $\Res(z)=0$, then there exists $z'\in \SR_L$ such that
$\partial z'=z$. Then $m-(\delta(\pi)\pi^{-1}\varphi_q-1)z'$ and
$n-\nabla_{x^{-1}\delta}z'$ are in $\{(a,b): a,b\in L\}$, i.e.
$(m,n)$ is in $B^1(x^{-1}\delta)\oplus L(0,1)\oplus L(1,0)$.

If either $\delta(\pi)\neq \frac{\pi}{q}$ or $w_\delta\neq 1$, we
always have $\Res(z)=0$. Indeed, this follows from
$$(\delta(\pi)\frac{q}{\pi}-1)\Res(z)=\Res( (\delta(\pi)\varphi_q-1) z ) =
\Res(\partial m)=0 $$ and
$$ (w_\delta-1)  \Res(z) = \Res(\partial (t_\CF z)+(w_\delta-1)z) = \Res(\nabla_\delta
z)=\Res(\partial n) =0. $$ In the case of
$\delta(\pi)=\frac{\pi}{q}$ and $w_\delta=1$, if $z\in
L\frac{\partial u_\CF}{u_\CF}$, then $(m,n)$ is in $L(0,1) \oplus
L(1,0) \oplus L( \frac{1}{q}\log \frac{\varphi_q (u_\CF)}{u_\CF^q},
\frac{t_\CF\partial u_\CF}{u_\CF} )$. Now our lemma follows from
Sublemma \ref{sublem:partial-1} and Corollary \ref{cor-of-sublem}.
\end{proof}

\begin{prop}\label{prop:partial-iso-1}
\begin{enumerate}
\item\label{it:partial-a} If $\delta(\pi)\neq \pi,\pi/q$ or if $w_\delta\neq 1$, then $\partial: H^1_{\varphi_q,\nabla}(x^{-1}\delta)\rightarrow
H^1_{\varphi_q,\nabla}(\delta)$ is an isomorphism of
$\Gamma$-modules.
\item\label{it:partial-b} If $\delta(\pi)=\pi$ and $w_{\delta}=1$, then we have an exact
sequence of $\Gamma$-modules
\[ \xymatrix{ 0 \ar[r] & L(x^{-1}\delta)\oplus L(x^{-1}\delta) \ar[r] & H^1_{\varphi_q,\nabla} (x^{-1}\delta) \ar[r]^{\partial} & H^1_{\varphi_q,\nabla}(\delta) \ar[r]
& L(x^{-1}\delta) \ar[r] & 0. } \]
\item\label{it:partial-c} If $\delta(\pi)=\pi/q$ and $w_{\delta}=1$, then we have an exact
sequence of $\Gamma$-modules
\[ \xymatrix{ 0 \ar[r] & L(x^{-1}\delta) \ar[r] & H^1_{\varphi_q,\nabla} (x^{-1}\delta) \ar[r]^{\partial} & H^1_{\varphi_q,\nabla}(\delta)
\ar[r] & L(x^{-1}\delta) \oplus L(x^{-1}\delta) \ar[r] & 0. }
\]
\end{enumerate}
\end{prop}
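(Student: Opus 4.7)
My plan is to deduce each case by combining Lemmas~\ref{lem:bar-nonbar}, \ref{lem:partial-1}, and \ref{lem:partial-2}, which describe, respectively, when $\bar H^1_{\varphi_q,\nabla}(\delta)=H^1_{\varphi_q,\nabla}(\delta)$, and the image and kernel of the factored map $\partial:H^1_{\varphi_q,\nabla}(x^{-1}\delta)\to \bar H^1_{\varphi_q,\nabla}(\delta)\hookrightarrow H^1_{\varphi_q,\nabla}(\delta)$. All of these maps are $\Gamma$-equivariant, so the $\Gamma$-module identifications in the stated exact sequences come for free and one only needs to match up the sources, targets, kernels and cokernels in each case.

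Cases (\ref{it:partial-a}) and (\ref{it:partial-b}) are essentially bookkeeping. In case (\ref{it:partial-a}), the hypothesis ``$\delta(\pi)\neq \pi,\pi/q$ or $w_\delta\neq 1$'' is precisely what is needed to put us in the good branch of all three Lemmas: Lemma~\ref{lem:bar-nonbar} gives $\bar H^1=H^1$, while Lemmas~\ref{lem:partial-1}(a) and \ref{lem:partial-2}(a) give that $\partial$ is simultaneously surjective onto $\bar H^1$ and injective, and so an isomorphism onto $H^1$. In case (\ref{it:partial-b}), $\delta(\pi)=\pi\neq \pi/q$, so Lemma~\ref{lem:bar-nonbar} still gives $\bar H^1=H^1$; splicing Lemma~\ref{lem:partial-2}(b) (kernel $L(x^{-1}\delta)^{\oplus 2}$) with Lemma~\ref{lem:partial-1}(b) (cokernel $L(x^{-1}\delta)$ onto $\bar H^1=H^1$) yields exactly the stated five-term exact sequence.

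Case (\ref{it:partial-c}) is the one Lemma~\ref{lem:bar-nonbar} does not cover and is where the real work lies. Lemma~\ref{lem:partial-1}(a) still applies (since $\delta(\pi)=\pi/q\neq \pi$), so $\partial$ surjects onto $\bar H^1_{\varphi_q,\nabla}(\delta)$, and Lemma~\ref{lem:partial-2}(c) gives $\ker\partial=L(x^{-1}\delta)$. What remains is to identify the quotient $H^1_{\varphi_q,\nabla}(\delta)/\bar H^1_{\varphi_q,\nabla}(\delta)$ with $L(x^{-1}\delta)^{\oplus 2}$. The plan is to use the pair of residues
\[
(\Res,\Res)\colon Z^1_{\varphi_q,\nabla}(\delta)\to L\oplus L,\qquad (m,n)\mapsto (\Res(m),\Res(n)).
\]
Applying $\Res$ to the cocycle relation $\nabla_\delta m=(\delta(\pi)\varphi_q-1)n$ and using Proposition~\ref{prop-tate-trace} together with $\Res\circ\partial=0$ from Proposition~\ref{prop:res-partial} gives the identity $(w_\delta-1)\Res(m)=(q\delta(\pi)/\pi-1)\Res(n)$, which is automatically satisfied in case (\ref{it:partial-c}); the same formulas show the map vanishes on coboundaries. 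Thus $(\Res,\Res)$ descends to a $\Gamma$-equivariant injection $H^1_{\varphi_q,\nabla}(\delta)/\bar H^1_{\varphi_q,\nabla}(\delta)\hookrightarrow L(x^{-1}\delta)^{\oplus 2}$, the $\Gamma$-action on each factor being $x^{-1}\delta$ via $\Res\circ \sigma_a=a^{-1}\Res$.

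The hard part is proving surjectivity of this residue pair map, equivalently, producing two cocycles in $Z^1_{\varphi_q,\nabla}(\delta)$ with residue pairs $(1,0)$ and $(0,1)$. For $(1,0)$ the natural candidate is $m=\partial u_\CF/u_\CF$, which has $\Res(m)=1$ and satisfies $\nabla_\delta m=\partial(t_\CF\partial u_\CF/u_\CF)\in \SR^+_L$ with vanishing residue; one then needs $n\in\SR_L$ with $(\delta(\pi)\varphi_q-1)n=\nabla_\delta m$. When $\pi/q$ is not of the form $\pi^{-i}$ this is immediate from Lemma~\ref{lem:no1}(a) (giving $n\in\SR^+_L$, hence $\Res(n)=0$); otherwise one must allow $n$ to have a pole at $u_\CF=0$ and invoke the $\psi$-machinery of Section~\ref{sec:two-op}, particularly the intertwining $\psi\circ\partial=\pi^{-1}\partial\circ\psi$ from Lemma~\ref{lemma-comm-1}, to produce a solution. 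The companion cocycle with residue pair $(0,1)$ is built symmetrically, starting from an $n$ with $\Res(n)=1$. Once the two classes are in hand, their images span $L(x^{-1}\delta)^{\oplus 2}$, and combining with $\mathrm{Im}(\partial)=\bar H^1_{\varphi_q,\nabla}(\delta)$ and $\ker\partial=L(x^{-1}\delta)$ gives the claimed five-term exact sequence.
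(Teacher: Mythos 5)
Your treatment of parts (\ref{it:partial-a}) and (\ref{it:partial-b}) matches the paper's: both are indeed obtained by splicing Lemmas~\ref{lem:bar-nonbar}, \ref{lem:partial-1} and \ref{lem:partial-2}, and your strategy for part (\ref{it:partial-c}) --- the residue pair $(m,n)\mapsto(\Res(m),\Res(n))$ cutting out $\bar H^1_{\varphi_q,\nabla}(\delta)$ inside $H^1_{\varphi_q,\nabla}(\delta)$ with quotient $L(x^{-1}\delta)^{\oplus 2}$, verified by exhibiting cocycles with residue pairs $(1,0)$ and $(0,1)$ --- is also the paper's. However, the two places you label as ``the hard part'' are genuinely hard, and your sketch does not close either of them.

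First, the $(0,1)$ cocycle is not obtained ``symmetrically.'' Starting from $n$ with $\Res(n)=1$, you must solve $\nabla_\delta m=\partial(t_\CF m)=(\tfrac{\pi}{q}\varphi_q-1)n$ for $m\in\SR_L$. Even after arranging $\Res\bigl((\tfrac{\pi}{q}\varphi_q-1)n\bigr)=0$ so that a primitive exists, that primitive must be divisible by $t_\CF$, i.e.\ vanish at $u_\CF=0$ and at \emph{every} torsion point $\xi_\ell$ --- infinitely many conditions that a generic $n$ fails. The paper's construction is precisely designed around this: it takes $n_2=\partial(\log u_\CF-z)$ and $m_2=t_\CF^{-1}(q^{-1}\varphi_q-1)(\log u_\CF-z)$, where $z\in\SR_L^+$ is produced via the Fr\'echet--Stein decomposition (\ref{eq:inc-iso}) so that $z(\xi_\ell)=1\otimes\log\xi_\ell$, forcing $(q^{-1}\varphi_q-1)(\log u_\CF-z)$ to vanish along the divisor of $t_\CF$. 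This interpolation is the real content of case (\ref{it:partial-c}) and is absent from your argument. Second, for the $(1,0)$ cocycle your fallback when $\pi/q=\pi^{-i}$ (equivalently $q$ a power of $\pi$ in $L$ --- note this includes $F=\BQ_p$, so it is not a degenerate case) is only a gesture: the $\psi$-lemmas of Section~\ref{ss:pre} (Lemma~\ref{lem:no2}, Corollary~\ref{cor:no1}, Lemma~\ref{lem:no3}) all require $v_\pi(\alpha)<1-v_\pi(q)$, which fails exactly for $\alpha=\pi/q$, and the obstruction $\partial^i(\nabla_\delta m)|_{u_\CF=0}$ from Lemma~\ref{lem:no1}(\ref{it:no1-b}) is generically nonzero, so a solution $n$ need not exist in $\SR_L^+$ and is not produced by the intertwining $\psi\circ\partial=\pi^{-1}\partial\circ\psi$ alone. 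The paper handles this honestly by proving (\ref{it:partial-c}) only when $q$ is not a power of $\pi$ and deducing the remaining case later from the $\iota_k$ maps of Section~\ref{ss:iota-k}; you should either do the same or supply an actual construction for that case.
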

\begin{proof} Assertions (\ref{it:partial-a}) and
(\ref{it:partial-b}) follow from Lemma \ref{lem:bar-nonbar}, Lemma
\ref{lem:partial-1} and Lemma \ref{lem:partial-2}. Based on these
lemmas, for (\ref{it:partial-c}) we only need to show that, we have
an exact sequence of $\Gamma$-modules
$$\xymatrix{ 0 \ar[r] & \bar{H}^1_{\varphi_q,\nabla}(\delta)\ar[r]
& H^1_{\varphi_q,\nabla}(\delta)\ar[r]^{\hskip -18pt \Res} &
L(x^{-1}\delta)\oplus L(x^{-1}\delta) \ar[r] & 0, }$$ where $\Res$
is induced by $(m,n)\mapsto (\Res (m), \Res(n))$ which is
$\Gamma$-equivariant by Proposition \ref{prop-tate-trace}. Here we
prove this under the assumption that $q$ is not a power of $\pi$. We
will see in Section \ref{ss:iota-k} that it also holds without this
assumption. Put $m_1=1/u_\CF $. Then $ \nabla_\delta m_1 = t_\CF
\partial (1/u_\CF) + 1/u_\CF =\partial (t_\CF/u_\CF) $ is in
$\SR^+_L$. As $q$ is not a power of $\pi$, the map
$\frac{\pi}{q}\varphi_q-1:\SR^+_L\rightarrow \SR^+_L$ is an
isomorphism. Let $n_1$ be the unique solution of
$(\frac{\pi}{q}\varphi_q-1)n_1=t_\CF\partial m_1 +m_1$ in $\SR^+_L$.
Then $c_1=(m_1,n_1)$ is in $Z^1_{\varphi_q,\nabla}(\delta)$ and
$\Res(m_1,n_1)=(1,0)\neq 0$. For any $\ell\in\BN$ we choose a root
$\xi_\ell$ of $Q_\ell=\varphi_q^{\ell-1}(Q)$. For any $f(u_\CF)\in
\SR^+_L$, the value of $f$ at $\xi_\ell$ is an element $f(\xi_\ell)$
in $L\otimes_F F_\ell$. By (\ref{eq:inc-iso}) there exists an
element $z\in \SR^+_L$ whose value at $\xi_\ell$ is $1\otimes\log
\xi_\ell$. Put $m_2=t_\CF^{-1} (q^{-1}\varphi_q-1) ( \log u_\CF - z
)$ and $n_2=\partial (\log u_\CF - z)$. Then $(m_2,n_2)$ is in
$Z^1_{\varphi_q,\nabla}(\delta)$ and $\Res(n_2)=1$.
\end{proof}

\begin{prop} \label{prop:partial-iso-2}
\begin{enumerate}
\item \label{it:partial-iso} If $\delta\neq x, \ x\delta_\unr$, then $\partial: H_{\an}^1(x^{-1}\delta)\rightarrow
H_{\an}^1(\delta)$ is an isomorphism.
\item \label{it:par-zero-1} If $\delta=x$, then $\partial: H_{\an}^1(x^{-1}\delta)\rightarrow
H_{\an}^1(\delta)$ is zero, and $\dim_LH_{\an}^1(\delta)=1$.
\item \label{it:par-zero-2} If $\delta=x\delta_\unr$, then
$\partial: H_{\an}^1(x^{-1}\delta)\rightarrow H_{\an}^1(\delta)$ is
zero, and $\dim_LH_{\an}^1(\delta)=2$.
\end{enumerate}
\end{prop}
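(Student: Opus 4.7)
The overall strategy is to derive Proposition \ref{prop:partial-iso-2} from its non-analytic counterpart Proposition \ref{prop:partial-iso-1} by taking $\Gamma$-invariants, supplementing with the explicit computations of Proposition \ref{prop:H1-case1}. Part (\ref{it:partial-iso}) is established first and then iterated to provide the dimensional inputs needed for (\ref{it:par-zero-1}) and (\ref{it:par-zero-2}).

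For part (\ref{it:partial-iso}): when $\delta(\pi)\neq\pi,\pi/q$ or $w_\delta\neq 1$, Proposition \ref{prop:partial-iso-1}(\ref{it:partial-a}) already gives $\partial$ as a $\Gamma$-equivariant isomorphism on $H^1_{\varphi_q,\nabla}$, and the claim follows at once upon passing to $\Gamma$-invariants. In the remaining subcases we may write $\delta=x\chi$ or $\delta=x\delta_\unr\chi$ with $\chi$ a locally constant character, and the hypothesis $\delta\neq x, x\delta_\unr$ forces $\chi|_\Gamma\neq 1$. Splitting the four-term exact sequence of Proposition \ref{prop:partial-iso-1}(\ref{it:partial-b}) or (\ref{it:partial-c}) into two short exact sequences and applying $(-)^\Gamma$, one invokes the vanishing $H^0(\Gamma,L(\chi))=H^1(\Gamma,L(\chi))=0$ for nontrivial finite order characters $\chi$ of $\Gamma$; this vanishing is obtained via Hochschild--Serre for the tame subgroup $\mu_{q-1}\subset\Gamma$ together with the fact that multiplication by $\chi(\gamma)-1$ is invertible on $L(\chi)$ for topological generators $\gamma$ of the pro-$p$ part. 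This yields the isomorphism $\partial\colon H^1_\an(x^{-1}\delta)\xrightarrow{\sim} H^1_\an(\delta)$.

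For parts (\ref{it:par-zero-1}) and (\ref{it:par-zero-2}): iterating part (\ref{it:partial-iso}) along the chain $\delta,\,x^{-1}\delta,\,x^{-2}\delta,\ldots$ produces isomorphisms allowing us to transport the base-case dimensions of Proposition \ref{prop:H1-case1} forward, giving $\dim_L H^1_\an(1)=2$ with generators the images of $(1,0)$ and $(0,1)$, and $\dim_L H^1_\an(\delta_\unr)=1$ (the latter applies directly via Proposition \ref{prop:H1-case1}(\ref{it:dim-1})). For (\ref{it:par-zero-1}) with $\delta=x$, both generators of $H^1_\an(1)$ are killed by the component-wise operator $\partial$, so $\partial\colon H^1_\an(1)\to H^1_\an(x)$ is zero; the exact sequence of Proposition \ref{prop:partial-iso-1}(\ref{it:partial-b}) (with trivial $\Gamma$-action on the flanking copies of $L$), combined with the fact that the $L\oplus L$ kernel already exhausts $H^1_\an(1)$, forces $\dim_L H^1_\an(x)\leq 1$; a matching lower bound is provided by the explicit class built from $(\partial m',\partial n')$ with $m'=\log(\varphi_q(u_\CF)/u_\CF^q)$ and $n'$ as in the proof of Lemma \ref{lem:partial-1}(2), verified to be $\Gamma$-invariant modulo $B^1(x)$. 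For (\ref{it:par-zero-2}) with $\delta=x\delta_\unr$, we have $x^{-1}\delta=\delta_\unr$ with $\delta_\unr|_\Gamma=1$, so the kernel $L=L(x^{-1}\delta)$ in Proposition \ref{prop:partial-iso-1}(\ref{it:partial-c}) has trivial $\Gamma$-action and maps injectively into $H^1_\an(\delta_\unr)$; comparing dimensions forces this injection to be an isomorphism, hence $\partial$ vanishes on $H^1_\an(\delta_\unr)$. The cokernel $L\oplus L$ in Proposition \ref{prop:partial-iso-1}(\ref{it:partial-c}) yields $\dim_L H^1_\an(x\delta_\unr)\leq 2$, matched by two explicit linearly independent classes constructed as in the end of the proof of Proposition \ref{prop:partial-iso-1}(\ref{it:partial-c}).

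The principal obstacle is justifying the exactness of $(-)^\Gamma$ on the relevant sequences in part (\ref{it:partial-iso}), which reduces to $H^1(\Gamma,L(\chi))=0$ for nontrivial finite order $\chi$; this requires separate treatment of the tame quotient $\mu_{q-1}$ and the pro-$p$ part of $\Gamma$. A secondary technical point is constructing the explicit lower-bound representatives in parts (\ref{it:par-zero-1}) and (\ref{it:par-zero-2}) and verifying $\Gamma$-invariance modulo coboundaries, which must be carried out by hand.
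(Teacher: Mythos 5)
Your part (\ref{it:partial-iso}) is sound: in the generic case you quote Proposition \ref{prop:partial-iso-1}(\ref{it:partial-a}) directly, and in the exceptional subcases the flanking terms $L(x^{-1}\delta)=L(\chi)$ carry a \emph{nontrivial} finite-order character of $\Gamma$, so $H^0(\Gamma,L(\chi))=H^1(\Gamma,L(\chi))=0$ does give exactness of $(-)^\Gamma$ on the two short exact sequences and hence the isomorphism. This is a legitimate variant of the paper's argument (the paper instead proves the needed surjectivities by a lifting argument, see below).

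The gap is in parts (\ref{it:par-zero-1}) and (\ref{it:par-zero-2}), exactly where your stated reduction ``exactness of $(-)^\Gamma$ reduces to $H^1(\Gamma,L(\chi))=0$ for nontrivial $\chi$'' no longer applies: for $\delta=x$ and $\delta=x\delta_\unr$ the character $x^{-1}\delta$ restricted to $\Gamma$ is \emph{trivial}, and $H^1(\Gamma,L)=\Hom(\Gamma,L)\neq 0$. Concretely, writing $M=\im\bigl(\partial\colon H^1_{\varphi_q,\nabla}(x^{-1}\delta)\to H^1_{\varphi_q,\nabla}(\delta)\bigr)$, your upper bounds $\dim_L H^1_\an(x)\leq 1$ and $\dim_L H^1_\an(x\delta_\unr)\leq 2$ require $M^\Gamma=0$. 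What you actually prove is only that $\partial\bigl(H^1_\an(x^{-1}\delta)\bigr)=0$, i.e.\ that the image of the $\Gamma$-invariants is zero; this does not exclude a class $c\in M^\Gamma$ none of whose $\partial$-preimages is $\Gamma$-invariant (a non-split extension of the trivial module by $L(x^{-1}\delta)^{\oplus j}$ inside $H^1_{\varphi_q,\nabla}(x^{-1}\delta)$). The paper closes exactly this hole: for $c\in M^\Gamma$ the preimage $\partial^{-1}(Lc)$ is a finite-dimensional $\Gamma$-stable subspace of $H^1_{\varphi_q,\nabla}(x^{-1}\delta)$ on which the induced operator $\nabla$ vanishes (since $\nabla(m,n)=f_1(n)\in B^1$ for any cocycle $(m,n)$), so the $\Gamma$-action on it is locally constant, hence semisimple, and $c$ lifts to $H^1_\an(x^{-1}\delta)$; therefore $M^\Gamma=\partial\bigl(H^1_\an(x^{-1}\delta)\bigr)=0$. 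You need this (or an equivalent) argument to make (\ref{it:par-zero-1}) and (\ref{it:par-zero-2}) complete; the explicit lower-bound classes you invoke are fine and match the paper's.
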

\begin{proof}
We apply Proposition \ref{prop:partial-iso-1}. There is nothing to
prove for the case that $\delta(\pi)\neq \pi, \pi/q$ or
$w_\delta\neq 1$. Combining the assertions in this case and
Proposition \ref{prop:H1-case1} we obtain that $\dim_L
H^1_\an(\delta_\unr)=1$. This fact is useful below.

Next we consider the case of $\delta(\pi)=\pi/q$ and $w_{\delta}=1$.
The argument for the case of $\delta(\pi)=\pi$ and $w_\delta=1$ is
similar.

Let $M$ be the image of $\partial: H^1_{\varphi_q,\nabla}
(x^{-1}\delta) \rightarrow H^1_{\varphi_q,\nabla}(x)$. Then we have
two short exact sequences of $\Gamma$-modules
\[ \xymatrix{ 0 \ar[r] & L(x^{-1}\delta) \ar[r] & H^1_{\varphi_q,\nabla} (x^{-1}\delta)
\ar[r]^{\hskip 15pt \partial} & M \ar[r] & 0 } \] and
 \[\xymatrix{ 0 \ar[r] & M  \ar[r] & H^1_{\varphi_q,\nabla}(\delta) \ar[r]
& L(x^{-1}\delta) \oplus L(x^{-1}\delta) \ar[r] & 0 . } \] We will
show that, taking $\Gamma$-invariants yields two exact sequences
\[ \xymatrix{ 0 \ar[r] &
L(x^{-1}\delta)^\Gamma \ar[r] & H^1_{\an} (x^{-1}\delta)
\ar[r]^{\partial} & M^\Gamma \ar[r] & 0}
\] and
 \[\xymatrix{ 0 \ar[r] & M^\Gamma  \ar[r] & H^1_{\an}(\delta) \ar[r]
& L(x^{-1}\delta)^\Gamma \oplus L(x^{-1}\delta)^\Gamma \ar[r] & 0 .
}
\] If we have that the $\Gamma$-actions on $H^1_{\varphi_q,\nabla} (x^{-1}\delta)$ and
$H^1_{\varphi_q,\nabla}(\delta)$ are semisimple, then there is
nothing to prove. However we will avoid this by an alternative
argument. It suffices to prove the surjectivity of
$H^1_{\varphi_q,\nabla} (x^{-1}\delta)^\Gamma \rightarrow M^\Gamma$
and that of $H^1_{\varphi_q,\nabla}(\delta)^\Gamma \rightarrow
L(x^{-1}\delta)^\Gamma \oplus L(x^{-1}\delta)^\Gamma$. The latter
follows from the proof of Proposition \ref{prop:partial-iso-1}. In
fact, if $\delta=x\delta_\unr$, then $(m_1, n_1)$ and $(m_2,n_2)$
constructed there are in $Z^1(\delta)$. Now let $c$ be any element
of $M^\Gamma$, then the preimage $\partial^{-1} (L c)$ is two
dimensional over $L$ and $\Gamma$-invariant. From the definition of
$H^1_{\varphi_q,\nabla}$, we obtain that the induced $\nabla$-action
on $\partial^{-1} (L c)$ is zero and thus $\partial^{-1} (L c)$ is a
semisimple $\Gamma$-module, as wanted.

If $\delta=x\delta_\unr$, then $\dim_L L(x^{-1}\delta)^\Gamma=\dim_L
H^1_\an (x^{-1}\delta)=1$, and so $M^\Gamma=0$. Thus $\partial:
H^1_\an(x^{-1}\delta)\rightarrow H^1_\an(\delta)$ is zero and
$\dim_L H^1_\an(\delta)=2$. If $\delta\neq x\delta_\unr$, then
$\partial: H^1_\an(x^{-1}\delta)\rightarrow H^1_\an(\delta)$ is an
isomorphism since both $H^1_\an(x^{-1}\delta)\rightarrow M^\Gamma$
and $M^\Gamma\rightarrow H^1_\an(\delta)$ are isomorphisms.
\end{proof}

\subsection{Dimension of $
H^1(\delta)$ for $\delta\in \SI(L)$} \label{ss:H1}

\begin{thm}(= Theorem \ref{thm:intro-dim})\label{thm:dim-H1} Let $\delta$ be in $\SI_\an(L)$.
\begin{enumerate}
\item\label{it:dim-a} If $\delta$ is not of the form $x^{-i}$ with $i\in \BN$, or the
form $x^i\delta_\unr$ with $i\in\BZ_+$, then $H_{\an}^1(\delta)$ and
$H^1(\delta)$ are $1$-dimensional over $L$.
\item \label{it:dim-b} If  $\delta=x^i\delta_\unr$ with $i\in\BZ_+$,
then $H^1_\an(\delta)$ and $H^1(\delta)$ are $2$-dimensional over
$L$.
\item If
$\delta=x^{-i}$ with $i\in \BN$, then $H^1_\an(\delta)$ is
$2$-dimensional over $L$ and $H^1(\delta)$ is $(d+1)$-dimensional
over $L$, where $d=[F:\BQ_p]$.
\end{enumerate}
\end{thm}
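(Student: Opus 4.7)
The plan is to compute $\dim_L H^1_\an(\delta)$ first by an inductive telescope argument, and then derive $\dim_L H^1(\delta)$ from it via Corollary~\ref{thm:comp}. Three ingredients already established are combined. Proposition~\ref{prop:H1-case1} serves as the base case: whenever $v_\pi(\delta(\pi)) < 1 - v_\pi(q)$ one has $\dim_L H^1_\an(\delta) = 2$ if $\delta = x^{-i}$ with $i\in\BN$ and $\dim_L H^1_\an(\delta) = 1$ otherwise. Proposition~\ref{prop:partial-iso-2} provides the iterative step: $\partial\colon H^1_\an(x^{-1}\delta') \to H^1_\an(\delta')$ is an isomorphism except when $\delta' = x$ or $\delta' = x\delta_\unr$, in which two exceptional ``bump'' cases the map is zero and the targets satisfy $\dim_L H^1_\an(x) = 1$, $\dim_L H^1_\an(x\delta_\unr) = 2$.

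Given $\delta \in \SI_\an(L)$, I will choose $N \geq 0$ large enough that $\delta_0 := x^{-N}\delta$ satisfies $v_\pi(\delta_0(\pi)) < 1 - v_\pi(q)$, and consider the chain
\[
H^1_\an(\delta_0) \xrightarrow{\partial} H^1_\an(x\delta_0) \xrightarrow{\partial} \cdots \xrightarrow{\partial} H^1_\an(x^N\delta_0) = H^1_\an(\delta).
\]
By Proposition~\ref{prop:partial-iso-2}, every arrow is an isomorphism except possibly when the target equals $x$ or $x\delta_\unr$; such a bump occurs in the chain iff $\delta = x^k$ or $\delta = x^k\delta_\unr$ for some $k \in \{1,\ldots, N\}$. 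A short case analysis then completes the computation: when $\delta = x^{-i}$ ($i \in \BN$), the starting character $\delta_0 = x^{-N-i}$ has dimension $2$ and no target in the chain equals $x$ or $x\delta_\unr$, so $\dim_L H^1_\an(\delta) = 2$; when $\delta = x^i$ with $i \geq 1$, the chain has exactly one bump at target $x$, so the final dimension is forced to $\dim_L H^1_\an(x) = 1$; when $\delta = x^i\delta_\unr$ with $i \geq 1$, the chain has exactly one bump at target $x\delta_\unr$, so the final dimension is $\dim_L H^1_\an(x\delta_\unr) = 2$; in all remaining cases no bump occurs, the starting character is not of the form $x^{-j}$, and the starting dimension $1$ propagates to $\delta$.

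The passage to $H^1(\delta)$ is immediate from Corollary~\ref{thm:comp} applied to $D = \SR_L(\delta)$, which yields
\[
\dim_L H^1(\delta) = \dim_L H^1_\an(\delta) + ([F:\BQ_p]-1)\dim_L H^0(\delta).
\]
By Proposition~\ref{prop:H0}, $H^0(\delta) = 0$ unless $\delta = x^{-i}$ for some $i\in \BN$, in which case it is one-dimensional. Thus $\dim_L H^1(\delta) = \dim_L H^1_\an(\delta)$ in cases (a) and (b), and equals $2 + ([F:\BQ_p]-1) = [F:\BQ_p]+1$ in case (c), as claimed.

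The main difficulty is really only organizational: one must track carefully, for each $\delta$, which characters in the telescope are of the form $x^{-j}$ (hence contribute starting dimension $2$ via the base case) and where the at-most-one bump is located. Once $N$ is chosen large enough relative to $v_\pi(\delta(\pi))$, no further technical input is needed; the analytic heavy lifting has already been done in Proposition~\ref{prop:H1-case1} and Proposition~\ref{prop:partial-iso-2}.
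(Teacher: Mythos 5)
Your proposal is correct and is essentially the paper's own proof: the paper likewise deduces the $H^1_\an$ dimensions by combining Proposition \ref{prop:H1-case1} (the base case for $v_\pi(\delta(\pi))<1-v_\pi(q)$) with the transition maps of Proposition \ref{prop:partial-iso-2}, and then passes to $H^1$ via Proposition \ref{prop:H0} and Corollary \ref{thm:comp}. Your write-up simply makes explicit the telescoping/bump bookkeeping that the paper leaves implicit, and the case analysis (including the check that $N$ can be taken large enough for any bump to actually lie in the chain) is sound.
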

\begin{proof} The assertions for $H^1_\an(\delta)$ follow from Proposition \ref{prop:H1-case1} and Proposition
\ref{prop:partial-iso-2}. By Proposition \ref{prop:H0} we have
$$\dim_L \SR_L(\delta)^{\varphi_q=1,\Gamma=1}=\left\{
\begin{array}{ll} 1 & \text{ if } \delta=x^{-i} \text{ with } i\in \BN,
\\
0 & \text{ otherwise.}
\end{array} \right.$$ So the assertions for $H^1(\delta)$ come
from the assertions for $H^1_\an(\delta)$ and Corollary
\ref{thm:comp}.
\end{proof}

When $\delta=x^{-i}$ with $i\in \BN$, $H^1_\an(\delta)$ is generated
by the classes of $(t_\CF^i,0)$ and $(0, t_\CF^i)$. Let $\rho_i$
$(i=1,\cdots, d)$ be a basis of $\Hom(\Gamma, Lt_\CF^i)$. Then the
class of the $1$-cocycle $c_0$ with $c_0(\varphi_q)=t_\CF^i$ and
$c_0|\Gamma=0$, and the classes of $1$-cocycles $c_i$ with
$c_i(\varphi_q)=0$ and $c_i|\Gamma=\rho_i$ ($i=1,\cdots, d$), form a
basis of $H^1(\delta)$.

\begin{thm}\label{thm:not-anal} (=Theorem \ref{thm:intro-nonanal}) If $\delta\in \SI(L)$ is not locally $F$-analytic, then
$H^1(\delta)=0$.
\end{thm}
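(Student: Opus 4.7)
The plan is to show directly that every continuous $1$-cocycle $c : G^+ \to \SR_L(\delta)$ representing a class in $H^1(\SR_L(\delta))$ is a coboundary. Write $a := c(\varphi_q)$; the cocycle identity reads
\[
(\delta(\pi)\varphi_q - 1)c(\gamma) = (\delta(\gamma)\sigma_{\chi_\CF(\gamma)} - 1)a
\qquad\text{for every }\gamma\in\Gamma.
\]

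The first step is a $\psi$-reduction. Applying Corollary \ref{cor:no1} (in the range $v_\pi(\delta(\pi)) < 1 - v_\pi(q)$; the complementary range must be handled separately, most plausibly by swapping $\delta$ with a character of smaller valuation via Tate-style duality), I would subtract a coboundary so that $a \in (\SE^\dagger_L)^{\psi = 0}$. Applying $\psi$ to the cocycle identity and using that $\psi$ commutes with $\sigma_{\chi_\CF(\gamma)}$ then yields $\psi(c(\gamma)) = \delta(\pi)c(\gamma)$, which by Lemma \ref{lem:no3} forces $c(\gamma) \in \SR^+_L$ for every $\gamma$. A further coboundary coming from $\SR^+_L$, via Lemma \ref{lem:no1}, places $a$ inside $\ker\bigl(\delta(\pi)\varphi_q - 1 : \SR^+_L \to \SR^+_L\bigr)$. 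This kernel is trivial unless $\delta(\pi) = \pi^{-i}$ for some $i \in \BN$, in which case it equals $L t_\CF^i$; write $a = \alpha t_\CF^i$ with $\alpha \in L$ (and the convention $\alpha = 0$ otherwise). Introduce the character $\chi$ of $\Gamma$ defined by $\chi(\gamma) = \delta(\gamma)\chi_\CF(\gamma)^i$, so that the cocycle identity becomes $(\pi^{-i}\varphi_q - 1)c(\gamma) = \alpha(\chi(\gamma) - 1) t_\CF^i$.

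The crucial observation is that $\chi$ is not the trivial character of $\Gamma$. Indeed, $\chi = 1$ would give $\delta|_{\CO_F^\times} = x^{-i}|_{\CO_F^\times}$, which combined with $\delta(\pi) = \pi^{-i}$ forces $\delta = x^{-i}$, a locally $F$-analytic character — contradicting the hypothesis. Now by Lemma \ref{lem:no1}(\ref{it:no1-b}) the image of $\pi^{-i}\varphi_q - 1$ on $\SR^+_L$ consists of $g$ with $\partial^i g|_{u_\CF=0} = 0$, while $\partial^i t_\CF^i|_{u_\CF=0} = i! \neq 0$; since $c(\gamma) \in \SR^+_L$, the cocycle identity forces $\alpha(\chi(\gamma) - 1) = 0$ for every $\gamma$, and the non-triviality of $\chi$ then yields $\alpha = 0$, i.e., $a = 0$.

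With $a = 0$ the cocycle identity collapses to $c(\gamma) \in \ker(\delta(\pi)\varphi_q - 1) \cap \SR^+_L$, equal to $Lt_\CF^i$ when $\delta(\pi) = \pi^{-i}$ (and to $0$ otherwise, in which case $c$ is already trivial). Writing $c(\gamma) = \lambda(\gamma) t_\CF^i$, the $\Gamma$-cocycle condition becomes $\lambda(\gamma_1\gamma_2) = \lambda(\gamma_1) + \chi(\gamma_1)\lambda(\gamma_2)$, so $\lambda$ is a continuous $1$-cocycle of the abelian group $\Gamma$ with values in $L(\chi)$. Commutativity $\gamma_1\gamma_2 = \gamma_2\gamma_1$ gives $(\chi(\gamma_1) - 1)\lambda(\gamma_2) = (\chi(\gamma_2) - 1)\lambda(\gamma_1)$; choosing $\gamma_1$ with $\chi(\gamma_1) \neq 1$ exhibits $\lambda(\gamma) = (\chi(\gamma) - 1)\mu$ for $\mu := \lambda(\gamma_1)/(\chi(\gamma_1) - 1) \in L$, so that $c(\gamma) = (\delta(\gamma)\sigma_{\chi_\CF(\gamma)} - 1)(\mu t_\CF^i)$ is a coboundary. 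The main obstacle is the first reduction in full generality, i.e., handling the range $v_\pi(\delta(\pi)) \geq 1 - v_\pi(q)$ where Corollary \ref{cor:no1} does not apply directly; once that is in place, the rest is a direct computation whose key input is precisely the non-analyticity of $\delta$, manifested as the non-triviality of $\chi$.
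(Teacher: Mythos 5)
Your route — explicit reduction of cocycles in the style of Colmez — is genuinely different from the paper's, but it has two real gaps. The first you acknowledge: the whole reduction rests on Corollary \ref{cor:no1} and Lemma \ref{lem:no3}, which require $v_\pi(\delta(\pi))<1-v_\pi(q)$, and neither of the escape routes you gesture at is available. The paper establishes no duality for $H^1$ of $(\varphi_q,\Gamma)$-modules over $\SR_L$ (only the pairing on $\SR_L$ itself, Proposition \ref{prop:dual}), and the transition maps $\partial$ of Section \ref{ss:H1an-par} are constructed only on $H^1_{\varphi_q,\nabla}$ and $H^1_\an$, i.e., for $\CO_F$-analytic modules, whereas your $\delta$ is precisely not analytic. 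The second gap is unacknowledged and is the heart of the matter: after the $\psi$-reduction you know $a\in(\SE_L^\dagger)^{\psi=0}$ and $c(\gamma)\in\SR_L^+$, but to invoke Lemma \ref{lem:no1} you need $a$ itself to lie in $\SR_L^+$ modulo coboundaries. What the cocycle identity actually gives is that the image $\bar a$ of $a$ in $\SR_L^-(\delta)^{\psi=0}$ satisfies the twisted invariance $\sigma_b(\bar a)=\delta(b)^{-1}\bar a$; for locally $F$-analytic $\delta$ this invariant space is exactly the one-dimensional space $S_\delta$ that makes $H^1_\an(\delta)\neq 0$ (Proposition \ref{prop:H1-case1}), so its vanishing for non-analytic $\delta$ cannot be skipped. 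Proving $\bar a=0$ forces you to play the non-analyticity of $\delta|_{\CO_F^\times}$ against the local $F$-analyticity of the $\Gamma$-action on $\SR_L$, which is precisely the mechanism you have not supplied.

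The paper's proof is much shorter and avoids all of this. Since $G^+$ is commutative, every $\gamma-1$ acts as zero on $H^1(\delta)$, hence so do the operators $\beta^{-1}\rd\Gamma_{\SR_L(\delta)}(\beta)-\beta'^{-1}\rd\Gamma_{\SR_L(\delta)}(\beta')$ for $\beta,\beta'\in\Lie\Gamma$. These operators are $\SR_L$-linear on the rank-one module $\SR_L(\delta)$, hence are multiplication by scalars in $L$; if all of these scalars vanished, $\delta$ would be locally $F$-analytic, so at least one is a nonzero scalar annihilating $H^1(\delta)$, which forces $H^1(\delta)=0$. Your endgame (the non-triviality of $\chi$ and the cocycle computation on $Lt_\CF^i$) is correct as far as it goes, but the two reductions preceding it are not established, so the proposal does not yet constitute a proof.
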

\begin{proof} As the maps \(\gamma-1\), $\gamma\in \Gamma$, are null on \(H^1(\delta)\), by definition of
\(H^1\), so are the maps \(\rd\Gamma_{\SR_L(\delta)}(\beta)\),
\(\beta \in \Lie\Gamma\), and the differences \(\beta^{-1}
\rd\Gamma_{\SR_L(\delta)}(\beta) - \beta'^{-1}
\rd\Gamma_{\SR_L(\delta)}(\beta')\). Note that \(\beta^{-1}
\rd\Gamma_{\SR_L(\delta)}(\beta) - \beta'^{-1}
\rd\Gamma_{\SR_L(\delta)}(\beta')\) are \(\SR_L\)-linear on
$\SR_L(\delta)$. So \(\beta^{-1} \rd\Gamma_{\SR_L(\delta)}(\beta) -
\beta'^{-1} \rd\Gamma_{\SR_L(\delta)}(\beta')\) are multiplications
by scalars in $L$, since \(\beta^{-1}
\rd\Gamma_{\SR_L(\delta)}(\beta) e_\delta- \beta'^{-1}
\rd\Gamma_{\SR_L(\delta)}(\beta') e_\delta\) is in $Le_\delta$. If
the intersection of their kernels is null, then the cohomology
\(H^1(\delta)\) vanishes. Thus, either the intersection of their
kernels is \(0\) and so the cohomology vanishes, or they are all
null and \(\delta\) is of form \(x \mapsto x^w\) for \(x\) close
to~\(1\) with \(w = \frac{\log\delta(\beta)}{\log\beta}\) for
\(\beta\) close to \(1\) (i.e. $\delta$ is locally $F$-analytic).
\end{proof}

\begin{rem}\label{rem:non-over} Suppose that $[F:\BQ_p]\geq 2$.  Let $\delta\neq 1$ be a character of
$F^\times$ with $\delta(\pi)\in \CO_L^\times$, and let $L(\delta)$
be the $L$-representation of $G_F$ induced by $\delta$. Suppose that
$\delta\neq x^2\delta_\unr$ when $[F:\BQ_p]=2$. Combining Theorem
\ref{thm:dim-H1} and the Euler-Poincar\'e characteristic formula
\cite{Tate} we obtain that, there exist Galois representations in
$\Ext(L,L(\delta))$ that are not overconvergent. Theorem
\ref{thm:not-anal} tells us that, if further $\delta$ is not locally
analytic, then there is no nontrivial overconvergent extension of
$L$ by $L(\delta)$.
\end{rem}

\subsection{The maps $\iota_k : H^1(\delta)\rightarrow
H^1(x^{-k}\delta)$ and $\iota_{k,\an}: H^1_\an(\delta)\rightarrow
H^1_\an(x^{-k}\delta)$} \label{ss:iota-k}

Let $k$ be a positive integer.

\begin{prop} \label{prop:iota-pre} Let $\delta$ be in $\SI_\an(L)$.
\begin{enumerate}
\item\label{it-lem-inv-1} If $w_\delta\notin \{ 1-k,\cdots, 0 \}$, then
$H_{\an}^0(\SR_L(\delta)/t^k_\CF\SR_L(\delta))=0$.
\item\label{it-lem-inv-2} If $w_\delta\in\{1-k,\cdots, 0\}$, then
$H_{\an}^0(\SR_L(\delta)/t^k_\CF\SR_L(\delta))$ is a $1$-dimensional
$L$-vector space.
\end{enumerate}
\end{prop}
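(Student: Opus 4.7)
The plan is to exploit the $t_\CF$-adic filtration on $D := \SR_L(\delta)/t_\CF^k\SR_L(\delta)$, analyze the infinitesimal operator $\nabla_\delta = t_\CF\partial + w_\delta$ on it, and use the containment $H^0_\an(D) \subseteq \Ker(\nabla_\delta \mid D)$ (valid because $\Gamma$-invariance forces annihilation by the Lie-algebra generator). Set $F_j := t_\CF^j \SR_L(\delta)/t_\CF^k\SR_L(\delta)$ for $0 \leq j \leq k$. Multiplication by $t_\CF^j$ furnishes a canonical $(\varphi_q,\Gamma)$-equivariant isomorphism $F_j/F_{j+1} \cong \SR_L(x^j\delta)/t_\CF\SR_L(x^j\delta)$, and using $\partial t_\CF = 1$ together with the Leibniz rule one computes
\[
\nabla_\delta(t_\CF^j z\, e_\delta) \equiv (j + w_\delta)\, t_\CF^j z\, e_\delta \pmod{t_\CF^{j+1}\SR_L(\delta)},
\]
so $\nabla_\delta$ acts on $F_j/F_{j+1}$ as multiplication by the scalar $j + w_\delta$.

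For part (a), if $w_\delta \notin \{1-k,\dots,0\}$, the scalar $j + w_\delta$ is nonzero for every $j \in \{0,\dots,k-1\}$, hence $\nabla_\delta$ is invertible on each graded piece. An iterated snake-lemma argument along the filtration shows $\nabla_\delta$ is invertible on all of $D$, whence $\Ker(\nabla_\delta \mid D) = 0$ and $H^0_\an(D) = 0$. For part (b), write $w_\delta = -j_0$ with $j_0 \in \{0,\dots,k-1\}$. Then $\nabla_\delta$ is invertible on the sub-module $F_{j_0+1}$ and on the quotient $D/F_{j_0}$, and vanishes on $F_{j_0}/F_{j_0+1}$. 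Applying the snake lemma first to $0 \to F_{j_0+1} \to F_{j_0} \to F_{j_0}/F_{j_0+1} \to 0$ and then to $0 \to F_{j_0} \to D \to D/F_{j_0} \to 0$ produces a canonical $(\varphi_q,\Gamma)$-equivariant isomorphism
\[
\Ker(\nabla_\delta \mid D) \xrightarrow{\sim} F_{j_0}/F_{j_0+1} \cong \SR_L(\chi)/t_\CF\SR_L(\chi),
\]
with $\chi := x^{j_0}\delta \in \SI_\an(L)$ of weight $w_\chi = 0$. This identifies $H^0_\an(D)$ with $H^0\bigl(\SR_L(\chi)/t_\CF\SR_L(\chi)\bigr)$ and reduces the whole problem to the base case $k=1$, $w_\chi = 0$.

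The main obstacle is this base case: showing $\dim_L H^0\bigl(\SR_L(\chi)/t_\CF\SR_L(\chi)\bigr) = 1$ for every $\chi \in \SI_\an(L)$ with $w_\chi = 0$. I would attack it via the long exact sequence attached to $0 \to \SR_L(x\chi) \xrightarrow{\cdot t_\CF} \SR_L(\chi) \to \SR_L(\chi)/t_\CF\SR_L(\chi) \to 0$, using Proposition~\ref{prop:H0} (which gives $H^0(x\chi) = 0$ since $w_{x\chi} = 1$) and the explicit description of $H^1$ provided by Theorem~\ref{thm:dim-H1} and the explicit cocycle generators described immediately after it. Constructing one explicit invariant element --- the class of $1 \in \SR_L$ when $\chi = 1$, and a Galois-averaged analogue when $\chi$ is a non-trivial locally constant character --- yields $\dim_L H^0 \geq 1$, while the matching upper bound comes from showing that the connecting map $H^1(x\chi) \to H^1(\chi)$ has the appropriate rank, which can be read off from the explicit generators of these $H^1$-spaces.
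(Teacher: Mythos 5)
Your d\'evissage is correct as far as it goes: the computation of $\nabla_\delta$ on the graded pieces $F_j/F_{j+1}$, the resulting proof of part (a), and the $(\varphi_q,\Gamma)$-equivariant identification $\ker(\nabla_\delta\mid D)\cong F_{j_0}/F_{j_0+1}$ reducing part (b) to the case $k=1$, $w_\chi=0$ are all sound (part (a) is essentially the paper's ``smooth twisted action'' argument recast in Lie-algebra terms). The gap is in the base case, which is where the entire content of the proposition lies, and there your plan is both incomplete and circular. The rank of the multiplication-by-$t_\CF$ map $H^1(x\chi)\to H^1(\chi)$ is precisely the content of Propositions \ref{prop:iota} and \ref{prop:iota-k} (applied with $\delta=x\chi$, $k=1$), and in the paper those results are \emph{deduced from} Proposition \ref{prop:iota-pre} via the exact sequence (\ref{eq:exact-sq}); you cannot invoke them here. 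Computing that rank ``by hand'' from the explicit generators is not a routine read-off: you would need to show that $\iota_1\colon H^1(x)\to H^1(1)$ is injective, that $H^1(x\delta_\unr)\to H^1(\delta_\unr)$ has rank exactly $1$, and that $H^1(x\chi)\to H^1(\chi)$ is the \emph{zero} map for every other weight-$0$ character $\chi$ --- none of which you establish. (You would also need to justify the long exact sequence for the continuous semigroup cohomology $H^\bullet$, i.e.\ continuous lifting of cochains through $\SR_L(\chi)\to\SR_L(\chi)/t_\CF\SR_L(\chi)$.)

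The lower bound is likewise not secured by a ``Galois-averaged analogue of $1$''. A weight-$0$ character $\chi\in\SI_\an(L)$ is locally constant on $\CO_F^\times$, but $\chi(\pi)\in L^\times$ is arbitrary (e.g.\ $\chi=\delta_\unr$), and the sought invariant of $\SR_L(\chi)/t_\CF\SR_L(\chi)$ must be a $\varphi_q$-eigenvector of eigenvalue $\chi(\pi)^{-1}$ in $\SR_L/t_\CF\SR_L$; no averaging over $\Gamma$ produces the correct $\varphi_q$-eigenvalue. The paper's proof instead works directly with the structure of the quotient: $\SR^+_L/t_\CF\SR^+_L$ decomposes as $\SR^+_L/(u_\CF)\times\prod_{n\geq 1}\SR^+_L/(\varphi_q^{n-1}(Q))$, each nontrivial factor being $L\otimes_F F_n$ with the regular action of $\Gamma/\Gamma_n$; two elements of $\SR_L/t_\CF\SR_L$ agree iff their components agree for all large $n$; and $\varphi_q$ acts as a shift through injective maps $\varphi_{q,n}$. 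For $n\gg 0$ each factor contains a unique $\chi^{-1}$-eigenline for $\Gamma$, and the shift structure together with the prescribed eigenvalue $\chi(\pi)^{-1}$ pins the whole invariant space down to exactly one line. This is the argument your sketch is missing; without it (or an independent cocycle-level computation of the ranks above) part (b) is not proved.
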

\begin{proof}
We have $\SR_L^+/t^k_\CF\SR^+_L =  \SR^+_L/(u_\CF^k) \times
\prod_{n=1}^\infty \SR^+_L / (\varphi_q^{n-1}(Q))^k $. As
$\Gamma$-modules, $\SR_L^+/(u_\CF^k) = \oplus_{i=0}^{k-1}L t^i_\CF $
and $\SR^+_L / (\varphi_q^{n}(Q))^k =
 \bigoplus_{i=0}^{k-1}(L\otimes_F  F_{n}) t_\CF^i $. Thus as a
$\Gamma$-module $\SR_L^+/t^k_\CF\SR^+_L$ is isomorphic to
$\bigoplus_{i=0}^{k-1}(\SR^+_L/\SR^+_L t_\CF) \otimes_L L t_\CF^i$.
Note that the natural map $\SR_L^+/\SR^+_L t^k_\CF \rightarrow
\SR_L/\SR_L t^k_\CF$ is surjective. Furthermore, two sequences
$(y_n)_{n\geq 0}$ and $(z_n)_{n\geq 0}$ in $\SR^+_L/\SR^+_L u_\CF^k
\times \prod_{n=1}^\infty \SR^+_L / (\varphi_q^{n-1}(Q))^k $ have
the same image in $\SR_L/\SR_L t^k_\CF$, if and only if there exists
$N>0$ such that $y_n=z_n$ when $n\geq N$.

Since the action of $\Gamma$ on $(\SR_L^+/t_\CF\SR^+_L)t_\CF^i$
twisted by the character $x^{-i}$ is smooth, (\ref{it-lem-inv-1})
follows.

For (\ref{it-lem-inv-2}) we only need to consider the case of
$w_\delta=0$ and $k=1$. The operator $\varphi_q$ induces an
injection $\SR^+_L/(\varphi_q^n(Q))\rightarrow
\SR^+_L/(\varphi_q^{n+1}(Q))$ which is denoted by $\varphi_{q,n}$.
The action of $\varphi_q$ on $\SR_L/\SR_L t_\CF$ is given by
$\varphi_q(y_n)_n=(\varphi_{q,n}(y_n))_{n+1}$. For any $n\geq 0$,
the $\Gamma$-action on $L\otimes_F F_n$ factors through
$\Gamma/\Gamma_n$, and the resulting $\Gamma/\Gamma_n$-module
$L\otimes_F F_n$ is isomorphic to the regular one. Thus for any
discrete character $\delta$ of $\Gamma$, $\dim_L (L\otimes_F
F_n)^{\Gamma=\delta^{-1}}=1$ when $n$ is sufficiently large. Then
from the fact that $\varphi_{q,n}$ $(n\geq 1)$ are injective, we
obtain $\dim_L \big(\SR_L /t_\CF\SR_L \big)^{\Gamma=\delta^{-1},
\varphi_q=\delta(\pi)^{-1}}=1$.
\end{proof}

\begin{cor} Let $\delta$ be in $\SI_\an(L)$.
\begin{enumerate}
\item  If $w_\delta\notin \{ 1,\cdots, k \}$, then
$H_{\an}^0(t^{-k}_\CF\SR_L(\delta)/\SR_L(\delta))=0$.
\item  If $w_\delta\in\{1,\cdots, k\}$, then
$H_{\an}^0(t^{-k}_\CF\SR_L(\delta)/\SR_L(\delta))$ is a
$1$-dimensional $L$-vector space.
\end{enumerate}
\end{cor}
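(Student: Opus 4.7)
The plan is to reduce this corollary to the preceding proposition by a twist-and-shift argument using multiplication by a power of $t_\CF$.

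First I would observe that multiplication by $t_\CF^{-k}$ gives an isomorphism of $(\varphi_q,\Gamma)$-modules (compatible with $\nabla$)
\[
\SR_L(x^{-k}\delta)\;\xrightarrow{\;\sim\;}\; t_\CF^{-k}\SR_L(\delta),\qquad e_{x^{-k}\delta}\longmapsto t_\CF^{-k}e_\delta.
\]
The verification is straightforward from the identities $\varphi_q(t_\CF)=\pi t_\CF$, $\sigma_a(t_\CF)=a t_\CF$ and $\nabla t_\CF=t_\CF$: these force $\varphi_q(t_\CF^{-k}e_\delta)=\pi^{-k}\delta(\pi)\,t_\CF^{-k}e_\delta$, $\sigma_a(t_\CF^{-k}e_\delta)=a^{-k}\delta(a)\,t_\CF^{-k}e_\delta$, and $\nabla(t_\CF^{-k}e_\delta)=(w_\delta-k)t_\CF^{-k}e_\delta$, which are precisely the defining relations for the basis element of $\SR_L(x^{-k}\delta)$ (note $w_{x^{-k}\delta}=w_\delta-k$).

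Next, under this isomorphism, the submodule $\SR_L(\delta)=t_\CF^k\cdot(t_\CF^{-k}\SR_L(\delta))$ corresponds to $t_\CF^k\SR_L(x^{-k}\delta)$. Therefore we obtain a $(\varphi_q,\Gamma)$-equivariant (and $\nabla$-equivariant) isomorphism
\[
t_\CF^{-k}\SR_L(\delta)/\SR_L(\delta)\;\cong\;\SR_L(x^{-k}\delta)/t_\CF^k\SR_L(x^{-k}\delta),
\]
hence an equality $H^0_\an(t_\CF^{-k}\SR_L(\delta)/\SR_L(\delta))=H^0_\an(\SR_L(x^{-k}\delta)/t_\CF^k\SR_L(x^{-k}\delta))$.

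Finally I would apply the preceding proposition with $\delta$ replaced by $x^{-k}\delta$. Since $w_{x^{-k}\delta}=w_\delta-k$, the condition $w_{x^{-k}\delta}\in\{1-k,\dots,0\}$ translates exactly to $w_\delta\in\{1,\dots,k\}$; in that case the $H^0_\an$ is one-dimensional, and otherwise it vanishes, as required. There is no real obstacle here: the whole argument is the natural $t_\CF^k$-duality between the two filtration quotients, and the content lies entirely in the previous proposition.
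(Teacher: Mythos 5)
Your proof is correct and is exactly the argument the paper intends: the corollary is deduced from Proposition \ref{prop:iota-pre} via the canonical isomorphism $\SR_L(x^{-k}\delta)\cong t_\CF^{-k}\SR_L(\delta)$ (which the paper notes immediately afterwards), together with the weight shift $w_{x^{-k}\delta}=w_\delta-k$ translating $\{1-k,\dots,0\}$ into $\{1,\dots,k\}$. Nothing further is needed.
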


Note that $\SR_L(x^{-k}\delta)$ is canonically isomorphic to
$t_\CF^{-k}\SR_L(\delta)$. When $k\geq 1$, the inclusion
$\SR_L(\delta)\hookrightarrow t_\CF^{-k}\SR_L(\delta)$ induces maps
$\iota_{k,\an}: H_{\an}^1(\delta)\rightarrow
H_{\an}^1(x^{-k}\delta)$ and $\iota_k : H^1(\delta)\rightarrow
H^1(x^{-k}\delta)$. If $\gamma\in\Gamma$ is of infinite order, then
we have the following commutative diagram
\begin{equation}\label{eq:comm-iota}
\xymatrix{ H^1(\delta) \ar[r]^{\iota_k }
\ar[d]^{\Upsilon^\delta_{\an,\gamma}\circ\Upsilon^\delta_\gamma} &
H^1(x^{-k}\delta)
\ar[d]^{\Upsilon^{x^{-k}\delta}_{\an,\gamma}\circ\Upsilon^{x^{-k}\delta}_\gamma } \\
H^1_\an(\delta) \ar[r]^{\iota_{k,\an}} & H_\an^1(x^{-k}\delta). }
\end{equation}

\begin{lem} We have the following exact sequence
\begin{equation} \label{eq:exact-sq}
0\rightarrow H_{\an}^0(\delta) \rightarrow
H_{\an}^0(x^{-k}\delta)\rightarrow
H_{\an}^0(t_\CF^{-k}\SR_L(\delta)/\SR_L(\delta))\rightarrow
H_{\an}^1(\delta) \xrightarrow{\iota_{k,\an}}
H_{\an}^1(x^{-k}\delta) .
\end{equation}
\end{lem}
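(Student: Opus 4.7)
The plan is to derive this exact sequence from the long exact cohomology sequence attached to a short exact sequence of complexes, and then take $\Gamma$-invariants. First I would use the canonical identification $\SR_L(x^{-k}\delta) \cong t_\CF^{-k}\SR_L(\delta)$ introduced just before the lemma (sending $e_{x^{-k}\delta} \mapsto t_\CF^{-k} e_\delta$, which is $(\varphi_q,\Gamma)$-equivariant by virtue of $\varphi_q(t_\CF) = \pi t_\CF$ and $\sigma_a(t_\CF) = a t_\CF$) to realize $\iota_{k,\an}$ as the map induced by the inclusion $\SR_L(\delta) \hookrightarrow t_\CF^{-k}\SR_L(\delta)$. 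The resulting short exact sequence
$$0 \to \SR_L(\delta) \to \SR_L(x^{-k}\delta) \to Q \to 0,$$
with $Q := t_\CF^{-k}\SR_L(\delta)/\SR_L(\delta)$, is compatible with $\varphi_q$, $\nabla$ and the $\Gamma$-action. Applying the $L$-linear functor $C^\bullet_{\varphi_q,\nabla}(-)$ yields a short exact sequence of complexes of $L[\Gamma]$-modules, whence a $\Gamma$-equivariant long exact sequence
$$0 \to H^0_{\varphi_q,\nabla}(\delta) \to H^0_{\varphi_q,\nabla}(x^{-k}\delta) \to H^0_{\varphi_q,\nabla}(Q) \xrightarrow{\partial^0} H^1_{\varphi_q,\nabla}(\delta) \to H^1_{\varphi_q,\nabla}(x^{-k}\delta).$$

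Next I would take $\Gamma$-invariants. The functor $(-)^\Gamma$ is left exact, which immediately gives $0 \to H^0_\an(\delta) \to H^0_\an(x^{-k}\delta) \to H^0_\an(Q)$ and produces a well-defined restriction $\partial^0\colon H^0_\an(Q) \to H^1_\an(\delta)$. Exactness at the remaining two places reduces, via standard diagram chases, to the vanishing of $H^1(\Gamma, N)$ for certain finite-dimensional $L$-subspaces $N$ of the various $H^0_{\varphi_q,\nabla}(-)$: concretely, if $[q] \in H^0_\an(Q)$ satisfies $\partial^0[q] = 0$ (in $H^1_\an(\delta) \hookrightarrow H^1_{\varphi_q,\nabla}(\delta)$), I pick a lift $[b] \in H^0_{\varphi_q,\nabla}(x^{-k}\delta)$, and then $\gamma \mapsto \gamma[b] - [b]$ is a continuous $1$-cocycle with values in $N := \im(H^0_{\varphi_q,\nabla}(\delta) \to H^0_{\varphi_q,\nabla}(x^{-k}\delta))$, so $[b]$ can be corrected to a $\Gamma$-invariant lift exactly when this cocycle is a coboundary. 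The analogous situation arises at $H^1_\an(\delta)$, with $N$ replaced by $\im(H^0_{\varphi_q,\nabla}(x^{-k}\delta) \to H^0_{\varphi_q,\nabla}(Q))$.

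The hard part will be the cohomology vanishing. The crucial observation is that $\nabla = 0$ on every $H^0_{\varphi_q,\nabla}(-)$ by construction, so the infinitesimal $\Gamma$-action on any $\Gamma$-stable subspace thereof is trivial. Combined with continuity and the $p$-adic analytic structure of the compact group $\Gamma \cong \CO_F^\times$ (a finite group times $\BZ_p^{[F:\BQ_p]}$), this forces the $\Gamma$-action on $N$ to factor through a finite quotient, so that $H^i(\Gamma, N) = 0$ for $i \geq 1$ since $L$ has characteristic zero. Finite-dimensionality of $N$ over $L$ follows from $\SR_L(x^{-k}\delta)^{\varphi_q=1}$ being finite-dimensional; the analogous image required at $H^1_\an(\delta)$ is finite-dimensional as a quotient of $H^0_{\varphi_q,\nabla}(x^{-k}\delta)$. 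This vanishing converts the diagram chases of the previous paragraph into genuine lifts, yielding the desired exactness.
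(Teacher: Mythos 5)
Your overall strategy coincides with the paper's: form the short exact sequence $0\to\SR_L(\delta)\to\SR_L(x^{-k}\delta)\to Q\to 0$, take the long exact sequence of $H^\bullet_{\varphi_q,\nabla}$, and then pass to $\Gamma$-invariants. The gap is in your justification of exactness at the last two spots. The claim that a finite-dimensional $N$ with $\Gamma$-action factoring through a finite quotient has $H^i(\Gamma,N)=0$ for $i\geq 1$ is false for continuous cohomology of the compact $p$-adic Lie group $\Gamma\cong\CO_F^\times$: already for the trivial module one has $H^1(\Gamma,L)=\Hom_{\cont}(\Gamma,L)\cong L^{[F:\BQ_p]}\neq 0$ (inflation--restriction only kills the part coming from the finite quotient, not $\Hom_{\cont}(\Gamma',N)$ for an open $\Gamma'$ acting trivially). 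This is not a peripheral issue in this paper: the nonvanishing of exactly these groups is what makes $\Ext^1_\an\subsetneq\Ext^1$ in Theorem \ref{thm:exact-sq}, so an argument resting on $H^1(\Gamma,N)=0$ cannot be right as stated.

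What saves the situation is not cohomology vanishing but the fact that the specific obstruction cocycles are coboundaries of elements on which $\Gamma$ acts semisimply. At the spot $H^0_\an(Q)$ your argument can be repaired: the lift $[b]$ lies in $H^0_{\varphi_q,\nabla}(x^{-k}\delta)$, which is finite-dimensional with $\nabla=0$, hence the action there is locally constant; the cocycle $\gamma\mapsto\gamma[b]-[b]$ is therefore inflated from a finite quotient and one can average. At the spot $H^1_\an(\delta)$, however, the element being corrected is a lift $q\in H^0_{\varphi_q,\nabla}(Q)$ of the given class, and $Q=t_\CF^{-k}\SR_L(\delta)/\SR_L(\delta)$ is neither finite-dimensional nor obviously smooth as a $\Gamma$-module (it is a quotient of an infinite product of the modules $L\otimes_F F_n$ on which the trivializing subgroup shrinks with $n$), so you cannot conclude that $\gamma\mapsto\gamma q-q$ vanishes on an open subgroup, and the $H^1$-vanishing you invoke is unavailable. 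The paper instead uses that $H^0_{\varphi_q,\nabla}(\delta)$ and $H^0_{\varphi_q,\nabla}(x^{-k}\delta)$ are semisimple $\Gamma$-modules (being finite-dimensional with $\nabla=0$) and that $t_\CF^{-k}\SR_L(\delta)/\SR_L(\delta)$, hence $H^0_{\varphi_q,\nabla}(t_\CF^{-k}\SR_L(\delta)/\SR_L(\delta))$, is semisimple; semisimplicity splits the surjections onto $\im(B_0\to Q_0)$ and onto $\im(\partial^0)$ $\Gamma$-equivariantly, giving surjectivity on $\Gamma$-invariants directly with no cohomological input. You need this semisimplicity (or an equivalent splitting argument) to close the gap.
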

\begin{proof} From
the short exact sequence $0\rightarrow \SR_L(\delta)\rightarrow
\SR_L(x^{-k}\delta) \rightarrow \SR_L(x^{-k}\delta)/\SR_L(\delta)
\rightarrow 0$ we deduce an exact sequence \begin{equation}
\label{eq:exact-sq-pre} 0\rightarrow H_{\varphi_q,\nabla}^0(\delta)
\rightarrow H_{\varphi_q,\nabla}^0(x^{-k}\delta)\rightarrow
H_{\varphi_q,\nabla}^0(t_\CF^{-k}\SR_L(\delta)/\SR_L(\delta))\rightarrow
H_{\varphi_q,\nabla}^1(\delta) \rightarrow
H_{\varphi_q,\nabla}^1(x^{-k}\delta) . \end{equation} Being finite
dimensional $H_{\varphi_q,\nabla}^0(\delta) $ and
$H_{\varphi_q,\nabla}^0(x^{-k}\delta)$ are semisimple
$\Gamma$-modules; since $t_\CF^{-k}\SR_L(\delta)/\SR_L(\delta)$ is a
semisimple $\Gamma$-module, so is
$H_{\varphi_q,\nabla}^0(t_\CF^{-k}\SR_L(\delta)/\SR_L(\delta))$.
Hence, taking $\Gamma$-invariants of each term in
(\ref{eq:exact-sq-pre}), we obtain the desired exact sequence.
\end{proof}

\begin{prop}\label{prop:iota}
Let $\delta$ be in $\SI_\an(L)$, $k\in\BZ_+$. If
$w_\delta\notin\{1,\cdots, k\}$, then $\iota_{k,\an}$ and $\iota_k $
are isomorphisms.
\end{prop}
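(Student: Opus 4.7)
The plan is to derive both isomorphisms from the long exact sequence attached to $0 \to \SR_L(\delta) \to \SR_L(x^{-k}\delta) \to t_\CF^{-k}\SR_L(\delta)/\SR_L(\delta) \to 0$: injectivity comes from the vanishing of an $H^0$ term, and surjectivity from a dimension count based on the computations of Sections \ref{ss:H0}--\ref{ss:H1}.

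For injectivity of $\iota_{k,\an}$, the hypothesis $w_\delta \notin \{1,\dots,k\}$ combined with the corollary to Proposition \ref{prop:iota-pre} yields $H^0_\an(t_\CF^{-k}\SR_L(\delta)/\SR_L(\delta)) = 0$, and the five-term sequence (\ref{eq:exact-sq}) forces $\ker(\iota_{k,\an}) = 0$. The same conclusion for $\iota_k$ follows from the standard $\Ext^1$-long exact sequence, available because $H^1(D)$ classifies extensions of $\SR_L$ by $D$ via the isomorphism $\Theta^D$: this gives $\cdots \to H^0(t_\CF^{-k}\SR_L(\delta)/\SR_L(\delta)) \to H^1(\delta) \xrightarrow{\iota_k} H^1(x^{-k}\delta)$, and $H^0 = H^0_\an$ since both are simply the $(\varphi_q=1,\Gamma=1)$-invariants.

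For surjectivity it suffices to show equality of dimensions. Theorem \ref{thm:dim-H1} tells us $H^1_\an$ is $1$-dimensional except in two exceptional families: $\delta = x^{-i}$ with $i \in \BN$ (weight $-i \leq 0$), and $\delta = x^j\delta_\unr$ with $j \in \BZ_+$ (weight $j \geq 1$). A direct case-by-case check shows that each family is simultaneously inherited by $\delta$ and $x^{-k}\delta$ under $w_\delta \notin \{1,\dots,k\}$; conversely, if $x^{-k}\delta$ lies in one of them but $\delta$ does not, one is forced to $\delta = x^j$ with $1 \leq j \leq k$, contradicting the hypothesis. Hence $\dim_L H^1_\an(\delta) = \dim_L H^1_\an(x^{-k}\delta)$. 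For the non-analytic case I would invoke Corollary \ref{thm:comp}, which gives $\dim_L H^1(\delta) - \dim_L H^1_\an(\delta) = ([F:\BQ_p]-1)\dim_L H^0(\delta)$, so it remains to match $\dim_L H^0(\delta)$ with $\dim_L H^0(x^{-k}\delta)$; by Proposition \ref{prop:H0}, $H^0$ is nonzero exactly when $\delta$ is a nonpositive power of $x$, a condition that is preserved and reflected by $\delta \mapsto x^{-k}\delta$ once the excluded weights are taken into account.

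I expect the main subtle point to be the formality of the $\Ext^1$-long exact sequence for general $(\varphi_q,\Gamma)$-modules (as opposed to the $H^\bullet_\an$ side, where the exact sequence (\ref{eq:exact-sq}) has already been written out), but this is automatic from the abelian category structure once the identification of $H^1$ with Yoneda $\Ext^1(\SR_L,-)$ via $\Theta^{(-)}$ is in hand.
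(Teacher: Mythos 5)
Your proposal is correct and follows essentially the same route as the paper: injectivity of $\iota_{k,\an}$ from the exact sequence (\ref{eq:exact-sq}) together with the vanishing of $H^0_\an(t_\CF^{-k}\SR_L(\delta)/\SR_L(\delta))$, and surjectivity from the equality of dimensions supplied by Theorem \ref{thm:dim-H1}, with the $\iota_k$ case handled analogously (the paper simply says the proof is ``similar''). One small caveat: justifying the $H^1$-sequence via ``Yoneda $\Ext^1$ in the abelian category'' is slightly off, since the category of $(\varphi_q,\Gamma)$-modules here consists of \emph{free} $\SR_L$-modules and the torsion quotient $t_\CF^{-k}\SR_L(\delta)/\SR_L(\delta)$ is not an object of it; the correct (and easy) justification is the long exact sequence of continuous $G^+$-cochain cohomology for the short exact sequence of topological $G^+$-modules, which yields exactly the segment $H^0(t_\CF^{-k}\SR_L(\delta)/\SR_L(\delta)) \to H^1(\delta)\xrightarrow{\iota_k} H^1(x^{-k}\delta)$ you use.
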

\begin{proof} We only prove the assertion for $\iota_{k,\an}$. The
proof of the assertion for $\iota_k $ is similar. By Theorem
\ref{thm:dim-H1}, $\dim_L H_{\an}^1(\delta)=\dim_L
H_{\an}^1(x^{-k}\delta)$ when $w_\delta\notin\{1,\cdots, k\}$.
Combining (\ref{eq:exact-sq}) with the facts that
$H_{\an}^0(t_\CF^{-k}\SR_L(\delta)/\SR_L(\delta))=0$ and that
$\dim_L H_{\an}^1(\delta) = \dim_L H_{\an}^1(x^{-k}\delta)$, we
obtain the assertion.
\end{proof}

We assign to any nonzero $c\in H_{\an}^1(\delta)$ an $\SL$-invariant
in $\RP^1(L)=L\cup \{\infty\}$. In the case of $\delta=x^{-k}$ with
$k\in \BN$, put $\SL((at^k_\CF, bt^k_\CF))=a/b$. If
$\delta=x\delta_\unr$, then any $c\in H_{\an}^1(\delta)$ can be
written as $$c=t_\CF^{-1}( (q^{-1}\varphi_q-1) (\lambda G(1,1) + \mu
(\log u_\CF -z )) , t_\CF\partial (\lambda G(1,1) + \mu (\log u_\CF
-z )))$$ with $\lambda,\mu\in L$.  Here $G(1,1)$ is an element of
$\SR_L$ which induces a basis of $(\SR_L/\SR_Lt_\CF)^\Gamma$ and
whose value at $\xi_n$ is $1\otimes 1\in L\otimes_F  F_n$ when $n$
is large enough; $z$ is an element of $\SR_L$ whose value at $\xi_n$
is $1\otimes\log (\xi_n)\in L\otimes_F  F_n$ for any $n$. We put
$\SL(c)=-\frac{e_F (q-1) }{q }\cdot \frac{\lambda}{\mu}$. In the
case of $\delta=x^k\delta_\unr$ with $k\geq 2$, for any $c\in
H_{\an}^1(x^k\delta_\unr)$, put $\SL(c)=\SL(\iota_{k-1}(c))$. In the
case that $\delta$ is not of the form $x^{-k}$ with $k\in \BN$ or
the form $x^k\delta_\unr$ with $k\in \BZ_+$, we put $\SL(c)=\infty$.

\begin{prop} \label{prop:iota-k} Let $\delta$ be in $\SI_\an(L)$, $k\in\BZ_+$.
\begin{enumerate} % \item\label{it:iota-k-no-1}
\item\label{it:iota-k-no-2} If $w_\delta\in\{1,\cdots, k\}$ and if $\delta\neq
x^{w_\delta},$ $x^{w_\delta}\delta_\unr$, then $\iota_{k,\an}$ and
$\iota_k $ are zero.
\item\label{it:iota-k-no-3} If $\delta=x^{w_\delta}\delta_\unr$ with $1\leq w_\delta\leq
k$, then $\iota_{k,\an}$ and $\iota_k $ are surjective, and the
kernel of $\iota_{k,\an}$ is the $1$-dimensional subspace  $\{ c\in
H_{\an}^1(\delta): c=0 \text{ or } \SL(c)=\infty\}$.
\item\label{it:iota-k-no-4} If $\delta=x^{w_\delta}$ with $1\leq w_\delta \leq k$, then
$\iota_{k,\an}$ and $\iota_k $ are injective, and the image of
$\iota_{k,\an}$ is $\{ c\in H_{\an}^1(x^{-k}\delta): c=0 \text{ or }
\SL(c)=\infty\}$.
\end{enumerate}
\end{prop}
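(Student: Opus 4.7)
The three cases will be handled uniformly by applying the long exact sequence in $(\varphi_q,\Gamma)$-cohomology attached to
\[
0 \to \SR_L(\delta) \to \SR_L(x^{-k}\delta) \to t_\CF^{-k}\SR_L(\delta)/\SR_L(\delta) \to 0,
\]
together with the analogous analytic sequence~(\ref{eq:exact-sq}), and reading off dimensions from Proposition~\ref{prop:H0}, Theorem~\ref{thm:dim-H1}, and the corollary of Proposition~\ref{prop:iota-pre}. In all three cases $w_\delta\in\{1,\dots,k\}$, so $\delta$ is never of the form $x^{-i}$; hence $H^0(\delta)=0$ and $\dim_L H^0(t_\CF^{-k}\SR_L(\delta)/\SR_L(\delta))=1$. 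The case split is governed by whether $H^0(x^{-k}\delta)$ vanishes.

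For case (\ref{it:iota-k-no-2}), $\delta\neq x^{w_\delta},x^{w_\delta}\delta_\unr$ forces $x^{-k}\delta$ to be neither of the form $x^{-i}$ (the $\delta_\unr$ factor obstructs this since $\delta_\unr(\pi)=q^{-1}$) nor of the form $x^i\delta_\unr$ with $i\geq 1$ (since $w_\delta\leq k$). Thus $H^0(x^{-k}\delta)=0$, and by Theorem~\ref{thm:dim-H1} both $H^1(\delta),H^1(x^{-k}\delta)$ are $1$-dimensional (likewise for $H^1_\an$), so the exact sequence $0\to 0\to 0\to L\to H^1(\delta)\xrightarrow{\iota_k} H^1(x^{-k}\delta)$ forces the connecting map to be an isomorphism onto $H^1(\delta)$, giving $\iota_k=\iota_{k,\an}=0$. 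For case (\ref{it:iota-k-no-3}), again $H^0(x^{-k}\delta)=0$ (the $\delta_\unr$ factor prevents $x^{-k}\delta=x^{-j}$), so the connecting map injects $L\hookrightarrow H^1(\delta)$; since Theorem~\ref{thm:dim-H1} gives $\dim_L H^1_\an(\delta)=2$ and $\dim_L H^1_\an(x^{-k}\delta)=1$ (and analogously for $H^1$), we conclude $\iota_{k,\an}$ (and $\iota_k$) is surjective with $1$-dimensional kernel. For case (\ref{it:iota-k-no-4}), $x^{-k}\delta=x^{-(k-w_\delta)}$ is of the form $x^{-i}$, so $H^0(x^{-k}\delta)=Lt_\CF^{k-w_\delta}$; under the identification $\SR_L(x^{-k}\delta)=t_\CF^{-k}\SR_L(\delta)$ the generator $t_\CF^{k-w_\delta}e_{x^{-k}\delta}$ corresponds to $t_\CF^{-w_\delta}e_\delta\not\in\SR_L(\delta)$, which represents a non-zero class in the $1$-dimensional $H^0$ of the quotient. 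Hence the connecting map vanishes, $\iota_k$ and $\iota_{k,\an}$ are injective, and dimension count places the image as a line inside the $(d+1)$-dimensional $H^1(x^{-k}\delta)$ and $2$-dimensional $H^1_\an(x^{-k}\delta)$.

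It remains to identify the distinguished $1$-dimensional kernel (resp.\ image) with the $\SL=\infty$ locus. The strategy is to reduce, via the $\partial$-isomorphisms of Proposition~\ref{prop:partial-iso-2}, to base cases: for (\ref{it:iota-k-no-3}) to $k=w_\delta=1$, $\delta=x\delta_\unr$; for (\ref{it:iota-k-no-4}) to $k=w_\delta=1$, $\delta=x$. In the former, I would take the explicit generator of $H^0_\an(t_\CF^{-1}\SR_L(\delta)/\SR_L(\delta))$ produced in the proof of Proposition~\ref{prop:iota-pre}(\ref{it-lem-inv-2}) (essentially the class of $t_\CF^{-1}G(1,1)\,e_\delta$, where $G(1,1)$ is the element entering the definition of $\SL$), lift it and apply the connecting map, and match the resulting cocycle $((\varphi_q-1)z,\nabla z)$ against the normal form of Section~\ref{ss:iota-k}; the vanishing of the ``$\mu$''-component—equivalent to $\SL(c)=\infty$—is built into the fact that the lift lies in $\SR_L(\delta)\cdot t_\CF^{-1}$ with no $\log u_\CF-z$ contribution. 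For case (\ref{it:iota-k-no-4}), at the base case an explicit generator of $H^1_\an(x)$ is pushed forward by multiplication by $t_\CF^k$ and then reduced to the form $(at_\CF^{k-w_\delta},bt_\CF^{k-w_\delta})$ using Lemma~\ref{lem:no1}; showing $b=0$ on the nose identifies the image with the $\SL=\infty$ line.

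\textbf{The main obstacle} lies in these last identifications. The dimension-chasing arguments from the long exact sequence are routine once Proposition~\ref{prop:iota-pre} is in hand. What is delicate is verifying that the specific $1$-dimensional subspace produced by the connecting map (in case (\ref{it:iota-k-no-3})) or by the inclusion (in case (\ref{it:iota-k-no-4})) coincides with the line defined via the $\SL$-invariant: the $\SL$-invariant is defined by reducing to a particular representative, whereas $\iota_k$ and the connecting map give representatives in a different shape, so one must explicitly transit between them. The reduction via $\partial$ localises the difficulty to a single base-case computation, but that base case—performed at the level of concrete series involving $G(1,1)$, $\log u_\CF-z$, and $\frac{1}{q}\log\frac{\varphi_q(u_\CF)}{u_\CF^q}$—is the real content of the proof.
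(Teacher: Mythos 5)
Your dimension-counting framework coincides with the paper's and is sound: the exact sequence (\ref{eq:exact-sq}) together with Proposition \ref{prop:iota-pre} and Theorem \ref{thm:dim-H1} yields the vanishing in case (a), surjectivity with one-dimensional kernel in case (b), and injectivity in case (c); the corresponding statements for $\iota_k$ then follow from the commutative diagram (\ref{eq:comm-iota}) (your appeal to a long exact sequence for the semigroup cohomology $H^\bullet$ itself is not set up in the paper, but since $H^0(\delta)=H^0(x^{-k}\delta)=0$ in cases (a) and (b) and $H^0(\delta)=0$ in case (c), the vertical maps needed in (\ref{eq:comm-iota}) are isomorphisms and this is only a cosmetic difference).

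The gap is in the identification of the distinguished line with the locus $\SL=\infty$, which you correctly isolate as the crux but do not prove. First, the proposed reduction to the base cases $\delta=x\delta_\unr$ and $\delta=x$ ``via the $\partial$-isomorphisms of Proposition \ref{prop:partial-iso-2}'' is not available as stated: to transport the statement you would need $\partial$ to intertwine the maps $\iota_{k,\an}$ and to preserve the $\SL=\infty$ line, and neither is established --- indeed $\partial$ does not commute with multiplication by $t_\CF^k$ (one has $\partial(t_\CF^k m)=t_\CF^k\partial m+kt_\CF^{k-1}m$), and $\SL$ on $H^1_{\an}(x^{w}\delta_\unr)$ for $w\geq 2$ is \emph{defined} through $\iota_{w-1}$, not through $\partial$. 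The paper's reduction instead uses the tautological composition $\iota_{k,\an}=\iota_{k+1-w_\delta,\an}\circ\iota_{w_\delta-1,\an}$ together with the definition $\SL(c)=\SL(\iota_{w_\delta-1}(c))$, which costs nothing. Second, the base-case computations are only announced (``I would take\dots'', ``is the real content of the proof'') and never carried out. For (b) one must check that $\SL(c)=\infty$ forces $c=t_\CF^{-1}\lambda\big((q^{-1}\varphi_q-1)G(1,1),\nabla G(1,1)\big)$, whose image under $\iota_{1,\an}$ is the coboundary of $\lambda G(1,1)$ in $C^\bullet_{\varphi_q,\nabla}(\delta_\unr)$, hence zero; for (c) the paper argues directly for an arbitrary cocycle $(m,n)\in Z^1(x^{w_\delta})$, using $\Res(t_\CF^{w_\delta-1}n)=0$ and Proposition \ref{prop:res-partial} to produce $z$ with $\nabla z=t_\CF^{w_\delta}n$ and conclude $\iota_{k,\an}(m,n)\sim(at_\CF^{k-w_\delta},0)$, rather than pushing forward a single explicit generator. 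Without these steps the $\SL$-identifications in (b) and (c) are not established.
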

\begin{proof} We will
use the exact sequence (\ref{eq:exact-sq}) frequently without
mentioning it.

First we prove (\ref{it:iota-k-no-2}). From the fact that
$\dim_LH_{\an}^0(t_\CF^{-k}\SR_L(\delta)/\SR_L(\delta)) = \dim_L
H_{\an}^1(\delta) =1 $ and $H_{\an}^0(x^{-k}\delta)=0$, we obtain
the assertion for $\iota_{k,\an}$. The assertion for $\iota_k $
follows from this and the commutative diagram (\ref{eq:comm-iota})
where the two vertical maps are isomorphisms.

Next we prove (\ref{it:iota-k-no-3}). From the fact that
$$H_{\an}^0(x^{-k}\delta)=0, \hskip 5pt
\dim_LH_{\an}^0(t_\CF^{-k}\SR_L(\delta)/\SR_L(\delta)) =1, \hskip
5pt \dim_L H_{\an}^1(\delta) =2 \ \ \text{and}  \ \dim_L
H_{\an}^1(x^{-k}\delta)=1,$$ we obtain the surjectivity of
$\iota_{k,\an}$. The surjectivity of $\iota_k $ follows from this
and the commutative diagram (\ref{eq:comm-iota}) where the two
vertical maps are isomorphisms. We show that, if $ c \in
H_{\an}^1(\delta) $ satisfies $\SL(c)=\infty$, then
$\iota_{k,\an}(c)=0$. As $\SL(\iota_{w_\delta-1,\an}(c))=\infty$ and
$\iota_{k,\an}=\iota_{k+1-w_\delta,\an}\iota_{w_\delta-1,\an}$, we
reduce to the case of $\delta=x\delta_\unr$. In this case, $c=
t_\CF^{-1}\lambda((q^{-1}\varphi_q-1)  G(1,1) , \nabla
 G(1,1))$ with $\lambda\in L$. Thus $\iota_{1,\an}(c)=\lambda( (q^{-1}\varphi_q-1) G(1,1) ,
\nabla G(1,1)) \sim (0,0)$. Hence $\iota_{k,\an}(c)=0$ for any
integer $k\geq 1$.

Finally we prove (\ref{it:iota-k-no-4}). From the fact that
$$H_{\an}^0(\delta)=0 \ \ \text{and}  \ \dim_LH_{\an}^0(x^{-k}\delta)=\dim_LH_{\an}^0(t_\CF^{-k}\SR_L(\delta)/\SR_L(\delta))
= 1,$$ we obtain the injectivity of $\iota_{k,\an}$. The injectivity
of $\iota_k $ follows from this and the commutative diagram
(\ref{eq:comm-iota}) where the vertical map
$\Upsilon^\delta_{\an,\gamma}\circ\Upsilon^\delta_\gamma$ is an
isomorphism. For the second assertion, let $(m,n)$ be in
$Z^1(x^{w_\delta})$. Then
$\iota_{w_\delta-1}(m,n)=(t^{w_\delta-1}_\CF m, t^{w_\delta-1}_\CF
n)\in Z^1(x)$. In other words, $\partial(t^{w_\delta}_\CF
m)=\nabla_x(t^{w_\delta-1}_\CF
m)=(\pi\varphi_q-1)(t^{w_\delta-1}_\CF n)$. Thus
$\Res(t^{w_\delta-1}_\CF n)=0$ and there exists $z\in \SR_L$ such
that $\partial z =t^{w_\delta-1}_\CF n$ or equivalently $\nabla z=
t^{w_\delta}_\CF n$. It follows that
$\nabla_{x^{w_\delta-k}}(t^{k-w_\delta}_\CF
z)=\big(\nabla+(w_\delta-k)\big)(t^{k-w_\delta}_\CF
z)=t_\CF^{k-w_\delta}\nabla z = t^k_\CF n$. Thus
$\iota_{k,\an}(m,n)=(t^k_\CF m, t^k_\CF n)\sim \Big(t^k_\CF m -
(\pi^{w_\delta-k}\varphi_q-1)(t^{k-w_\delta}_\CF z), 0 \Big)$. So we
have $\iota_{k,\an}(m,n)=(at^{k-w_\delta}_\CF, 0)$. If
$\iota_{k,\an}(m,n)\neq 0$ or equivalently $a\neq 0$, then
$\SL(\iota_{k,\an}(m,n))=\infty$.
\end{proof}

% Let $n$ be the unique solution of $(\frac{\pi}{q}\varphi_q-1)n=t_\CF\partial 1/u +1/u$ in $\SR^+_L$.
% Then $(1/u, n)$ is in $Z^1(x\delta_\unr)$ with $\SL(1/u,n)=\infty$.
% Thus $\partial^{w_\delta-1}(1/u,n)$ is a base of $\{ c\in H_{\an}^1(\delta): c=0 \text{ or } \SL(c)=\infty\}$. We have
% $\iota_{k,\an}(\partial^{w_\delta-1}(1/u,n))=(t^k_\CF \partial^{w_\delta-1}(1/u), t^k_\CF \partial^{w_\delta-1} n).$
% It is in $B^1(x^{-k}\delta)$ since $t^k_\CF \partial^{w_\delta-1}(1/u)$ and $t^k_\CF \partial^{w_\delta-1} n$ are in $\SR^+_L$.

\section{Triangulable % $\CO_F $-analytic
$(\varphi_q,\Gamma)$-modules of rank $2$} \label{sec:tri}

In his paper \cite{tri}, Colmez classified $2$-dimensional
trianguline representations of the Galois group $G_{\BQ_p}$. Later
Nakamura \cite{Nakamura} classified $2$-dimensional trianguline
representations of the Galois group of a $p$-adic local field that
is finite over $\BQ_p$, generalizing Colmez's work.

In this section we classify triangulable $\CO_F$-analytic
$(\varphi_q,\Gamma)$-modules of rank $2$ following Colmez's method
\cite{tri}. First we recall the definition.

\begin{defn} A $(\varphi_q,\Gamma)$-module over $\SR_L$ is called
{\it triangulable} if there exists a filtration of $D$ consisting of
$(\varphi_q,\Gamma)$-submodules $0=D_0\subset D_1\subset \cdots
\subset D_d=D$ such that $D_i/D_{i-1}$ is free of rank $1$ over
$\SR_L$.
\end{defn}

Note that, if $D$ is $\CO_F $-analytic, then so is $D_i/D_{i-1}$ for
any $i$.

% \subsection{Triangulable $\CO_F $-analytic $(\varphi_q,\Gamma)$-modules of rank $2$}

If $\delta_1,\delta_2\in\SI_\an(L)$, then $\Ext(\SR_L(\delta_2),
\SR_L(\delta_1))$ is isomorphic to $\Ext(\SR_L,
\SR_L(\delta_1\delta_2^{-1}))$, or $H^1(\delta_1\delta_2^{-1})$. The
isomorphism only depends on the choices of $e_{\delta_1}$,
$e_{\delta_2}$ and $e_{\delta_1\delta_2^{-1}}$. Thus it is unique up
to a nonzero multiple and induces an isomorphism from
$\Proj(\Ext(\SR_L(\delta_2), \SR_L(\delta_1)))$ to
$\Proj(H^1(\delta_1\delta_2^{-1}))$ independent of the choices of
$e_{\delta_1}$, $e_{\delta_2}$ and $e_{\delta_1\delta_2^{-1}}$.
Similarly there is a natural isomorphism from
$\Proj(\Ext_\an(\SR_L(\delta_2), \SR_L(\delta_1)))$ to
$\Proj(H_\an^1(\delta_1\delta_2^{-1}))$. Hence the set of
triangulable (resp. triangulable and $\CO_F$-analytic)
$(\varphi_q,\Gamma)$-modules $D$ of rank $2$ satisfying the
following two properties is classified by
$\Proj(H^1(\delta_1\delta_2^{-1}))$ (resp.
$\Proj(H_\an^1(\delta_1\delta_2^{-1}))$):

$\bullet$  $\SR_L(\delta_1)$ is a saturated
$(\varphi_q,\Gamma)$-submodule of $D$ and $\SR_L(\delta_2)$ is the
quotient module,

$\bullet$ $D$ is not isomorphic to $\SR_L(\delta_1)\oplus
\SR_L(\delta_2)$.

% Let $\SS$ be the parameter space defined in the introduction. Recall

Let $\SS^\an=\SS^\an(L)$ be the analytic variety obtained by blowing
up $(\delta_1,\delta_2)\in\SI_\an(L)\times \SI_\an(L)$ along the
subvarieties $\delta_1\delta_2^{-1}=x^i\delta_\unr$ for $i\in
\BZ_{+}$ and the subvarieties $\delta_1\delta_2^{-1}=x^{-i}$ for
$i\in \BN$. The fiber over the point $(\delta_1,\delta_2)$ is
isomorphic to $\Proj(H_{\an}^1(\delta_1\delta_2^{-1}))$. Similarly
let $\SS=\SS(L)$ be the analytic variety over $\SI_\an(L)\times
\SI_\an(L)$ whose fiber over $(\delta_1,\delta_2)$ is isomorphic to
$\Proj(H^1(\delta_1\delta_2^{-1}))$. The inclusions
$\Ext_\an(\SR_L(\delta_1),\SR_L(\delta_2))\hookrightarrow
\Ext(\SR_L(\delta_1),\SR_L(\delta_2))$ for
$\delta_1,\delta_2\in\SI_\an(L)$ induce a
natural injective map $\SS^\an \hookrightarrow \SS$. %
We write points of $\SS$ (resp. $\SS^\an$) in the form $(\delta_1,
\delta_2, c)$ with $c\in \Proj(H^1(\delta_1\delta_2^{-1}))$ (resp.
$c\in \Proj(H_{\an}^1(\delta_1\delta_2^{-1}))$). If
$(\delta_1,\delta_2,c)\in \SS$ is in the image of $\SS_\an$, for our
convenience we use $c_\an$ to denote the element in
$\Proj(H_{\an}^1(\delta_1\delta_2^{-1}))$ corresponding to $c$. For
$(\delta_1,\delta_2,c)\in \SS^\an$, since the $\SL$-invariant
induces an inclusion
$\Proj(H_{\an}^1(\delta_1\delta_2^{-1}))\hookrightarrow \RP^1(L)$,
we also use $(\delta_1,\delta_2, \SL(c))$ to denote
$(\delta_1,\delta_2, c)$.

If $s\in \SS$, we assign to $s$ the invariant $w(s)\in L$ by $
w(s)=w_{\delta_1}-w_{\delta_2}. $ Let $\SS_+$ be the subset of $\SS$
consisting of elements $s\in \SS$ with
$$v_\pi(\delta_1(\pi))+v_\pi(\delta_2(\pi))=0, \hskip 10pt
v_\pi(\delta_1(\pi))\geq 0.$$
% Here $v_L$ is the valuation such that $v_L(\pi)=1$.
If $s\in \SS_+$, we assign to $s$ the invariant $u(s)\in \BQ_+$ by
$$ u(s) =v_\pi(\delta_1(\pi)) =-v_\pi(\delta_2(\pi)) .
$$
Put $\SS_0=\{s\in \SS_+ \ |\ u(s)=0\}$ and $\SS_*=\{s\in\SS_+ \ | \
u(s)>0\}$. Then $\SS_+$ is the disjoint union of $\SS_0$ and
$\SS_*$. For $?\in \{+, 0,
*\}$ we put $\SS^\an_?=\SS^\an\cap \SS_?$. We decompose the set $\SS^\an_?$ as $\SS^\an_?=\SS_?^\ng\coprod
\SS_?^\cris\coprod \SS_?^\st \coprod \SS_?^\ord \coprod \SS_?^\ncl$,
where {\allowdisplaybreaks
\begin{eqnarray*} \SS_?^\ng &=& \{ s\in\SS_? \ |\ w(s) \text{ is not
an integer }\geq 1 \},
\\
\SS_?^\cris &=& \{ s\in \SS_? \ |\ w(s) \text{ is an integer }\geq 1
, u(s)<w(s), \SL=\infty \},
\\
\SS_?^\st &=& \{ s\in \SS_? \ |\ w(s) \text{ is an integer }\geq 1 ,
u(s)<w(s), \SL\neq \infty \}
\\
\SS_?^\ord &=& \{ s\in \SS_? \ |\ w(s) \text{ is an integer }\geq 1
, u(s)=w(s) \},
\\
\SS_?^\ncl &=& \{ s\in \SS_? \ |\ w(s) \text{ is an integer }\geq 1
, u(s)>w(s) \} .
\end{eqnarray*} }
\vskip -15pt \noindent Note that $\SS_0^\ord$ and $\SS_0^\ncl$ are
empty.

Let $D$ be an extension of $\SR_L(\delta_2)$ by $\SR_L(\delta_1)$.
For any $k\in \BN$, the preimage of $t_\CF^k \SR_L(\delta_2)$ is a
$(\varphi_q,\Gamma)$-submodule of $D$, which is denoted by $D'$.
Then $D'$ is an extension of $\SR_L(x^k\delta_2)$ by
$\SR_L(\delta_1)$. If $D$ is $\CO_F $-analytic, then so is $D'$.

\begin{lem}\label{lem:split}
\begin{enumerate}
\item The class of $D'$ in $H^1(\delta_1\delta_2^{-1}x^{-k})$
coincides with $\iota_k(c)$ up to a nonzero multiple, where $c$ is
the class of $D$ in $H^1(\delta_1\delta_2^{-1})$.
\item If $D$ is $\CO_F $-analytic, the class of $D'$ in $H_{\an}^1(\delta_1\delta_2^{-1}x^{-k})$
coincides with $\iota_{k,\an}(c)$ up to a nonzero multiple, where
$c$ is the class of $D$ in $H_{\an}^1(\delta_1\delta_2^{-1})$.
\end{enumerate}
\end{lem}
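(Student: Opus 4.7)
The plan is a direct cocycle computation that reduces both classes to a shared choice of lift. Fix bases $e_{\delta_1}, e_{\delta_2}, e_{x^k\delta_2}$ of the rank-one modules and pick any lift $\tilde{e} \in D$ of $e_{\delta_2}$. Since $D'$ is by definition the preimage of $t_\CF^k \SR_L(\delta_2)$, and the canonical isomorphism $\SR_L(x^k\delta_2) \xrightarrow{\sim} t_\CF^k \SR_L(\delta_2)$ sends $e_{x^k\delta_2}$ to $t_\CF^k e_{\delta_2}$, the element $t_\CF^k \tilde{e}$ lies in $D'$ and serves as a lift of $e_{x^k\delta_2}$.

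Writing $g(\tilde{e}) = \delta_2(g)\tilde{e} + a_g e_{\delta_1}$ with $a_g \in \SR_L$ for $g \in G^+$, the class $c$ of $D$ in $H^1(\delta_1\delta_2^{-1})$ is represented, after twisting by $\delta_2^{-1}$, by $g \mapsto \delta_2(g)^{-1} a_g \cdot e_{\delta_1\delta_2^{-1}}$. Using $g(t_\CF^k) = x^k(g) t_\CF^k$ (i.e.\ $\varphi_q(t_\CF^k) = \pi^k t_\CF^k$ and $\sigma_a(t_\CF^k) = a^k t_\CF^k$), a brief calculation yields
\[
g(t_\CF^k\tilde{e}) = (x^k\delta_2)(g)\,(t_\CF^k\tilde{e}) + x^k(g) t_\CF^k a_g\, e_{\delta_1},
\]
so the class of $D'$ in $H^1(\delta_1\delta_2^{-1}x^{-k})$ is represented by $g \mapsto \delta_2(g)^{-1} t_\CF^k a_g \cdot e_{\delta_1\delta_2^{-1}x^{-k}}$, the $x^k(g)$ cancelling against $(x^k\delta_2)(g)^{-1}/\delta_2(g)^{-1}$.

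The key observation is that the inclusion $\SR_L(\delta_1\delta_2^{-1}) \hookrightarrow t_\CF^{-k}\SR_L(\delta_1\delta_2^{-1}) \cong \SR_L(x^{-k}\delta_1\delta_2^{-1})$ defining $\iota_k$ sends $e_{\delta_1\delta_2^{-1}}$ to $t_\CF^k e_{x^{-k}\delta_1\delta_2^{-1}}$; applying $\iota_k$ to the cocycle for $D$ thus produces precisely the cocycle for $D'$, establishing (a). The ``up to nonzero multiple'' clause absorbs the scalar ambiguity inherent in the identifications $\Ext(\SR_L(\delta_i),\SR_L(\delta_1)) \cong H^1(\delta_1\delta_i^{-1})$, which depend on the chosen bases of the rank-one modules.

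For part (b), if $D$ is $\CO_F$-analytic then so is $D'$, as the differential operator $\nabla$ restricts to the subspace $D' \subset D$. The same computation performed in the complex $C^\bullet_{\varphi_q,\nabla}$ goes through verbatim, using $\nabla(t_\CF^k) = k t_\CF^k$ and the Leibniz rule $\nabla(t_\CF^k \tilde{e}) = k t_\CF^k \tilde{e} + t_\CF^k \nabla(\tilde{e})$; no new obstacle appears. Alternatively, (b) may be deduced from (a) using the commutative square \eqref{eq:comm-iota}, which relates $\iota_k$ and $\iota_{k,\an}$ via the vertical maps $\Upsilon^\bullet_\gamma \circ \Upsilon^\bullet_{\an,\gamma}$.
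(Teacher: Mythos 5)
Your proof is correct and follows essentially the same route as the paper: both arguments choose a lift $\tilde{e}$ of the generator of $\SR_L(\delta_2)$, observe that $t_\CF^k\tilde{e}$ is then a lift of the generator of $t_\CF^k\SR_L(\delta_2)\cong\SR_L(x^k\delta_2)$ inside $D'$, and check that the resulting cocycle is $t_\CF^k$ times the cocycle of $D$, which is by definition $\iota_k(c)$ (resp.\ $\iota_{k,\an}(c)$). The only cosmetic difference is that the paper carries out the computation for (b) in the $(\varphi_q,\nabla)$-complex and declares (a) similar, while you do (a) in the semigroup-cocycle picture and declare (b) similar.
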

\begin{proof} We only prove (b). The proof of (a) is similar. Let $e$ be a basis of $\SR_L(\delta_2)$ such that
$\varphi_q(e)=\delta_2(\pi)e$ and $\sigma_a e=\delta_2(a)e$. Let
$\tilde{e}$ be a lifting of $e$ in $D$. The class of $D$, or the
same, $c$, coincides with the class of $\Big(
(\delta_2(\pi)^{-1}\varphi_q-1)\tilde{e},
(\nabla-w_{\delta_2})\tilde{e} \Big)$ up to a nonzero multiple.
Similarly, up to a nonzero multiple, the class of $D'$ coincides
with the class of
$$\Big(
(\pi^{-k}\delta_2(\pi)^{-1}\varphi_q-1)(t_\CF^k\tilde{e}),
(\nabla-w_{\delta_2}-k)(t_\CF^k\tilde{e})
\Big)=\big(t_\CF^k(\delta_2(\pi)^{-1}\varphi_q-1)\tilde{e},
t_\CF^k(\nabla-w_{\delta_2})\tilde{e} \Big)$$ which is exactly
$\iota_{k,\an}(c)$.
\end{proof}

\begin{prop}\label{prop:equiv}
Put $D=D(s)$ with $ s = ( \delta_1, \delta_2, c) \in \SS $. Then the
following two conditions are equivalent:
\begin{enumerate}
\item \label{it:equi-no1}
$D(s)$ has a $(\varphi_q,\Gamma)$-submodule $M$ of rank $1$ such
that $M\cap \SR_L(\delta_1)=0$;
\item \label{it:equi-no2} $s$ is in $\SS^\an$ and satisfies $w(s)\in \BZ_+$,
$\delta_1\delta_2^{-1}\neq x^{w(s)}$ and $\SL(c_\an)=\infty$.
\end{enumerate}
Among all such $M$ there exists a unique one, $M_\sat$, that is
saturated; $M_\sat$ is isomorphic to $\SR_L(x^{w(s)}\delta_2)$. For
any $M$ that satisfies Condition (\ref{it:equi-no1}), there exists
some $i\in \BN$ such that $M=t_\CF^i M_\sat$.
\end{prop}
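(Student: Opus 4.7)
The plan is to reinterpret (1) as the vanishing of some $\iota_k(c)$ and then match the resulting conditions on $\delta=\delta_1\delta_2^{-1}$ with (2) using Propositions \ref{prop:iota} and \ref{prop:iota-k}. The prerequisite is a classification of rank-one $(\varphi_q,\Gamma)$-submodules of $\SR_L(\delta_2)$: any such submodule is $f\SR_L\cdot e_{\delta_2}$ for some nonzero $f\in\SR_L$, and the condition that $\varphi_q$ send a basis to a basis forces $\varphi_q(f)/f\in\SR_L^\times$; factoring $f$ (up to a unit) as a product of the irreducible pieces $u_\CF$ and $\varphi_q^n(Q/Q(0))$ and comparing exponents in $\varphi_q(f)/f$ via the computation underlying Lemma \ref{lem:Berger}, all these exponents must equal a common integer $k\geq 0$, so $f\SR_L=t_\CF^k\SR_L$ and the submodule is $t_\CF^k\SR_L\cdot e_{\delta_2}\cong\SR_L(x^k\delta_2)$.

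Assuming (1), the projection $\pi\colon D\to\SR_L(\delta_2)$ embeds $M$ with image $t_\CF^k\SR_L(\delta_2)$ for some $k\geq 1$ (the case $k=0$ is excluded as $c\neq 0$), so the preimage $D'=\pi^{-1}(t_\CF^k\SR_L(\delta_2))$ splits as $\SR_L(\delta_1)\oplus M$; by Lemma \ref{lem:split} this is equivalent to $\iota_k(c)=0$. Now Proposition \ref{prop:iota} rules out $w_\delta\notin\{1,\dots,k\}$ (else $\iota_k$ is an isomorphism), so $w_\delta\in\BZ_+$; Proposition \ref{prop:iota-k}(\ref{it:iota-k-no-4}) rules out $\delta=x^{w_\delta}$; and taking the minimal such $k=w(s)=w_\delta$, Proposition \ref{prop:iota-k}(\ref{it:iota-k-no-2}) handles $\delta\neq x^{w(s)}\delta_\unr$ (with $\SL(c_\an)=\infty$ automatic by the default definition of $\SL$), while Proposition \ref{prop:iota-k}(\ref{it:iota-k-no-3}) identifies the $1$-dimensional kernel of $\iota_{k,\an}$ as exactly $\{\SL(c_\an)=\infty\}$ when $\delta=x^{w(s)}\delta_\unr$. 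Finally $w_\delta\geq 1$ forces $\delta\neq x^{-i}$ for any $i\in\BN$, so $H^0(\delta)=0$ by Proposition \ref{prop:H0} and $H^1_\an(\delta)\cong H^1(\delta)$ by Corollary \ref{thm:comp}, placing $s$ in $\SS^\an$ automatically. The converse (2)$\Rightarrow$(1) runs this argument in reverse: taking $k=w(s)$ the same propositions give $\iota_k(c)=0$, so $D'$ splits and the complement of $\SR_L(\delta_1)$ in $D'$ is the required $M$.

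For uniqueness and structure of $M_\sat$, the space of splittings of $D'$ (with $k=w(s)$) is a torsor over $\Hom_{(\varphi_q,\Gamma)}(\SR_L(x^{w(s)}\delta_2),\SR_L(\delta_1))=H^0(\delta x^{-w(s)})$, which vanishes by Proposition \ref{prop:H0} since $\delta\neq x^{w(s)}$ prevents $\delta x^{-w(s)}$ from having the form $x^{-i}$ with $i\in\BN$. Thus $M_\sat$ is unique. Writing $M_\sat=\SR_L\cdot(\alpha e_{\delta_1}+t_\CF^{w(s)}b)$ for a lift $b$ of $e_{\delta_2}$, a direct computation shows $D/M_\sat$ is torsion-free exactly when $\alpha\neq 0$, and $\alpha=0$ would split $D$, contradicting $c\neq 0$; so $M_\sat$ is saturated. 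For any $M$ satisfying (1) with $\pi(M)=t_\CF^k\SR_L(\delta_2)$, both $M$ and $t_\CF^{k-w(s)}M_\sat$ provide splittings of $\pi^{-1}(t_\CF^k\SR_L(\delta_2))$, so they coincide and $M=t_\CF^{k-w(s)}M_\sat$.

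The main obstacle is the precise dictionary between vanishing of $\iota_{k,\an}$ and the $\SL$-invariant in the case $\delta=x^{w(s)}\delta_\unr$, where the $\SL$-invariant is engineered exactly to cut out the $1$-dimensional kernel of $\iota_{k,\an}$; outside that case, matters reduce to routine application of Propositions \ref{prop:iota} and \ref{prop:iota-k} combined with the automatic $\CO_F$-analyticity coming from $H^0(\delta)=0$.
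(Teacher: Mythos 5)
Your treatment of the equivalence of (a) and (b), and of the relation $M=t_\CF^{k-w(s)}M_\sat$, follows essentially the paper's route: the image of $M$ in $\SR_L(\delta_2)$ is $t_\CF^k\SR_L(\delta_2)$ with $k\geq 1$, the preimage splits, Lemma \ref{lem:split} converts this to $\iota_k(c)=0$, and Propositions \ref{prop:iota} and \ref{prop:iota-k} (together with $H^0(\delta_1\delta_2^{-1})=0$, which identifies $H^1$ with $H^1_\an$) translate the vanishing into condition (b). Your torsor argument for uniqueness, via $\Hom(\SR_L(x^k\delta_2),\SR_L(\delta_1))=H^0(\delta_1\delta_2^{-1}x^{-k})=0$, is the same vanishing the paper uses, phrased slightly differently. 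All of that is fine.

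The gap is in the saturation of $M_\sat$. Writing $M_\sat=\SR_L\cdot(\alpha e_{\delta_1}+t_\CF^{w(s)}b)$, the quotient $D/M_\sat$ is torsion-free if and only if $\alpha$ and $t_\CF^{w(s)}$ generate the unit ideal of $\SR_L$ — \emph{not} merely when $\alpha\neq 0$. For example, if $\alpha=u_\CF\alpha'$ with $\alpha'\in\SR_L$, then $u_\CF^{-1}(\alpha e_{\delta_1}+t_\CF^{w(s)}b)$ lies in $D$ (since $u_\CF$ divides $t_\CF$ by the corollary to Lemma \ref{lem:Berger}) but not in $M_\sat$, while $u_\CF$ times it does; so $D/M_\sat$ has $u_\CF$-torsion although $\alpha\neq 0$. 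Establishing the coprimality is exactly where the paper does real work: it first shows $\alpha\notin t_\CF\SR_L$ — otherwise $t_\CF^{-1}M_\sat\subset D$ and the preimage of $t_\CF^{w(s)-1}\SR_L(\delta_2)$ would split, contradicting the injectivity of $\iota_{w(s)-1}$ coming from Proposition \ref{prop:iota} and $c\neq 0$ — and then observes that the ideal $(\alpha,t_\CF^{w(s)})$ is stable under $\varphi_q$ and $\Gamma$, so that Lemma \ref{lem:Berger} forces it to equal $(t_\CF^j)$, whence $j=0$. As written, your argument does not establish that a saturated $M$ exists at all. You would need to supply this step, or alternatively replace $M_\sat$ by the saturation of your $M_0$ inside $D$, check that its image in $\SR_L(\delta_2)$ is still $t_\CF^{w(s)}\SR_L(\delta_2)$ (again using injectivity of $\iota_{k'}$ for $k'<w(s)$), and then invoke your uniqueness of splittings.
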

\begin{proof}
Assume that $D(s)$ satisfies Condition (\ref{it:equi-no1}). Since
the intersection of $M$ and $\SR_L(\delta_1)$ is zero, the image of
$M$ in $\SR_L(\delta_2)$ is a nonzero $(\varphi_q,\Gamma)$-submodule
of $\SR_L(\delta_2)$, and so must be of the form $t_\CF^k
\SR_L(\delta_2)$ with $k\in \BN$. Since $D(s)$ does not split, we
have $k\geq 1$. The preimage of $t_\CF^k \SR_L(\delta_2)$ in $D$ is
exactly $M\oplus \SR_L(\delta_1)$. Since $M\oplus \SR_L(\delta_1)$
splits, by Lemma \ref{lem:split} we have $\iota_{k}(c)=0$. By
Proposition \ref{prop:iota-k} this happens only if $w(s)\in \{
1,\cdots, k\}$ and $\delta_1\delta_2^{-1}\neq x^{w(s)}$. Note that,
when $w(s)\in \{ 1,\cdots, k\}$ and $\delta_1\delta_2^{-1}\neq
x^{w(s)}$, $D(s)$ is automatically $\CO_F $-analytic.
% So we may write $s$ in the form $(\delta_1,\delta_2,c_\an)$ with $c_\an\in H^1_\an(\delta_1\delta_2^{-1})$.
Again by Proposition \ref{prop:iota-k} we obtain
$\SL(c_\an)=\infty$. This proves
(\ref{it:equi-no1})$\Leftrightarrow$(\ref{it:equi-no2}).

If (\ref{it:equi-no1}) holds, then the preimage of
$t_\CF^{w(s)}\SR_L(\delta_2)$ splits as $\SR_L(\delta_1)\oplus M_0$,
where $M_0$ is isomorphic to $\SR_L(x^{w(s)}\delta_2)$. We show that
$M_0$ is saturated. Note that $M_0$ is not included in $t_\CF D(s)$.
Otherwise, the preimage of $t_\CF^{w(s)-1}\SR_L(\delta_2)$ will
split, which contradicts Proposition \ref{prop:iota-k}. Let $e_1$
(resp. $e_2$, $e$) be a basis of $\SR_L(\delta_1)$ (resp.
$\SR_L(\delta_2)$, $M_0$) such that $Le_1$ (resp. $L e_2$, $Le$) is
stable under $\varphi_q$ and $\Gamma$. Let $\tilde{e}_2$ be a
lifting of $e_2$. Write $e=ae_1+b\tilde{e}_2$. Then $a\notin t_\CF
\SR_L$ and $b\in t_\CF^{w(s)}\SR_L$. Observe that the ideal $I$
generated by $a$ and $t_\CF^{w(s)}$ satisfies $\varphi_q(I)=I$ and
$\gamma(I)=I$ for all $\gamma\in\Gamma$. Thus by Lemma
\ref{lem:Berger}, $I=\SR_L$. It follows that $M_0$ is saturated. If
$M$ is another $(\varphi_q,\Gamma)$-submodule of $D(s)$ such that
$M\cap \SR_L(\delta_1)=0$, then the image of $M$ in
$\SR_L(\delta_2)$ is $t_\CF^k\SR_L(\delta_2)$ for some integer
$k\geq w(s)$. Then $M\subset \SR_L(\delta_1)\oplus M_0$. Since
$\delta_1\neq \delta_2x^{w(s)}$, $\SR_L(\delta_1)$ has no nonzero
$(\varphi_q,\Gamma)$-submodule isomorphic to $\SR_L(x^k\delta_2)$.
It follows that $M\subset M_0$ and thus $M=t_\CF^{k-w(s)}M_0$.
\end{proof}

\begin{cor}\label{cor:sat} Let $s=(\delta_1,\delta_2,c)$ be in $\SS$. If $s$ is in $\SS^\an$ and satisfies $w(s)\in \BZ_+$,
$\delta_1\delta_2^{-1}\neq x^{w(s)}$ and $\SL(c_\an)=\infty$, then
$D(s)$ has exactly two saturated $(\varphi_q,\Gamma)$-submodules of
$D(s)$ of rank $1$, one being $\SR_L(\delta_1)$ and the other
isomorphic to $\SR_L(x^{w(s)}\delta_2)$. Otherwise, $D(s)$ has
exactly one saturated $(\varphi_q,\Gamma)$-submodule of rank $1$
which is $\SR_L(\delta_1)$.
\end{cor}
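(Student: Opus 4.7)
The plan is to reduce the count of saturated rank-one $(\varphi_q,\Gamma)$-submodules of $D(s)$ to the classification given in Proposition \ref{prop:equiv}. First I would observe that $\SR_L(\delta_1)$ is itself saturated in $D(s)$: indeed the quotient $D(s)/\SR_L(\delta_1)\cong \SR_L(\delta_2)$ is free and hence torsion-free, so $\SR_L(\delta_1)$ provides one saturated rank-one $(\varphi_q,\Gamma)$-submodule in every case. The problem is therefore to determine when and how other such submodules arise.

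Next I would let $N$ be any saturated rank-one $(\varphi_q,\Gamma)$-submodule of $D(s)$ and split into two cases according to whether $N\cap \SR_L(\delta_1)$ is zero. In the case $N\cap\SR_L(\delta_1)\neq 0$, since both $N$ and $\SR_L(\delta_1)$ are free of rank one, the intersection $N\cap \SR_L(\delta_1)$ is a nonzero $\SR_L$-submodule of rank one of each. Saturating inside $N$ gives $N$ and saturating inside $\SR_L(\delta_1)$ gives $\SR_L(\delta_1)$; but both saturations coincide with the saturation of $N\cap\SR_L(\delta_1)$ inside $D(s)$ (using that $N$ and $\SR_L(\delta_1)$ are both saturated in $D(s)$). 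Hence $N=\SR_L(\delta_1)$, and this case contributes no new submodule.

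In the case $N\cap\SR_L(\delta_1)=0$, Proposition \ref{prop:equiv} applies directly: the existence of such an $N$ forces $s\in\SS^\an$ with $w(s)\in\BZ_+$, $\delta_1\delta_2^{-1}\neq x^{w(s)}$, and $\SL(c_\an)=\infty$; moreover the \emph{saturated} such $N$ is unique, namely $M_\sat$, and $M_\sat\cong\SR_L(x^{w(s)}\delta_2)$. Combining the two cases yields exactly two saturated rank-one submodules ($\SR_L(\delta_1)$ and $M_\sat$) precisely under the stated hypothesis, and exactly one ($\SR_L(\delta_1)$) otherwise.

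I do not anticipate a serious obstacle: the corollary is essentially a bookkeeping consequence of Proposition \ref{prop:equiv}, and the only nontrivial point is the saturation-comparison argument in the first case, which is a standard fact about saturated submodules in free modules over the Bezout-type ring $\SR_L$ (and in any event follows from the fact that $\SR_L(\delta_1)$ has no proper nonzero $(\varphi_q,\Gamma)$-submodule of cofinite index whose saturation is larger than itself).
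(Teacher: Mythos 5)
Your proof is correct and is essentially the argument the paper intends: the corollary is stated without proof as an immediate consequence of Proposition \ref{prop:equiv}, and the dichotomy on whether $N\cap\SR_L(\delta_1)$ vanishes, together with the observation that a saturated rank-one submodule meeting $\SR_L(\delta_1)$ nontrivially must equal it (both being the saturation of their intersection), is exactly the implicit bookkeeping. Nothing further is needed.
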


\begin{cor}\label{cor:classify} Let $s=(\delta_1,\delta_2, c)$ and
$s'=(\delta'_1,\delta'_2, c')$ be in $\SS(L)$.
\begin{enumerate}
\item\label{it:cor-iso-no1} If $\delta_1=\delta'_1$, then $D(s)\cong D(s')$ if and only if
$s=s'$.
\item\label{it:cor-iso-no2} If $\delta_1\neq \delta'_1$, then $D(s)\cong D(s')$ if and
only if $s$ and $s'$ are in $\SS^\an$ and satisfy $w(s)\in \BZ_+$,
$\delta'_1=x^{w(s)}\delta_2$, $\delta'_2=x^{-w(s)}\delta_1$ and
$\SL(c_\an)=\SL(c'_\an)=\infty$.
\end{enumerate}
\end{cor}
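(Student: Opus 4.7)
The plan is to reduce both parts of the corollary to Corollary \ref{cor:sat}, which describes the saturated rank-1 $(\varphi_q,\Gamma)$-submodules of $D(s)$, together with Corollary \ref{cor:H0}, which says that $\SR_L(\eta)\cong\SR_L(\eta')$ forces $\eta=\eta'$. The key observation is that any isomorphism $\phi\colon D(s)\xrightarrow{\sim}D(s')$ must send saturated rank-1 $(\varphi_q,\Gamma)$-submodules to saturated rank-1 $(\varphi_q,\Gamma)$-submodules.

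Given such a $\phi$, I would split the forward direction into two cases. In the first case, $\phi(\SR_L(\delta_1))=\SR_L(\delta'_1)$, which forces $\delta_1=\delta'_1$ and, after passing to quotients, $\delta_2=\delta'_2$; the induced identification of extension classes yields $c=c'$. In the second case, $\phi(\SR_L(\delta_1))\neq\SR_L(\delta'_1)$, so $D(s')$ has two distinct saturated rank-1 submodules and Corollary \ref{cor:sat} applies to it: we must have $s'\in\SS^\an$, $w(s')\in\BZ_+$, $\delta'_1(\delta'_2)^{-1}\neq x^{w(s')}$, $\SL(c'_\an)=\infty$, and $\phi(\SR_L(\delta_1))\cong\SR_L(x^{w(s')}\delta'_2)$, forcing $\delta_1=x^{w(s')}\delta'_2$. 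Applying the same reasoning to $\phi^{-1}$ gives $\delta'_1=x^{w(s)}\delta_2$; comparing weights then yields $w(s)=w(s')$ and hence $\delta'_2=x^{-w(s)}\delta_1$. Part (\ref{it:cor-iso-no1}) follows: when $\delta_1=\delta'_1$ the second case would force $\delta_1\delta_2^{-1}=x^{w(s)}$, contradicting the hypothesis of Proposition \ref{prop:equiv}(\ref{it:equi-no2}), so the first case holds and $s=s'$. For the forward direction of part (\ref{it:cor-iso-no2}), $\delta_1\neq\delta'_1$ excludes the first case, and the second case furnishes all the claimed relations.

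For the reverse direction of part (\ref{it:cor-iso-no2}), I would apply Proposition \ref{prop:equiv} to $D(s)$: the hypotheses on $s$ give a saturated rank-1 submodule $M_\sat\subset D(s)$ with $M_\sat\cong\SR_L(x^{w(s)}\delta_2)=\SR_L(\delta'_1)$, and the quotient $D(s)/M_\sat$ has determinant $\delta_1\delta_2\cdot(\delta'_1)^{-1}=\delta'_2$ so that $D(s)/M_\sat\cong\SR_L(\delta'_2)$. This exhibits $D(s)$ as an extension of $\SR_L(\delta'_2)$ by $\SR_L(\delta'_1)$, with some class $c''\in\Proj(H^1(\delta'_1(\delta'_2)^{-1}))$. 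Applying Proposition \ref{prop:equiv} in the reverse direction to this new triangulation, with $\SR_L(\delta_1)\subset D(s)$ playing the role of a rank-1 submodule meeting $\SR_L(\delta'_1)$ trivially, shows that $c''$ is $\CO_F$-analytic with $\SL(c''_\an)=\infty$; since $\SL$ is injective on $\Proj(H^1_\an(\delta'_1(\delta'_2)^{-1}))$ by the discussion in Section \ref{ss:iota-k}, this uniquely pins $c''=c'$, yielding $D(s)\cong D(s')$. The main obstacle is this last step: one must verify both that $\SR_L(\delta_1)\cap\SR_L(\delta'_1)=0$ inside $D(s)$ (by a rank argument, since distinct saturated rank-1 submodules of a free rank-2 module intersect trivially), and that the $\SL=\infty$ characterization provided by Proposition \ref{prop:equiv} really identifies $c''$ with the unique point at infinity in $\Proj(H^1_\an)$; everything else is a direct combination of Corollaries \ref{cor:H0} and \ref{cor:sat}.
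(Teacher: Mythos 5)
Your proof is correct and follows essentially the same route as the paper: the forward direction is exactly the paper's argument (apply Proposition \ref{prop:equiv} to the two saturated rank-one submodules $\SR_L(\delta_1)$ and $\phi(\SR_L(\delta'_1))$, which meet trivially because they are distinct and saturated, then compare weights), and your treatment of the converse of (\ref{it:cor-iso-no2}) — which the paper leaves implicit — is the intended one. The only detail you gloss over there is that the extension $0\to M_\sat\to D(s)\to \SR_L(\delta'_2)\to 0$ is non-split, so that $c''$ really defines a point of $\Proj(H^1(\delta'_1(\delta'_2)^{-1}))$ and Proposition \ref{prop:equiv} may legitimately be applied to this new triangulation; this follows at once from Corollary \ref{cor:sat}, since a splitting would exhibit a saturated rank-one submodule isomorphic to $\SR_L(x^{-w(s)}\delta_1)$, forcing $w(s)=0$ against the hypothesis $w(s)\in\BZ_+$.
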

\begin{proof} Assertion (\ref{it:cor-iso-no1}) is clear. We prove (\ref{it:cor-iso-no2}). Since $D(s)\cong D(s')$, there
exists a $(\varphi_q,\Gamma)$-submodule $M$ of $D(s)$ such that
$M\cong \SR_L(\delta'_1)$ and $D(s)/M\cong \SR_L(\delta'_2)$. Since
both $\SR_L(\delta_1)$ and $M$ are saturated
$(\varphi_q,\Gamma)$-submodules of $D$, $\SR_L(\delta_1)\cap M=0$.
By Proposition \ref{prop:equiv} we have $w(s)\in \BZ_+$,
$\delta_1\delta_2^{-1}\neq x^{w(s)}$, $\SL(c_\an)=\infty$ and
$\delta'_1=x^{w(s)}\delta_2$. Similarly,
$\delta_1=x^{w(s')}\delta'_2$. As
$\delta_1\delta_2=\delta'_1\delta'_2$, we have $w(s)=w(s')$.
\end{proof}

\begin{prop}\label{prop:slope} Let $s=(\delta_1,\delta_2, c)$ be in $\SS$. Then $D(s)$
is of slope zero if and only if $s\in \SS_+ - \SS_+^\ncl$; $D(s)$ is
of slope zero and the Galois representation attached to $D(s)$ is
irreducible if and only if $s$ is in $\SS_\ast -(\SS_\ast^\ord\cup
\SS_\ast^\ncl)$; $D(s)$ is of slope zero and $\CO_F$-analytic if and
only if $s\in \SS^\an_+ - \SS_+^\ncl$.
\end{prop}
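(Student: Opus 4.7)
The plan is to combine Kedlaya's slope filtration theorem \cite{Kedlaya} with the enumeration of saturated rank-$1$ submodules of $D(s)$ supplied by Corollary \ref{cor:sat}. Since $\det\varphi_q$ acts on $\det D(s)$ by $\delta_1(\pi)\delta_2(\pi)$ and the submodule $\SR_L(\delta_1)$ has slope $v_\pi(\delta_1(\pi))$, a slope-zero $D(s)$ must satisfy $v_\pi(\delta_1(\pi))+v_\pi(\delta_2(\pi))=0$ and $v_\pi(\delta_1(\pi))\geq 0$, i.e.\ $s\in\SS_+$. Conversely, by Kedlaya's theorem, a rank-$2$ module with $v_\pi(\det\varphi_q)=0$ is of slope $0$ iff it contains no saturated rank-$1$ submodule of strictly negative slope (such a submodule would form the bottom step of a nontrivial Harder--Narasimhan filtration). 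On $\SS_+$ the submodule $\SR_L(\delta_1)$ has slope $u(s)\geq 0$, and by Corollary \ref{cor:sat} the only other possible saturated rank-$1$ submodule is $M_\sat\cong\SR_L(x^{w(s)}\delta_2)$, of slope $w(s)-u(s)$, which exists exactly when $s\in\SS^\an$, $w(s)\in\BZ_+$, $\delta_1\delta_2^{-1}\neq x^{w(s)}$, and $\SL(c_\an)=\infty$. Hence $D(s)$ is of slope $0$ iff $M_\sat$ is absent or has nonnegative slope.

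For the first equivalence, the key point is that $\SS_+^\ncl\subset\SS^\an$ with the remaining hypotheses of Corollary \ref{cor:sat} automatically satisfied on $\SS_+^\ncl$. Indeed, for $s\in\SS_+^\ncl$ the character $\delta=\delta_1\delta_2^{-1}$ has weight $w(s)\geq 1$ and $v_\pi(\delta(\pi))=2u(s)>2w(s)$; an elementary weight-and-valuation comparison rules out both $\delta=x^{-i}$ for $i\in\BN$ and $\delta=x^i\delta_\unr$ for $i\in\BZ_+$, so Theorem \ref{thm:dim-H1} gives $\dim_L H^1(\delta)=\dim_L H^1_\an(\delta)=1$, which forces $s\in\SS^\an$ and, by the definition of $\SL$ in Section \ref{ss:iota-k}, $\SL(c_\an)=\infty$; moreover $2u(s)\neq w(s)$ gives $\delta\neq x^{w(s)}$. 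Thus $M_\sat$ exists on $\SS_+^\ncl$ with slope $w(s)-u(s)<0$, so $D(s)$ is not of slope $0$. Conversely, for $s\in\SS_+-\SS_+^\ncl$ either $w(s)\notin\BZ_+$ (so $M_\sat$ is absent) or $w(s)\geq u(s)$ (so $M_\sat$, if present, has slope $\geq 0$), and in either case $D(s)$ is of slope $0$.

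For the second equivalence, the Kisin--Ren correspondence (Theorem \ref{th:kisin}) together with Proposition \ref{th:ked} implies that $V(D(s))$ is irreducible iff $D(s)$ contains no saturated rank-$1$ étale (i.e.\ slope-$0$) submodule. The sub $\SR_L(\delta_1)$ has slope $0$ iff $s\in\SS_0$, and when it exists $M_\sat$ has slope $0$ iff $s\in\SS_+^\ord$; an analogous weight-and-valuation check shows $\SS_+^\ord\subset\SS^\an$ with the remaining hypotheses of Corollary \ref{cor:sat} automatic, so $M_\sat$ is guaranteed to exist on $\SS_+^\ord$. Combining with the first equivalence gives the characterization $s\in\SS_\ast-(\SS_\ast^\ord\cup\SS_\ast^\ncl)$. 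The third equivalence follows by intersecting $\SS_+-\SS_+^\ncl$ with $\SS^\an$ and using $\SS_+^\ncl\subset\SS^\an$ to obtain $\SS^\an_+-\SS_+^\ncl$. The main technical step is the elementary but essential case analysis establishing $\SS_+^\ncl,\SS_+^\ord\subset\SS^\an$ together with the automatic vanishing of the remaining hypotheses of Corollary \ref{cor:sat}, which bridges the definitions of $\SS_+^\ncl$ and $\SS_+^\ord$ (formulated purely in terms of $w$ and $u$) with the existence criterion for $M_\sat$.
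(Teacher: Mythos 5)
Your proof is correct and follows essentially the same route as the paper's: reduce via Kedlaya's theorem to the presence of a negative-slope (resp.\ slope-zero) saturated rank-$1$ submodule, and then read off when $M_\sat$ exists and what its slope is from Proposition \ref{prop:equiv} and Corollary \ref{cor:sat}. Your explicit weight-and-valuation check that the auxiliary hypotheses of Corollary \ref{cor:sat} ($s\in\SS^\an$, $\delta_1\delta_2^{-1}\neq x^{w(s)}$, $\SL(c_\an)=\infty$) hold automatically on $\SS_+^\ncl$ and $\SS_+^\ord$ is a point the paper only asserts in passing, and it is verified correctly.
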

\begin{proof} By Kedlaya's slope filtration
theorem, $D(s)$ is of slope zero if and only if
$v_\pi(\delta_1(\pi)\delta_2(\pi))=0$ and $D(s)$ has no
$(\varphi_q,\Gamma)$-submodule of rank $1$ that is of slope $<0$. In
particular, if $D(s)$ is of slope zero, then
$v_\pi(\delta_1(\pi))\geq 0$ and thus $s\in \SS_+$. Hence we only
need to consider the case of $s\in \SS_+$. Assume that $D(s)$ has a
$(\varphi_q,\Gamma)$-submodule of rank $1$, say $M$, that is of
slope $<0$. Then the intersection of $M$ and $\SR_L(\delta_1)$ is
zero. By Proposition \ref{prop:equiv} we may suppose that $M$ is
saturated. By Corollary \ref{cor:sat}, this happens if and only if
$s$ is in $\SS^\an$ and satisfies $w(s)\in\BZ_+$,
$\delta_1\delta_2^{-1}\neq x^{w(s)}$, $\SL(c_\an)=\infty$ and
$w(s)<u(s)$. Note that $\delta_1\delta_2^{-1}\neq x^{w(s)}$ and
$\SL(c_\an)=\infty$ automatically hold when $0<w(s)<u(s)$. The first
assertion follows. Similarly, $D(s)$ has a saturated
$(\varphi_q,\Gamma)$-submodule of rank $1$ that is of slope zero, if
and only if $u(s)=0$ or $u(s)=w(s)$. By Proposition
\ref{prop:faithful} (\ref{it:over-2}) and Remark \ref{rem:1-over},
we know that the Galois representation attached to an \'etale
$(\varphi_q,\Gamma)$-module $D$ over $\SR_L$ of rank $2$ is
irreducible if and only if $D$ has no \'etale
$(\varphi_q,\Gamma)$-submodule of rank $1$. This shows the second
assertion. The third assertion follows from the first one.
\end{proof}

\vskip 5pt

\noindent{\it Proof of Theorem \ref{thm:cl}.} Assertion
(\ref{it:cl-1}) follows from Proposition \ref{prop:slope}, and
(\ref{it:cl-2}) follows from Corollary \ref{cor:classify}. \qed

\vskip 10pt

\begin{rem} Let $s\neq s'$ be as in Theorem \ref{thm:cl}
(\ref{it:cl-2}). Then $s\in \SS_*^\cris$ if and only if $s'\in
\SS_*^\cris$; $s\in \SS_+^\ord$ if and only if $s'\in \SS_0^\cris$.
\end{rem}

\begin{rem} By an argument similar to that in \cite{tri} one can show that, if $s$ is in $\SS_+^\cris$ (resp. $\SS_+^\ord$,
$\SS_+^\st$), then $D(s)$ comes from a crystalline (resp. ordinary,
semistable but non-crystalline) $L$-representation twisted by a
character.
\end{rem}

{\bf Acknowledgement}: Both authors thank Professor P. Colmez and
Professor L. Berger for helpful advices on revising the original
version of this paper. The second author thanks the hospitality and
stimulating environment provided by Beijing International Center for
Mathematical Research, where a part of this research was carried
out. The second author also thanks Professor Q. Tian and Professor
C. Zhao for encouragements. The second author is supported by the
NSFC grant 11101150 and the doctoral fund for new teachers
20110076120002.

\end{document}